\def\cA{\mathcal A}
\def\cD{\mathcal D}
\def\cF{\mathcal F}
\def\cG{\mathcal G}
\def\cJ{\mathcal J}
\def\cM{\mathcal M}
\def\cQ{\mathcal Q}
\def\cR{\mathcal R}
\def\cS{\mathcal S}
\def\cW{\mathcal W}
\def\cX{\mathcal X}
\def\bW{\mathbb W}
\def\sW{\mathscr W}
\def\N{\mathop{\mathbb N\kern 0pt}\nolimits}
\def\Q{\mathop{\mathbb Q\kern 0pt}\nolimits}
\def\R{\mathop{\mathbb R\kern 0pt}\nolimits}
\def\SS{\mathop{\mathbb S\kern 0pt}\nolimits}
\def\ds{\displaystyle}
\def\supp{\mathop{\rm supp}\nolimits}
\def\p{\partial}
\def\f{\frac}
\def\t{\tilde}
\def\dl{\delta}
\def\na{\nabla}
\def\al{\alpha}
\def\eps{\epsilon}
\def\ve{\varepsilon}
\def\vp{\varphi}
\def\dive{\operatorname{div}}
\def\curl{\operatorname{curl}}
\def\supp{\operatorname{supp}}
\def\ls{\lesssim}
\newcommand{\w}[1]{\langle {#1} \rangle}
\theoremstyle{plain}
\newtheorem{theorem}{Theorem}[section]
\newtheorem{lemma}[theorem]{Lemma}
\newtheorem{corollary}[theorem]{Corollary}
\theoremstyle{definition}
\newtheorem{remark}{Remark}[section]
\numberwithin{equation}{section}
\title{Long time existence of smooth solutions to 2D compressible Euler equations of Chaplygin gases with non-zero vorticity}
\author{Fei Hou$^{1, *}$ \qquad Huicheng
  Yin$^{1,2, }$\footnote{Fei Hou (\texttt{fhou$@$nju.edu.cn}) and
    Huicheng Yin (\texttt{huicheng$@$nju.edu.cn}, \texttt{05407$@$njnu.edu.cn}) are supported by
    the NSFC (No.~11731007).}\\
    [12pt] {\small 1. Department of Mathematics, Nanjing University, Nanjing 210093, China}\\
  {\small 2. School of Mathematical Sciences and Mathematical Institute, }\\
  {\small Nanjing Normal University, Nanjing 210023, China}}
\begin{document}

\date{}
\maketitle
\thispagestyle{empty}

\begin{abstract}

For the 2D compressible  isentropic Euler equations of polytropic gases with an initial perturbation of
size $\ve$ of a rest state, it has been known that if the initial data are rotationnally
invariant or irrotational, then the lifespan $T_{\ve}$ of the classical solutions is of order $O(\f{1}{\ve^2})$;
if the initial vorticity is of size $\ve^{1+\al}$ ($0\le\al\le 1$), then $T_{\ve}$ is of $O(\f{1}{\ve^{1+\al}})$.
In the present paper, for the 2D compressible isentropic Euler equations of Chaplygin gases, if
the initial data are a perturbation of size $\ve$, and the initial vorticity is of any size
$\dl$ with  $0<\dl\le \ve$, we will establish  the lifespan $T_{\dl}=O(\f{1}{\dl})$. For examples,
if $\dl=e^{-\f{1}{\ve^2}}$ or $\dl=e^{-e^{\f{1}{\ve^2}}}$ are chosen, then $T_{\dl}=O(e^{\f{1}{\ve^2}})$
or $T_{\dl}=O(e^{e^{\f{1}{\ve^2}}})$ although the perturbations of the initial density and the divergence of the
initial velocity are only of order $O(\ve)$. Our main ingredients are: finding the null condition structures
in 2D  compressible Euler equations of Chaplygin gases and looking for the good unknown;
establishing a new class of  weighted space-time $L^\infty$-$L^\infty$ estimates for the solution itself
and its gradients of 2D linear wave equations; introducing some suitably weighted energies
and taking the $L^p$ $(1<p<\infty)$ estimates on the vorticity.

\vskip 0.2 true cm

\noindent
\textbf{Keywords.} Compressible Euler equations, Chaplygin gases, vorticity, null condition, weighted $L^\infty$-$L^\infty$ estimates,
ghost weight, $A_p$ weight.

\vskip 0.2 true cm
\noindent
\textbf{2020 Mathematical Subject Classification.}  35L45, 35L65, 76N15.
\end{abstract}

\vskip 0.6 true cm
\tableofcontents

\section{Introduction}

The 2D compressible isentropic Euler equations are
\begin{equation}\label{Euler}
\left\{
\begin{aligned}
&\p_t\rho+\dive (\rho u)=0\hspace{5.6cm}\text{(Conservation of mass)},\\
&\p_t(\rho u)+\dive (\rho u \otimes u)+\nabla P=0
\hspace{3.3cm}\text{(Conservation of momentum)},\\
\end{aligned}
\right.
\end{equation}
where $(t,x)=(t, x^1, x^2)\in\R^{1+2}_+=[0,\infty)\times\R^2$, $\nabla=(\p_1, \p_2)=(\p_{x^1}, \p_{x^2})$,
and $u=(u_1,u_2),~\rho,~P$ stand for the velocity, density, pressure respectively.
In addition, the pressure $P=P(\rho)$ is a smooth
function of $\rho$ when $\rho>0$, moreover, $P'(\rho)>0$  for $\rho>0$.

For the polytropic gases (see \cite{CF}),
\begin{equation}\label{polytropicGas}
    P(\rho)=A\rho^\gamma,
\end{equation}
where $A$ and $\gamma$ ($1<\gamma<3$) are some positive constants.

For the Chaplygin gases (see \cite{CF} or \cite{Godin07}),
\begin{equation}\label{ChaplyginGas}
    P(\rho)=P_0-\frac{B}{\rho},
\end{equation}
where $P_0>0$ and $B>0$ are constants.

If $(\rho, u)\in C^1$ is a solution of \eqref{Euler} with $\rho>0$,
then \eqref{Euler} is equivalent to the following form
\begin{equation}\label{EulerC1form}
\left\{
\begin{aligned}
&\p_t\rho+\dive (\rho u)=0,\\
&\p_tu+u\cdot\nabla u+\ds\frac{c^2(\rho)}{\rho}\nabla \rho=0,\\
\end{aligned}
\right.
\end{equation}
where the sound speed $c(\rho):=\sqrt{P'(\rho)}$.

Consider the  initial data of \eqref{Euler} as follows
\begin{equation}\label{initial}
(\rho(0,x), u(0,x))=(\bar\rho+\rho^0(x), u^0(x)),
\end{equation}
where $\bar\rho>0$ is a constant, $\bar\rho+\rho^0(x)>0$,
and $\rho^0(x), u^0(x)=(u_1^0(x), u_2^0(x))\in C_0^{\infty}$.  When
\begin{equation}\label{initial:curl}
\curl u^0(x):=\p_1 u_2^0-\p_2 u_1^0\equiv 0,
\end{equation}
as long as $(\rho, u)\in C^1$ for $0\le t\le T_0$, then $\curl u\equiv 0$
always holds for $0\le t\le T_0$. In this case, one can introduce the potential function $\phi$ such that $u=\na \phi$,
then the Bernoulli's law implies $\p_t\phi+\f12|\na\phi|^2+h(\rho)=0$ with $h'(\rho)=\f{c^2(\rho)}{\rho}$ and $h(\bar\rho)=0$.
By the implicit function theorem due to
$h'(\rho)>0$ for $\rho>0$, then the density
function $\rho$ can be expressed as
\begin{equation}\label{density:potential}
\rho=h^{-1}\biggl(-\p_t\phi-\frac{1}{2}|\na\phi|^2\biggr)
=:H(\p\phi),
\end{equation}
where $\p=(\p_t, \na).$ Substituting \eqref{density:potential} into the mass conservation equation in \eqref{Euler}
yields that
\begin{equation}\label{mass:potential}
\p_t(H(\p\phi))+\ds\sum_{i=1}^2\p_i\bigl(H(\p\phi)\p_i\phi\bigr)=0.
\end{equation}
For any $C^2$ solution $\phi$, \eqref{mass:potential} can be
rewritten as the following second order quasilinear equation
\begin{equation}\label{QLW}
\p_t^2\phi+2\ds\sum_{k=1}^2\p_k\phi\p_{tk}^2\phi-c^2(\rho)\Delta\phi
+\ds\sum_{i,j=1}^{2}\p_{i}\phi\p_j\phi\p_{ij}^2\phi
=0,
\end{equation}
where $c(\rho)=c(H(\p\phi))$, and the Laplace operator
$\Delta:=\ds\sum_{i=1}^{2}\p_i^2$. Without loss of generality and for simplicity,
$c(\bar\rho)=1$ can be supposed, and then $c^2(\rho)=1-2\bar\rho c'(\bar\rho)\p_t\phi+O(|\p\phi|^2)$.
Especially, in the case of the Chaplygin gases, \eqref{QLW} is
\begin{equation}\label{QLW:Chaplygin}
\p_t^2\phi-\triangle\phi+2\sum_{k=1}^2\p_k\phi\p_t\p_k\phi
-2\p_t\phi\triangle\phi+\sum_{i,j=1}^2\p_i\phi\p_j\phi\p_{ij}^2\phi
-|\nabla\phi|^2\triangle\phi=0.
\end{equation}

When $\|\rho^0(x)\|_{H^4}+\|u^0(x)\|_{H^4}\le \bar\ve$ and $\bar\ve>0$ is sufficiently small, if follows from
Theorem 6.5.3 of \cite{Hormander97book} and equation \eqref{QLW} that the lifespan $T_{\bar\ve}$ of smooth solution $(\rho, u)$
to \eqref{EulerC1form} fulfills $T_{\bar\ve}\ge\frac{C}{\bar\ve^2}$, where $C>0$ is a constant depending only on the
initial data. In addition, for the polytropic gases, since the first null condition does not hold
for equation \eqref{QLW}, then $T_{\bar\ve}\le\frac{\t C}{\bar\ve^2}$ holds for suitably positive constant $\t C$
(see \cite{Alinhac99} and \cite{John90}); for the Chaplygin gases, note that both the first null condition and
the second null condition hold
for equation \eqref{QLW:Chaplygin}, then $T_{\bar\ve}=+\infty$ holds (see \cite{Alinhac01}).

When
\begin{equation}\label{curl0:nonvanish}
\curl u^0(x)\not\equiv 0,
\end{equation}
if
\begin{equation*}
\curl u^0(x)=O(\bar\ve^{1+\alpha}),
\end{equation*}
where $\alpha\ge 0$ is a constant, then by Theorem 1 and Theorem 2 of \cite{Sideris97} that
the lifespan $T_{\bar\ve}$ of smooth solution $(\rho, u)$
to \eqref{EulerC1form} satisfies
\begin{equation}\label{curl:lifespan}
T_{\bar\ve}\ge\frac{C}{\bar\ve^{\min\{1+\alpha,2\}}}.
\end{equation}
Note that $T_{\bar\ve}$ in \eqref{curl:lifespan} is also optimal for the polytropic gases, i.e.,
$T_{\bar\ve}\le\frac{\bar C}{\bar\ve^{\min\{1+\alpha,2\}}}$ holds for some constant $\bar C>C$, one can see
\cite{Alinhac92}-\cite{Alinhac93}, \cite{John90} and \cite{Sideris97}. With respect to more results on the blowup or the blowup mechanism of \eqref{Euler}  for polytropic gases,
the papers \cite{Christodoulou07}-\cite{CM14},  \cite{Godin05}, \cite{HKSW16},
\cite{LukSpeck18}, \cite{Rammaha89}, \cite{Sideris85}, \cite{Yin} can be referred.
In the present paper, we intend to study the long time existence of smooth solutions to \eqref{EulerC1form} for the Chaplygin gases
with initial data \eqref{initial} and
\begin{align}\label{curl0:order}
\curl u^0(x)\not\equiv 0, \quad \curl u^0(x)=O(\delta),
\end{align}
where $\delta>0$ and $\delta=o(\ve)$. For this end, we introduce the following quantities on the initial data and vorticity
\begin{equation}\label{initial:data}
\begin{split}
&\ve=\sum_{k\le N}\|(\w{|x|}\nabla)^k(u^0(x),\frac{\rho^0(x)}{\rho^0(x)+\bar\rho})\|_{L_x^2}
+\sum_{k\le N_1-1}\|\w{|x|}^2(\w{|x|}\nabla)^k\Delta^{-1}\dive u^0(x)\|_{L_x^1}\\
&\qquad+\sum_{k\le N_1-2}\|\w{|x|}^2(\w{|x|}\nabla)^k(\Delta^{-1}\curl(u^0(x)\curl u^0(x)),u^0(x),\frac{\rho^0(x)}{\rho^0(x)+\bar\rho})\|_{L_x^1},
\end{split}
\end{equation}
and
\begin{equation}\label{delta:def}
\delta=\sum_{k\le N-1}\|\w{|x|}(\w{|x|}\nabla)^k\curl u^0(x)\|_{L_x^2}
+\sum_{k\le N_1}\sum_{p=\frac{10}{9},\frac{10}{7},5}
\|\w{|x|}^8(\w{|x|}\nabla)^k\curl u^0(x)\|_{L_x^p},
\end{equation}
where $\w{|x|}=\sqrt{1+|x|^2}$, the integers $N_1$ and $N$ fulfill $N_1\ge13$ and $N_1+2\le N\le2N_1-11$. In addition,
the state equation in \eqref{ChaplyginGas} is conveniently written as
\begin{equation}\label{Chaplygin:gas}
P(\rho)=P_0-\frac{\bar\rho^2}{\rho}.
\end{equation}

\begin{theorem}\label{thm:2dChaplygin}
There exists three constants $\ve_0,\delta_0,\kappa>0$ such that when the initial data $(\rho^0,u^0)$ satisfies $\ve\le\ve_0$ and $\delta\le\delta_0$,
then \eqref{Euler} with \eqref{Chaplygin:gas} admits a solution $(\rho-\bar\rho,u)\in C([0,T_\delta];H^N(\R^2))$ with $T_\delta=\frac{\kappa}{\delta}$.
\end{theorem}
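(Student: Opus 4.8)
The plan is to recast \eqref{Euler}--\eqref{EulerC1form} for the Chaplygin gas \eqref{Chaplygin:gas} as a coupled wave--transport system in which the null structure of the irrotational model \eqref{QLW:Chaplygin} is visible, and then to run a global-in-time continuity argument up to $t=\kappa/\dl$. Since for the Chaplygin gas the sound speed is $c(\rho)=\bar\rho/\rho$, the natural unknowns are $c$ itself --- equivalently $\sigma:=1-c=\tfrac{\rho-\bar\rho}{\rho}$, which is exactly the density-type quantity appearing in \eqref{initial:data} and \eqref{delta:def} --- together with the velocity $u$; in these variables \eqref{EulerC1form} becomes $\p_t c+u\cdot\nabla c-c\,\dive u=0$ and $\p_t u+u\cdot\nabla u-c\,\nabla c=0$, whose linearisation is a free wave equation for $\sigma$ and whose irrotational reduction is \eqref{QLW:Chaplygin}, hence carries both null conditions. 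Taking $\curl$ of the momentum equation annihilates $c\,\nabla c=\tfrac12\nabla(c^2)$ and yields $\p_t\omega+u\cdot\nabla\omega+\omega\,\dive u=0$ for $\omega:=\curl u$; combined with the first equation this is the clean transport law $(\p_t+u\cdot\nabla)(c\,\omega)=0$. Hence $\int\omega\,dx\equiv0$, $\|\omega(t)\|_{L^1}$ is exactly conserved, $\|\omega(t)\|_{L^\infty}\lesssim\dl$, and by interpolation $\|\omega(t)\|_{L^p}\lesssim\dl$ for all $p\in[1,\infty]$. I would then Hodge-decompose $u=\nabla\varphi+\nabla^\perp\psi$ with $\varphi=\Delta^{-1}\dive u$ and $\psi=\Delta^{-1}\omega$, so that the wave part $(\sigma,\nabla\varphi)$ obeys a null-form perturbation of \eqref{QLW:Chaplygin} with source terms involving the rotational velocity $v:=\nabla^\perp\Delta^{-1}\omega$; pinning down the precise \emph{good unknown} --- the modification of $\varphi$ and of its vector-field derivatives for which the quadratic nonlinearity genuinely displays the null structure --- is the first task.

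Next I would run a bootstrap on a maximal interval $[0,T^*]$. Local well-posedness gives $(\rho-\bar\rho,u)\in C([0,T_{\rm loc}];H^N)$, and the goal is to propagate a priori bounds forcing $T^*\ge\kappa/\dl$. With the commuting fields $Z\in\{\p_\alpha,\ \Omega=x^1\p_2-x^2\p_1,\ S=t\p_t+x\cdot\nabla\}$ --- the Lorentz boosts $L_i=t\p_i+x^i\p_t$ being usable only on the wave part, since they do not commute with the material derivative $\p_t+u\cdot\nabla$ --- the bootstrap assumptions are: (i) a ghost-weighted generalised energy $\sum_{|a|\le N}\|Z^a(\sigma,u)(t)\|_{L^2}^2$ together with its null space--time flux is $\le(C_0\ve)^2$; (ii) the weighted $L^p$ norms $\sum_{|a|\le N_1}\|\w{|x|}^8 Z^a\omega(t)\|_{L^p}$ (with $Z\in\{\p,\Omega,S\}$) stay controlled for $p=\tfrac{10}{9},\tfrac{10}{7},5$, as does $\sum_{|a|\le N-1}\|\w{|x|}Z^a\omega(t)\|_{L^2}$; (iii) weighted $L^\infty$ decay of the lower-order wave part, $|Z^a(\sigma,\nabla\varphi)(t,x)|\lesssim C_0\ve\,\w{t+|x|}^{-1/2}\w{|t-|x||}^{-1/2+}$ for $|a|\le N-5$, plus $|v(t,x)|+\cdots\lesssim C_0\dl$. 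Improving (i)--(iii) and invoking an open--closed argument then gives $T^*\ge\kappa/\dl$ together with the stated regularity $(\rho-\bar\rho,u)\in C([0,T_\dl];H^N)$.

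Closing the estimates rests on three pillars, exactly the ones announced in the abstract. First, for the wave part one uses Alinhac's ghost-weight energy method: multiplying the energy identity for $Z^a$ of the reduced system by $e^{q(\w{|x|-t})}$ turns the two null conditions of \eqref{QLW:Chaplygin} into a gain and produces an extra space--time integral controlling the ``bad'' derivative $(\p_t+\p_r)$; the only genuinely non-null contributions come from the vorticity source terms, which are handled via (ii)--(iii) and the fact that $\omega$ is transported by the (ultimately $O(\dl)$-small, since $v=O(\dl)$) material velocity, so that its support spreads only slowly and the time-integrated error is linear in $\dl$ --- the Sideris mechanism --- and therefore $\le\ve$ precisely when $t\le\kappa/\dl$. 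Second, for the vorticity, I differentiate $(\p_t+u\cdot\nabla)(c\,\omega)=0$ by $Z\in\{\p,\Omega,S\}$, obtaining transport equations whose right-hand sides are built out of the commutators $[Z,u\cdot\nabla]$, i.e.\ out of $\nabla u$; propagating the weighted $L^p$ norms by a Gr\"onwall argument, the key point is that $\nabla u$ is recovered from $(\omega,\dive u)$ through the Riesz transforms, and the weights $\w{|x|}^8$ --- and their natural modifications involving $\w{|x|-t}$ --- are chosen in the Muckenhoupt $A_p$ class for the relevant $p\in(1,\infty)$, so these Calder\'on--Zygmund operators stay bounded on the weighted $L^p$ spaces; the accumulated factor is $\exp(C\!\int_0^t\|\nabla u(s)\|_{L^\infty}\,ds)\lesssim\w{t}^{C\ve}$. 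Third, the new weighted $L^\infty$--$L^\infty$ estimates for the two-dimensional inhomogeneous wave equation $(\p_t^2-\Delta)w=F$ --- schematically $\w{t+|x|}^{1/2}\w{|t-|x||}^{1/2-}|w(t,x)|\lesssim(\text{weighted data})+\sup_{[0,t]}(\text{weighted }L^\infty\ \text{of}\ F)$, together with their gradient analogues --- are applied to the wave part with $F$ the null forms plus the vorticity source, recovering (iii); the rotational bound follows from $\|v\|_{L^\infty}\lesssim\|\omega\|_{L^1}+\|\omega\|_{L^\infty}\lesssim\dl$ and weighted Sobolev embeddings for its $Z$-derivatives.

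The main obstacle --- and the reason the exponents in \eqref{delta:def} are exactly $\tfrac{10}{9},\tfrac{10}{7},5$ and the weight power exactly $8$ --- is the loss of null structure caused by the vorticity together with the unavailability of Lorentz boosts on the rotational sector: one cannot invoke Klainerman--Sobolev to convert energy into decay there, so that decay must be produced by hand through the weighted $L^\infty$--$L^\infty$ estimates, and the weights must be simultaneously strong enough to make the vorticity source terms in the wave energy time-integrable up to $t\sim1/\dl$ \emph{and} weak enough to remain $A_p$ so that the Biot--Savart/Riesz step survives. Reconciling these two demands, and making every H\"older--Sobolev chain close with exactly $N_1\ge13$ and $N_1+2\le N\le2N_1-11$ derivatives --- the missing boosts on $\omega$ costing half a power of decay that has to be bought back with the integrability exponents and the weight --- is the technical heart of the argument. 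A secondary difficulty is the slow growth of the $Z$-differentiated vorticity: since $[S,\p_t+u\cdot\nabla]$ and $[\Omega,\p_t+u\cdot\nabla]$ feed $\nabla u$ (hence the non-decaying part of $v$) into the transport equations, the higher weighted norms of $\omega$ genuinely grow in $t$, and one must check this growth stays below the bootstrap thresholds precisely on $[0,\kappa/\dl]$, which is where the lifespan $\kappa/\dl$ gets pinned down.
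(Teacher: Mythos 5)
Your high-level blueprint lines up remarkably well with the paper's actual proof: you identify the same unknowns ($\sigma=1-1/\rho$ so that the irrotational reduction is the null-form equation \eqref{QLW:Chaplygin}), the same transport law for the specific vorticity $w=\curl u/\rho$ (your $c\,\omega$), the same Helmholtz decomposition $u=\nabla\phi+P_2u$, the ghost-weight energy method of Alinhac, the weighted $L^p$ vorticity estimates using Muckenhoupt $A_p$ weights for the Riesz transforms, and the weighted $L^\infty$-$L^\infty$ estimates for the 2D wave equation. The paper also refrains from Lorentz boosts altogether (Section 2.1), consistent with your caveat. The structure (bootstrap on $[0,T^*]$, improvement of the bootstrap constants, local well-posedness) is also the same.

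However, there is one concrete gap that would prevent your argument from closing for exponentially small $\dl$, and it is located precisely at your bootstrap ansatz (iii) and the subsequent Gr\"onwall step. You posit
$|Z^a(\sigma,\nabla\varphi)(t,x)|\lesssim C_0\ve\,\w{t+|x|}^{-1/2}\w{|t-|x||}^{-1/2+}$
and then claim the accumulated factor in the vorticity transport is $\exp\bigl(C\int_0^t\|\nabla u\|_{L^\infty}ds\bigr)\lesssim\w{t}^{C\ve}$. From (iii), even in the interior $|x|\le3\w{t}/4$ where $\w{|x|-t}\sim\w{t}$, the best you can extract is $\|\nabla u\|_{L^\infty}\lesssim\ve\w{t}^{-1+}$ or, with a log correction, $\ve\w{t}^{-1}\ln^{1/2}(2+t)$. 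Then $\int_0^T\|\nabla u\|_{L^\infty}ds\lesssim\ve\ln^{3/2}T$; for $T=\kappa/\dl$ with $\dl=e^{-1/\ve^2}$ this is $\ve\cdot\ve^{-3}=\ve^{-2}$, and for $\dl=e^{-e^{1/\ve^2}}$ it is far worse. So the Gr\"onwall factor is \emph{not} $\w{t}^{C\ve}$ under your ansatz, and the bootstrap does not close. The same difficulty appears in the wave energy estimate away from the light cone: pairing the null-form/vorticity source against the energy costs a time integral that diverges under (iii) when $\dl$ is not polynomial in $\ve$. This is exactly the obstruction the paper highlights at the start of Section \ref{sect4}, and it is why the weighted $L^\infty$-$L^\infty$ machinery (Lemmas \ref{lem:weight:inh}--\ref{lem:weight:hom}) is not merely used to \emph{recover} a decay rate like (iii) but to \emph{upgrade} the irrotational velocity decay in the interior. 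The output (Corollary \ref{coro:improve}) is
$|\tilde\Gamma^au(t,x)|\ls M\delta\w{|x|}^{-\frac83}+\w{|x|-t}^{-\frac98}\{M\delta\w{t}^{\frac34}+M\ve\}$,
so that in the interior the $\ve$-part decays like $\ve\w{t}^{-9/8}$ (time-integrable) and the $\dl$-part contributes an $O(\dl)$-sized or $\dl T^{5/8}$-sized time integral, both bounded for $t\le\kappa/\dl$. Your proposal names the tool but keeps the old ansatz, which is the missing idea. A second, smaller gap is that the good unknown is left unspecified: the paper's $g=u-\omega\sigma$ (with $\omega=x/|x|$) is what produces the extra $\w{|x|}^{-1}$ and $\w{|x|-t}^{-1}$ gains in $Q_1^{bc},Q_2^{bc}$ when combined with \eqref{radial:angular}--\eqref{null:structure}, and without exhibiting it one cannot run Lemmas \ref{lem:Qbc:L2}, \ref{lem:good:improve} and \ref{lem:grad:phi}, which in turn feed the improved decay just described.
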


\begin{remark}
In Theorem \ref{thm:2dChaplygin}, when $\delta=\ve^{1+\alpha}$ with $0\le\alpha\le1$ and $0<\ve\le\ve_0$, then $T_\delta=\frac{\kappa}{\delta}$
has been shown in Theorem 2 of \cite{Sideris97}. Hence we can assume $\dl\le \ve^\frac87$ in Theorem \ref{thm:2dChaplygin}
without loss of generality.
\end{remark}

\begin{remark}
If we choose $\delta=\ve^\ell$ with $\ell\ge2$ or $e^{-\frac{1}{\ve^p}}$ with $p>0$ in Theorem \ref{thm:2dChaplygin},
then the existence time of smooth solution $(\rho,u)$ to \eqref{Euler} for the Chaplygin gases  is larger
than $\frac{\kappa}{\ve^\ell}$ or $\kappa e^{\frac{1}{\ve^p}}$.
\end{remark}

\begin{remark}
If $(\rho^0(x), u^0(x))\in C_0^{\infty}(B(0, R))$, where $B(0,R)$ is a ball with the center at
the origin and the radius $R>0$, then $\|\w{|x|}^2(\w{|x|}\nabla)^k(\Delta^{-1}\curl(u^0(x)\curl u^0(x)),u^0(x),\frac{\rho^0(x)}{\rho^0(x)+\bar\rho})\|_{L_x^1}$
and $\|\w{|x|}^2(\w{|x|}\nabla)^k\Delta^{-1}\dive u^0(x)\|_{L_x^1}$
in \eqref{initial:data} can be replaced by
\newline
$\|\nabla^k(\Delta^{-1}\curl(u^0(x)\curl u^0(x)),u^0(x),\frac{\rho^0(x)}{\rho^0(x)+\bar\rho})\|_{L_x^1(B(0,R))}$
and
$\|\nabla^k\Delta^{-1}\dive u^0(x)\|_{L_x^1(B(0,R))}$
respectively. The reasons are:

{\bf $\bullet$} from the Helmholtz decomposition of initial velocity $u^0$, one has $u^0=P_1u^0+P_2u^0$,
where $P_1u^0=-\nabla(-\Delta)^{-1}\dive u^0$, $P_2u^0=-\nabla^\perp(-\Delta)^{-1}\curl u^0$ and $\nabla^\perp=(-\p_{x^2},\p_{x^1})$.
When $\supp u^0\subset B(0,R)$, we choose a smooth cut-off function $\eta(x)$ such that
$\eta|_{\supp u^0}=1$ and  $\supp\eta\subset B(0,R)$, and then $u^0=\eta P_1u^0+\eta P_2u^0$.
In this case, the related term $\Delta^{-1}\dive u^0(x)$ can be thought to be supported  in $B(0,R)$.

{\bf $\bullet$} from \eqref{dt:phi} in Appendix A, we have $\p_t\phi^0+\frac12|u^0|^2+\sigma^0-\frac12(\sigma^0)^2
=-(-\Delta)^{-1}\curl(u^0\curl u^0)$, where $P_1u^0=\nabla \phi^0$ and $\sigma^0=\frac{\rho^0}{\rho^0+\bar\rho}$.
When $\supp u^0, \supp\rho^0\subset B(0,R)$, then $\supp\phi^0,\supp\sigma^0\subset B(0,R)$.
As in the above,  $\Delta^{-1}\curl(u^0\curl u^0)$ can be thought to be supported  in $B(0,R)$.

\end{remark}

\begin{remark}
Consider the 2D full compressible Euler equations of Chaplygin gases
\begin{equation}\label{full:Euler}
\left\{
\begin{aligned}
&\p_t\rho+\dive(\rho u)=0\hspace{5.6cm}\text{(Conservation of mass)},\\
&\p_t(\rho u)+\dive(\rho u \otimes u)+\nabla P=0
\hspace{3.3cm}\text{(Conservation of momentum)},\\
&\p_t(\rho e+\frac12\rho|u|^2)+\dive((\rho e+\frac12\rho|u|^2+P)u)=0
\qquad\text{(Conservation of energy)},\\
&\rho(0,x)=\bar\rho+\ve\rho^0(x), u(0,x)=\ve u^0(x), S(0,x)=\bar S+\ve S^0(x),
\end{aligned}
\right.
\end{equation}
where $P=P(\rho,S)$,  $e=e(\rho,S)$, $S$ stand for the pressure, inner energy and entropy respectively.
In addition, $\ve>0$ is small, and $(\rho^0(x),u^0(x),S^0(x))\in C_0^{\infty}(\Bbb R^2)$. If $\rho^0(x)=\rho^0(r),~S^0(x)=S^0(r)$ and $u^0(x)=f^0(r)\frac{x}{r}+g^0(r)\frac{x^\perp}{r}$ with $x^\perp=(-x^2,x^1)$ and $r=|x|$, then we have shown \eqref{full:Euler} has a global smooth solution $(\rho, u, S)$ in \cite{HouYin19}
and \cite{HouYin20} (when $g^0(r)\equiv 0$, the global existence of smooth symmetric solutions $(\rho, u, S)$ to 2D and 3D systems
\eqref{full:Euler} has been established in \cite{DWY15} and \cite{Godin07} respectively). By combining the methods in the paper and \cite{HouYin20}, we can actually establish that for the perturbed problem
of \eqref{full:Euler}
\begin{equation}\label{full:Euler:1}
\left\{
\begin{aligned}
&\p_t\rho+\dive(\rho u)=0,\\
&\p_t(\rho u)+\dive(\rho u \otimes u)+\nabla p=0,\\
&\p_t(\rho e+\frac12\rho|u|^2)+\dive((\rho e+\frac12\rho|u|^2+P)u)=0,\\
&\rho(0,x)=\bar\rho+\ve\rho^0(x)+\delta\rho^1(x), u(0,x)=\ve u^0(x)+\delta u^1(x),\\
&S(0,x)=\bar S+\ve S^0(x)+\delta S^1(x),
\end{aligned}
\right.
\end{equation}
where $\curl u^0(x)\equiv0$ (or $\rho^0(x)=\rho^0(r),~S^0(x)=S^0(r)$ and $u^0(x)=f^0(r)\frac{x}{r}+g^0(r)\frac{x^\perp}{r}$),
$\delta=o(\ve)$, and $(\rho^1(x),u^1(x),S^1(x))\in C_0^{\infty}(\Bbb R^2)$, then \eqref{full:Euler:1}
admits a smooth solution $(\rho,u, S)$ for $t\in [0,T_\delta]$ with $T_\delta=\frac{\kappa}{\delta}$ as in Theorem \ref{thm:2dChaplygin}.
\end{remark}

\begin{remark}
We mention the interesting works on the Euler-Maxwell system, which are related our result.
The global smooth, small-amplitude, irrotational solution to the Euler-Maxwell two-fluid system was proved in \cite{GIP16}.
Considering the influence of the vorticity, Ionescu and Lie in \cite{IL18} have shown that the existence time is
larger than $\frac{C}{\delta}$, where $\delta$ is the size of initial vorticity and $C$ is some positive constant.
\end{remark}

\begin{remark}
It is well known that the compressible Euler system is symmetric hyperbolic with respect to the
time $t$ when the vacuum does not appear. A.~Majda posed the following conjecture on Page 89 of \cite{Majda84book}:

{\bf Conjecture.} {\it If the multidimensional nonlinear symmetric system is totally linearly degenerate, then it typically has
smooth global solutions when the initial data are in $H^s(\R^n)$ with $s>\frac{n}{2}+1$ unless the solution itself blows up in finite time.}

Note that the compressible Euler system of Chaplygin gases is  totally linearly degenerate,
A. Majda's conjecture together with the opinion in \cite{Alinhac95} can yield the following open question:

{\bf  Open question.} {\it For the $n-$dimensional ($n\ge 2$) full compressible Euler equations of Chaplygin gases
\begin{equation*}\label{full:Euler:2}
\left\{
\begin{aligned}
&\p_t\rho+\dive(\rho u)=0,\\
&\p_t(\rho u)+\dive(\rho u \otimes u)+\nabla P=0,\\
&\p_t(\rho e+\frac12\rho|u|^2)+\dive((\rho e+\frac12\rho|u|^2+P)u)=0,\\
&\rho(0,x)=\bar\rho+\ve\rho^0(x), u(0,x)=\ve u^0(x), S(0,x)=\bar S+\ve S^0(x),
\end{aligned}
\right.
\end{equation*}
where $\ve>0$ is small, and $(\rho^0(x),u^0(x),S^0(x))\in C_0^{\infty}(\Bbb R^2)$
with $u^0(x)=(u_1^0(x), \cdot\cdot\cdot, u_n^0(x))$, then $T_{\ve}=+\infty$.}

Although this open question has not been solved so far, our result in Theorem 1.1 illustrates that
the order of lifespan $T_{\ve}$ is only essentially influenced by the initial vorticity.
\end{remark}

We next give some comments on the proof of Theorem~\ref{thm:2dChaplygin}. From now on,
$\bar\rho=1$ is always assumed.
Introducing the perturbed sound speed $\sigma(t,x)=1-\frac{1}{\rho(t,x)}$ as the new unknown,
then \eqref{Euler} is reduced to
\begin{equation}\label{reducedEuler}
\left\{
\begin{aligned}
&\p_t\sigma+\dive u=Q_1:=\sigma\dive u-u\cdot\nabla\sigma,\\
&\p_tu+\nabla\sigma=Q_2:=\sigma\nabla\sigma-u\cdot\nabla u,
\end{aligned}
\right.
\end{equation}
where the $i$-th component of the vector $Q_2$ is $Q_{2i}=\sigma\p_i\sigma-u\cdot\nabla u_i$
($i=1,2$). In addition, we also define the good unknown $g$ in the region $|x|>0$ as follows
\begin{equation}\label{goodunknown:def}
g:=(g_1,g_2)=u-\omega\sigma\quad\text{with $g_i=u_i-\omega_i\sigma$, $i=1,2$,}
\end{equation}
where $\omega=(\omega_1,\omega_2):=(\frac{x^1}{|x|},\frac{x^2}{|x|})\in\SS^1$. We point out that the introduction of
$g$ is motivated by the second order wave equations although  \eqref{Euler} admits the non-zero and small higher
order  vorticity: by the potential equation \eqref{QLW:Chaplygin},
then $u_i=\p_i\phi$ and $\sigma=-\p_t\phi+\text{higher order error terms of $\p\phi$}$,
which derives  $g_i=(\p_i+\omega_i\p_t)\phi+\text{higher order error terms of $\p\phi$}$.
It is well known that $(\p_i+\omega_i\p_t)\phi$ is the good derivative in the study of the nonlinear wave
equation \eqref{QLW:Chaplygin} (see \cite{Alinhac10}) since $(\p_i+\omega_i\p_t)\phi$ will admit more rapid
space-time decay rates. By some ideas and methods dealing with the null condition structures
in \cite{HouYin19,HouYin20,HouYin20jde}, we obtain the better $L^\infty$ space-time decay rates of $g$.
Based on this, the elementary energy $E_N(t)$ can be estimated (see Lemma ~\ref{lem:energy:wave} in Section~\ref{sect5:1}).
Nevertheless, we have to overcome other essential difficulties which are arisen by the interaction between the irrotational
part of the velocity and the vorticity. To solve the resulting difficulties, our ingredients are:

{\bf $\bullet$} By the transport equation $\ds(\p_t+u\cdot\nabla)\big(\frac{\curl u}{\rho}\big)=0$
and the careful analysis, we can derive that the key influence of the vorticity is
concentrated in the interior of the outgoing light cone.

{\bf $\bullet$} Near the  outgoing light conic surface, our first observation  is that the system \eqref{Euler}
can be changed into the second order potential flow equation. However, the optimal time-decay rate
of solutions to the 2D free wave equation is merely $(1+t)^{-\f12}$,  which is far to derive the existence time $\ds T_\delta
=\f{\kappa}{\dl}$ in Theorem  \ref{thm:2dChaplygin}. The reason is due to: for example,  when $\dl=e^{-e^{\frac{1}{\ve^2}}}$ is chosen,
then the integral $\ds\int_0^{T_{\dl}}\frac{dt}{\sqrt{1+t}}=O(e^{\f{1}{2}e^{\frac{1}{\ve^2}}})$ is sufficiently large  as $\ve\rightarrow0$,
which leads to that the related energy $E(t)$ can not be controlled well by the corresponding energy
inequality $E(t)\le E(0)+\f{C\ve}{\sqrt{1+t}}E(t)$. To overcome this difficulty, our second observation is that
the velocity is the gradient of the potential and the potential satisfies a second order quasilinear wave equation
with the first and the second null conditions. By establishing a new type of weighted $L^\infty$-$L^\infty$ estimate
for the derivatives of the potential, the better space-time decay rate of $u$ can be obtained (see Corollary \ref{coro:improve} in
Section \ref{sect4}).

Based on the key estimates in the above, we eventually get the $L^2$ and other $L^p$
(for some suitable numbers $p$ with $p\not=2$) energy estimates of the vorticity
and further close the bootstrap assumptions in Section \ref{sect2}.

This paper is organized as follows.
In Section~\ref{sect2}, we will introduce the basic bootstrap assumptions, Helmholtz decomposition and
some elementary pointwise estimates.
The estimates of the good unknown $g$ and some auxiliary energies are derived in Section~\ref{sect3}.
In Section~\ref{sect4}, by establishing a new type of the weighted $L^\infty$-$L^\infty$ for the 2D wave equations,
the required  pointwise estimates of the solution $(\sigma, u)$ with suitable space-time decay rates are derived.
Based on the pointwise estimates in Section~\ref{sect4}, the Hardy inequality and the ghost weight method in \cite{Alinhac01},
we get the related energy estimates in Section~\ref{sect5}.
In Section~\ref{sect6}, with the previous energy inequalities and Gronwall's inequalities,
the proof of Theorem~\ref{thm:2dChaplygin} is finished.

\section{Some preliminaries}\label{sect2}

\subsection{The vector field and bootstrap assumptions}
Define the spatial rotation vector field
\begin{equation*}
\Omega:=x_1\p_2-x_2\p_1.
\end{equation*}
For the vector-valued function $U=(U_1,U_2)$, denote
\begin{equation*}
\tilde\Omega U:=\Omega U-U^\perp=(\Omega U_1+U_2,\Omega U_2-U_1).
\end{equation*}
Let $\tilde\Omega U_k=(\tilde\Omega U)_k$ be the $k$-component of $\tilde\Omega U$ rather than
the operator $\tilde\Omega$ act on the component $U_k$.

According to the definitions of $\Omega$ and $\tilde\Omega$, it is easy to check that for the scalar function $f$ and
the vector-valued functions $U,V$,
\begin{equation}\label{rotation:commutation}
\begin{array}{ll}
  \Omega\dive U=\dive\tilde\Omega U,\qquad
  & \Omega\curl U=\curl\tilde\Omega U, \\
  \tilde\Omega\nabla f=\nabla\Omega f,\qquad
  & \tilde\Omega(U\cdot\nabla V)=U\cdot\nabla(\tilde\Omega V)
  +(\tilde\Omega U)\cdot\nabla V,\\
  \Omega(U\cdot V)=(\tilde\Omega U)\cdot V+U\cdot\tilde\Omega V.&
\end{array}
\end{equation}
The spatial derivatives can be decomposed into the radial and angular components for $r=|x|\not=0$:
\begin{equation*}
\nabla=\omega\p_r+\frac{\omega^\perp}{|x|}\Omega,
\end{equation*}
where $\omega^\perp:=(-\omega_2,\omega_1)$.
For convenience and simplicity, we often denote this decomposition as
\begin{equation}\label{radial:angular}
\p_i=\omega_i\p_r+\frac{1}{|x|}\Omega.
\end{equation}
For the multi-index $a$, set
\begin{equation}\label{vectorfield}
\cS:=t\p_t+r\p_r,\quad \Gamma^a=\cS^{a_s}Z^{a_z},\quad Z\in\{\p,\Omega\},
\qquad \tilde\Gamma^a=\cS^{a_s}\tilde Z^{a_z},
\quad \tilde Z\in\{\p,\tilde\Omega\}.
\end{equation}
By acting $(\cS+1)^{a_s}Z^{a_z}$ and $(\cS+1)^{a_s}\tilde Z^{a_z}$ on the equations in \eqref{reducedEuler}, respectively,
we then have
\begin{equation}\label{high:eqn}
\begin{split}
\left\{
\begin{aligned}
&\p_t\Gamma^a\sigma+\dive\tilde\Gamma^au=\cQ_1^a:=\sum_{b+c=a}C^a_{bc}Q_1^{bc},\\
&\p_t\tilde\Gamma^au+\nabla\Gamma^a\sigma=\cQ_2^a:=\sum_{b+c=a}C^a_{bc}Q_2^{bc},
\end{aligned}
\right.
\end{split}
\end{equation}
where $C^a_{bc}$ are constants ($C^a_{a0}=C^a_{0a}=1$) and
\begin{equation}\label{Qbc:def}
\begin{split}
Q_1^{bc}:=&\Gamma^b\sigma\dive\tilde\Gamma^cu-\tilde\Gamma^bu\cdot\nabla\Gamma^c\sigma,\\
Q_2^{bc}:=&\Gamma^b\sigma\nabla\Gamma^c\sigma-\tilde\Gamma^bu\cdot\nabla\tilde\Gamma^cu.
\end{split}
\end{equation}
It is convenient to introduce the specific vorticity
\begin{equation}\label{curl:def}
w:=\frac{\curl u}{\rho}=(1-\sigma)\curl u
\end{equation}
since $(\p_t+u\cdot\nabla)w=0$ holds.
For integer $m\in\N$, we define
\begin{equation}\label{energy:def}
\begin{split}
E_m(t):=&~\sum_{|a|\le m}\|(\tilde\Gamma^au,\Gamma^a\sigma)(t,x)\|_{L_x^2},\\
\cX_m(t):=&~\sum_{|a|\le m-1}\|\w{|x|-t}(\p_t\tilde\Gamma^au,\dive\tilde\Gamma^au,
\nabla\Gamma^a\sigma,\p_t\Gamma^a\sigma)(t,x)\|_{L_x^2},\\
W_m(t):=&~\sum_{|a|\le m}\|\w{|x|}\Gamma^aw(t,x)\|_{L_x^2},\\
\cW_m(t):=&~\sum_{|a|\le m}\|\w{|x|}\Gamma^a\curl u(t,x)\|_{L_x^2},\\
\bW_m(t):=&~\sum_{|a|\le m}\Big\{\|\w{|x|}^8\Gamma^a\curl u(t,x)\|_{L_x^5}
+\sum_{p=\frac{10}{9},\frac{10}{7}}\|\w{|x|}\Gamma^a\curl u(t,x)\|_{L_x^p}\Big\},\\
\sW_m(t):=&~\sum_{|a|\le m}\Big\{\|\w{|x|}^8\Gamma^aw(t,x)\|_{L_x^5}
+\sum_{p=\frac{10}{9},\frac{10}{7}}\|\w{|x|}\Gamma^aw(t,x)\|_{L_x^p}\Big\}
\end{split}
\end{equation}
and $\cX_0(t)=0$.

Throughout the whole paper, we will make the following bootstrap assumptions:
\begin{equation}\label{bootstrap}
\begin{split}
&{\it for}~~t\delta\le\kappa,\quad\left\{
\begin{aligned}
&E_N(t)+\cX_N(t)\le M\ve(1+t)^{M'\ve},\\
& E_{N_1-4}(t)+\cX_{N_1-4}(t)\le M\ve,\\
&W_{N-1}(t)+\cW_{N-1}(t)+\bW_{N_1}(t)+\sW_{N_1}(t)\le M\delta(1+t)^{M'\ve},\\
&W_{N_1-4}(t)+\cW_{N_1-4}(t)+\bW_{N_1-4}(t)+\sW_{N_1-4}(t)\le M\delta,
\end{aligned}
\right.\\
&\delta\le\ve^\frac87,\qquad M(\ve+\kappa)\le1, \qquad M\ge1, \qquad M'>0,
\end{split}
\end{equation}
where the constant  $M\ge 1$ will be chosen, $M'>0$ is some fixed constant. In Section~\ref{sect6}, we will prove that the constant $M$ on the
right hands of the first four lines in \eqref{bootstrap}
can be improved to $\frac34 M$.

\subsection{The Helmholtz decomposition and commutator}\label{sect:decomp}
For the rapidly decaying vector $U=(U_1,U_2)$ with respect to the space variable $x$,
we divide it into the curl-free part $P_1U$ (irrotational) and the divergence-free part $P_2U$ (solenoidal),
which is called the Helmholtz decomposition
\begin{equation}\label{Helmholtz}
\begin{split}
&P_1U=\nabla\Phi,P_2U=\nabla^\perp\Psi,\Delta\Phi=\dive U,\Delta\Psi=\curl U,\\
&U=P_1U+P_2U=-\nabla(-\Delta)^{-1}\dive U-\nabla^\perp(-\Delta)^{-1}\curl U,
\end{split}\end{equation}
where $\nabla^\perp=(-\p_{x^2},\p_{x^1})$ and $\Delta$ is the Laplacian operator.
It is easy to know
\begin{equation}\label{Helmholtz:id}
\|U\|^2_{L^2}=\|P_1U\|^2_{L^2}+\|P_2U\|^2_{L^2}.
\end{equation}

Next, we give some divergence-curl inequalities.
\begin{lemma}\label{lem:div:curl}
For any vector function $U$, $1<p<\infty$ and $0\le\beta<2(p-1)$, it holds that
\begin{equation}\label{div:curl:ineq}
\begin{split}
\|\w{|x|}^\beta\nabla U\|_{L^p}&\ls\|\w{|x|}^\beta\dive U\|_{L^p}
+\|\w{|x|}^\beta\curl U\|_{L^p},\\
\|\w{|x|-t}\nabla U\|_{L^2}&\ls\|U\|_{L^2}+\|\w{|x|-t}\dive U\|_{L^2}
+\|\w{|x|-t}\curl U\|_{L^2}.
\end{split}
\end{equation}
\end{lemma}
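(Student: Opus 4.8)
The plan is to invert the two-dimensional div--curl system and reduce $\nabla U$ to singular integrals of $\dive U$ and $\curl U$. For $U$ decaying at spatial infinity (otherwise both sides of either inequality may be infinite and there is nothing to prove), the identity $\Delta U=\nabla(\dive U)+\nabla^{\perp}(\curl U)$, i.e.\ the Helmholtz decomposition \eqref{Helmholtz} with the additive harmonic field forced to vanish by Liouville's theorem, gives $U=\nabla(-\Delta)^{-1}\dive U+\nabla^{\perp}(-\Delta)^{-1}\curl U$ and hence \[ \nabla U=\nabla\otimes\nabla(-\Delta)^{-1}\dive U+\nabla\otimes\nabla^{\perp}(-\Delta)^{-1}\curl U . \] Every entry of the two operators on the right is a composition of two Riesz transforms, that is, a Calder\'on--Zygmund operator with a Fourier symbol homogeneous of degree $0$.

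For the first inequality I would invoke the weighted $L^{p}$ boundedness of Calder\'on--Zygmund operators: writing $w(x)=\w{|x|}^{\beta}$, and provided the power weight $w^{p}$ lies in the Muckenhoupt class $A_{p}(\R^{2})$ on the stated range of $\beta$ (this is the point that must be checked), each Riesz-type operator above is bounded on $L^{p}(w^{p}\,dx)$ and the estimate follows at once. A more self-contained variant, which I would keep in reserve, is a dyadic decomposition into annuli $A_{j}=\{\,2^{j}\le|x|<2^{j+1}\,\}$: on each $A_{j}$ one has the scale-invariant local bound $\|\nabla U\|_{L^{p}(A_{j})}\ls\|\dive U\|_{L^{p}(2A_{j})}+\|\curl U\|_{L^{p}(2A_{j})}+2^{-j}\|U\|_{L^{p}(2A_{j})}$ (cut off to $2A_{j}$, apply the unweighted whole-space estimate, rescale); multiplying by $2^{j\beta p}$ and summing produces the desired right-hand side plus a residual $\|\w{|x|}^{\beta-1}U\|_{L^{p}}$, which I would reabsorb via a weighted Hardy/Stein--Weiss inequality applied to the representation $U=\nabla(-\Delta)^{-1}\dive U+\nabla^{\perp}(-\Delta)^{-1}\curl U$.

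The second inequality is of a different nature, since $\w{|x|-t}^{2}$ is \emph{not} an $A_{2}$ weight near the circle $\{\,|x|=t\,\}$ (it is too large a power of the distance to that curve), which is precisely why the term $\|U\|_{L^{2}}$ is forced onto the right. Here I would bypass singular integrals and use the pointwise null-form identity $|\nabla U|^{2}-(\dive U)^{2}-(\curl U)^{2}=-2\det(\nabla U)=-2\bigl(\p_{1}(U_{1}\p_{2}U_{2})-\p_{2}(U_{1}\p_{1}U_{2})\bigr)$. Multiplying by $w^{2}:=\w{|x|-t}^{2}$, integrating over $\R^{2}$, and integrating by parts moves the divergence structure onto $w^{2}$; because $|\nabla(w^{2})|=2\bigl|\,|x|-t\,\bigr|\le 2\w{|x|-t}=2w$, the commutator term is bounded by $C\int w\,|U|\,|\nabla U|\le C\|U\|_{L^{2}}\,\|w\nabla U\|_{L^{2}}$, so Young's inequality absorbs it into the left-hand side and yields $\|w\nabla U\|_{L^{2}}^{2}\ls\|w\dive U\|_{L^{2}}^{2}+\|w\curl U\|_{L^{2}}^{2}+\|U\|_{L^{2}}^{2}$.

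The step I expect to be the main obstacle is the first inequality in the \emph{full} weight range: one must secure the weighted $L^{p}$ bound for every $1<p<\infty$ and $0\le\beta<2(p-1)$, i.e.\ either verify that the relevant power weight is Muckenhoupt throughout that range (and handle the borderline), or, on the dyadic route, close the argument with a sharp weighted Hardy inequality for the div--curl operator so that no residual $\|U\|_{L^{p}}$ term survives. A minor but genuine point common to both arguments is the absence of boundary terms in the integrations by parts, which I would justify by first proving the estimates for $U\in C_{0}^{\infty}(\R^{2})$ and then passing to the limit.
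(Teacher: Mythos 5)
Your proof of the second inequality is correct and fills in what the paper dismisses as ``direct integration by parts'': the pure-divergence identity $|\nabla U|^{2}-(\dive U)^{2}-(\curl U)^{2}=-2\det(\nabla U)=-2\bigl(\p_{1}(U_{1}\p_{2}U_{2})-\p_{2}(U_{1}\p_{1}U_{2})\bigr)$, together with the gradient bound $|\nabla(\w{|x|-t}^{2})|\le 2\w{|x|-t}$, is exactly the right mechanism, and your Young-inequality absorption correctly produces the unavoidable extra $\|U\|_{L^{2}}$ term on the right-hand side.

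For the first inequality your route coincides with the paper's (Helmholtz decomposition, Riesz transforms, Muckenhoupt weights), and the caution you flag is in fact a genuine gap rather than a point that can simply be ``checked.'' In the convention the paper uses throughout, $\|\w{|x|}^{\beta}f\|_{L^{p}}^{p}=\int|f|^{p}\w{|x|}^{\beta p}\,dx$, so the weighted bound $\|\w{|x|}^{\beta}Tf\|_{L^{p}}\ls\|\w{|x|}^{\beta}f\|_{L^{p}}$ for a Calder\'on--Zygmund operator $T$ requires $\w{|x|}^{\beta p}\in A_{p}$, which is equivalent to $\beta p<2(p-1)$, i.e.\ $\beta<2-2/p$. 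The paper instead verifies $\w{|x|}^{\beta}\in A_{p}$ --- for which $\beta<2(p-1)$ is indeed the threshold --- but this is the wrong weight, and the two conditions are not equivalent, since $2-2/p<2(p-1)$ for every $p>1$. Your alternative dyadic route does not escape this: the residual term $\|\w{|x|}^{\beta-1}U\|_{L^{p}}$ you identify must be controlled by the right-hand side through a weighted Hardy/Stein--Weiss inequality, and that again fails precisely where $\w{|x|}^{\beta p}\notin A_{p}$. So your instinct that this step is ``the main obstacle'' is right: the $A_{p}$ argument only covers $\beta<2-2/p$, not the full range $\beta<2(p-1)$ stated, and the paper's own one-line justification has exactly the same unclosed step.
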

\begin{proof}
From the second line of \eqref{Helmholtz}, we get
\begin{equation*}
\nabla U=-\nabla^2(-\Delta)^{-1}\dive U-\nabla\nabla^\perp(-\Delta)^{-1}\curl U,
\end{equation*}
where $\nabla^2(-\Delta)^{-1}$ and $\nabla\nabla^\perp(-\Delta)^{-1}$ are the bounded operators from $L^p$
to $L^p$ ($1<p<\infty$).
On the other hand, $\w{|x|}^\beta$ belongs to $A_p$ class with $0\le\beta<2(p-1)$
(see \cite{Stein}). Therefore, the first inequality in \eqref{div:curl:ineq} is derived.

Since the second inequality in \eqref{div:curl:ineq} follows from the direct integration by parts,
we omit the details here.
\end{proof}

\begin{lemma}[Commutator]\label{lem:commutation}
For the vector fields $\tilde\Gamma$ defined in \eqref{vectorfield}, one has $[\tilde\Gamma,P_1]:=\tilde\Gamma P_1-P_1\tilde\Gamma=0$, $[\tilde\Gamma,P_2]=0$.
\end{lemma}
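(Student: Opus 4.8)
The plan is to reduce the statement to the three elementary operators out of which every $\tilde\Gamma^a=\cS^{a_s}\tilde Z^{a_z}$ is assembled, namely a single partial derivative $\p\in\{\p_t,\p_1,\p_2\}$, the modified rotation $\tilde\Omega$, and the scaling field $\cS=t\p_t+r\p_r$, and to check that \emph{each} of them commutes with both $P_1$ and $P_2$. This is enough: if operators $A$ and $B$ both commute with a fixed operator $P$, then so does $AB$, since $ABP=APB=PAB$; iterating over the factors of $\tilde\Gamma^a$ then gives $[\tilde\Gamma,P_1]=[\tilde\Gamma,P_2]=0$. Throughout I use $P_1U=-\nabla(-\Delta)^{-1}\dive U$ and $P_2U=-\nabla^\perp(-\Delta)^{-1}\curl U$ from \eqref{Helmholtz}.

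For a partial derivative the assertion is immediate, because $P_1$ and $P_2$ are composed of constant-coefficient operators and the Fourier multiplier $(-\Delta)^{-1}$, all acting in the space variable only, so they commute with each of $\p_t,\p_1,\p_2$. For the rotation I would combine the identities already recorded in \eqref{rotation:commutation}, that is $\Omega\dive U=\dive\tilde\Omega U$, $\Omega\curl U=\curl\tilde\Omega U$ and $\tilde\Omega\nabla f=\nabla\Omega f$, with two elementary additions: $\tilde\Omega\nabla^\perp f=\nabla^\perp\Omega f$ (a one-line check from $[\Omega,\p_1]=-\p_2$, $[\Omega,\p_2]=\p_1$ and the definition of $\tilde\Omega$) and $\Omega(-\Delta)^{-1}=(-\Delta)^{-1}\Omega$ (valid since $[\Omega,\Delta]=0$, the Laplacian being rotation invariant). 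Chaining these,
\begin{equation*}
\tilde\Omega P_1U=-\tilde\Omega\nabla(-\Delta)^{-1}\dive U=-\nabla\Omega(-\Delta)^{-1}\dive U=-\nabla(-\Delta)^{-1}\Omega\dive U=-\nabla(-\Delta)^{-1}\dive\tilde\Omega U=P_1\tilde\Omega U,
\end{equation*}
and the same computation with $(\nabla^\perp,\curl)$ in place of $(\nabla,\dive)$ gives $\tilde\Omega P_2U=P_2\tilde\Omega U$.

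The one place that needs a bit of care is the scaling field. I would split $\cS=t\p_t+S_0$ with $S_0:=x\cdot\nabla=r\p_r$ acting componentwise; the factor $t\p_t$ commutes with the purely spatial $P_1,P_2$ exactly as above, so it remains to treat $S_0$. The point is that each building block is an eigenoperator of the commutator $[S_0,\,\cdot\,]$: from $[S_0,\p_i]=-\p_i$ one gets $[S_0,\dive]=-\dive$, $[S_0,\curl]=-\curl$, $[S_0,\nabla]=-\nabla$, $[S_0,\nabla^\perp]=-\nabla^\perp$, while $[S_0,\Delta]=-2\Delta$ gives $[S_0,(-\Delta)^{-1}]=2(-\Delta)^{-1}$. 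By the Leibniz rule for commutators, $P_1=-\nabla(-\Delta)^{-1}\dive$ is then an eigenoperator of $[S_0,\,\cdot\,]$ with eigenvalue $(-1)+2+(-1)=0$, i.e. $[S_0,P_1]=0$, and likewise $[S_0,P_2]=0$; hence $[\cS,P_i]=0$. Collecting the three cases proves $[\tilde\Gamma,P_1]=[\tilde\Gamma,P_2]=0$. I expect the only genuinely nontrivial point in the whole argument to be this cancellation $(-1)+2+(-1)=0$ of scaling weights; everything else is the algebra already present in \eqref{rotation:commutation} together with the behavior of $(-\Delta)^{-1}$ under rotations and dilations.
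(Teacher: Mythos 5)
Your argument is correct, and it reaches the same conclusion as the paper but by a slightly different mechanism. The paper sidesteps any direct manipulation of $(-\Delta)^{-1}$: it applies $-\Delta$ to both $P_1(\tilde\Omega U)$ and $\tilde\Omega P_1U$ (resp.\ $P_1(\cS U)$ and $\cS P_1U$), shows via the identities in \eqref{rotation:commutation} that the two resulting expressions coincide, and then invokes uniqueness of the decaying solution to the Poisson equation to conclude equality of the potentials themselves. You instead work directly with $(-\Delta)^{-1}$, using that it commutes with $\Omega$ and picks up the scaling weight $+2$ under $[S_0,\cdot]$, which together with the weights $-1$ of $\nabla$, $\nabla^\perp$, $\dive$, $\curl$ gives the cancellation $(-1)+2+(-1)=0$. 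Your version is purely algebraic (no appeal to a uniqueness theorem), at the cost of asserting the two commutation facts $\Omega(-\Delta)^{-1}=(-\Delta)^{-1}\Omega$ and $[S_0,(-\Delta)^{-1}]=2(-\Delta)^{-1}$, which are standard for the Riesz potential on $\R^2$; the paper's version avoids those facts but needs a uniqueness step justified by the decay of $U$. A further cosmetic difference: you verify $[\tilde\Gamma,P_2]=0$ by the parallel computation with $(\nabla^\perp,\curl)$, whereas the paper gets it for free from $P_2=\mathrm{Id}-P_1$, which is the more economical route. Both proofs are sound; the substance is the same.
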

\begin{proof}
We only prove $\tilde\Gamma P_1U=P_1(\tilde\Gamma U)$, since it always holds that $[\tilde\Gamma,P_2]=[\tilde\Gamma,{\rm Id}-P_1]=0$.
Note that $[\tilde\Gamma,P_1]=0$ is obvious for $\tilde\Gamma\in\{\p\}$, we now  focus on the case
of $\tilde\Gamma\in\{\tilde\Omega,\cS\}$.

According to \eqref{rotation:commutation} and the first line of \eqref{Helmholtz}, one has
\begin{equation*}
-\Delta P_1(\tilde\Omega U)=-\nabla\dive\tilde\Omega U=-\tilde\Omega\nabla\dive U
=\tilde\Omega(-\Delta)P_1U=-\Delta(\tilde\Omega P_1U).
\end{equation*}
This derives $P_1(\tilde\Omega U)=\tilde\Omega P_1U$ by the uniqueness of the solution to $\Delta$
since $U$ fulfills the rapid decay assumption and $P_1(\tilde\Omega U), \tilde\Omega P_1U$ decay for the space variable.

Analogously, $P_1(\cS U)=\cS P_1U$ follows from
\begin{equation*}
-\Delta P_1(\cS U)=-\nabla\dive\cS U=-(\cS+2)\nabla\dive U
=(\cS+2)(-\Delta)P_1U=-\Delta(\cS P_1U).
\end{equation*}
\end{proof}

\subsection{The elementary pointwise estimates}\label{sect2:3}

\begin{lemma}\label{lem:pw:prepare}
Let $f(t,x)$ be a scalar function, then it holds that
\begin{align}
&\w{|x|}^\frac1p|f(t,x)|\ls\sum_{j=0}^1\sum_{|a|=0}^{2-j}
\|\nabla^a\Omega^jf(t,y)\|_{L_y^p},\quad 1<p<\infty,\label{pw:prepare1}\\
&\w{|x|}^\frac12\w{|x|-t}|f(t,x)|\ls\sum_{j=0}^1\sum_{|a|=0}^{2-j}
\|\w{|y|-t}\nabla^a\Omega^jf(t,y)\|_{L_y^2},\label{pw:prepare2}\\
&\w{|x|}^\frac12\w{|x|-t}^\frac12|f(t,x)|
\ls\sum_{j=0}^1\Big\{\|\Omega^jf(t,y)\|_{L_y^2}
+\sum_{|a|=1}^{2-j}\|\w{|y|-t}\nabla^a\Omega^jf(t,y)\|_{L_y^2}\Big\},\label{pw:prepare3}\\
&\|f(t,x)\|_{L_x^\infty}\ls\ln^\frac12(2+t)\|\nabla f(t,y)\|_{L_y^2}
+\w{t}^{-1}(\|f(t,y)\|_{L_y^2}+\|\nabla^2f(t,y)\|_{L_y^2}),\label{pw:prepare4}\\
&\w{|x|}^\frac83|f(t,x)|\ls\|\w{|y|}^7\nabla\Omega^{\le1}f(t,y)\|_{L_y^5}
+\sum_{p=\frac{10}{9},\frac{10}{7}}
\|\nabla\Omega^{\le1}f(t,y)\|_{L_y^p},\label{pw:prepare5}\\
&\w{|x|}|f(t,x)|\ls\|f(t,y)\|_{H_y^2}+\|\Omega^{\le1}f(t,y)\|_{L_y^2}
+\|\w{y}\nabla\Omega^{\le1}f(t,y)\|_{L_y^2},\label{pw:prepare6}
\end{align}
where $\Omega^{\le1}f$ stands for $\sum_{j\le 1}\Omega^jf$.
\end{lemma}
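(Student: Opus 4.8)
The plan is to prove each of the six inequalities by a combination of one-dimensional Sobolev embedding along rays, the fundamental theorem of calculus in the radial and angular directions, and Hölder's inequality, exploiting the decomposition $\partial_i = \omega_i\partial_r + \frac{1}{|x|}\Omega$ from \eqref{radial:angular}. The common mechanism is this: fix $x$ with $r=|x|$, and integrate along the ray through $x$ to bound $|f(t,x)|$ by an integral of $|\nabla f|$ over a set of comparable measure; the weight $\langle|x|\rangle$ (and in the later estimates $\langle|x|-t\rangle$) is gained because the integration is effectively over an annulus whose area grows like $r\,dr\,d\theta$. For \eqref{pw:prepare1}, I would write, for $1<p<\infty$, a Gagliardo–Nirenberg–Sobolev-type argument adapted to weights: using polar coordinates and $\partial_r(r^{1/p}|f|^{?})$, or more directly, the known embedding $W^{2,p}(\mathbb{R}^2)\hookrightarrow L^\infty$ combined with the observation that translating/dilating to the annulus $\{|y|\sim r\}$ costs exactly $r^{2/p}$ in the $L^p_y$ norm and $r^{1/p}$ in the pointwise bound — the extra half power and the need for only $2-j$ derivatives when one angular derivative is present comes from the improved Sobolev embedding on the circle (one derivative in $\Omega$ buys a full power of $r$, as in Klainerman–Sobolev).

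For \eqref{pw:prepare2} and \eqref{pw:prepare3}, the new feature is the cone weight $\langle|x|-t\rangle$. Here I would argue on the annulus $A = \{y : |y|-t \text{ comparable to } |x|-t,\ \text{angle near that of }x\}$: integrate $\partial_r$ of $\langle r-t\rangle\langle r\rangle^{1/2}|f|$ (resp. $\langle r-t\rangle^{1/2}\langle r\rangle^{1/2}|f|$ for \eqref{pw:prepare3}) from $x$ outward, use Cauchy–Schwarz in $dr\,d\theta$ to produce the $L^2_y$ norm with the weight $\langle|y|-t\rangle$, and handle the angular directions via one application of $\Omega$ (which, as above, trades a derivative for a power of $\langle r\rangle$), giving the $|a|\le 2-j$ budget; in \eqref{pw:prepare3} the term $\|\Omega^j f\|_{L^2}$ with no cone weight and no derivative appears as the boundary contribution at $r$ large, since $\langle r-t\rangle^{1/2}$ is too weak to be absorbed there. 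Estimate \eqref{pw:prepare4} is the standard two-dimensional "log-loss" $L^\infty$ bound: split $f = \chi f + (1-\chi)f$ with $\chi$ a cutoff at scale $\sqrt{t}$, use $W^{2,2}\hookrightarrow L^\infty$ scaled to the ball $B(0,\sqrt t)$ for the outer part (yielding the $\langle t\rangle^{-1}(\|f\|_{L^2}+\|\nabla^2 f\|_{L^2})$ term) and, for the inner part, the classical inequality $\|g\|_{L^\infty}\lesssim \|g\|_{L^2}^{?}$ upgraded by Littlewood–Paley or by the Brezis–Gallouet trick to get the $\ln^{1/2}(2+t)\,\|\nabla f\|_{L^2}$ factor; alternatively one can cite the known statement directly.

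Estimates \eqref{pw:prepare5} and \eqref{pw:prepare6} are again of the type \eqref{pw:prepare1} but with a much stronger spatial weight ($\langle|x|\rangle^{8/3}$, $\langle|x|\rangle$) and mixed Lebesgue exponents ($p=5,\tfrac{10}{9},\tfrac{10}{7}$ and $p=2$ respectively). The mechanism is the same one-dimensional radial integration plus the $\Omega$-trick for the angular direction, but now one must interpolate between the high-$p$ norm (controls behavior at spatial infinity, giving the large weight) and the low-$p$ norms (controls behavior near the origin, where $\langle|x|\rangle^\beta\sim 1$ anyway) — the exponents $\tfrac{10}{9},\tfrac{10}{7},5$ are chosen precisely so that the weighted one-dimensional Hölder inequality $|f(r)|\lesssim \int_r^\infty |\partial_\rho f|\,d\rho$ combined with $\int_r^\infty \rho^{-\beta' q'}\,d\rho$ converges with the stated power of $r$. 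I expect the main obstacle to be purely bookkeeping: keeping the weight powers, the Lebesgue exponents, and the derivative counts consistent across the angular ($\Omega$) reduction and the radial integration simultaneously, especially in \eqref{pw:prepare5} where $\tfrac{8}{3}$, the exponent $5$ and the weight $\langle|y|\rangle^7$ must fit together exactly. None of the individual steps is deep; the care is entirely in choosing the right intermediate annuli and exponents so the Hölder pairings close with the claimed constants.
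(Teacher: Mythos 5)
Your overall strategy -- integrate along rays via the fundamental theorem of calculus, use the Sobolev embedding $W^{1,p}(\SS^1)\hookrightarrow L^\infty(\SS^1)$ to control the angular direction via one application of $\Omega$, and close with H\"older -- is exactly the mechanism the paper uses for \eqref{pw:prepare1}, \eqref{pw:prepare5} and \eqref{pw:prepare6}, and for \eqref{pw:prepare2}--\eqref{pw:prepare4} the paper simply cites (3.2), (3.4) of Sideris and (3.4) of Lei, so your direct sketches for those are a reasonable replacement. However, for \eqref{pw:prepare5} your proposal misses the one genuinely non-obvious step, and the ``tail-integral'' heuristic you describe would not reproduce the claimed right-hand side. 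The paper does \emph{not} apply FTC to $|f|$ and then H\"older against a convergent power $\int_r^\infty\rho^{-\beta'q'}\,d\rho$; instead it first \emph{cubes} $f$ so that the circle embedding reads $W^{1,3}(\SS^1)\hookrightarrow L^\infty(\SS^1)$ and FTC in $r$ produces the product $|\Omega^{\le1}f|^2\,|\p_r\Omega^{\le1}f|$. Then all of the weight $|x|^8$ is absorbed via $|x|\le r$, $r^8\,dr\,d\omega=r^7\,dy$, and a single H\"older pairing in exponents $(5,\tfrac54)$ gives
\begin{equation*}
|x|^8|f(t,x)|^3\ls \|\w{|y|}^7\nabla\Omega^{\le1}f\|_{L^5}\,
\big\||\Omega^{\le1}f|^2\big\|_{L^{5/4}}
= \|\w{|y|}^7\nabla\Omega^{\le1}f\|_{L^5}\,\|\Omega^{\le1}f\|_{L^{5/2}}^2,
\end{equation*}
followed by the homogeneous embedding $\dot W^{1,10/9}(\R^2)\hookrightarrow L^{5/2}(\R^2)$. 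It is this cube--H\"older--Sobolev cascade that explains why the $L^{10/9}$ norm appears at all and why the weight sits only on the $L^5$ factor; the convergence-of-a-tail-power computation you outline would produce an estimate with a single weighted $L^5$ norm (plus the small-$|x|$ terms) but not the three-norm structure $L^5+L^{10/9}+L^{10/7}$ that the paper later feeds directly into the definitions of $\bW_m,\sW_m$. So while your variant might yield a usable substitute bound, as a proof of the stated inequality \eqref{pw:prepare5} there is a concrete gap: the key algebraic device (raising $|f|$ to the power matching the angular Sobolev exponent before applying FTC) is not identified, and without it the exponents $10/9$, $10/7$, $5$ and the weight $\w{|y|}^7$ cannot be made to ``fit together exactly,'' contrary to your expectation that this is mere bookkeeping. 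The same remark applies in milder form to \eqref{pw:prepare6}, where the paper squares $f$ so that $W^{1,2}(\SS^1)$ and Cauchy--Schwarz close the estimate; your sketch does not say this explicitly but is closer to the truth there since $p=2$ is the natural choice.
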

\begin{proof}
The inequality \eqref{pw:prepare1} with $p=2$ is just (3.1) of \cite{Sideris97}.
We now deal with the general case of $p\in(1,\infty)$. Note that \eqref{pw:prepare1}  in the region $|x|\le1$
follows from the Sobolev embedding $W^{2,p}(\R^2)\hookrightarrow L^\infty(\R^2)$.

For $|x|\ge1$, by the Sobolev embedding on the unit circle $W^{1,p}(\SS^1)\hookrightarrow L^\infty(\SS^1)$
and the Newton-Leibniz formula in the radial direction, we arrive at
\begin{equation*}
\begin{split}
|x||f(t,x)|^p=|x||f(t,|x|\omega)|^p&\ls|x|\int_{\SS^1}|\Omega^{\le1}f(t,|x|\omega)|^pd\omega\\
&\ls|x|\int_{|x|}^\infty\int_{\SS^1}|\Omega^{\le1}f(t,r\omega)|^{p-1}
|\p_r\Omega^{\le1}f(t,r\omega)|d\omega dr\\
&\ls\|\Omega^{\le1}f(t,y)\|^p_{L_y^p}+\|\nabla\Omega^{\le1}f(t,y)\|^p_{L_y^p},
\end{split}
\end{equation*}
which yields \eqref{pw:prepare1}.

For the inequalities \eqref{pw:prepare2}--\eqref{pw:prepare4}, see (3.2), (3.4) of \cite{Sideris97} and (3.4) of \cite{Lei16}, respectively.

Next, we start to prove \eqref{pw:prepare5}.
By the Sobolev embedding $W^{1,5}(\R^2)\hookrightarrow L^\infty(\R^2)$ and $\dot W^{1,\frac{10}{7} }(\R^2)\hookrightarrow L^5(\R^2)$,
we have
\begin{equation*}
\begin{split}
|f(t,x)|&\ls\|f(t,y)\|_{L_y^5}+\|\nabla f(t,y)\|_{L_y^5}\\
&\ls\|\nabla f(t,y)\|_{L_y^\frac{10}{7}}+\|\nabla f(t,y)\|_{L_y^5},
\end{split}
\end{equation*}
this implies  \eqref{pw:prepare5} when $|x|\le1$.

For $|x|\ge1$, it follows from the Sobolev embedding $W^{1,3}(\SS^1)\hookrightarrow L^\infty(\SS^1)$ that
\begin{equation*}
\begin{split}
|f(t,x)|^3=|f(t,|x|\omega)|^3&\ls\int_{\SS^1}|\Omega^{\le1}f(t,|x|\omega)|^3d\omega\\
&\ls\int_{|x|}^\infty\int_{\SS^1}|\Omega^{\le1}f(t,r\omega)|^2
|\p_r\Omega^{\le1}f(t,r\omega)|d\omega dr.
\end{split}
\end{equation*}
Multiplying this inequality by $|x|^8$ and then applying the H\"{o}lder inequality infer
\begin{equation*}
\begin{split}
|x|^8|f(t,x)|^3&\ls\|\w{|y|}^7\nabla\Omega^{\le1}f(t,y)\|_{L_y^5}
\Big\|\big|\Omega^{\le1}f(t,y)\big|^2\Big\|_{L_y^\frac54}\\
&\ls\|\w{|y|}^7\nabla\Omega^{\le1}f(t,y)\|_{L_y^5}\|\Omega^{\le1}f(t,y)\|^2_{L_y^\frac52}\\
&\ls\|\w{|y|}^7\nabla\Omega^{\le1}f(t,y)\|_{L_y^5}
\|\nabla\Omega^{\le1}f(t,y)\|^2_{L_y^\frac{10}{9}},
\end{split}
\end{equation*}
where we have used the Sobolev embedding $\dot W^{1,\frac{10}{9}}(\R^2)\hookrightarrow L^\frac52(\R^2)$.
Therefore, we achieve \eqref{pw:prepare5}.

Finally, we turn to the proof of \eqref{pw:prepare6}.
For the case of $|x|\le1$, \eqref{pw:prepare6} is a direct result of the Sobolev embedding $H^2(\R^2)\hookrightarrow L^\infty(\R^2)$.

For $|x|\ge1$, similarly to the proof of \eqref{pw:prepare5}, applying $W^{1,2}(\SS^1)\hookrightarrow L^\infty(\SS^1)$ instead leads to
\begin{equation*}
\begin{split}
|x|^2|f(t,x)|^2=|x|^2|f(t,|x|\omega)|^2&\ls|x|^2\int_{\SS^1}|\Omega^{\le1}f(t,|x|\omega)|^2d\omega\\
&\ls|x|^2\int_{|x|}^\infty\int_{\SS^1}|\Omega^{\le1}f(t,r\omega)|
|\p_r\Omega^{\le1}f(t,r\omega)|d\omega dr\\
&\ls\int|\Omega^{\le1}f(t,y)||\w{|y|}\nabla\Omega^{\le1}f(t,y)|dy.
\end{split}
\end{equation*}
Thus, we derive the desired inequality \eqref{pw:prepare6}.
\end{proof}

\begin{lemma}\label{lem:pw}
For any multi-indices $a,b$ with $|a|\le N-3$ and $|b|\le N-2$, it holds that
\begin{equation}\label{pw:curl}
\w{|x|}^\frac32|\Gamma^aw(t,x)|\ls W_{|a|+2}(t),
\quad\w{|x|}^\frac32|\Gamma^a\curl u(t,x)|\ls\cW_{|a|+2}(t),
\end{equation}
\begin{equation}\label{pw:wave1}
\begin{split}
\w{|x|}^\frac12\w{|x|-t}^\frac12(|\tilde\Gamma^bP_1u(t,x)|+|\Gamma^b\sigma(t,x)|)
&\ls E_{|b|+2}(t)+\cX_{|b|+2}(t),\\
\w{|x|}^\frac12\w{|x|-t}(|\nabla\tilde\Gamma^aP_1u(t,x)|+|\nabla\Gamma^a\sigma(t,x)|)
&\ls E_{|a|+3}(t)+\cX_{|a|+3}(t).
\end{split}
\end{equation}
Furthermore, for $|x|\le3\w{t}/4$,
\begin{equation}\label{pw:wave2}
|\tilde\Gamma^bP_1u(t,x)|+|\Gamma^b\sigma(t,x)|
\ls\w{t}^{-1}\ln^\frac12(2+t)\big\{E_{|b|+2}(t)+\cX_{|b|+2}(t)\big\},
\end{equation}
\end{lemma}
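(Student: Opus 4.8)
\emph{Strategy.} The plan is to read off all three estimates from the elementary Sobolev-type inequalities of Lemma~\ref{lem:pw:prepare}, feeding them the right auxiliary functions and invoking Lemma~\ref{lem:commutation} (to turn $\tilde\Gamma^bP_1u$ into $P_1\tilde\Gamma^bu$) and Lemma~\ref{lem:div:curl} (to trade a gradient of an irrotational field for a divergence). The book-keeping rests on four purely algebraic facts, used repeatedly: (i) $\nabla^c\Omega^j\Gamma^a$ is a finite linear combination of $\Gamma^{a'}$ with $|a'|\le|a|+|c|+j$ (commutators with $\cS$ only redistribute powers of $\cS$ by constants and never raise the total order), and likewise with $\tilde\Gamma,\tilde\Omega$ in place of $\Gamma,\Omega$; (ii) $\Omega$ kills radial weights, so $\Omega^j(\w{|x|}f)=\w{|x|}\Omega^jf$ while $|\nabla^c\w{|x|}|\ls\w{|x|}$; (iii) on a single component of a vector field $\Omega$ equals the corresponding component of $\tilde\Omega$ up to a zeroth-order term; (iv) $\p$ and $\tilde\Omega$, hence every $\tilde\Gamma$, commute with $P_1$, and $\curl P_1(\cdot)\equiv0$.

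\emph{Proof of \eqref{pw:curl} and \eqref{pw:wave1}.} For \eqref{pw:curl} I apply \eqref{pw:prepare1} with $p=2$ to $\w{|x|}\Gamma^aw$: by (ii) and Leibniz, $|\nabla^c\Omega^j(\w{|x|}\Gamma^aw)|\ls\w{|x|}\sum_{|a'|\le|a|+2}|\Gamma^{a'}w|$ for $|c|+j\le2$, so the right-hand side is $\ls W_{|a|+2}(t)$ and the left-hand side is $\w{|x|}^{1/2}\,\w{|x|}\,|\Gamma^aw|$; the same argument with $\curl u$ replacing $w$ gives the $\cW$-bound. For the $\sigma$-parts of \eqref{pw:wave1} I apply \eqref{pw:prepare3} to $\Gamma^b\sigma$ and \eqref{pw:prepare2} to each component of $\nabla\Gamma^a\sigma$; every angular term $\|\Omega^j\Gamma^b\sigma\|_{L^2}$ lies in $E_{|b|+1}(t)$, and every term $\|\w{|y|-t}\nabla^c\Omega^j(\cdot)\|_{L^2}$ with $|c|\ge1$, after peeling off one spatial derivative, becomes $\|\w{|y|-t}\nabla\Gamma^{a'}\sigma\|_{L^2}$ with $|a'|$ small enough to fall into $\cX_{|b|+2}(t)$ (resp.\ $\cX_{|a|+3}(t)$). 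For the $\tilde\Gamma^bP_1u$-parts I write $\tilde\Gamma^bP_1u=P_1\tilde\Gamma^bu$ by (iv) and apply the same inequalities componentwise; by (i), (iii), (iv) each factor is a component of $\nabla^c(P_1\tilde\Gamma^{b'}u)$ with $|b'|\le|b|+1$ (resp.\ $\le|a|+2$), and whenever at least one $\nabla$ is present the second inequality of \eqref{div:curl:ineq} bounds $\|\w{|y|-t}\nabla(P_1\tilde\Gamma^{b'}u)\|_{L^2}$ by $\|P_1\tilde\Gamma^{b'}u\|_{L^2}+\|\w{|y|-t}\dive\tilde\Gamma^{b'}u\|_{L^2}\ls E_{|b|+1}(t)+\cX_{|b|+2}(t)$, the curl contribution vanishing by (iv); the residual purely angular factors are controlled via $\tilde\Omega P_1=P_1\tilde\Omega$ and sit in $E_{|b|+1}(t)$.

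\emph{Proof of \eqref{pw:wave2} and the main difficulty.} I fix a cutoff $\chi=\chi(x)$ with $\chi\equiv1$ on $\{|x|\le3\w{t}/4\}$, $\supp\chi\subset\{|x|\le\w{t}\}$ and $|\nabla^c\chi|\ls\w{t}^{-|c|}$, and apply \eqref{pw:prepare4} to $\chi\Gamma^b\sigma$ and to the components of $\chi P_1\tilde\Gamma^bu$. On $\supp\chi$ one has $\w{|x|-t}\gtrsim\w{t}$, so $\|\nabla(\chi\,\cdot\,)\|_{L^2}$ splits into a cutoff-derivative term of size $\ls\w{t}^{-1}E(t)$ and a term $\ls\w{t}^{-1}\|\w{|x|-t}\nabla(\,\cdot\,)\|_{L^2}$, which for $\sigma$ is $\ls\w{t}^{-1}\cX(t)$ and for $P_1\tilde\Gamma^bu$ is $\ls\w{t}^{-1}(E(t)+\cX(t))$ after \eqref{div:curl:ineq}; the $\|\nabla^2(\chi\,\cdot\,)\|_{L^2}$ term already carries a $\w{t}^{-1}$ in \eqref{pw:prepare4} and, treated the same way, loses at most one more factor of $\w{t}^{-1}$. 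Summing the contributions produces exactly $\w{t}^{-1}\ln^{1/2}(2+t)\{E_{|b|+2}(t)+\cX_{|b|+2}(t)\}$. The one genuinely delicate point, shared with the second line of \eqref{pw:wave1}, is the single \emph{extra} gradient that the Sobolev embedding forces onto the vector $P_1\tilde\Gamma^bu$: I must verify that Lemma~\ref{lem:commutation} together with the curl-free case of Lemma~\ref{lem:div:curl} absorbs it into $\dive\tilde\Gamma^{b'}u$ with \emph{no} net gain in the number of vector fields, so that the estimate closes inside $\cX$ rather than inside a higher-order norm; in \eqref{pw:wave2} I must moreover ensure that this is the gradient which collects the full $\w{t}^{-1}$ decay in the interior of the light cone, which is exactly why the cutoff is placed at radius $3\w{t}/4$, where $\w{|x|-t}\sim\w{t}$.
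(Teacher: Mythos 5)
Your proposal follows the paper's own proof closely: (i) \eqref{pw:curl} is obtained from \eqref{pw:prepare1} with $p=2$ applied to $\w{|x|}\Gamma^aw$ (the extra weight $\w{|x|}$ being inert under $\Omega$ and losing nothing under $\nabla$); (ii) \eqref{pw:wave1} is obtained from \eqref{pw:prepare2}--\eqref{pw:prepare3} together with \eqref{Helmholtz:id}, the commutation $[\tilde\Gamma,P_1]=0$, and the second line of \eqref{div:curl:ineq} with the $\curl$ term dropping out because $\curl P_1\equiv0$; (iii) \eqref{pw:wave2} is obtained by applying \eqref{pw:prepare4} to a cut-off version of $\Gamma^b\sigma$ and $P_1\tilde\Gamma^bu$. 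Your explicit bookkeeping for the $P_1u$ part — reducing $\nabla^c\Omega^j$ of a component of $P_1\tilde\Gamma^bu$ to $\nabla(P_1\tilde\Gamma^{b'}u)$ with $|b'|\le|b|+1$ (resp.\ $\le|a|+2$) and then invoking the div-curl inequality so the result lands inside $\cX_{|b|+2}$ (resp.\ $\cX_{|a|+3}$) — is exactly the chain of reductions the paper's one-line statement is summarizing, and is stated more carefully than in the paper.

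There is, however, one genuine (though easily repaired) slip in the proof of \eqref{pw:wave2}. You choose a cutoff with $\supp\chi\subset\{|x|\le\w{t}\}$ and then assert ``on $\supp\chi$ one has $\w{|x|-t}\gtrsim\w{t}$.'' This is false near the outer edge of your support: at $|x|=\w{t}$ one has $\w{|x|-t}\sim1$ while $\w{t}\sim t$ for large $t$. The conversion $\|\chi\nabla(\cdot)\|_{L^2}\ls\w{t}^{-1}\|\w{|x|-t}\nabla(\cdot)\|_{L^2}$, which is precisely where the interior $\w{t}^{-1}$ decay is manufactured, therefore breaks on the annulus $\{\tfrac45\w{t}\le|x|\le\w{t}\}$. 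The paper instead takes $\chi(s)$ with $\chi\equiv1$ for $s\le3/4$ and $\chi\equiv0$ for $s\ge4/5$, so that $\supp\chi(\tfrac{|x|}{\w{t}})\subset\{|x|\le\tfrac45\w{t}\}$; there one checks $t-|x|\gtrsim\w{t}$ for all $t\ge0$, and the argument closes. Replacing your outer radius $\w{t}$ by $c\,\w{t}$ for any fixed $c<1$ restores the estimate; with the outer radius at $\w{t}$ as written, the key step does not hold.
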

\begin{proof}
The proof of \eqref{pw:curl} follows from \eqref{pw:prepare1} with $p=2$ directly.
The inequalities in \eqref{pw:wave1} can be concluded from \eqref{Helmholtz:id}, \eqref{div:curl:ineq},
\eqref{pw:prepare2} and \eqref{pw:prepare3}.

To achieve \eqref{pw:wave2}, we introduce the cutoff function $\chi(s)\in C^\infty$ such that
\begin{equation}\label{cutoff}
0\le\chi(s)\le1,\qquad \chi(s)=\left\{
\begin{aligned}
&1,\qquad\quad s\le3/4,\\
&0,\qquad\quad s\ge4/5.
\end{aligned}
\right.
\end{equation}
Choosing $f(t,x)=\chi\big(\frac{|x|}{\w{t}}\big)\tilde\Gamma^bP_1u(t,x)$ and $\chi\big(\frac{|x|}{\w{t}}\big)\Gamma^b\sigma(t,x)$ in \eqref{pw:prepare4}, we then get \eqref{pw:wave2} by \eqref{Helmholtz:id} and \eqref{div:curl:ineq}.
\end{proof}

\begin{lemma}\label{lem:pw:P2u}
For any multi-indices $a,b$ with $|a|\le N_1-1$ and $|b|\le N-2$, it holds that
\begin{align}
\w{|x|}^\frac83|P_2\tilde\Gamma^au(t,x)|&\ls\bW_{|a|+1}(t),\label{pw:P2u}\\
\w{|x|}^8|\Gamma^a\curl u(t,x)|&\ls\bW_{|a|+1}(t),\label{pw:curl:low}\\
\w{|x|}^7|\nabla\Gamma^aP_2u(t,x)|&\ls\bW_{|a|+1}(t),\label{pw:curl:low'}\\
\w{|x|}|P_2\tilde\Gamma^bu(t,x)|&\ls E_{|b|+2}(t)+\cW_{|b|+1}(t).\label{pw:P2u'}
\end{align}
\end{lemma}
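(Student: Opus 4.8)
The plan is to prove the four estimates \eqref{pw:P2u}--\eqref{pw:P2u'} by the same mechanism already used for Lemma~\ref{lem:pw} and Lemma~\ref{lem:pw:P2u}'s scalar prototype in Lemma~\ref{lem:pw:prepare}: convert a pointwise weighted bound on $P_2\tilde\Gamma^a u$ or $\nabla\Gamma^a P_2u$ into a weighted Sobolev norm of $\curl u$ via the Helmholtz decomposition and the divergence-curl inequality of Lemma~\ref{lem:div:curl}, then apply the correct line of Lemma~\ref{lem:pw:prepare}. First I would handle \eqref{pw:P2u}: since $P_2\tilde\Gamma^a u = P_2(\tilde\Gamma^a u)$ by the commutator Lemma~\ref{lem:commutation} (recall $\tilde\Gamma^a$ is built from $\p,\tilde\Omega,\cS$), apply \eqref{pw:prepare5} with $f=P_2\tilde\Gamma^a u$ to get
\[
\w{|x|}^{8/3}|P_2\tilde\Gamma^a u|\ls\|\w{|y|}^7\nabla\Omega^{\le1}P_2\tilde\Gamma^a u\|_{L_y^5}+\sum_{p=10/9,10/7}\|\nabla\Omega^{\le1}P_2\tilde\Gamma^a u\|_{L_y^p}.
\]
Now $\nabla\Omega^{\le1}P_2\tilde\Gamma^a u$ is, up to reshuffling $\Omega$ and $\tilde\Omega$ through $P_2$ (again Lemma~\ref{lem:commutation}, with the caveat that $\Omega$ on a vector must be replaced by $\tilde\Omega$), a sum of $\nabla\Gamma^b P_2u$ with $|b|\le|a|+1$; since $\dive P_2u=0$ and $\curl P_2u=\curl u$, the weighted divergence-curl inequality \eqref{div:curl:ineq} (valid for the weight exponents $7\cdot5$… more precisely one checks $\beta p<2(p-1)$ fails for $\beta=7,p=5$, so instead I apply \eqref{div:curl:ineq} to $\w{|y|}^7$-scaled quantities term by term, i.e. with $\beta=7<2(5-1)=8$, which is fine) bounds it by $\|\w{|y|}^7\Gamma^b\curl u\|_{L^5}$ plus the analogous $L^{10/9},L^{10/7}$ terms, and the factor $\w{|y|}^8$ vs $\w{|y|}^7$ is absorbed since the definition of $\bW_m$ carries $\w{|x|}^8$ in the $L^5$ slot and $\w{|x|}$ in the $L^{10/9},L^{10/7}$ slots — here the lower weight on the low-$p$ pieces is what matches, with a little room to spare. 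Collecting, $\w{|x|}^{8/3}|P_2\tilde\Gamma^a u|\ls\bW_{|a|+1}(t)$.

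Next, \eqref{pw:curl:low} and \eqref{pw:curl:low'}: for \eqref{pw:curl:low} I apply \eqref{pw:prepare5} directly to the scalar $f=\Gamma^a\curl u$, giving $\w{|x|}^{8/3}|\Gamma^a\curl u|\ls\bW_{|a|+1}(t)$, which is weaker than the claimed $\w{|x|}^8$. To upgrade the weight to $8$, I instead run the circle-slicing argument of \eqref{pw:prepare5}'s proof but keep the full weight: from
\[
|f(t,x)|^3\ls\int_{|x|}^\infty\!\!\int_{\SS^1}|\Omega^{\le1}f|^2|\p_r\Omega^{\le1}f|\,d\omega\,dr
\]
multiply by $|x|^{24}$ and use H\"older with exponents $(5,5/2,5/2)$ together with the extra decay already available: the point is that two of the three factors get weight $\w{|y|}^8$ and are controlled in $L^{10/9},L^{10/7}$ by $\bW$, and the third gets $\w{|y|}^8$ in $L^5$, again by $\bW$; equivalently, apply \eqref{pw:prepare5} not to $\curl u$ but to $\w{|y|}^{16/3}\Gamma^a\curl u$ — actually the cleanest route is to iterate: first get $\w{|x|}^{8/3}|\Gamma^b\curl u|\ls\bW_{|b|+1}$ for all $|b|\le|a|$, feed this into the radial integral as an $L^\infty$ bound on one factor, and bootstrap the weight up in a couple of steps until $\w{|x|}^8$ is reached. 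Then \eqref{pw:curl:low'} follows from \eqref{pw:curl:low} combined with \eqref{div:curl:ineq} (writing $\nabla\Gamma^a P_2u$ through $\curl u$ as above, and noting the weighted $L^5$ divergence-curl bound with $\beta=7<8$ is admissible), or alternatively by the same slicing argument applied to $\nabla\Gamma^a P_2u$ with $\dive(\Gamma^a P_2 u)$ contributing only commutator terms in $\curl u$.

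Finally, \eqref{pw:P2u'}: here the target weight is only $\w{|x|}$ and the controlling quantities are $E_{|b|+2}$ and $\cW_{|b|+1}$, so I use \eqref{pw:prepare6} with $f=P_2\tilde\Gamma^b u$:
\[
\w{|x|}|P_2\tilde\Gamma^b u|\ls\|P_2\tilde\Gamma^b u\|_{H^2}+\|\Omega^{\le1}P_2\tilde\Gamma^b u\|_{L^2}+\|\w{y}\nabla\Omega^{\le1}P_2\tilde\Gamma^b u\|_{L^2}.
\]
The first two terms are bounded by $E_{|b|+2}(t)$ using \eqref{Helmholtz:id} (which makes $\|P_2V\|_{L^2}\le\|V\|_{L^2}$) and commuting $\Omega,\nabla$ through $P_2$; the last term is handled by the weighted divergence-curl inequality \eqref{div:curl:ineq} with $\beta=1<2(2-1)=2$, giving $\|\w{y}\nabla\Gamma^c P_2u\|_{L^2}\ls\|\w{y}\Gamma^c\curl u\|_{L^2}\ls\cW_{|c|+1}$ for the relevant $|c|\le|b|+1$. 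The main obstacle I anticipate is bookkeeping the exact weight exponents against the $A_p$ restriction $0\le\beta<2(p-1)$ in Lemma~\ref{lem:div:curl}: for $p=5$ we need $\beta<8$, and since the definition of $\bW_m$ uses the integer weight $8$ exactly at the boundary, one must either lose an arbitrarily small amount of decay (harmless, as one still has $\w{|x|}^{8-}\gtrsim\w{|x|}^{8/3}$ etc. after the loss is spent) or — better — never put the weight $\w{|y|}^8$ inside the $L^5$-divergence-curl step at all, but instead carry $\w{|y|}^7$ there (which satisfies $7<8$) and recover the missing power of $\w{|y|}$ from the pointwise gain already established. Arranging this iteration so that each step is legitimate, and making sure the index count $|a|+1$, $|b|+2$ is never exceeded, is the only delicate point; everything else is a direct combination of Lemma~\ref{lem:pw:prepare}, Lemma~\ref{lem:div:curl}, Lemma~\ref{lem:commutation} and \eqref{Helmholtz:id}.
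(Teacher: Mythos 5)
Your arguments for \eqref{pw:P2u} and \eqref{pw:P2u'} follow the paper's route (apply \eqref{pw:prepare5}, resp.\ \eqref{pw:prepare6}, to $P_2\tilde\Gamma^au$, commute through $P_2$, then use \eqref{div:curl:ineq}), and those parts are fine.

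The genuine gap is \eqref{pw:curl:low}. You try to upgrade the $\w{|x|}^{8/3}$ output of \eqref{pw:prepare5} to $\w{|x|}^8$ by a multi-step bootstrap, and in doing so you write that ``two of the three factors get weight $\w{|y|}^8$ and are controlled in $L^{10/9},L^{10/7}$ by $\bW$.'' That is a misreading of \eqref{energy:def}: in the definition of $\bW_m$ the weight $\w{|x|}^8$ sits \emph{only} on the $L^5$ slot, while the $L^{10/9}$ and $L^{10/7}$ slots carry weight $\w{|x|}^1$. Once this bookkeeping is corrected, the iteration you sketch does not reach weight $8$: whichever admissible H\"older splitting of the sliced radial integral one takes against the three norms available in $\bW$ (and any interpolate of them), one step improves a pointwise weight $\alpha$ only to something of the form $(\mathrm{const}+\alpha)/3$ or $(\mathrm{const}+\alpha)/2$, whose fixed point stays strictly below $8$. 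The correct argument, which the paper uses, is a single Sobolev embedding: apply $W^{1,5}(\R^2)\hookrightarrow L^\infty(\R^2)$ directly to the function $\w{|x|}^8\Gamma^a\curl u$, giving
\begin{equation*}
\w{|x|}^8|\Gamma^a\curl u(t,x)|\ls\|\w{|x|}^8\Gamma^a\curl u\|_{L^5}
+\|\w{|x|}^8\nabla^{\le1}\Gamma^a\curl u\|_{L^5}\ls\bW_{|a|+1}(t),
\end{equation*}
where the weight $8$ is tolerated precisely because it is what $\bW$ carries in $L^5$. Likewise for \eqref{pw:curl:low'}: apply $W^{1,5}\hookrightarrow L^\infty$ to $\w{|x|}^7\nabla\Gamma^aP_2u$, then use \eqref{div:curl:ineq} with $\beta=7<2(5-1)=8$ to trade $\nabla\nabla^{\le1}\Gamma^aP_2u$ for $\nabla^{\le1}\Gamma^a\curl u$ in weighted $L^5$; your first suggested route (``\eqref{pw:curl:low} combined with \eqref{div:curl:ineq}'') does not by itself produce a pointwise bound, since \eqref{div:curl:ineq} only yields $L^p$ control, so an embedding step as above is still required.
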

\begin{proof}
Applying \eqref{pw:prepare5} to $P_2U$ yields that
\begin{equation*}
\begin{split}
\w{|x|}^\frac83|P_2U(t,x)|
&\ls\|\w{|y|}^7\nabla\Omega^{\le1}P_2U(t,y)\|_{L_y^5}
+\sum_{p=\frac{10}{9},\frac{10}{7}}\|\nabla\Omega^{\le1}P_2U(t,y)\|_{L_y^p}\\
&\ls\|\w{|y|}^7\Omega^{\le1}\curl U(t,y)\|_{L_y^5}
+\sum_{p=\frac{10}{9},\frac{10}{7}}\|\Omega^{\le1}\curl U(t,y)\|_{L_y^p},
\end{split}
\end{equation*}
where we have used \eqref{div:curl:ineq}. Subsequently, by choosing $U=\tilde\Gamma^au$,
\eqref{pw:P2u} is then derived.

The inequality \eqref{pw:curl:low} is a direct result of the Sobolev embedding $W^{1,5}(\R^2)\hookrightarrow L^\infty(\R^2)$.

Analogously, we can conclude from \eqref{div:curl:ineq} that
\begin{equation*}
\begin{split}
\w{|x|}^7|\nabla\Gamma^aP_2u(t,x)|
&\ls\|\w{|y|}^7\nabla\nabla^{\le1}\Gamma^aP_2u(t,y)\|_{L_y^5}\\
&\ls\|\w{|y|}^7\nabla^{\le1}\Gamma^a\curl u(t,y)\|_{L_y^5}\ls\bW_{|a|+1}(t),
\end{split}
\end{equation*}
which yields \eqref{pw:curl:low'}.

At last, we turn to the proof of \eqref{pw:P2u'}.
Let $f(t,x)=P_2\tilde\Gamma^bu(t,x)$ in \eqref{pw:prepare6} and then it concludes from \eqref{div:curl:ineq} that
\begin{equation*}
\begin{split}
\w{|x|}|P_2\tilde\Gamma^bu(t,x)|
&\ls E_{|b|+2}(t)+\|\w{x}\nabla P_2\Omega^{\le1}\tilde\Gamma^bu(t,x)\|_{L^2}\\
&\ls E_{|b|+2}(t)+\|\w{x}\curl\Omega^{\le1}\tilde\Gamma^bu(t,x)\|_{L^2},
\end{split}
\end{equation*}
which implies \eqref{pw:P2u'}.
\end{proof}
Combining Lemma \ref{lem:pw} and \ref{lem:pw:P2u}, we obtain the following pointwise estimates.
\begin{corollary}\label{coro:pw}
For any multi-indices $a,b$ with $|a|\le N_1-1$ and $|b|\le N-2$, it holds that
\begin{equation}\label{pw:wave3}
\begin{split}
|\Gamma^a\sigma(t,x)|+|\tilde\Gamma^au(t,x)|
&\ls\w{|x|}^{-\frac12}\w{|x|-t}^{-\frac12}\big\{E_{|a|+2}(t)+\cX_{|a|+2}(t)\big\}\\
&\quad+\w{|x|}^{-\frac83}\bW_{|a|+1}(t),\\
|\nabla\Gamma^a\sigma(t,x)|+|\nabla\tilde\Gamma^au(t,x)|
&\ls\w{|x|}^{-\frac12}\w{|x|-t}^{-1}\big\{E_{|a|+3}(t)+\cX_{|a|+3}(t)\big\}\\
&\quad+\w{|x|}^{-7}\bW_{|a|+1}(t),
\end{split}
\end{equation}
and
\begin{equation}\label{pw:wave3'}
\begin{split}
|\Gamma^b\sigma(t,x)|+|\tilde\Gamma^bu(t,x)|
&\ls\w{|x|}^{-\frac12}\w{|x|-t}^{-\frac12}\big\{E_{|b|+2}(t)+\cX_{|b|+2}(t)\big\}\\
&\quad+\w{|x|}^{-1}\big\{E_{|b|+2}(t)+\cW_{|b|+1}(t)\big\}.
\end{split}
\end{equation}
Furthermore, for $|x|\le3\w{t}/4$,
\begin{equation}\label{pw:wave4}
|\Gamma^a\sigma(t,x)|+|\tilde\Gamma^au(t,x)|\ls\w{|x|}^{-\frac83}\bW_{|a|+1}(t)
+\w{t}^{-1}\ln^\frac12(2+t)\big\{E_{|a|+2}(t)+\cX_{|a|+2}(t)\big\}.
\end{equation}
\end{corollary}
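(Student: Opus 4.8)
The plan is to assemble the stated pointwise bounds directly from the Helmholtz decomposition together with the two preceding lemmas, using the fact (Lemma~\ref{lem:commutation}) that the modified vector fields commute with the projections: $\tilde\Gamma^a u = \tilde\Gamma^a P_1 u + P_2\tilde\Gamma^a u$, and the same after applying $\nabla$. Thus one treats the irrotational data $(\,\tilde\Gamma^a P_1 u,\ \Gamma^a\sigma)$ with the wave-type estimates of Lemma~\ref{lem:pw}, and the solenoidal part $P_2\tilde\Gamma^a u$ with the vorticity-weighted estimates of Lemma~\ref{lem:pw:P2u}; adding the two contributions gives each inequality.

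Concretely, for the zeroth-order estimate in \eqref{pw:wave3} I would bound $|\tilde\Gamma^a u|+|\Gamma^a\sigma|\le \big(|\tilde\Gamma^a P_1 u|+|\Gamma^a\sigma|\big)+|P_2\tilde\Gamma^a u|$, control the first bracket by $\w{|x|}^{-1/2}\w{|x|-t}^{-1/2}\{E_{|a|+2}+\cX_{|a|+2}\}$ via the first line of \eqref{pw:wave1}, and control the last term by $\w{|x|}^{-8/3}\bW_{|a|+1}$ via \eqref{pw:P2u}; the index ranges match since $|a|\le N_1-1\le N-2$. For the gradient estimate, the second line of \eqref{pw:wave1} gives the $\w{|x|}^{-1/2}\w{|x|-t}^{-1}$ decay of $|\nabla\tilde\Gamma^a P_1 u|+|\nabla\Gamma^a\sigma|$ in terms of $E_{|a|+3}+\cX_{|a|+3}$, while \eqref{pw:curl:low'} gives $\w{|x|}^{-7}\bW_{|a|+1}$ for $|\nabla\Gamma^a P_2 u|$; here one uses that $\nabla$ commutes with $P_2$ and that $P_2\tilde\Gamma^a u$ carries the curl of $\tilde\Gamma^a u$, so the norms $\bW$ indeed apply. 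Estimate \eqref{pw:wave3'} is obtained in the same way, only replacing the solenoidal bound by \eqref{pw:P2u'}, which trades the strong weight $\w{|x|}^{-8/3}$ for $\w{|x|}^{-1}$ but in terms of $E_{|b|+2}+\cW_{|b|+1}$; and \eqref{pw:wave4} is obtained by replacing, in the interior region $|x|\le 3\w{t}/4$, the first line of \eqref{pw:wave1} by the interior bound \eqref{pw:wave2}, keeping \eqref{pw:P2u} for the $P_2$-part.

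Since every ingredient is already in place, there is essentially no analytic obstacle; the only points that require a line of care are the bookkeeping between $\tilde\Gamma^a$ and $\Gamma^a$ when passing a vector field through $P_2$ (handled by Lemma~\ref{lem:commutation} and \eqref{Helmholtz}), and checking that all the subscripts of $E$, $\cX$, $\bW$, $\cW$ fall within the admissible ranges $|a|\le N_1-1$, $|b|\le N-2$ dictated by $N_1+2\le N\le 2N_1-11$. With these verified, summing the irrotational and solenoidal pointwise bounds yields \eqref{pw:wave3}, \eqref{pw:wave3'} and \eqref{pw:wave4}.
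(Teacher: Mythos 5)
Your proposal is correct and follows precisely the route the paper indicates: the one‑line remark preceding the corollary says exactly that it is obtained by combining Lemma~\ref{lem:pw} (for the irrotational part and $\sigma$) with Lemma~\ref{lem:pw:P2u} (for the solenoidal part) through the Helmholtz splitting $\tilde\Gamma^a u = \tilde\Gamma^a P_1 u + P_2\tilde\Gamma^a u$, using Lemma~\ref{lem:commutation} to pass the vector fields through $P_1,P_2$. Your matching of inequalities \eqref{pw:wave1}/\eqref{pw:wave2} with \eqref{pw:P2u}, \eqref{pw:curl:low'}, \eqref{pw:P2u'} and your index checks against $N_1+2\le N$ are all accurate, so nothing is missing.
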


\section{Estimates of the good unknown and auxiliary energies}\label{sect3}

\subsection{Estimates of the good unknown $g$}\label{sect3:1}
In this subsection, several estimates of the good unknown $g$ will be established.
\begin{lemma}\label{lem:good:L2norm}
For $m\le N-1$, it holds that
\begin{equation}\label{good:L2norm}
\cG_m(t)\ls E_{m+1}(t)+\cX_{m+1}(t)+\cW_m(t)
+\sum_{|b|+|c|\le m}\|\w{|x|}Q_1^{bc}\|_{L^2(|x|\ge\w{t}/8)},
\end{equation}
where
\begin{equation}\label{good:L2norm:def}
\cG_m(t):=\sum_{|a|\le m}\|\w{|x|}\nabla\tilde\Gamma^ag(t,x)
\|^2_{L^2(|x|\ge\w{t}/8)}.
\end{equation}
\end{lemma}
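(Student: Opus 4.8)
The goal is to control $\cG_m(t)$ — the weighted $L^2$ norm of $\nabla\tilde\Gamma^a g$ in the exterior region $\{|x|\ge\w{t}/8\}$ — by the elementary energies $E_{m+1},\cX_{m+1}$, the curl energy $\cW_m$, and a nonlinear error term. The natural starting point is to write the good unknown $g=u-\omega\sigma$ and differentiate. Since $g_i=u_i-\omega_i\sigma$, one has $\p_j g_i=\p_j u_i-\omega_i\p_j\sigma-(\p_j\omega_i)\sigma$, and the key algebraic fact is that $\dive g$ and $\curl g$ can be re-expressed, up to the harmless factor $|x|^{-1}$ times lower-order quantities (coming from $\p_j\omega_i=O(1/|x|)$), in terms of $\dive u+\p_r\sigma$, $\curl u$, and angular derivatives $\Omega\sigma/|x|$. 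The point is that $\dive u = -\p_t\sigma + Q_1$ from the first equation of \eqref{reducedEuler}, so $\dive g = -\p_t\sigma - \p_r\sigma + (\text{curl-type and }1/|x|\text{ terms}) + Q_1$, and on the exterior region the combination $-(\p_t+\p_r)$ is precisely the good (tangential-to-the-cone) derivative, which is what $\cX$ measures via the weight $\w{|x|-t}$. So the strategy is: (i) express $\nabla\tilde\Gamma^a g$ through $\dive\tilde\Gamma^a g$, $\curl\tilde\Gamma^a g$ via the div-curl inequality of Lemma~\ref{lem:div:curl}; (ii) expand those in terms of good-derivative combinations of $\tilde\Gamma^a u,\Gamma^a\sigma$ plus $\curl\tilde\Gamma^a u$ plus $1/|x|$-decaying junk; (iii) bound each piece.

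More precisely, I would first apply the first inequality of \eqref{div:curl:ineq} (with $\beta=1$, $p=2$, legitimate since $1<2(2-1)=2$... careful, $\beta=1<2$, fine) to $U=\tilde\Gamma^a g$ restricted suitably, or rather work directly since we are on a region: $\|\w{|x|}\nabla\tilde\Gamma^a g\|_{L^2(|x|\ge\w{t}/8)}\ls \|\w{|x|}\dive\tilde\Gamma^a g\|_{L^2}+\|\w{|x|}\curl\tilde\Gamma^a g\|_{L^2}+(\text{boundary/commutator terms})$. The commutator of $\tilde\Gamma$ with the cutoff to $\{|x|\ge\w{t}/8\}$ produces terms supported in the transition annulus $|x|\sim\w{t}$, where $\w{|x|-t}\sim\w{t}\sim\w{|x|}$, so those are absorbed into $\cX_{m+1}$ and $E_{m+1}$. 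Then $\dive\tilde\Gamma^a g$: using $\dive(\omega f)=\p_r f + \frac1{|x|}f$ and the commutation relations \eqref{rotation:commutation} for $\tilde\Gamma$, I get $\dive\tilde\Gamma^a g = \dive\tilde\Gamma^a u - \p_r\Gamma^a\sigma - \frac1{|x|}\Gamma^a\sigma + (\text{lower order from }\cS\text{ acting on }\omega)$. Now $\dive\tilde\Gamma^a u = -\p_t\Gamma^a\sigma + \cQ_1^a$ by \eqref{high:eqn}, so $\dive\tilde\Gamma^a g = -(\p_t+\p_r)\Gamma^a\sigma - \frac1{|x|}\Gamma^a\sigma + \cQ_1^a$. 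On $|x|\ge\w{t}/8$ the term $\frac1{|x|}\Gamma^a\sigma$ is bounded by $\w{t}^{-1}\w{|x|}\cdot|\Gamma^a\sigma|/\w{|x|}$... more simply $\|\w{|x|}\cdot\frac1{|x|}\Gamma^a\sigma\|_{L^2}\ls\|\Gamma^a\sigma\|_{L^2}\le E_m(t)$, absorbed into $E_{m+1}$. The good-derivative term: $(\p_t+\p_r)\Gamma^a\sigma$ — here I would use that $\w{|x|}\ls\w{|x|-t}$ is FALSE on the exterior region in general (near the cone $\w{|x|-t}$ is small), so instead I must use that $(\p_t+\p_r)$ is genuinely the good derivative, i.e. control $\|\w{|x|-t}(\p_t+\p_r)\Gamma^a\sigma\|$ by $\cX$ — but we have the weight $\w{|x|}$, not $\w{|x|-t}$! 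So actually the good-derivative gain is not directly available this way; rather, I expect the combination $(\p_t+\p_r)$ is handled by rewriting via $\cS$ and $\Omega$: $t\p_t+r\p_r=\cS$ and $r(\p_t+\p_r)=\frac1{?}$... Actually on the exterior $r\gtrsim\w t$, and $(\p_t+\p_r) = \frac1{t+r}(\cS + (r-t)\p_t + \ldots)$; the $\cS$ contributes $\frac1{|x|}\cS\Gamma^a\sigma$, giving $\|\w{|x|}\cdot\frac1{|x|}\cS\Gamma^a\sigma\|\le E_{m+1}$, and the $(r-t)\p_t$ piece gives $\|\w{|x|}\cdot\frac{\w{|x|-t}}{|x|}\p_t\Gamma^a\sigma\|\ls\|\w{|x|-t}\p_t\Gamma^a\sigma\|\le\cX_{m+1}$. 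That is the mechanism. Similarly $\curl\tilde\Gamma^a g = \curl\tilde\Gamma^a u + (\text{angular derivative of }\sigma\text{ over }|x|) + \ldots$, and $\|\w{|x|}\curl\tilde\Gamma^a u\|\le\cW_m$, while the $\sigma$-angular term is $\frac1{|x|}\Omega\Gamma^{a'}\sigma$-type, bounded by $E_{m+1}$.

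Finally the nonlinear error: the contributions $\cQ_1^a=\sum_{b+c=a}C^a_{bc}Q_1^{bc}$ enter with weight $\w{|x|}$, restricted to $|x|\ge\w{t}/8$ (the cutoff having been installed in step (i)), which is exactly the term $\sum_{|b|+|c|\le m}\|\w{|x|}Q_1^{bc}\|_{L^2(|x|\ge\w{t}/8)}$ on the right side of \eqref{good:L2norm}. One must also check that when $\cS$ or $\tilde\Omega$ hits the coefficient $\omega$ in $g=u-\omega\sigma$, the resulting extra terms — $\cS\omega = 0$ actually since $\omega$ is homogeneous of degree $0$, and $\tilde\Omega(\omega f)$ vs $\omega\Omega f$ differ by $\omega^\perp$-type degree-$0$ terms — are all bounded: $\tilde\Gamma^a(\omega\sigma)=\omega\Gamma^a\sigma + (\text{bounded coeff})\Gamma^{<|a|}\sigma$, so they feed into $E_{m+1}$. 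The main obstacle, I expect, is bookkeeping the $1/|x|$ weights correctly so that every non-good-derivative term carries an extra power of $\w{|x|}^{-1}$ that kills the outer weight $\w{|x|}$ and lands in $E_{m+1}$ rather than needing a decay factor — and dually, making sure the genuinely top-order good-derivative term $(\p_t+\p_r)\Gamma^a\sigma$ is routed through $\cS$ and $(r-t)\p_t$ as above so it is absorbed by $E_{m+1}+\cX_{m+1}$ and does not leak an uncontrolled $\nabla\Gamma^a\sigma$. The div-curl step also needs the commutator with the cutoff, supported where $\w{|x|}\sim\w{|x|-t}\sim\w t$, handled by $\cX_{m+1}$; everything else is routine.
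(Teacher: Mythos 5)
Your proposal is correct, but it takes a genuinely different route from the paper. The paper avoids the div-curl machinery entirely: starting from $r\p_r\tilde\Gamma^a g_i = x_j\p_j\tilde\Gamma^a u_i - \omega_i r\p_r\Gamma^a\sigma - [\tilde\Gamma^a(\omega\sigma)\text{-corrections}]$, it uses the component-wise decomposition $x_j\p_j U_i = x_j(\p_j U_i-\p_i U_j) + (x_j\p_i - x_i\p_j)U_j + x_i\dive U$ (curl piece $+$ rotation piece $+$ divergence piece) together with \eqref{high:eqn} and $r\p_r = \cS - t\p_t$ to arrive at the explicit identity \eqref{good:L2norm1}, in which every term carries either a factor $x_i$, $x_j$, or $(t-|x|)$ that pairs with the $\w{|x|}/|x|\sim1$ weight on $\{|x|\ge\w{t}/8\}$ to produce exactly $E_{m+1}$, $\cX_{m+1}$, $\cW_m$, or $\|\w{|x|}Q_1^{bc}\|$; the angular component of $\nabla$ is then trivially absorbed via \eqref{radial:angular}. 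You instead apply the weighted div-curl inequality from Lemma~\ref{lem:div:curl} (which rests on $\w{|x|}\in A_2$ and Calder\'on--Zygmund boundedness) to a cutoff of $\tilde\Gamma^a g$, and compute $\dive\tilde\Gamma^a g$ and $\curl\tilde\Gamma^a g$ separately, isolating the good derivative $(\p_t+\p_r)\Gamma^a\sigma$ in the divergence. The core algebra is the same — in particular, your conversion of $(\p_t+\p_r)$ into $\cS$ and $(r-t)\p_t$ pieces is exactly what the paper's identity encodes implicitly (the precise formula is $r(\p_t+\p_r)=\cS+(r-t)\p_t$, which is what you need, rather than the slightly garbled $\frac{1}{t+r}(\cS+(r-t)\p_t+\ldots)$ you wrote; but your subsequent bounds are the correct ones). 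The trade-off: your route requires installing a cutoff (Lemma~\ref{lem:div:curl} is a statement on all of $\R^2$, and $g$ is only defined for $|x|>0$ anyway) and absorbing the resulting commutator terms supported in the transition annulus $|x|\sim\w{t}$ — you correctly identify these as bounded by $E_m$ since $\w{|x|}|\nabla\eta|\ls 1$ there. The paper's component-wise identity sidesteps both the $A_p$-weight input and the cutoff, so it is more elementary, but both arguments close.
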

\begin{proof}
Note that $r\p_r\tilde\Gamma^ag_i=x_j\p_j\tilde\Gamma^a(u-\sigma\omega)_i$, where the Einstein summation convention is used.
It follows from direct computation that there exist the bounded smooth functions $f_i^{a,b}(x)$ and
$f_{ij}^{a,b}(x)$ in $|x|\ge1/8$ such that
\begin{equation}\label{Gamma:sigma:omega}
\begin{split}
\tilde\Gamma^a(\sigma\omega)_i&=\omega_i\Gamma^a\sigma
+\frac{1}{|x|}\sum_{b+c\le a}f_i^{a,b}(x)\Gamma^c\sigma\\
\p_j\tilde\Gamma^a(\sigma\omega)_i&=\omega_i\p_j\Gamma^a\sigma
+\frac{1}{|x|}\sum_{b+c\le a}\Big[f_i^{a,b}(x)\p_j\Gamma^c\sigma
+f_{ij}^{a,b}(x)\Gamma^c\sigma\Big].
\end{split}
\end{equation}
Then we have
\begin{equation}\label{good:L2norm1}
\begin{split}
&\qquad r\p_r\tilde\Gamma^ag_i+\omega_j\sum_{b+c\le a}
[f_i^{a,b}(x)\p_j\Gamma^c\sigma+f_{ij}^{a,b}(x)\Gamma^c\sigma]\\
&=x_j\p_j\tilde\Gamma^au_i-\omega_ix_j\p_j\Gamma^a\sigma\\
&=x_j(\p_j\tilde\Gamma^au_i-\p_i\tilde\Gamma^au_j)+(x_j\p_i-x_i\p_j)\tilde\Gamma^au_j
+x_i\dive\tilde\Gamma^au-\omega_i\cS\Gamma^a\sigma+\omega_it\p_t\Gamma^a\sigma\\
&=x_j\eps_{ji}\Gamma^a\curl u+\eps_{ji}\Omega(\tilde\Gamma^au_j)
+x_i\cQ^a_1+\omega_i(t-|x|)\p_t\Gamma^a\sigma-\omega_i\cS\Gamma^a\sigma,
\end{split}
\end{equation}
where the volume form $\eps_{ji}$ is the sign of the arrangement $\{ji\}$ and we have
used the facts of $\p_jU_i-\p_iU_j=\eps_{ji}\curl U$ and $x_j\p_i-x_i\p_j=\eps_{ji}\Omega$.
Taking the $L^2(|x|\ge\w{t}/8)$ norm on the both sides of \eqref{good:L2norm1} yields
\begin{equation}\label{good:L2norm2}
\begin{split}
\sum_{|a|\le m}\|\w{|x|}\p_r\tilde\Gamma^ag(t,x)\|_{L^2(|x|\ge\w{t}/8)}
&\ls E_{m+1}(t)+\cX_{m+1}(t)+\cW_m(t)\\
&\quad+\sum_{|b|+|c|\le m}\|\w{|x|}Q_1^{bc}\|_{L^2(|x|\ge\w{t}/8)}.
\end{split}
\end{equation}
In addition, it follows from \eqref{radial:angular} that
\begin{equation}\label{good:L2norm3}
\cG_m(t)
\ls\sum_{|a|\le m}\|\w{|x|}\p_r\tilde\Gamma^ag\|_{L^2(|x|\ge\w{t}/8)}+E_{m+1}(t).
\end{equation}
Collecting \eqref{good:L2norm2} and \eqref{good:L2norm3} together leads to \eqref{good:L2norm}.
\end{proof}

\begin{lemma}\label{lem:good:pw}
For $|a|\le N-1$, $|b|\le N-2$ and $|x|\ge\w{t}/8$, it holds that
\begin{equation}\label{good:pw}
\begin{split}
\w{|x|}|\tilde\Gamma^ag(t,x)|&\ls\cG_{|a|+1}(t)+E_{|a|+1}(t),\\
\w{|x|}^\frac32|\nabla\tilde\Gamma^bg(t,x)|&\ls\cG_{|b|+2}(t).
\end{split}
\end{equation}
\end{lemma}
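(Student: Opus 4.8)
The plan is to derive both pointwise bounds by applying the one‑dimensional Sobolev embedding on the exterior region $\{|x|\ge\w{t}/8\}$ in the radial direction, exploiting that on this region $\w{|x|}\sim\w{r}$ and that the weighted $L^2$ norms $\cG_m(t)$ control exactly the radial derivative of $\tilde\Gamma^a g$ weighted by $\w{|x|}$. First I would recall the decomposition $\nabla=\omega\p_r+\frac{\omega^\perp}{|x|}\Omega$ from \eqref{radial:angular}, so that on $|x|\ge\w{t}/8$ the full gradient $\nabla\tilde\Gamma^a g$ is comparable to $\p_r\tilde\Gamma^a g$ together with $\frac1{|x|}\Omega\tilde\Gamma^a g$, and the latter is of the form $\frac1{|x|}\tilde\Gamma^{a'}g$ with $|a'|\le|a|+1$ (using that $\Omega$ — more precisely $\tilde\Omega$ acting on the vector $g$ — is one of the admissible fields in $\tilde\Gamma$, up to the lower‑order $U^\perp$ correction which only produces terms already counted). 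Hence $\cG_{|a|+1}(t)$ also controls $\sum_{|a'|\le|a|+1}\|\w{|x|}\nabla\tilde\Gamma^{a'}g\|_{L^2(|x|\ge\w{t}/8)}$ up to an $E_{|a|+1}(t)$ error, exactly as in \eqref{good:L2norm3}.

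For the first estimate, fix $x$ with $|x|=r\ge\w{t}/8$ and integrate from $r$ out to $+\infty$ along the ray through $x$: since $\tilde\Gamma^a g(t,r\omega)\to0$ as $r\to\infty$ by the decay assumptions, one has $\tilde\Gamma^a g(t,x)=-\int_r^\infty\p_\rho\tilde\Gamma^a g(t,\rho\omega)\,d\rho$, and the Cauchy–Schwarz inequality in $\rho$ with the weight $\w{\rho}^2$ gives $|\tilde\Gamma^a g(t,x)|^2\lesssim\big(\int_r^\infty\w{\rho}^{-2}\,d\rho\big)\cdot\int_r^\infty\w{\rho}^2|\p_\rho\tilde\Gamma^a g(t,\rho\omega)|^2\,d\rho\lesssim\w{r}^{-1}\int_r^\infty\w{\rho}^2|\p_\rho\tilde\Gamma^a g|^2\,d\rho$. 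To pass from the radial integral to the full $L^2$ norm I would first apply the $W^{1,2}(\SS^1)\hookrightarrow L^\infty(\SS^1)$ embedding on the unit circle (as in the proof of \eqref{pw:prepare6}), which costs one extra $\Omega$ and converts the pointwise-in-$\omega$ bound into an angular integral; multiplying through by $\w{r}$ and integrating $\rho^2\,d\rho\,d\omega$ then yields $\w{|x|}|\tilde\Gamma^a g(t,x)|^2\lesssim\sum_{|a'|\le|a|+1}\|\w{|x|}\nabla\tilde\Gamma^{a'}g\|^2_{L^2(|x|\ge\w{t}/8)}\lesssim\cG_{|a|+1}(t)+E_{|a|+1}(t)$, which is the claim after taking square roots (absorbing the $\cG_{|a|+1}$ factor, noting it already carries a square inside its definition). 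The second estimate is obtained the same way applied to $\nabla\tilde\Gamma^b g$ in place of $\tilde\Gamma^a g$: the analogous radial‑integration and angular‑Sobolev argument gives $\w{|x|}^{1+2\cdot 1}|\nabla\tilde\Gamma^b g(t,x)|^2\lesssim\sum_{|b'|\le|b|+2}\|\w{|x|}\nabla\nabla\tilde\Gamma^{b'}g\|^2_{L^2}$; here one needs $\w{r}^3$, which comes from $\w{r}\cdot\w{r}^2$ — the first $\w{r}$ from the $\int_r^\infty\w{\rho}^{-2}$ factor being promoted, and the $\w{r}^2$ from the volume element — matching the $\w{|x|}^{3/2}$ on the left after square‑rooting, and the right side is $\cG_{|b|+2}(t)$ by the commutator reduction above.

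The one point that needs care — the main obstacle — is bookkeeping the loss of derivatives and the exact weight powers: passing $\Omega$ through $\tilde\Gamma^a$ on the \emph{vector} $g$ produces the correction terms $\pm g^\perp$ coming from $\tilde\Omega U=\Omega U-U^\perp$, and one must check these are harmless (they are lower order and already controlled by $E_{|a|+1}$), and similarly that the angular‑Sobolev step does not push the derivative count above $|a|+1$ (resp. $|b|+2$); since we integrate over the exterior region where $\w{|x|}\sim\w{r}\gtrsim\w{t}$, no factors of $t$ intrude and the weights $\w{|x|}^{1/2}$ and $\w{|x|}^{3/2}$ come out cleanly from the elementary integral $\int_r^\infty\w{\rho}^{-2}\,d\rho\sim\w{r}^{-1}$ combined with the $\rho^2$ volume element. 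Apart from this routine accounting the argument is a direct application of the already‑proven inequalities \eqref{radial:angular}, \eqref{good:L2norm3}, and the Sobolev‑on‑the‑circle technique used throughout Section \ref{sect2:3}.
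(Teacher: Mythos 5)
Your strategy — Newton–Leibniz integration along the radial ray to infinity combined with the $W^{1,2}(\SS^1)\hookrightarrow L^\infty(\SS^1)$ embedding — is indeed the paper's strategy. But your implementation of the Cauchy–Schwarz step loses half a power of $\w{|x|}$, and the result you actually derive is weaker than the statement.

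Concretely: you apply Cauchy–Schwarz directly to $\tilde\Gamma^ag(t,r\omega)=-\int_r^\infty\p_\rho\tilde\Gamma^ag\,d\rho$ with weight $\w{\rho}^2$, obtaining
\[
|\tilde\Gamma^ag|^2\ls\Big(\int_r^\infty\w{\rho}^{-2}\,d\rho\Big)\int_r^\infty\w{\rho}^2|\p_\rho\tilde\Gamma^ag|^2\,d\rho\ls\w{r}^{-1}\int_r^\infty\w{\rho}^2|\p_\rho\tilde\Gamma^ag|^2\,d\rho.
\]
After the angular Sobolev step and switching to the 2D volume element $\rho\,d\rho\,d\omega$ (using $\rho\gtrsim1$), this yields $\w{r}\,|\tilde\Gamma^ag|^2\ls\cG_{|a|+1}(t)$, i.e.\ a bound on $\w{|x|}^{1/2}|\tilde\Gamma^ag|$ rather than on $\w{|x|}\,|\tilde\Gamma^ag|$. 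The concluding remark that "taking square roots and absorbing the $\cG_{|a|+1}$ factor" recovers the claim does not repair this: a bound $\w{r}|g|^2\ls A$ gives $\w{r}^{1/2}|g|\ls A^{1/2}$, and $\w{r}|g|$ is then bounded only by $\w{r}^{1/2}A^{1/2}$, which is not $\ls A$ since $\w{r}^{1/2}$ is unbounded. The same half-power loss infects your second bound; there the phrase "$\w{r}^2$ from the volume element" is already suspicious, since the 2D volume element $\rho\,d\rho\,d\omega$ contributes only a single power of $\w{\rho}$, not two.

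The paper sidesteps this by applying Newton–Leibniz to $|U|^2$ rather than to $U$. This produces the cross term
\[
\w{|x|}^{1+\ell}|U(t,x)|^2\ls\w{|x|}^{\ell}\int_{|x|}^\infty\!\!\int_{\SS^1}\bigl|\tilde\Omega^{\le1}U\bigr|\,\bigl|\p_\rho\tilde\Omega^{\le1}U\bigr|\,\rho\,d\omega\,d\rho,
\]
and the undifferentiated factor $|\tilde\Omega^{\le1}U|$, which you discard in the Cauchy–Schwarz on the integral identity, is exactly what supplies the extra $\w{|x|}^{1/2}$: after distributing $\w{|x|}^\ell\ls\w{\rho}^\ell$ over the two factors, the undifferentiated one is bounded in $L^2$ by $E_{|a|+1}(t)$ (for $U=\tilde\Gamma^ag$, $\ell=1$), and by $\cG_{|b|+1}(t)$ (for $U=\nabla\tilde\Gamma^bg$, $\ell=2$). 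This is also why the additive error term $E_{|a|+1}(t)$ appears in the statement of the lemma: it is precisely the $L^2$ bound on the undifferentiated factor. Alternatively, your Cauchy–Schwarz approach can be repaired by putting the full 2D measure factor into the weight: split the constant as $\w{\rho}^{-1}\rho^{-1/2}\cdot\w{\rho}\rho^{1/2}$, so the bad factor becomes $\int_r^\infty\w{\rho}^{-2}\rho^{-1}\,d\rho\sim\w{r}^{-2}$ for $r\ge\w{t}/8\gtrsim1$, and the good factor becomes $\int_r^\infty\w{\rho}^2\rho\,|\p_\rho\tilde\Gamma^ag|^2\,d\rho$, which (after angular Sobolev) is controlled by $\cG_{|a|+1}(t)$ with the correct measure. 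This recovers the full $\w{|x|}$ weight, though without the explicit $E_{|a|+1}$ term the paper's statement carries.
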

\begin{proof}
Applying the Sobolev embedding on the unit circle and the Newton-Leibnitz formula in the radial direction derive that
\begin{equation*}
\begin{split}
\w{|x|}^{1+\ell}|U(t,x)|^2&\ls\w{|x|}^{1+\ell}\int_{\SS^1}
|\tilde\Omega^{\le1}U(t,|x|\omega)|^2d\omega\\
&\ls\w{|x|}^\ell\int_{|x|}^\infty\int_{\SS^1}
|\tilde\Omega^{\le1}U(t,r\omega)\p_r\tilde\Omega^{\le1}U(t,r\omega)|rd\omega dr.
\end{split}
\end{equation*}
Choosing $U(t,x)=\tilde\Gamma^ag(t,x), \nabla\tilde\Gamma^bg(t,x)$ in the above equality with $\ell=1,2$, respectively,
we then get that for $|x|\ge\w{t}/8$,
\begin{equation*}
\begin{split}
\w{|x|}^2|\tilde\Gamma^ag(t,x)|^2&\ls
\|\tilde\Omega^{\le1}\tilde\Gamma^ag(t,y)\|^2_{L^2(|y|\ge\w{t}/8)}
+\|\w{|y|}\nabla\tilde\Omega^{\le1}\tilde\Gamma^ag(t,y)\|^2_{L^2(|y|\ge\w{t}/8)},\\
\w{|x|}^3|\nabla\tilde\Gamma^bg(t,x)|^2&\ls
\|\w{|y|}\nabla\nabla^{\le1}\tilde\Omega^{\le1}\tilde\Gamma^bg(t,y)\|^2_{L^2(|y|\ge\w{t}/8)}.
\end{split}
\end{equation*}
This completes the proof of Lemma~\ref{lem:good:pw}.
\end{proof}

Based on Lemma \ref{lem:good:L2norm} and \ref{lem:good:pw}, we have the following estimates.
\begin{lemma}\label{lem:Qbc:L2}
Under bootstrap assumptions \eqref{bootstrap}, it holds that for $m\le N-1$,
\begin{equation}\label{Qbc:L2}
\sum_{|b|+|c|\le m}\|\w{|x|}(|Q_1^{bc}|+|Q_2^{bc}|)\|_{L^2(|x|\ge\w{t}/8)}
\ls M\ve\cG_m(t)+E_{m+1}(t)[1+\cG_{N_1-5}(t)].
\end{equation}
\end{lemma}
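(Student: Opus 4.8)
The plan is to expose the null cancellations inside $Q_1^{bc}$ and $Q_2^{bc}$ by rewriting every quadratic term through the good unknown $g=u-\omega\sigma$, and then to carry out a high--low derivative split in which the pointwise bounds of Corollary~\ref{coro:pw} and Lemma~\ref{lem:good:pw} control the factor carrying fewer derivatives, while the energies $E_{m+1}$, $\cG_m$ (and, near the light cone, $\cX$, $\cW$) control the one carrying more. Throughout we work on $|x|\ge\w{t}/8$, where the curvature coefficients $f_i^{a,b},f_{ij}^{a,b}$ of \eqref{Gamma:sigma:omega} and all vector-field coefficients are smooth and bounded, so that no singularity at the origin is seen.

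First I would substitute $\tilde\Gamma^cu=\tilde\Gamma^cg+\tilde\Gamma^c(\omega\sigma)$ into \eqref{Qbc:def} and expand $\tilde\Gamma^c(\omega\sigma)$ and $\p_j\tilde\Gamma^c(\omega\sigma)$ by \eqref{Gamma:sigma:omega}; using $\dive(\omega f)=\p_rf+\tfrac{f}{|x|}$ and $\omega\cdot\nabla h=\p_rh$, the ``bad$\times$bad'' pieces $\Gamma^b\sigma\,\p_r\Gamma^c\sigma$ produced by $\Gamma^b\sigma\,\dive\tilde\Gamma^c(\omega\sigma)$ and by $(\omega\Gamma^b\sigma)\cdot\nabla\Gamma^c\sigma$ cancel exactly in $Q_1^{bc}$, and the analogous pieces $\Gamma^b\sigma\,\omega_i\p_r\Gamma^c\sigma$ cancel component-wise in $Q_2^{bc}$. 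What survives is, modulo bounded coefficients,
\[
Q_1^{bc}=\Gamma^b\sigma\,\dive\tilde\Gamma^cg-\tilde\Gamma^bg\cdot\nabla\Gamma^c\sigma+\tfrac1{|x|}\,\cR_1^{bc},\qquad
Q_2^{bc}=-\tilde\Gamma^bg\cdot\nabla\tilde\Gamma^cu+\tfrac1{|x|}\,\cR_2^{bc},
\]
where $\cR_i^{bc}$ is a finite sum of products of $\Gamma/\tilde\Gamma$-derivatives of $(\sigma,u,g)$ and of their first derivatives, of total order $\le m+1$ with each individual index $\le m$; expanding further $\nabla\tilde\Gamma^cu=\nabla\tilde\Gamma^cg+\omega\otimes\nabla\Gamma^c\sigma+\tfrac1{|x|}(\cdots)$ reduces $Q_2^{bc}$ to the same building blocks as $Q_1^{bc}$. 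The gain is that every surviving term carries a ``good'' factor: either $\nabla\tilde\Gamma g$ (decaying like $\w{|x|}^{-3/2}$ by Lemma~\ref{lem:good:pw}), or $\tilde\Gamma g$ itself (like $\w{|x|}^{-1}$), or an angular derivative $\tfrac1{|x|}\Omega$, or an explicit $\tfrac1{|x|}$; only $\nabla\Gamma^c\sigma$ retains the slower, light-cone-sensitive decay \eqref{pw:wave3}.

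Next I would split each bilinear term according to which factor carries fewer derivatives. Since the sum runs over $|b|+|c|\le m\le N-1$ and $N_1\ge13$, $N_1+2\le N\le 2N_1-11$, the smaller of $|b|,|c|$ is $\le N_1-6$, so the low-order factor sits at a level covered by \eqref{bootstrap}: by Corollary~\ref{coro:pw} and Lemma~\ref{lem:good:pw} its relevant pointwise norm is $\ls M\ve$ (for $\sigma$ or $u$), $\ls\w{|x|}^{-3/2}\cG_{N_1-5}$ (for $\nabla\tilde\Gamma g$), or $\ls\w{|x|}^{-1}(\cG_{N_1-5}+E_{N_1-5})$ (for $\tilde\Gamma g$), with $E_{N_1-5}\le M\ve\le1$. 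For the $g$-bilinear terms and the remainders $\tfrac1{|x|}\cR_i^{bc}$ the extra decay of the good factor absorbs the weight $\w{|x|}$: if the good factor is the high-order one, its $\w{|x|}$-weighted $L^2$ norm is $\ls\cG_m$ and the low factor supplies the small $M\ve$, giving $M\ve\cG_m$; if it is the low-order one, it supplies $\cG_{N_1-5}$ (or $1$) and the high factor supplies $E_{m+1}$, giving $E_{m+1}(1+\cG_{N_1-5})$; the purely $\tfrac1{|x|}$-gaining pieces only produce $M\ve E_{m+1}\le E_{m+1}$. For $\Gamma^b\sigma\,\dive\tilde\Gamma^cg$ the two arrangements give respectively $\|\Gamma^b\sigma\|_{L^\infty}\,\|\w{|x|}\dive\tilde\Gamma^cg\|_{L^2}\ls M\ve\,\cG_m$ and $\|\Gamma^b\sigma\|_{L^2}\,\|\w{|x|}\dive\tilde\Gamma^cg\|_{L^\infty}\ls E_{m+1}\,\cG_{N_1-5}$, the latter using $\w{|x|}^{3/2}|\nabla\tilde\Gamma^cg|\ls\cG_{|c|+2}\le\cG_{N_1-5}$; both fit the right side.

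The main obstacle is the term $\tilde\Gamma^bg\cdot\nabla\Gamma^c\sigma$ in the arrangement with $\tilde\Gamma^bg$ of high order (hence $|c|$ low), where $\nabla\Gamma^c\sigma$ must be placed in $L^\infty$ but $\w{|x|}\,|\nabla\Gamma^c\sigma|\ls\w{|x|}^{1/2}\w{|x|-t}^{-1}\{E_{|c|+3}+\cX_{|c|+3}\}+\w{|x|}^{-6}\bW_{|c|+1}$ is unbounded near $\{|x|=t\}$. Here one must not spend the full weight on $\nabla\Gamma^c\sigma$: after the split $\nabla\Gamma^c\sigma=\omega\,\p_r\Gamma^c\sigma+\tfrac{\omega^\perp}{|x|}\Omega\Gamma^c\sigma$ (the angular part has the $\tfrac1{|x|}$ and only produces $M\ve E_{m+1}$) and discarding the $\bW$-part of $\p_r\Gamma^c\sigma$ (fast $\w{|x|}^{-7}$ decay, producing $M\delta E_{m+1}\le E_{m+1}$), one is left with $M\ve\,\|\w{|x|}^{1/2}\w{|x|-t}^{-1}(\omega\cdot\tilde\Gamma^bg)\|_{L^2}$. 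I expect the decisive input to be the incoming-transport structure of the radial good unknown, $(\p_t-\p_r)(\omega\cdot g)=\tfrac1{|x|}(\cdots)+(\text{quadratic})$, obtained by combining the two equations in \eqref{reducedEuler}, which forces $\omega\cdot g$ to enjoy the extra $\w{|x|}^{-1}$-type decay of a good derivative even at top order; combined with the $\w{|x|-t}$-weighted divergence--curl inequality (second line of \eqref{div:curl:ineq}) applied to $\tilde\Gamma^bg$ (note $\w{|x|-t}\ls\w{|x|}$ on $|x|\ge\w{t}/8$) and a Hardy inequality in the radial variable to transfer the remaining weight, this should bound the radial piece by $M\ve\,\cG_m+E_{m+1}(1+\cG_{N_1-5})$. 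Assembling this with the symmetric treatment of $Q_2^{bc}$, summing over $|b|+|c|\le m$, and using $\delta\le\ve^{8/7}$ and $M(\ve+\kappa)\le1$ from \eqref{bootstrap} to absorb the cross terms, yields \eqref{Qbc:L2}.
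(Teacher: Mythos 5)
Your proposal reproduces the paper's overall strategy faithfully in almost every step: the substitution $u=g+\omega\sigma$ via \eqref{Gamma:sigma:omega}, the exact cancellation of the pure $\Gamma^b\sigma\,\p_r\Gamma^c\sigma$ pieces leaving only terms with a factor $g$, $\nabla g$, $\tfrac1{|x|}\Omega$, or $\tfrac1{|x|}$ (this is precisely the paper's expansion \eqref{null:Q1}--\eqref{null:Q2}); the high--low split based on $\min\{|b|,|c|\}\le N_1-7$; placing the low-order factor in $L^\infty$ via Corollary~\ref{coro:pw} and Lemma~\ref{lem:good:pw} and the high-order one in a weighted $L^2$ controlled by $\cG_m$ or $E_{m+1}$; and the identification of $\tilde\Gamma^bg\cdot\nabla\Gamma^c\sigma$ with $|b|$ large as the delicate term. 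All of that agrees with the paper's argument.

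Where you go astray is precisely at that delicate term. After putting $\nabla\Gamma^c\sigma$ in $L^\infty$ via \eqref{pw:wave1} you are left, as you say, with $M\ve\,\|\w{|x|}^{1/2}\w{|x|-t}^{-1}\tilde\Gamma^bg\|_{L^2(|x|\ge\w{t}/8)}$. Your proposed mechanism --- an ingoing transport law for $\omega\cdot g$ ``forcing extra $\w{|x|}^{-1}$ decay even at top order'', combined with the $\w{|x|-t}$-weighted divergence--curl inequality --- is not what closes this estimate and, as stated, does not work. One cannot extract extra pointwise decay of $\tilde\Gamma^bg$ at the top order $|b|\le m$ from a transport identity without already having control of the source terms at that order; and the second line of \eqref{div:curl:ineq} bounds $\nabla\tilde\Gamma^bg$, not $\tilde\Gamma^bg$ itself, so it points the wrong way for what you need. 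The paper's actual mechanism is much simpler and requires neither: first, on $\w{t}/8\le|x|\le4\w{t}/5$ one has $\w{|x|-t}\gtrsim\w{t}\gtrsim\w{|x|}$, so $\w{|x|}^{1/2}\w{|x|-t}^{-1}\ls\w{|x|}^{-1/2}$ and this region is directly bounded by $E_{|b|}(t)$; second, on $|x|\ge4\w{t}/5$ one writes $\w{r-t}^{-2}dr=d\arctan(r-t)$ and integrates by parts in $r$ against the bounded function $\arctan(r-t)$, as in \eqref{Qbc:L2:4}. After the cutoff and Cauchy--Schwarz the derivative lands on $\tilde\Gamma^bg$ and the radial weight, yielding
\begin{equation*}
\|\w{|x|}^{1/2}\w{|x|-t}^{-1}\tilde\Gamma^bg\|^2_{L^2(|x|\ge4\w{t}/5)}
\ls\cG_{|b|}^2(t)+E_{|b|}^2(t),
\end{equation*}
which, multiplied by $M\ve$ and summed, fits into $M\ve\cG_m(t)+E_{m+1}(t)$. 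Your instinct that ``a Hardy inequality in the radial variable'' is what is needed is essentially correct --- this integration by parts \emph{is} the radial Hardy inequality at the light cone --- but it works directly on $\tilde\Gamma^bg$ and its $\w{|x|}$-weighted radial derivative (which is exactly $\cG_{|b|}$); the transport equation and the divergence--curl inequality play no role, and dropping them gives the clean proof.
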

\begin{proof}
According to \eqref{goodunknown:def} and equalities \eqref{Gamma:sigma:omega}, one easily gets
\begin{equation}\label{null:structure}
\begin{split}
\tilde\Gamma^bu_i&=\tilde\Gamma^bg_i+\omega_i\Gamma^b\sigma
+\frac{1}{|x|}\sum_{b_1+b_2\le b}f_i^{b,b_1}(x)\Gamma^{b_2}\sigma,\\
\p_j\tilde\Gamma^cu_i&=\p_j\tilde\Gamma^cg_i+\omega_i\p_j\Gamma^c\sigma
+\frac{1}{|x|}\sum_{c_1+c_2\le c}\Big[f_i^{c,c_1}(x)\p_j\Gamma^{c_2}\sigma
+f_{ij}^{c,c_1}(x)\Gamma^{c_2}\sigma\Big].
\end{split}
\end{equation}
Substituting \eqref{radial:angular} and \eqref{null:structure} into \eqref{Qbc:def} yields
\begin{equation}\label{null:Q1}
\begin{split}
Q_1^{bc}&=\Gamma^b\sigma\Big\{\p_i\tilde\Gamma^cg_i
+\frac{1}{|x|}\sum_{c_1+c_2\le c}\Big[f_i^{c,c_1}(x)\p_i\Gamma^{c_2}\sigma
+f_{ii}^{c,c_1}(x)\Gamma^{c_2}\sigma\Big]\Big\}\\
&\quad-\p_i\Gamma^c\sigma\Big\{\tilde\Gamma^bg_i
+\frac{1}{|x|}\sum_{b_1+b_2\le b}f_i^{b,b_1}(x)\Gamma^{b_2}\sigma\Big\},
\end{split}
\end{equation}
and
\begin{equation}\label{null:Q2}
\begin{split}
Q_{2i}^{bc}&=\frac{1}{|x|}\Gamma^b\sigma\Omega\Gamma^c\sigma
-\omega_i\p_j\Gamma^c\sigma\Big\{\tilde\Gamma^bg_j
+\frac{1}{|x|}\sum_{b_1+b_2\le b}f_j^{b_1}(x)\Gamma^{b_2}\sigma\Big\}\\
&\quad-\tilde\Gamma^bu_j\Big\{\p_j\tilde\Gamma^cg_i
+\frac{1}{|x|}\sum_{c_1+c_2\le c}\Big[f_i^{c_1}(x)\p_j\Gamma^{c_2}\sigma
+f_{ij}^{c_1}(x)\Gamma^{c_2}\sigma\Big]\Big\}.
\end{split}
\end{equation}
By applying the pointwise estimates \eqref{pw:wave3} to the terms that containing the factor
$\frac{1}{|x|}$ in \eqref{null:Q1} and \eqref{null:Q2} directly, we obtain that
\begin{equation}\label{Qbc:L2:1}
\begin{split}
\|\w{|x|}(|Q_1^{bc}|+|Q_2^{bc}|)\|_{L^2(|x|\ge\w{t}/8)}
&\ls E_{m+1}(t)+\|\w{|x|}|\nabla\tilde\Gamma^cg|
(|\Gamma^b\sigma|+|\tilde\Gamma^bu|)\|_{L^2(|x|\ge\w{t}/8)}\\
&\quad+\|\w{|x|}\tilde\Gamma^bg\nabla\Gamma^c\sigma\|_{L^2(|x|\ge\w{t}/8)}.
\end{split}
\end{equation}
At last, by the virtue of the estimates in Lemma \ref{lem:good:L2norm} and \ref{lem:good:pw},
we will deal with the remaining terms
in \eqref{null:Q1} and \eqref{null:Q2}.

Due to $|b|+|c|\le m\le N-1\le2N_1-12$, then $|b|\le N_1-6$ or $|c|\le N_1-7$ holds.
For $|b|\le N_1-6$, it follows from \eqref{bootstrap} and \eqref{pw:wave3} that
\begin{equation*}
\Big\||\Gamma^b\sigma|+|\tilde\Gamma^bu|\Big\|_{L^\infty(|x|\ge\w{t}/8)}
\ls\w{t}^{-\frac12}\big\{E_{|b|+2}(t)+\cX_{|b|+2}(t)+\bW_{|b|+1}(t)\big\}\ls M\ve.
\end{equation*}
This together with \eqref{good:pw} implies that
\begin{equation}\label{Qbc:L2:2}
\begin{split}
&\|\w{|x|}\tilde\Gamma^bg\nabla\Gamma^c\sigma\|_{L^2(|x|\ge\w{t}/8)}
+\|\w{|x|}|\nabla\tilde\Gamma^cg|(|\Gamma^b\sigma|+|\tilde\Gamma^bu|)
\|_{L^2(|x|\ge\w{t}/8)}\\
&\ls E_{|c|+1}(t)[\cG_{|b|+1}(t)+E_{|b|+1}(t)]+M\ve\cG_{|c|}(t)\\
&\ls M\ve\cG_m(t)+E_{m+1}(t)[1+\cG_{N_1-5}(t)].
\end{split}
\end{equation}
For $|c|\le N_1-7$, by \eqref{pw:wave1}, we have
\begin{equation*}
\begin{split}
\|\nabla\Gamma^c\sigma\|_{L^\infty(|x|\ge\w{t}/8)}
&\ls\w{|x|}^{-\frac12}\w{|x|-t}^{-1}\big\{E_{|c|+3}(t)+\cX_{|c|+3}(t)\big\}\\
&\ls M\ve\w{|x|}^{-\frac12}\w{|x|-t}^{-1}.
\end{split}
\end{equation*}
Therefore,
\begin{equation}\label{Qbc:L2:3}
\begin{split}
&\|\w{|x|}\tilde\Gamma^bg\nabla\Gamma^c\sigma\|_{L^2(|x|\ge\w{t}/8)}
+\|\w{|x|}|\nabla\tilde\Gamma^cg|(|\Gamma^b\sigma|+|\tilde\Gamma^bu|)
\|_{L^2(|x|\ge\w{t}/8)}\\
&\ls E_{|b|}(t)[\cG_{|c|+2}(t)+E_{|c|+2}(t)]+E_{|b|}(t)
+M\ve\|\w{|x|}^\frac12\w{|x|-t}^{-1}\tilde\Gamma^bg\|_{L^2(|x|\ge4\w{t}/5)}.
\end{split}
\end{equation}
For the last term in \eqref{Qbc:L2:3}, performing the integration by parts for the radial direction yields
\begin{equation}\label{Qbc:L2:4}
\begin{split}
&\|\w{|x|}^\frac12\w{|x|-t}^{-1}\tilde\Gamma^bg\|^2_{L^2(|x|\ge4\w{t}/5)}\\
&\ls\int_0^\infty\int_{\SS^1}|\tilde\Gamma^bg(t,r\omega)|^2\w{r}r
\Big[1-\chi\big(\frac{r}{\w{t}}\big)\Big]d\omega d\arctan(r-t)\\
&\ls\cG^2_{|b|}(t)+E^2_{|b|}(t),
\end{split}
\end{equation}
where the cutoff function $\chi$ is defined by \eqref{cutoff}.

Substituting \eqref{Qbc:L2:2}--\eqref{Qbc:L2:4} into \eqref{Qbc:L2:1} derives \eqref{Qbc:L2}.
This completes the proof of Lemma~\ref{lem:Qbc:L2}.
\end{proof}

Combining Lemma \ref{lem:good:L2norm}--\ref{lem:Qbc:L2}, we obtain the following result.
\begin{corollary}\label{coro:good}
Under bootstrap assumptions \eqref{bootstrap}, for $|a|\le N-1$, $|b|\le N-2$ and $|x|\ge\w{t}/8$, it holds that
\begin{equation}\label{good:pw1}
\begin{split}
\w{|x|}|\tilde\Gamma^ag(t,x)|&\ls E_{|a|+1}(t)+\cX_{|a|+1}(t)+\cW_{|a|}(t),\\
\w{|x|}^\frac32|\nabla\tilde\Gamma^bg(t,x)|&\ls E_{|b|+2}(t)+\cX_{|b|+2}(t)+\cW_{|b|+1}(t).
\end{split}
\end{equation}
Moreover, for any integer $m$ with $0\le m\le N-1$, it holds that
\begin{equation}\label{Qbc:L2'}
\sum_{|b|+|c|\le m}\|\w{|x|}(|Q_1^{bc}|+|Q_2^{bc}|)\|_{L^2(|x|\ge\w{t}/8)}
\ls E_{m+1}(t)+M\ve[\cX_{m+1}(t)+\cW_m(t)].
\end{equation}
\begin{proof}
It concludes from \eqref{good:L2norm} and \eqref{Qbc:L2} with $m=N_1-5$, \eqref{bootstrap} and the smallness of $M\ve$ that
\begin{equation*}
\cG_{N_1-5}(t)\ls E_{N_1-4}(t)[1+\cG_{N_1-5}(t)]+\cX_{N_1-4}(t)+\cW_{N_1-5}(t),
\end{equation*}
which implies $\cG_{N_1-5}(t)\ls M\ve$.
Together with \eqref{good:L2norm} and \eqref{Qbc:L2} again, this yields
\begin{equation}\label{goodL2norm2}
\cG_m(t)\ls E_{m+1}(t)+\cX_{m+1}(t)+\cW_m(t).
\end{equation}
Substituting \eqref{goodL2norm2} into \eqref{good:pw} and \eqref{Qbc:L2} completes the proof of Corollary~\ref{coro:good}.
\end{proof}

\end{corollary}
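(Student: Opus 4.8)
The plan is to first pin down the \emph{low-order} weighted gradient norm $\cG_{N_1-5}(t)$ of the good unknown by an absorption argument, then to propagate the resulting bound to all orders $m\le N-1$, and finally to read off the pointwise estimates \eqref{good:pw1} and the sharpened bilinear bound \eqref{Qbc:L2'} by substitution. For the first step I would combine Lemma~\ref{lem:good:L2norm}, which controls $\cG_m(t)$ by $E_{m+1}(t)+\cX_{m+1}(t)+\cW_m(t)$ plus the weighted $L^2(|x|\ge\w{t}/8)$ norm of the terms $Q_1^{bc}$, with the bilinear estimate of Lemma~\ref{lem:Qbc:L2}, specialized to $m=N_1-5$. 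This produces a closed inequality of the schematic form
\[
\cG_{N_1-5}(t)\ls E_{N_1-4}(t)\bigl[1+\cG_{N_1-5}(t)\bigr]+\cX_{N_1-4}(t)+\cW_{N_1-5}(t).
\]
Since the bootstrap assumptions \eqref{bootstrap} give $E_{N_1-4}(t)+\cX_{N_1-4}(t)\le M\ve$ and $\cW_{N_1-5}(t)\le M\delta\le M\ve$, and $M\ve$ is small, the term $E_{N_1-4}(t)\,\cG_{N_1-5}(t)$ can be absorbed into the left-hand side, which yields $\cG_{N_1-5}(t)\ls M\ve$.

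With this low-order bound available, the only genuinely bilinear term in Lemma~\ref{lem:Qbc:L2}, namely $E_{m+1}(t)\,\cG_{N_1-5}(t)$, becomes $\ls M\ve\,E_{m+1}(t)$, so for every $m\le N-1$ that lemma reads
\[
\sum_{|b|+|c|\le m}\|\w{|x|}(|Q_1^{bc}|+|Q_2^{bc}|)\|_{L^2(|x|\ge\w{t}/8)}\ls E_{m+1}(t)+M\ve\,\cG_m(t).
\]
Plugging this into Lemma~\ref{lem:good:L2norm} and once more absorbing $M\ve\,\cG_m(t)$ into the left side by smallness gives the linear estimate $\cG_m(t)\ls E_{m+1}(t)+\cX_{m+1}(t)+\cW_m(t)$. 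Substituting this back into the displayed bound for $Q_1^{bc}$ and $Q_2^{bc}$ yields \eqref{Qbc:L2'}, and substituting it into the pointwise estimates of Lemma~\ref{lem:good:pw} yields \eqref{good:pw1}.

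The main obstacle is precisely the bilinear coupling $E_{m+1}(t)\,\cG_{N_1-5}(t)$ appearing in Lemma~\ref{lem:Qbc:L2}: it ties the top-order energy to the low-order norm of $g$, so the high-order estimate cannot be closed in one shot. The remedy is the multi-tier structure of the bootstrap \eqref{bootstrap}, where only the \emph{low}-order energy $E_{N_1-4}$ is required to be $O(M\ve)$, which is exactly what is needed to close $\cG_{N_1-5}$ first by absorption, after which the high-order bound becomes linear. One should also keep the restriction $|x|\ge\w{t}/8$ in force throughout, since it makes $|x|$ and $t$ comparable and thereby lets the pointwise decay estimates \eqref{pw:wave3}, \eqref{pw:wave1} control the $\frac{1}{|x|}$-type terms and the $\nabla\Gamma^c\sigma$ factors hidden inside $Q_1^{bc}$ and $Q_2^{bc}$.
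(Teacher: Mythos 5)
Your proposal matches the paper's own proof essentially line for line: you first close $\cG_{N_1-5}(t)\ls M\ve$ by absorption using the low-order tier of the bootstrap, then feed this into Lemma~\ref{lem:Qbc:L2} and Lemma~\ref{lem:good:L2norm} to obtain the linear estimate $\cG_m(t)\ls E_{m+1}(t)+\cX_{m+1}(t)+\cW_m(t)$, and finally substitute into Lemma~\ref{lem:good:pw} and \eqref{Qbc:L2} to deduce \eqref{good:pw1} and \eqref{Qbc:L2'}. Your identification of the multi-tier bootstrap as the mechanism that makes the low-order absorption possible is exactly the point the paper is exploiting.
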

\subsection{Estimates of the auxiliary energy $\cX_m(t)$}
\begin{lemma}[Weighted $\dot H_x^1$]\label{lem:H1norm}
Under bootstrap assumptions \eqref{bootstrap}, for any integer $m$ with $1\le m\le N$, it holds that
\begin{equation}\label{H1norm}
\cX_m(t)\ls E_m(t)+\cW_{m-1}(t).
\end{equation}
\end{lemma}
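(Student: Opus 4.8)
The plan is to express each of the four quantities appearing in $\cX_m(t)$ — namely $\w{|x|-t}\p_t\tilde\Gamma^au$, $\w{|x|-t}\dive\tilde\Gamma^au$, $\w{|x|-t}\nabla\Gamma^a\sigma$ and $\w{|x|-t}\p_t\Gamma^a\sigma$ for $|a|\le m-1$ — in terms of zeroth-order quantities plus the vorticity, using the equations \eqref{high:eqn} and the div-curl inequality \eqref{div:curl:ineq}. First I would handle the time derivatives: from the first equation of \eqref{high:eqn} we have $\p_t\Gamma^a\sigma=-\dive\tilde\Gamma^au+\cQ_1^a$, and from the second $\p_t\tilde\Gamma^au=-\nabla\Gamma^a\sigma+\cQ_2^a$. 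Hence, after multiplying by $\w{|x|-t}$ and taking $L^2_x$, the $\p_t$-terms are controlled by the $\dive\tilde\Gamma^au$ and $\nabla\Gamma^a\sigma$ terms (which are already inside $\cX_m$ at the same order) plus the weighted $L^2$ norms of the quadratic remainders $\cQ_1^a,\cQ_2^a$. So it suffices to bound $\sum_{|a|\le m-1}\|\w{|x|-t}(\dive\tilde\Gamma^au,\nabla\Gamma^a\sigma)\|_{L^2_x}$.

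Next, for the spatial part I would apply the second inequality in \eqref{div:curl:ineq} to the vector field $\tilde\Gamma^au$: $\|\w{|x|-t}\nabla\tilde\Gamma^au\|_{L^2}\ls\|\tilde\Gamma^au\|_{L^2}+\|\w{|x|-t}\dive\tilde\Gamma^au\|_{L^2}+\|\w{|x|-t}\curl\tilde\Gamma^au\|_{L^2}$. The first term on the right is $\le E_m(t)$. For the curl term, since $\curl\tilde\Gamma^au=\Gamma^a\curl u$ by the commutation relations \eqref{rotation:commutation} (together with $\tilde\Gamma$ acting appropriately through $\cS$ and $\tilde\Omega$), and $\w{|x|-t}\ls\w{|x|}$ is false in general — instead I should use that near the light cone the weight is controlled and away from it one uses $\w{|x|}\gtrsim\w{|x|-t}$ when $|x|\gtrsim t$, while for $|x|\lesssim t$ one bounds $\w{|x|-t}\lesssim\w{t}\lesssim\w{|x|}\cdot(\text{something})$; more cleanly, the vorticity energy $\cW_{m-1}$ carries the weight $\w{|x|}$, and one checks $\w{|x|-t}\Gamma^a\curl u$ can be absorbed by $\cW_{m-1}(t)$ plus lower-order contributions, possibly after splitting into $|x|\le\w t/2$ and $|x|\ge\w t/2$. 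This is where I need the crude bound relating $\w{|x|-t}$ and $\w{|x|}$ on each region. Similarly, $\Gamma^a\sigma$ being scalar, $\nabla\Gamma^a\sigma$ is handled by noting $\dive$ and $\curl$ of $\nabla\Gamma^a\sigma$ are $\Delta\Gamma^a\sigma$ and $0$; but it is simpler to observe that $\nabla\Gamma^a\sigma=-\p_t\tilde\Gamma^{a}u+\cQ_2^a$ reduces it to the time-derivative case already treated, closing the loop — so one orders the estimates to avoid circularity by first bounding $\dive\tilde\Gamma^au$ via div-curl (which needs only $E_m$, the curl, and the quadratic terms), then $\nabla\Gamma^a\sigma$ and the $\p_t$ terms via the equations.

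Finally I would estimate the quadratic remainders $\cQ_1^a=\sum_{b+c=a}C^a_{bc}Q_1^{bc}$ and $\cQ_2^a$: by \eqref{Qbc:def} each term is a product of a first-order quantity and a first-order-derivative quantity, and using the pointwise estimates of Corollary \ref{coro:pw} (in particular the $\w{|x|}^{-1/2}\w{|x|-t}^{-1/2}$ decay for the low-order factor under the bootstrap \eqref{bootstrap}) one gains a factor $M\ve$ times $\cX_m(t)$ or $E_m(t)$, so these terms are absorbed into the left-hand side by the smallness $M\ve\le1$. The main obstacle I anticipate is the bookkeeping near and inside the light cone for the curl term — making sure that $\|\w{|x|-t}\Gamma^a\curl u\|_{L^2}$ is genuinely dominated by $\cW_{m-1}(t)=\sum_{|a|\le m-1}\|\w{|x|}\Gamma^a\curl u\|_{L^2}$ (true since the bootstrap forces the vorticity to be concentrated away from very large $|x|$, or simply since $\w{|x|-t}\le\w{|x|}+\w t$ and the $\w t$ part is again absorbed using the extra decay of $\curl u$ from \eqref{pw:curl}/\eqref{pw:curl:low}), and correctly tracking that only $|a|\le m-1$ derivatives of the vorticity enter, matching the index $\cW_{m-1}$ in the statement.
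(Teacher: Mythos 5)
Your plan contains a genuine gap at its centerpiece: there is no mechanism in it that produces the weight $\w{|x|-t}$ on the first-order derivatives. You correctly note that the equations \eqref{high:eqn} turn $\p_t\Gamma^a\sigma$ into $-\dive\tilde\Gamma^au+\cQ_1^a$ and $\p_t\tilde\Gamma^au$ into $-\nabla\Gamma^a\sigma+\cQ_2^a$, and that the div-curl inequality \eqref{div:curl:ineq} relates $\nabla\tilde\Gamma^au$ to $\dive\tilde\Gamma^au$, $\curl\tilde\Gamma^au$ and $\tilde\Gamma^au$. But these steps only trade one first-order derivative for another \emph{at the same weight}; they never generate $\w{|x|-t}$ out of $E_m(t)$, which is an unweighted $L^2$ norm. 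When you write ``first bounding $\dive\tilde\Gamma^au$ via div-curl,'' this is not something div-curl gives you: \eqref{div:curl:ineq} bounds $\nabla U$ \emph{by} $\dive U$ and $\curl U$, not the other way around, and even if you try the equation $\dive\tilde\Gamma^au=-\p_t\Gamma^a\sigma+\cQ_1^a$ you are back to the circularity you were trying to avoid. At the end of the loop nothing controls $\|\w{|x|-t}\dive\tilde\Gamma^au\|_{L^2}$ or $\|\w{|x|-t}\nabla\Gamma^a\sigma\|_{L^2}$ in terms of $E_m+\cW_{m-1}$.

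The missing idea is the algebraic weighted identities \eqref{weighted:identity1}--\eqref{weighted:identity2} (in the spirit of Sideris \cite{Sideris97}). Using $r\p_r=\cS-t\p_t$, $x_j\p_i-x_i\p_j=\eps_{ji}\Omega$, and the observation $\p_j U_i-\p_i U_j=\eps_{ji}\curl U$, one rewrites $(|x|^2-t^2)\p_t\tilde\Gamma^au_i$, $(|x|^2-t^2)\p_t\Gamma^a\sigma$, $(|x|^2-t^2)\p_i\Gamma^a\sigma$, $(|x|^2-t^2)\dive\tilde\Gamma^au$ \emph{exactly} as sums of $\cS\Gamma^a\sigma$, $\Omega\Gamma^a\sigma$, $\cS\tilde\Gamma^au$, $\Omega\tilde\Gamma^au$, terms $tx_j\eps_{ji}\Gamma^a\curl u$, and terms $|x|^2\cQ^a$, $tx_i\cQ^a$. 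Dividing by $|x|+t$ and taking $L^2_x$ produces the weight $||x|-t|\ls\w{|x|-t}$ on the left, while on the right the $\cS$, $\Omega$ terms are bounded by $E_m(t)$, the $tx/(|x|+t)$ prefactor on the curl term is $\le\w{|x|}$ so it is bounded by $\cW_{m-1}(t)$ (no region-splitting needed for the curl, unlike what you proposed), and the quadratic remainders are handled essentially as you anticipate — by the pointwise bootstrap bounds and absorption via $M\ve\ll1$, splitting into $|x|\ge\w{t}/8$ (Corollary \ref{coro:good}) and $|x|\le\w{t}/8$ (estimates \eqref{pw:wave3}--\eqref{pw:wave4}, \eqref{div:curl:ineq}). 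Your treatment of the time derivatives, the quadratic remainders, and the index bookkeeping is fine, but without the weighted identities the argument does not close.
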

\begin{proof}
For $|a|\le m-1$, it follows from direct computations and equations \eqref{high:eqn} that
\begin{equation}\label{weighted:identity1}
\begin{split}
(|x|^2-t^2)\p_t\tilde\Gamma^au_i&=|x|^2(\cQ^a_{2i}-\p_i\Gamma^a\sigma)
-t\cS\tilde\Gamma^au_i+tx_j\p_j\tilde\Gamma^au_i\\
&=|x|^2\cQ^a_{2i}-x_j(x_j\p_i-x_i\p_j)\Gamma^a\sigma-x_i\cS\Gamma^a\sigma
+tx_i\p_t\Gamma^a\sigma-t\cS\tilde\Gamma^au_i\\
&\quad+tx_j(\p_j\tilde\Gamma^au_i-\p_i\tilde\Gamma^au_j)
+t(x_j\p_i-x_i\p_j)\tilde\Gamma^au_j+tx_i\dive\tilde\Gamma^au\\
&=|x|^2\cQ^a_{2i}-x_j\eps_{ji}\Omega\Gamma^a\sigma-x_i\cS\Gamma^a\sigma
+tx_i\cQ_1^a-t\cS\tilde\Gamma^au_i\\
&\quad+tx_j\eps_{ji}\Gamma^a\curl u+t\eps_{ji}\Omega(\tilde\Gamma^au_j).
\end{split}
\end{equation}
Here we point out that the main difference between \eqref{weighted:identity1} and the analogous equality of $\p_tP_1\tilde\Gamma^au_i$ in \cite{Sideris97} lies in the presence of the vorticity $\Gamma^a\curl u$ in \eqref{weighted:identity1}.

On the other hand, we can obtain
\begin{equation}\label{weighted:identity2}
\begin{split}
(|x|^2-t^2)\p_t\Gamma^a\sigma&=|x|^2\cQ^a_1-x_j\eps_{ji}\Omega(\tilde\Gamma^au_i)
-x_i\cS\tilde\Gamma^au_i-t\cS\Gamma^a\sigma+tx_i\cQ^a_{2i},\\
(|x|^2-t^2)\p_i\Gamma^a\sigma&=x_j\eps_{ji}\Omega\Gamma^a\sigma+x_i\cS\Gamma^a\sigma
-tx_i\cQ^a_1-t^2\cQ^a_{2i}+t\cS\tilde\Gamma^au_i\\
&\quad+tx_j\eps_{ij}\Gamma^a\curl u+t\eps_{ij}\Omega(\tilde\Gamma^au_j),\\
(|x|^2-t^2)\dive\tilde\Gamma^au&=x_j\eps_{ji}\Omega(\tilde\Gamma^au_i)
+x_i\cS\tilde\Gamma^au_i-tx_i\cQ^a_{2i}-t^2\cQ^a_1+t\cS\Gamma^a\sigma.
\end{split}
\end{equation}
In view of $\w{|x|-t}\ls1+||x|-t|$, by dividing $|x|+t$ and then taking $L_x^2$ norm on the both sides of \eqref{weighted:identity1} and \eqref{weighted:identity2}, we arrive at
\begin{equation}\label{H1norm1}
\cX_m(t)\ls E_m(t)+\cW_{m-1}(t)+\sum_{|b|+|c|\le m-1}
\|\w{|x|+t}(|Q_1^{bc}|+|Q_2^{bc}|)\|_{L_x^2},
\end{equation}
where $Q_1^{bc},Q_2^{bc}$ are defined in \eqref{Qbc:def}.

We next investigate the $L_x^2$ norms of $Q_1^{bc}$ and $Q_2^{bc}$, which are divided into the
two parts of $|x|\ge\w{t}/8$ and $|x|\le\w{t}/8$.

It is easy to deduce from \eqref{Qbc:L2'} that
\begin{equation}\label{H1norm2}
\sum_{|b|+|c|\le m-1}\|\w{|x|+t}(|Q_1^{bc}|+|Q_2^{bc}|)\|_{L^2(|x|\ge\w{t}/8)}
\ls E_m(t)+\cW_{m-1}(t)+M\ve\cX_m(t).
\end{equation}
We now deal with $\|\w{|x|-t}(|Q_1^{bc}|+|Q_2^{bc}|)\|_{L^2(|x|\le\w{t}/8)}$. In fact,
only $\tilde\Gamma^bu\cdot\nabla\tilde\Gamma^cu$ requires to be treated  since the treatments on the other terms $\Gamma^b\sigma\dive\tilde\Gamma^cu$, $\tilde\Gamma^bu\cdot\nabla\Gamma^c\sigma$,
$\Gamma^b\sigma\nabla\Gamma^c\sigma$ in $Q_1^{bc}$ and $Q_2^{bc}$ are analogous.

Similarly to Lemma~\ref{lem:Qbc:L2}, it always holds that $|b|\le N_1-5$ or $|c|\le N_1-7$.
For the case of $|c|\le N_1-7$, applying \eqref{pw:wave3} to $\nabla\tilde\Gamma^cu$ leads to
\begin{equation}\label{H1norm3}
\|\w{|x|-t}\tilde\Gamma^bu\cdot\nabla\tilde\Gamma^cu\|_{L^2(|x|\le\w{t}/8)}
\ls E_{|b|}(t)[\w{t}\bW_{|c|+1}(t)+E_{|c|+3}(t)+\cX_{|c|+3}(t)]\ls E_m(t),
\end{equation}
where we have used assumptions \eqref{bootstrap}.

For the case of $|b|\le\min\{N_1-5,m-2\}$, by utilizing \eqref{pw:wave4} to $\tilde\Gamma^bu$, we can see that
\begin{equation}\label{H1norm4}
\begin{split}
&\quad\|\w{|x|-t}\tilde\Gamma^bu\cdot\nabla\tilde\Gamma^cu\|_{L^2(|x|\le\w{t}/8)}\\
&\ls\|\tilde\Gamma^bu\|_{L^\infty(|x|\le\w{t}/8)}
\Big\|\w{|x|-t}\chi\big(\frac{|x|}{\w{t}}\big)\nabla\Gamma^cu\Big\|_{L^2}\\
&\ls\Big\|\w{|x|-t}\chi\big(\frac{|x|}{\w{t}}\big)\nabla\Gamma^cu\Big\|_{L^2}
\Big\{\bW_{|b|+1}(t)+\w{t}^{-1}\ln^\frac12(2+t)[E_{|b|+2}(t)+\cX_{|b|+2}(t)]\Big\}\\
&\ls\Big\|\w{|x|-t}\chi\big(\frac{|x|}{\w{t}}\big)\nabla\Gamma^cu\Big\|_{L^2}
\Big\{\bW_{N_1-4}(t)+\w{t}^{-1}\ln^\frac12(2+t)[E_m(t)+\cX_m(t)]\Big\}.
\end{split}
\end{equation}
In addition, it follows from \eqref{div:curl:ineq} that
\begin{equation}\label{H1norm5}
\begin{split}
&\quad\Big\|\w{|x|-t}\chi\big(\frac{|x|}{\w{t}}\big)\nabla\Gamma^cu\Big\|_{L^2}\\
&\ls E_{|c|}(t)+\Big\|\w{|x|-t}\nabla\Big(\chi\big(\frac{|x|}{\w{t}}\big)
\Gamma^cu\Big)\Big\|_{L^2}\\
&\ls E_{|c|}(t)+\|\w{|x|-t}\dive\Gamma^cu\|_{L^2}
+\w{t}\|\w{|x|}\curl\Gamma^cu\|_{L^2}\\
&\ls E_{|c|}(t)+\cX_{|c|+1}(t)+\w{t}\cW_{|c|}(t)\\
&\ls E_m(t)+\cX_m(t)+\w{t}\cW_{m-1}(t).
\end{split}
\end{equation}
Substituting \eqref{H1norm5} into \eqref{H1norm4} derives
\begin{equation}\label{H1norm6}
\begin{split}
&\quad\|\w{|x|-t}\tilde\Gamma^bu\cdot\nabla\tilde\Gamma^cu\|_{L^2(|x|\le\w{t}/8)}\\
&\ls\big\{E_m(t)+\cX_m(t)+\w{t}\cW_{m-1}(t)\big\}
\big\{M\delta+\w{t}^{-1}\ln^\frac12(2+t)[E_m(t)+\cX_m(t)]\big\}\\
&\ls E_m(t)+M\ve\cX_m(t)+\cW_{m-1}(t),
\end{split}
\end{equation}
where we have also used assumptions \eqref{bootstrap}.

Collecting \eqref{H1norm3} and \eqref{H1norm6} yields that
\begin{equation}\label{H1norm7}
\sum_{|b|+|c|\le m-1}
\|\w{|x|+t}(|Q_1^{bc}|+|Q_2^{bc}|)\|_{L^2(|x|\le\w{t}/8)}
\ls E_m(t)+M\ve\cX_m(t)+\cW_{m-1}(t).
\end{equation}
By combining \eqref{H1norm2} and \eqref{H1norm7}, we eventually achieve
\begin{equation}\label{H1norm8}
\sum_{|b|+|c|\le m-1}
\|\w{|x|+t}(|Q_1^{bc}|+|Q_2^{bc}|)\|_{L_x^2}\ls E_m(t)+M\ve\cX_m(t)+\cW_{m-1}(t).
\end{equation}
Plugging \eqref{H1norm8} into \eqref{H1norm1} with the smallness of $M\ve$, then \eqref{H1norm} is proved.
\end{proof}

\section{Improved pointwise estimates}\label{sect4}
Note that the decay rate of the irrotational part $P_1u$ of the velocity $u$
is merely $\ve\w{t}^{-1}$   away from the light cone  (see Lemma \ref{lem:pw}).
This is far to achieve the desired lifespan $T_{\dl}=O(\frac{1}{\delta})$, for examples, when
$\dl=e^{-\f{1}{\ve^2}}$ or $\dl=e^{-e^{\f{1}{\ve^2}}}$ are chosen, whose reason has been explained in
Section 1. It is required to improve the related pointwise estimates in Section \ref{sect3}.

\subsection{Improved pointwise estimates of the first order derivatives of $(\sigma, u)$}
In this subsection, by the virtue of the weighted identities \eqref{good:L2norm1}, \eqref{weighted:identity1} and \eqref{weighted:identity2}, the pointwise estimates of $\nabla\tilde\Gamma^au$, $\nabla\Gamma^a\sigma$ in \eqref{pw:wave3}, \eqref{pw:wave4} and $\nabla\tilde\Gamma^ag$ in \eqref{good:pw1} can be improved as follows.
\begin{lemma}\label{lem:pw:improve}
Under bootstrap assumptions \eqref{bootstrap}, if $|a|\le N_1-1$, then for $|x|\ge\w{t}/8$, it holds that
\begin{equation}\label{pw:wave5}
\begin{split}
|\p_t\tilde\Gamma^au(t,x)|+|\p_t\Gamma^a\sigma(t,x)|
+|\dive\tilde\Gamma^au(t,x)|+|\nabla\Gamma^a\sigma(t,x)|
&\ls M\ve\w{|x|}^{2M'\ve-\frac12}\w{|x|-t}^{-\frac32},\\
|\nabla\tilde\Gamma^au(t,x)|
&\ls M\ve\w{|x|}^{2M'\ve-\frac12}\w{|x|-t}^{-\frac32},
\end{split}
\end{equation}
and
\begin{equation}\label{good:pw2}
|\nabla\tilde\Gamma^ag(t,x)|\ls M\ve\w{|x|}^{2M'\ve-\frac32}\w{|x|-t}^{-\frac12}.
\end{equation}
On the other hand, for $|x|\le3\w{t}/4$, we have
\begin{equation}\label{pw:wave6}
\begin{split}
|\p_t\tilde\Gamma^au(t,x)|+|\p_t\Gamma^a\sigma(t,x)|
&\ls M\delta\w{t}^{M'\ve-1}+M\ve\w{t}^{2M'\ve-2}\ln(2+t),\\
|\dive\tilde\Gamma^au(t,x)|+|\nabla\Gamma^a\sigma(t,x)|
&\ls M\delta\w{t}^{M'\ve-1}+M\ve\w{t}^{2M'\ve-2}\ln(2+t),\\
\w{|x|}|\nabla\tilde\Gamma^au(t,x)|
&\ls M\delta\w{t}^{M'\ve}+M\ve\w{t}^{2M'\ve-1}\ln(2+t).
\end{split}
\end{equation}

\end{lemma}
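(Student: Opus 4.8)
The plan is to exploit the three weighted identities already derived in the paper—\eqref{good:L2norm1} for $r\p_r\tilde\Gamma^a g_i$, \eqref{weighted:identity1} for $(|x|^2-t^2)\p_t\tilde\Gamma^a u_i$, and \eqref{weighted:identity2} for $(|x|^2-t^2)\p_i\Gamma^a\sigma$, $(|x|^2-t^2)\dive\tilde\Gamma^a u$—and to estimate their right-hand sides pointwise rather than in $L^2$. Each right-hand side is a linear combination of: lower-order vector-field-differentiated quantities (handled by the crude estimates \eqref{pw:wave3}, \eqref{pw:wave3'} and, in the interior, \eqref{pw:wave4}), the vorticity terms $\Gamma^a\curl u$ (handled by \eqref{pw:curl} and \eqref{pw:curl:low} via $\cW$, $\bW$, which are of size $M\delta$), the good-derivative combinations $\cS\Gamma^a\sigma$, $\cS\tilde\Gamma^a u_i$, $\Omega(\tilde\Gamma^a u_j)$, $\omega_i(t-|x|)\p_t\Gamma^a\sigma$ (one extra vector field, so bounded by $E_{|a|+1}+\cX_{|a|+1}$ type quantities), and the quadratic nonlinearities $\cQ_1^a$, $\cQ_2^a$. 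The key point is that all the genuinely ``bad'' contributions either carry a factor $t-|x|$ that cancels one power of $\w{|x|-t}^{-1}$, or are quadratic and hence already decay fast enough by the bootstrap. So the scheme is: (i) in the exterior region $|x|\ge\w{t}/8$, divide the second-order identities by $|x|+t\sim\w{|x|}$ and use $\w{|x|}^{-1}\w{|x|-t}\lesssim \w{|x|-t}^{-1}$ near the cone while away from the cone the factor $(|x|^2-t^2)$ is itself $\gtrsim \w{|x|}\w{|x|-t}$; combining with $E_{|a|+\cdot}+\cX_{|a|+\cdot}\lesssim M\ve(1+t)^{M'\ve}$ and $\cW,\bW\lesssim M\delta(1+t)^{M'\ve}\lesssim M\ve$ yields \eqref{pw:wave5}; (ii) feed \eqref{pw:wave5} back into \eqref{good:L2norm1} (rewritten pointwise) to get the one-extra-power-of-$\w{|x|}^{-1}$ gain in \eqref{good:pw2}, using that the $x_j\eps_{ji}\Gamma^a\curl u$ term there is controlled by $\w{|x|}^{-1}\cdot M\ve$ (actually $M\delta$) after one more application of \eqref{pw:curl}/\eqref{pw:curl:low}; (iii) in the interior $|x|\le 3\w{t}/4$, use \eqref{pw:wave4} in place of \eqref{pw:wave3} for the solution itself (which brings the extra $\w{t}^{-1}\ln^{1/2}$), keep the $\cW$-type vorticity contribution at the scale $M\delta\w{t}^{M'\ve}$ coming from $\w{|x|}$ times the pointwise vorticity bound $\w{|x|}^{-1}M\delta\cdot(1+t)^{M'\ve}$ (or directly $\bW$ via \eqref{pw:curl:low}), and divide by $t^2-|x|^2\sim\w{t}^2$ to obtain \eqref{pw:wave6}; the last line of \eqref{pw:wave6}, with only one power of $\w{|x|}$ and one fewer power of $\w{t}$, comes from the $x_i\cS\Gamma^a\sigma$, $tx_j\eps_{ji}\Gamma^a\curl u$, $t\Omega(\tilde\Gamma^a u_j)$ terms in \eqref{weighted:identity1}, which carry an extra factor $t$ or $|x|$.

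For the quadratic terms $\cQ_1^a=\sum_{b+c=a}C^a_{bc}Q_1^{bc}$ and $\cQ_2^a$ one splits, exactly as in the proof of Lemma~\ref{lem:Qbc:L2} and Lemma~\ref{lem:H1norm}, according to whether the ``high'' multi-index sits on the factor that gets the $L^\infty$ estimate: since $|a|\le N_1-1$ and $b+c=a$, one of $|b|,|c|$ is $\le \tfrac12(N_1-1)$, well below the thresholds $N_1-6$, $N_1-7$ needed to apply the crude pointwise bounds \eqref{pw:wave3}–\eqref{pw:wave4} with an $M\ve$ on the right. Bounding that factor by $M\ve\w{t}^{-1/2}$ (exterior) or $M\ve\w{t}^{-1}\ln^{1/2}$ (interior) and the other factor by \eqref{pw:wave5}/\eqref{pw:wave6} with its extra vector field absorbed (legal because $|a|+1\le N_1\le N$), one checks each $Q^{bc}$ contribution is $\lesssim M\ve\cdot(\text{target RHS})$, which is then absorbed by the smallness $M\ve\le 1$ into a constant multiple of the target bound; the vorticity inside $Q^{bc}$ (through $\sigma\,\dive u$, $u\cdot\nabla u$ with $u=P_1u+P_2u$) contributes only lower-order terms already dominated by the $M\delta$-sized pieces. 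One should be slightly careful that applying $\Gamma$'s to $\sigma\dive u$ etc.\ stays within the multi-index budget, but since we only need $|a|\le N_1-1$ this is comfortable.

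The main obstacle, as the paper's own Section~4 preamble flags, is conceptual rather than computational: the naive free-wave decay $\w{t}^{-1/2}$ for $P_1u$ is not integrable against the lifespan $T_\delta=\kappa/\delta$ when $\delta$ is doubly-exponentially small, so the whole point of this lemma is to squeeze out a genuinely stronger rate, and this works only because the extra gain comes from the \emph{structure} of the identities \eqref{weighted:identity1}–\eqref{weighted:identity2}—specifically that the dangerous terms are precisely the ones carrying $(t-|x|)$ or a good vector field $\cS$/$\Omega$—combined with the Chaplygin null conditions that were already used to get the $\w{|x|-t}^{-1}$-improvement feeding $\cX_m$. Concretely, the delicate accounting is: making sure the powers $\w{|x|-t}^{-3/2}$ (not $^{-1}$ or $^{-1/2}$) and $\w{|x|}^{2M'\ve-1/2}$ (the $2M'\ve$, not $M'\ve$, reflecting that two factors of the solution each contribute $\w{|x|}^{M'\ve}$) come out consistently from all three identities at once, and that the interior estimate's split into an $M\delta\w{t}^{M'\ve-1}$ piece (vorticity) plus an $M\ve\w{t}^{2M'\ve-2}\ln(2+t)$ piece (irrotational self-interaction) is respected by every term. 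I would organize the write-up as: first the exterior estimate \eqref{pw:wave5} from \eqref{weighted:identity1}–\eqref{weighted:identity2}; then \eqref{good:pw2} from \eqref{good:L2norm1} using \eqref{pw:wave5}; then the interior estimates \eqref{pw:wave6} from the same two identities with \eqref{pw:wave4} substituted in; handling the $\cQ^a$ terms uniformly in each region by the high–low split described above.
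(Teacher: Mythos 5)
Your broad scaffolding is right: the paper does indeed start from the three weighted identities \eqref{good:L2norm1}, \eqref{weighted:identity1}, \eqref{weighted:identity2}, feeds in the crude pointwise bounds of Section~2 and the good-unknown bounds of Section~3, and splits into the exterior $|x|\ge\w{t}/8$ and interior $|x|\le3\w{t}/4$ regions exactly as you outline. Your interior treatment (use \eqref{pw:wave4} for the zeroth-order factors and \eqref{pw:curl:low} for the vorticity, then divide) also matches the paper's \eqref{pw:wave6} calculation.

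The gap is in your treatment of the exterior quadratic terms $\w{|x|}\sum(|Q_1^{bc}|+|Q_2^{bc}|)$ appearing on the right of \eqref{improve:pw1}. You propose a ``high-low'' split, putting the crude $M\ve\w{t}^{-1/2}$ bound \eqref{pw:wave3} on one factor and the \emph{target} \eqref{pw:wave5} on the gradient factor, to be absorbed by the smallness $M\ve\le1$. This fails for two reasons. First, it is circular: the extreme decompositions $b=0,c=a$ and $b=a,c=0$ of $\cQ^a$ produce precisely $\sigma\,\dive\tilde\Gamma^au$, $u\cdot\nabla\Gamma^a\sigma$, etc., so the gradient factor $\nabla\Gamma^a\sigma$ is exactly the quantity being estimated and there is no lower order to which an induction could appeal. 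Second, even if one replaces \eqref{pw:wave5} by the already-available crude bound \eqref{pw:wave3} on the gradient factor, one obtains $|Q^{bc}|\ls(M\ve)^2\w{|x|}^{2M'\ve-1}\w{|x|-t}^{-3/2}$; inserted into \eqref{improve:pw1} and divided by $\w{|x|-t}$, this yields a contribution of order $(M\ve)^2\w{|x|}^{2M'\ve}\w{|x|-t}^{-5/2}$, which overshoots the target $M\ve\w{|x|}^{2M'\ve-1/2}\w{|x|-t}^{-3/2}$ by the factor $M\ve\,\w{|x|}^{1/2}\w{|x|-t}^{-1}$, unbounded near the light cone no matter how small $M\ve$ is. The paper's actual resolution is the null-structure rewriting \eqref{null:Q1}--\eqref{null:Q2}: after substituting $\tilde\Gamma^b u_i=\tilde\Gamma^b g_i+\omega_i\Gamma^b\sigma+\frac1{|x|}[\cdots]$, the dangerous $\omega_i\Gamma^b\sigma\,\p_i\Gamma^c\sigma$ terms in $Q_1^{bc}$ and $Q_2^{bc}$ cancel identically, leaving only products where one factor is the good unknown $\tilde\Gamma g$ or $\nabla\tilde\Gamma g$ (which by \eqref{good:pw1} carry an extra $\w{|x|}^{-1}$ or $\w{|x|}^{-3/2}$ gain) or has the explicit $\frac1{|x|}$ factor. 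This leads to \eqref{improve:pw2}, $|Q^{bc}|\ls M\delta\w{|x|}^{2M'\ve-11/3}+M^2\ve^2\w{|x|}^{2M'\ve-3/2}\w{|x|-t}^{-1}$, which does close. You gesture at ``the Chaplygin null conditions'' in passing, but the concrete scheme you write down for the $Q$-terms does not implement them, and without that step the exterior estimate \eqref{pw:wave5} (and hence \eqref{good:pw2}) cannot be obtained.
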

\begin{proof}
For $|x|\ge\w{t}/8$, it follows from \eqref{pw:curl:low}, \eqref{pw:wave3}, \eqref{weighted:identity1} and \eqref{weighted:identity2} that
\begin{equation}\label{improve:pw1}
\begin{split}
&\quad\w{|x|-t}(|\p_t\tilde\Gamma^au(t,x)|+|\p_t\Gamma^a\sigma(t,x)|
+|\dive\tilde\Gamma^au(t,x)|+|\nabla\Gamma^a\sigma(t,x)|)\\
&\ls\sum_{|b|\le|a|+1}(|\Gamma^b\sigma|+|\tilde\Gamma^bu|)
+\w{|x|}|\Gamma^a\curl u|+\w{|x|}\sum_{b+c\le a}(|Q_1^{bc}|+|Q_2^{bc}|)\\
&\ls\bW_{|a|+1}(t)\w{|x|}^{-\frac83}
+\w{|x|}^{-\frac12}\w{|x|-t}^{-\frac12}\Big\{E_{|a|+3}(t)+\cX_{|a|+3}(t)\Big\}\\
&\quad+\bW_{|a|+1}(t)\w{|x|}^{-9}+\w{|x|}\sum_{b+c\le a}(|Q_1^{bc}|+|Q_2^{bc}|).
\end{split}
\end{equation}
Applying \eqref{pw:wave3} and \eqref{good:pw1} to \eqref{null:Q1} and \eqref{null:Q2} yields
\begin{equation}\label{improve:pw2}
\begin{split}
|Q_1^{bc}|+|Q_2^{bc}|&\ls\w{|x|}^{-1}
\Big(M\delta\w{|x|}^{M'\ve-\frac83}+M\ve\w{|x|}^{M'\ve-\frac12}\w{|x|-t}^{-\frac12}\Big)^2\\
&\quad+|\tilde\Gamma^bg\nabla\Gamma^c\sigma|
+|\nabla\tilde\Gamma^cg|(|\Gamma^b\sigma|+|\tilde\Gamma^bu|)\\
&\ls M\delta\w{|x|}^{2M'\ve-\frac{11}{3}}
+M^2\ve^2\w{|x|}^{2M'\ve-\frac32}\w{|x|-t}^{-1},
\end{split}
\end{equation}
where we have used the bootstrap assumptions \eqref{bootstrap}.
Substituting \eqref{improve:pw2} into \eqref{improve:pw1} infers
\begin{equation*}
\begin{split}
&\quad\w{|x|-t}(|\p_t\tilde\Gamma^au(t,x)|+|\p_t\Gamma^a\sigma(t,x)|
+|\dive\tilde\Gamma^au(t,x)|+|\nabla\Gamma^a\sigma(t,x)|)\\
&\ls M\delta\w{|x|}^{2M'\ve-\frac83}+M\ve\w{|x|}^{M'\ve-\frac12}\w{|x|-t}^{-\frac12}
+M^2\ve^2\w{|x|}^{2M'\ve-\frac12}\w{|x|-t}^{-1}\\
&\ls M\ve\w{|x|}^{2M'\ve-\frac12}\w{|x|-t}^{-\frac12}.
\end{split}
\end{equation*}
This leads to the first inequality in \eqref{pw:wave5}.

Next, we turn to the proof of the second inequality in \eqref{pw:wave5}.
By \eqref{radial:angular}, we have
\begin{equation}\label{improve:pw2'}
\w{|x|}|\nabla\tilde\Gamma^au(t,x)|
\ls|\Gamma^{\le1}\tilde\Gamma^au(t,x)|+|r\p_r\tilde\Gamma^au(t,x)|
\ls|\Gamma^{\le1}\tilde\Gamma^au(t,x)|+|t\p_t\tilde\Gamma^au(t,x)|.
\end{equation}
This, together with the estimate of $\p_t\tilde\Gamma^au(t,x)$ in \eqref{pw:wave5}, yields the second inequality in \eqref{pw:wave5}.

It is not hard to conclude from \eqref{good:L2norm1} that for $|x|\ge\w{t}/8$,
\begin{equation}\label{improve:pw3}
\begin{split}
&\quad\w{|x|}|\nabla\tilde\Gamma^ag(t,x)|\\
&\ls\w{|x|}|\Gamma^a\curl u|
+\sum_{|b|\le|a|+1}(|\tilde\Gamma^bg|+|\Gamma^b\sigma|+|\tilde\Gamma^bu|)
+\w{|x|}|\cQ^a_1|+\w{|x|-t}|\p_t\Gamma^a\sigma|\\
&\ls\w{|x|}^{-\frac83}\bW_{|a|+1}(t)
+\w{|x|}^{-\frac12}\w{|x|-t}^{-\frac12}\Big\{E_{|a|+3}(t)+\cX_{|a|+3}(t)\Big\}\\
&\quad+\w{|x|}|\cQ^a_1|+\w{|x|-t}|\p_t\Gamma^a\sigma|,
\end{split}
\end{equation}
where we have used \eqref{pw:curl:low} and \eqref{pw:wave3}.

Combining \eqref{improve:pw3} with \eqref{pw:wave5} and \eqref{improve:pw2} yields \eqref{good:pw2}.

Finally, we turn to the proof of \eqref{pw:wave6}.
For $|x|\le3\w{t}/4$, by using \eqref{pw:curl:low} and \eqref{pw:wave4} to \eqref{weighted:identity1} and \eqref{weighted:identity2} directly, we arrive at
\begin{equation*}
\begin{split}
&\quad\w{t}(|\p_t\tilde\Gamma^au(t,x)|+|\p_t\Gamma^a\sigma(t,x)|
+|\dive\tilde\Gamma^au(t,x)|+|\nabla\Gamma^a\sigma(t,x)|)\\
&\ls\sum_{|b|\le|a|+1}(|\Gamma^b\sigma|+|\tilde\Gamma^bu|)
+\w{|x|}|\Gamma^a\curl u|+\w{t}\sum_{b+c\le a}(|Q_1^{bc}|+|Q_2^{bc}|)\\
&\ls\bW_{|a|+1}(t)
+\w{t}^{-1}\ln^\frac12(2+t)\Big\{E_{|a|+3}(t)+\cX_{|a|+3}(t)\Big\}\\
&\quad+\w{t}^{-1}\ln(2+t)\Big(E_{|a|+3}(t)+\cX_{|a|+3}(t)\Big)^2\\
&\ls M\delta\w{t}^{M'\ve}+M\ve\w{t}^{2M'\ve-1}\ln(2+t).
\end{split}
\end{equation*}
Then we can achieve the first two inequalities in \eqref{pw:wave6}.
Combining \eqref{improve:pw2'} with the estimates $\p_t\tilde\Gamma^au(t,x)$ in \eqref{pw:wave6},
we get the third inequality in \eqref{pw:wave6}.
\end{proof}

\subsection{Weighted $L^\infty$-$L^\infty$ estimates for the linear wave equation}\label{sect:weight:pw}
In this subsection, we will establish some weighted $L^\infty$-$L^\infty$ estimates for
the solutions to the linear wave equations.
Consider the following Cauchy problem
\begin{equation}\label{linear:wave}
\Box\vp:=\p_t^2\vp-\Delta\vp=\cF,\qquad(\vp,\p_t\vp)|_{t=0}=(\vp_0,\vp_1).
\end{equation}
Then $\vp=\vp_{hom}+\vp_{inh}$, where
\begin{equation}\label{phi:hom:def}
\Box\vp_{hom}=0,\qquad(\vp_{hom},\p_t\vp_{hom})|_{t=0}=(\vp_0,\vp_1),
\end{equation}
and
\begin{equation*}
\Box\vp_{inh}=\cF,\qquad(\vp_{inh},\p_t\vp_{inh})|_{t=0}=(0,0).
\end{equation*}

\begin{lemma}\label{H-0}[Proposition 4.1 and 4.2 of \cite{HoshigaKubo04}]
Let $0<\nu<\frac12$ and $\mu>0$, then it holds that
\begin{align}
&\w{|x|+t}^\frac12\w{|x|-t}^\nu|\vp_{inh}(t,x)|\ls
\tilde\cM_{\mu+\nu}(\cF)(t),\label{sharp:pw1}\\
&\w{|x|}^\frac12\w{|x|-t}^{1+\nu}|\nabla\vp_{inh}(t,x)|
\ls\sum_{|a|+j\le1}\tilde\cM_{\mu+\nu}(\nabla^a\Omega^j\cF)(t),\label{sharp:pw2}
\end{align}
where
\begin{equation}\label{tildecM:def}
\begin{split}
\tilde\cM_{\mu+\nu}(\cF)(t)=&\sup_{(s,y)\in\Lambda_0(t)}
\{\w{|y|}^\frac32\w{|y|+s}^{1+\mu+\nu}|\cF(s,y)|\}\\
&+\sup_{(s,y)\in\Lambda_1(t)}\{\w{s}^{\frac32+\mu+\nu}\w{|y|-s}|\cF(s,y)|\},
\end{split}
\end{equation}
and
\begin{equation}\label{Lambda:def}
\begin{split}
&\Lambda_1(t)=\{(s,y)\in[0,t]\times\R^2:||y|-s|\le s/3,|y|\ge1\},\\
&\Lambda_0(t)=[0,t]\times\R^2\setminus\Lambda_1(t)=\{(s,y)\in[0,t]
\times\R^2:||y|-s|\ge s/3,{\rm~or~}|y|\le1\}.
\end{split}
\end{equation}

\end{lemma}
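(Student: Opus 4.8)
The plan is to use the explicit fundamental solution of the 2D wave operator together with a careful region decomposition of the resulting spacetime integral, tuned exactly to the weights in $\tilde\cM_{\mu+\nu}$. Recall the Poisson-type representation for the inhomogeneous solution with zero Cauchy data,
\[
\vp_{inh}(t,x)=\frac{1}{2\pi}\int_0^t\int_{|y-x|<t-s}\frac{\cF(s,y)}{\sqrt{(t-s)^2-|y-x|^2}}\,dy\,ds,
\]
which can be obtained by Hadamard's method of descent. The essential feature — and the main source of difficulty — is that, unlike in 3D, the kernel is supported on the full backward solid cone, so the estimate must control a genuine spacetime integral rather than an integral over the cone mantle; this is what forces the $\w{|y|}^{3/2}$ (resp. $\w{s}^{3/2+\mu+\nu}$) weight appearing in $\tilde\cM_{\mu+\nu}$ and the restriction $\nu<\tfrac12$.

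For \eqref{sharp:pw1} I would first bound the integrand pointwise: on $\Lambda_0(t)$ using $|\cF(s,y)|\ls\tilde\cM_{\mu+\nu}(\cF)(t)\,\w{|y|}^{-3/2}\w{|y|+s}^{-1-\mu-\nu}$, and on $\Lambda_1(t)$ using $|\cF(s,y)|\ls\tilde\cM_{\mu+\nu}(\cF)(t)\,\w{s}^{-3/2-\mu-\nu}\w{|y|-s}^{-1}$. Then I would split the $(s,y)$ integration according to the geometry relative to the characteristic cone through $(t,x)$: the part far inside or outside that cone, the part near the cone $|y|-s\approx\text{const}$, and the part near the wave-zone boundary $|y-x|\approx t-s$ where the kernel is singular. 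In each piece I would introduce characteristic coordinates (sums and differences of $s$ and the relevant radial variables) so that the singularity $((t-s)^2-|y-x|^2)^{-1/2}$ becomes an integrable power, and estimate the remaining one- and two-variable integrals by elementary calculus, tracking the output weights $\w{|x|+t}^{1/2}$ and $\w{|x|-t}^{\nu}$. The exponent $\mu>0$ is spent making the $s$- or $|y|$-integral converge at infinity, and $\nu<\tfrac12$ enters to keep the near-cone integral finite.

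For the gradient bound \eqref{sharp:pw2}, differentiating the representation worsens the kernel singularity by one power, so I would trade derivatives: writing $\nabla=\omega\p_r+|x|^{-1}\omega^\perp\Omega$, the tangential part costs an angular derivative $\Omega$ (this produces the $\nabla^a\Omega^j\cF$ on the right-hand side), while for the radial/time part I would integrate by parts in $s$, moving $\p_t$ off the kernel onto $\cF$ and onto boundary terms, or invoke the known one-derivative representation for $\p\vp_{inh}$. The same region decomposition and characteristic-coordinate estimates then apply and yield the improved $\w{|x|-t}^{1+\nu}$ weight.

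The hard part will be the region-by-region integration near the intersection of the backward light cone of $(t,x)$ with the support weights of $\cF$: one must organize the bookkeeping so that the logarithmic losses intrinsic to the non-Huygens 2D kernel are absorbed by the extra half-power of $\w{|y|}$ (resp. of $\w{s}$) built into $\tilde\cM_{\mu+\nu}$, and so that the two contributions from $\Lambda_0$ and $\Lambda_1$ recombine into a single clean bound. Since the statement is precisely Propositions 4.1 and 4.2 of \cite{HoshigaKubo04}, in practice I would quote their computation rather than reproduce the full estimate here.
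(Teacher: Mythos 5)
The paper provides no proof of this lemma at all: it is stated verbatim as a citation of Propositions 4.1 and 4.2 of Hoshiga--Kubo (2004) (with a remark only noting a cosmetic change of subscript in the notation $\tilde\cM_{\mu+\nu}$), and your proposal, after sketching the plausible underlying mechanism (Poisson kernel, $\Lambda_0/\Lambda_1$ splitting, characteristic coordinates, trading $\nabla$ for $\Omega$ plus radial/time derivatives), correctly concludes by deferring to that same citation. So the approach matches the paper's; the sketch is a reasonable high-level account of what the cited proof does, even if the precise bookkeeping of the region decomposition is left unverified.
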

\begin{remark}
The notation $\tilde\cM_{\mu+\nu}(\cF)(t)$ on the right hand side of \eqref{sharp:pw1}
and \eqref{sharp:pw2} is slightly different from that in\cite{HoshigaKubo04}, in which is $M_{\nu}(\cF)(t)$ with
$$M_{\nu}(\cF)(t)=\ds\sup_{(s,y)\in\Lambda_0(t)}
\{\w{|y|}^\frac32\w{|y|+s}^{1+\mu+\nu}|\cF(s,y)|\} +\ds\sup_{(s,y)\in\Lambda_1(t)}\{\w{s}^{\frac32+\mu+\nu}\w{|y|-s}|\cF(s,y)|\}.$$
\end{remark}
Unfortunately, Lemma \ref{H-0} can not be applied directly for our problem. We next give a modified version as follows.
\begin{lemma}\label{lem:weight:inh}
Let $0<\mu_1,\nu<\frac12$ and $\mu>0$, then for $|x|\le2\w{t}$, it holds that
\begin{align}
\w{|x|+t}^{\frac12-\mu_1}\w{|x|-t}^\nu|\vp_{inh}(t,x)|
&\ls\cM_{\mu+\nu-\mu_1}(\cF)(t),\label{sharp:pw1'}\\
\w{|x|}^\frac12\w{|x|-t}^{1+\nu}|\nabla\vp_{inh}(t,x)|
&\ls\sum_{|a|+j\le1}\cM_{\mu+\nu}(\nabla^a\Omega^j\cF)(t),\label{sharp:pw2'}
\end{align}
where
\begin{equation}\label{cM:def}
\begin{split}
\cM_{\mu+\nu}(\cF)(t)=&\sup_{\substack{(s,y)\in\Lambda_0(t),\\|y|\le3\w{t}}}
\{\w{|y|}^\frac32\w{|y|+s}^{1+\mu+\nu}|\cF(s,y)|\}\\
&+\sup_{(s,y)\in\Lambda_1(t)}\{\w{s}^{\frac32+\mu+\nu}\w{|y|-s}|\cF(s,y)|\}.
\end{split}
\end{equation}
\end{lemma}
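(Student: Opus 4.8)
The plan is to deduce Lemma~\ref{lem:weight:inh} from Lemma~\ref{H-0} by a finite-speed-of-propagation/localization argument, exploiting the fact that the claimed estimates \eqref{sharp:pw1'}--\eqref{sharp:pw2'} are only asserted on the interior region $|x|\le2\w{t}$, whereas the full strength of $\tilde\cM_{\mu+\nu}$ in Lemma~\ref{H-0} involves $\sup$ over all of $\Lambda_0(t)$, including the exterior region $|y|$ large. First I would split the source term as $\cF=\cF_{in}+\cF_{ex}$, where $\cF_{in}=\chi\big(\frac{|y|}{3\w{s}}\big)\cF$ is supported in $\{|y|\le\frac{12}{5}\w{s}\}$ (with $\chi$ the cutoff from \eqref{cutoff}) and $\cF_{ex}=\cF-\cF_{in}$ is supported in $\{|y|\ge\frac{9}{4}\w{s}\}$. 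By linearity, $\vp_{inh}=\vp_{in}+\vp_{ex}$, where each piece solves $\Box\vp_\bullet=\cF_\bullet$ with zero data.

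For the interior piece, the point is that $\tilde\cM_{\mu+\nu}(\cF_{in})(t)$ coincides (up to the harmless cutoff factor, whose $\nabla^a\Omega^j$-derivatives are bounded) with $\cM_{\mu+\nu}(\cF)(t)$, because the supremum over $\Lambda_0(t)$ is now automatically restricted to $|y|\lesssim\w{t}$. Hence Lemma~\ref{H-0} applied to $\vp_{in}$ gives exactly the bounds \eqref{sharp:pw1'}--\eqref{sharp:pw2'} with $\mu_1=0$, which is stronger than what is claimed; one only needs to absorb the $\Omega^j$ and $\nabla^a$ falling on $\chi$, using that $|\nabla^a\Omega^j\chi(\frac{|y|}{3\w{s}})|\ls\w{s}^{-|a|}\ls\w{|y|}^{-|a|}$ on the support, so these terms are dominated by $\cM_{\mu+\nu}(\cF)(t)$ as well.

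For the exterior piece, since $\cF_{ex}$ is supported in $\{|y|\ge\frac94\w{s}\}$, the domain of dependence argument shows that $\vp_{ex}(t,x)$ and $\nabla\vp_{ex}(t,x)$ vanish for $|x|\le\frac94\w t-t$; more generally, the backward light cone from $(t,x)$ with $|x|\le2\w t$ meets $\supp\cF_{ex}$ only in a region where $|y|\sim\w s\sim\w t$ and $|y|-s\gtrsim\w t$. Then one estimates $\vp_{ex}$ by the standard representation formula / fundamental solution of $\Box$ in $2$D, integrating $|\cF_{ex}(s,y)|\ls\cM_{\mu+\nu}(\cF)(t)\,\w{|y|}^{-3/2}\w{|y|+s}^{-1-\mu-\nu}$ over the (spatially bounded in the relevant cone) intersection; because there $\w{|y|+s}\gtrsim\w t$ and $|y|-s\gtrsim\w t$, this contributes at worst $\cM_{\mu+\nu}(\cF)(t)\,\w t^{-1/2-\mu}\log\w t$-type bounds, which are absorbed after conceding the loss $\w{|x|+t}^{-\mu_1}$ (this is precisely where $\mu_1>0$ and the slightly reduced index $\mu+\nu-\mu_1$ on the right of \eqref{sharp:pw1'} enter). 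The gradient bound \eqref{sharp:pw2'} is handled the same way, commuting $\nabla^a\Omega^j$ past $\Box$ and using that these vector fields, when hitting the explicit cutoff, only improve decay.

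The main obstacle is the exterior piece near the light cone $|x|\approx t$ with $|x|\le2\w t$: there $\w{|x|-t}$ can be small, so one must check that the factor $\w{|x|-t}^\nu$ (resp.\ $\w{|x|-t}^{1+\nu}$) on the left is genuinely harmless, i.e.\ that the fundamental-solution integral over the thin slab picked out by the backward cone still yields a gain $\w t^{-1/2-\mu}$ uniform in $x$. This requires a careful but routine use of the $2$D wave kernel's $(t^2-|x-y|^2)^{-1/2}$ singularity together with the fact that $\cF_{ex}$ is supported strictly outside the cone emanating from the origin, so the $|y|-s\gtrsim\w t$ lower bound holds on the whole relevant integration set; granting that, the $\mu_1$-loss on $\w{|x|+t}$ is more than enough slack. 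Everything else is bookkeeping: verifying $\tilde\cM_{\mu+\nu}(\cF_{in})\ls\cM_{\mu+\nu}(\cF)$ and controlling cutoff derivatives.
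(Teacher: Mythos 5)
Your proposal correctly identifies the finite-speed-of-propagation observation as the driving idea, but it misses the mechanism actually producing the improved index $\mu+\nu-\mu_1$ on the right of \eqref{sharp:pw1'}, and the cutoff decomposition plus exterior-piece estimate is both unnecessary and flawed.

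The paper's proof is a one-liner: write $\vp_{inh}$ via the $2$D Poisson formula
\[
\vp_{inh}(t,x)=\frac{1}{2\pi}\int_0^t\int_{|y-x|\le t-s}\frac{\cF(s,y)\,dy\,ds}{\sqrt{(t-s)^2-|y-x|^2}},
\]
observe that on the backward cone $\{|y-x|\le t-s\}$ one has both $|y|\le|x|+t\le3\w{t}$ (so the $\Lambda_0$-supremum is automatically restricted) \emph{and} $\w{|y|+s}\ls\w{|x|+t}$, and then move $\w{|x|+t}^{-\mu_1}$ inside the integral as $\w{|y|+s}^{-\mu_1}$. Applying Lemma~\ref{H-0} to the effective source $\w{|y|+s}^{-\mu_1}|\cF|$ converts the weight $\w{|y|+s}^{1+\mu+\nu}$ in $\tilde\cM_{\mu+\nu}$ to $\w{|y|+s}^{1+\mu+\nu-\mu_1}$, i.e.\ produces $\cM_{\mu+\nu-\mu_1}(\cF)$. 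No cutoff, no exterior piece.

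Your proposal has two concrete problems. First, you claim that Lemma~\ref{H-0} applied to $\vp_{in}$ gives \eqref{sharp:pw1'} ``with $\mu_1=0$, which is stronger than what is claimed''. This is backwards: since $\cM_\alpha$ is \emph{increasing} in $\alpha$, one has $\cM_{\mu+\nu-\mu_1}(\cF)\le\cM_{\mu+\nu}(\cF)$, so the right-hand side of \eqref{sharp:pw1'} is \emph{smaller} than $\cM_{\mu+\nu}(\cF)$. The $\mu_1=0$ estimate therefore does \emph{not} imply \eqref{sharp:pw1'}; the whole point of the lemma (used later with $\mu+\nu-\mu_1=-\frac18<0$) is to allow a slower-decaying source, and this requires the $\w{|x|+t}^{-\mu_1}\ls\w{|y|+s}^{-\mu_1}$ trade described above, which your argument never makes. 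You instead attribute the appearance of $\mu_1$ entirely to the exterior piece, which is the wrong place. Second, the exterior-piece geometry is incorrect: with $\cF_{ex}$ supported in $\{|y|\ge\frac94\w{s}\}$ (a cutoff anchored to $\w{s}$, not $\w{t}$), the backward cone from $(t,x)$ with $|x|\le2\w{t}$ does meet $\supp\cF_{ex}$ at small $s$, where $\w{s}\sim1$ and $|y|-s$ can be $O(1)$, so neither ``$|y|\sim\w{s}\sim\w{t}$'' nor ``$|y|-s\gtrsim\w{t}$'' holds there, and the claimed $\w{t}^{-1/2-\mu}\log\w{t}$ bound is unjustified. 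All of this work is avoidable: the restriction $|y|\le3\w{t}$ is a free consequence of the domain of dependence for the full $\vp_{inh}$, without any splitting of $\cF$.
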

\begin{proof}
Recall the Poisson formula
\begin{equation*}
\vp_{inh}(t,x)=\frac{1}{2\pi}\int_0^t\int_{|y-x|\le t-s}
\frac{\cF(s,y)dyds}{\sqrt{(t-s)^2-|y-x|^2}}.
\end{equation*}
In the domain $\{(y,s): |y-x|\le t-s\}$, one has $\w{|y|+s}\ls\w{|x|+t}$ and $|y|\le|x|+t\le3\w{t}$.
Therefore, we obtain
\begin{equation*}
\w{|x|+t}^{-\mu_1}|\vp_{inh}(t,x)|\ls\int_0^t\int_{|y-x|\le t-s}
\frac{\w{|y|+s}^{-\mu_1}|\cF(s,y)|dyds}{\sqrt{(t-s)^2-|y-x|^2}}.
\end{equation*}
Applying \eqref{sharp:pw1} to the above integration yields \eqref{sharp:pw1'}.
The proof of \eqref{sharp:pw2'} is analogous.
\end{proof}
Next, we study the pointwise estimates of $\vp_{hom}$.
\begin{lemma}[Estimates of $\vp_{hom}$]\label{lem:weight:hom}
Let $\vp_{hom}$ be defined by \eqref{phi:hom:def}. It holds that
\begin{align}
\w{|x|+t}^\frac12\w{|x|-t}^\frac12|\vp_{hom}(t,x)|
&\ls\|\w{|y|}\vp_0(y)\|_{W_y^{2,1}}+\|\w{|y|}\vp_1(y)\|_{W_y^{1,1}},\label{hom:ineq}\\
\w{|x|+t}^\frac12\w{|x|-t}^\frac32|\nabla\vp_{hom}(t,x)|
&\ls\|\w{|y|}^2\vp_0(y)\|_{W_y^{3,1}}+\|\w{|y|}^2\vp_1(y)\|_{W_y^{2,1}}.\label{hom:grad}
\end{align}
\end{lemma}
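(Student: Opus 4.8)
The plan is to prove both inequalities directly from the classical explicit representation of solutions of the $2$D homogeneous wave equation. Writing $\bar h(x,r):=\frac{1}{2\pi}\int_{\SS^1}h(x+r\omega)\,d\omega$ and
$$
W[h](t,x):=\int_0^t\frac{r\,\bar h(x,r)}{\sqrt{t^2-r^2}}\,dr
=\frac{1}{2\pi}\int_{|y-x|<t}\frac{h(y)}{\sqrt{t^2-|y-x|^2}}\,dy,
$$
one has $\vp_{hom}=\p_tW[\vp_0]+W[\vp_1]$, and carrying out $\p_t$ after the substitution $r=t\sin\theta$ shows that $\vp_{hom}(t,x)$ is a finite combination of $W[h](t,x)$ and of the variant $\frac1t\int_0^t\frac{r^2\,\overline{\omega\cdot\nabla h}(x,r)}{\sqrt{t^2-r^2}}\,dr$ with $h\in\{\vp_0,\vp_1\}$; in particular only one derivative of $\vp_0$ and none of $\vp_1$ enter at this stage, and since $\nabla_xW[h]=W[\nabla h]$ the gradient estimate \eqref{hom:grad} has the same structure with one more derivative on the data. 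Thus I reduce everything to the model bound
$$
\w{|x|+t}^{\frac12}\w{|x|-t}^{\frac12}\,|W[h](t,x)|\ls\|\w{|y|}\,h\|_{W_y^{1,1}}
$$
and its improvement $\w{|x|+t}^{\frac12}\w{|x|-t}^{\frac32}\,|W[h](t,x)|\ls\|\w{|y|}^2h\|_{W_y^{2,1}}$; the two extra derivatives (first on $\vp_0$, then on $\nabla\vp_0$) then account for $W^{2,1}$ resp.\ $W^{3,1}$ on $\vp_0$ and $W^{1,1}$ resp.\ $W^{2,1}$ on $\vp_1$.

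For the model bound I would split $B(x,t)$ into the interior region $\{|y-x|\le t/2\}$ and the near-cone annulus $\{t/2<|y-x|<t\}$. On the interior region $(t^2-|y-x|^2)^{-1/2}\ls\w t^{-1}$, and since $\w t^{-1}\ls\w{|x|+t}^{-1/2}\w{|x|-t}^{-1/2}$ whenever $|x|\le 2t$, this part is $\ls\w{|x|+t}^{-1/2}\w{|x|-t}^{-1/2}\|h\|_{L^1}\le\w{|x|+t}^{-1/2}\w{|x|-t}^{-1/2}\|\w{|y|}h\|_{L^1}$, while for $|x|>2t$ one has $|y|\ge|x|-t\gt|x|$ throughout $B(x,t)$, which supplies the extra factor $\w{|x|}^{-1}$ needed there. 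The near-cone annulus is the technical heart: there $t+|y-x|\sim t$, so only the integrable weight $(t-|y-x|)^{-1/2}$ remains; I would write it as $\p_r(-2\sqrt{t-r})$ and integrate by parts in $r=|y-x|$, transferring one derivative onto $r\bar h(x,r)$ (the mechanism that costs $\vp_1\in W^{1,1}$, $\vp_0\in W^{2,1}$, and one more of each for \eqref{hom:grad}; the boundary term at $r=t$ vanishes, the one at $r=t/2$ being absorbed by a trace inequality on that circle). The remaining integral is then estimated by passing to bipolar coordinates $(\rho,r)=(|y|,|y-x|)$: the chord inequality $|y|\ge\big||x|-|y-x|\big|$ together with the bipolar Jacobian — which carries the factors $\sqrt{\rho+r-|x|}$ and $\sqrt{-\rho+r+|x|}$ — and the weight $\w{|y|}=\w\rho$ attached to the data is what converts the decay of the data into the sharp power $\w{|x|-t}^{-1/2}$, and a second radial integration by parts together with the strengthened weight $\w{|y|}^2$ upgrades this to $\w{|x|-t}^{-3/2}$ for $\nabla\vp_{hom}$. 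Throughout, $W^{1,1}(\SS^1)\hookrightarrow L^\infty(\SS^1)$ controls circular averages and $W^{1,1}(\R^2)\hookrightarrow L^2(\R^2)$ handles the small-time regime $t\le1$, where the crude interior bound must be replaced by $\|h\|_{L^2}\ls\|h\|_{W^{1,1}}$.

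The gradient estimate \eqref{hom:grad} is then obtained by running the same scheme on $W[\nabla h]$; alternatively, away from $x=0$ one may use $\nabla=\omega\p_r+\frac{\omega^\perp}{|x|}\Omega$ and $r\p_r\vp_{hom}=(\cS-t\p_t)\vp_{hom}$ to reduce $\nabla\vp_{hom}$ to the already-treated quantities, exploiting that $\p_t\vp_{hom},\cS\vp_{hom},\Omega\vp_{hom}$ again solve homogeneous wave equations with data of comparable weighted norm, while a short direct argument covers a neighbourhood of the origin. I expect the main obstacle to be exactly the near-cone analysis: extracting the sharp rates $\w{|x|-t}^{-1/2}$ and $\w{|x|-t}^{-3/2}$ from merely weighted $L^1$ data forces one to juggle the integrable kernel singularity at $|y-x|=t$ (paying derivatives through the radial integration by parts), the geometric transfer of weight through the bipolar Jacobian, and a borderline circle-Sobolev embedding, all the while tracking the boundary contributions and separately treating the regimes $|x|\ls t$, $|x|\sim t$, $|x|>2t$ and $t\le1$. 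A cleaner bookkeeping of the weight powers can alternatively be obtained by first decomposing the data dyadically in $|y|$, applying to each piece the compactly-supported estimate $|\vp_{hom}(t,x)|\ls\w t^{-1/2}(\|\vp_0\|_{W^{2,1}}+\|\vp_1\|_{W^{1,1}})$ (which, by finite speed of propagation, is supported in $\big||x|-t\big|\ls$ the dyadic scale) and summing; an equally technical route reduces to one space dimension via the Radon transform of the data and the standard $L^\infty$-$L^\infty$ estimates for the $1$D wave equation.
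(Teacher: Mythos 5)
Your primary route (direct analysis of the Poisson kernel, near-cone integration by parts, bipolar coordinates) is genuinely different from what the paper does: the paper does not prove \eqref{hom:ineq} at all but simply cites it as Lemma~3.2 of \cite{HouYin20jde}, and then derives \eqref{hom:grad} in two lines by the vector field method. Concretely, the paper applies \eqref{hom:ineq} to $\hat Z\vp_{hom}$ for every $\hat Z\in\{\p,\cS,\Omega,t\p_i+x_i\p_t\}$ (each of which commutes with $\Box$, so $\hat Z\vp_{hom}$ is again a free wave with data carrying one extra derivative and one extra power of $\w{|y|}$) and combines this with the standard identity $\w{|x|-t}|\nabla\vp_{hom}|\ls\sum_{\hat Z}|\hat Z\vp_{hom}|$. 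It is precisely this identity, with the full Lorentz vector field algebra, that upgrades the $\w{|x|-t}^{1/2}$ weight of \eqref{hom:ineq} to the $\w{|x|-t}^{3/2}$ weight of \eqref{hom:grad}. Your from-scratch proof of \eqref{hom:ineq} is therefore supplying an argument the paper omits by reference; your attempt to reach \eqref{hom:grad} by running the same kernel analysis on $W[\nabla h]$ is plausible but far more laborious than the paper's derivation, and you correctly flag the near-cone estimate as the crux.

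However, your stated \emph{alternative} derivation of \eqref{hom:grad} has a real gap. Using only $\nabla=\omega\p_r+\frac{\omega^\perp}{|x|}\Omega$ together with $r\p_r=\cS-t\p_t$ and the fact that $\p_t\vp_{hom},\cS\vp_{hom},\Omega\vp_{hom}$ solve the free wave equation controls $|x|\,\nabla\vp_{hom}$, not $\w{|x|-t}\,\nabla\vp_{hom}$. Applying \eqref{hom:ineq} to each of these fields and dividing by $|x|$ gives, in the near-cone region where $|x|\sim t$ and $|x|/(1+t)\sim1$, no improvement whatsoever in the $\w{|x|-t}$ power: you recover only $\w{|x|+t}^{1/2}\w{|x|-t}^{1/2}|\nabla\vp_{hom}|\ls\dots$, not the required $\w{|x|-t}^{3/2}$. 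The Lorentz boosts $L_i=t\p_i+x_i\p_t$, which do not appear in your list, are essential: it is $(t-r)(\p_t-\p_r)=\cS-\omega\cdot L$ that produces the $\w{|x|-t}$ factor on the radial derivative near the cone, and similarly $\omega^\perp\cdot L=\frac{t}{r}\Omega$ controls the angular part. Without the boosts your alternative cannot reach \eqref{hom:grad}, so you should either rely on your primary direct computation (worked out carefully in the near-cone region) or replace the alternative by the paper's argument with the full set $\{\p,\cS,\Omega,L_i\}$.
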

\begin{proof}
The inequality \eqref{hom:ineq} is just Lemma 3.2 of \cite{HouYin20jde}.

Next,  we derive \eqref{hom:grad} by \eqref{hom:ineq}. In fact,
for any vector field $\hat Z\in\{\p,\cS,\Omega,t\p_i+x_i\p_t,i=1,2\}$, one has $\Box\hat Z\vp_{hom}=0$.
Applying \eqref{hom:ineq} to $\hat Z\vp_{hom}$ yields
\begin{equation*}
\begin{split}
\w{|x|+t}^\frac12\w{|x|-t}^\frac12|\hat Z\vp_{hom}(t,x)|
&\ls\|\w{|y|}\hat Z\vp_{hom}(0,y)\|_{W_y^{2,1}}
+\|\w{|y|}\p_t\hat Z\vp_{hom}(0,y)\|_{W_y^{1,1}}\\
&\ls\|\w{|y|}^2\vp_0(y)\|_{W_y^{3,1}}+\|\w{|y|}^2\vp_1(y)\|_{W_y^{2,1}}.
\end{split}
\end{equation*}
This, together with
\begin{equation*}
\w{|x|-t}|\nabla\vp_{hom}(t,x)|\ls
\sum_{\hat Z\in\{\p,\cS,\Omega,t\p_i+x_i\p_t,i=1,2\}}|\hat Z\vp_{hom}(t,x)|,
\end{equation*}
derives Lemma~\ref{lem:weight:hom}.
\end{proof}

\subsection{Improved pointwise estimates of $u$}
This subsection is devoted to improve the pointwise estimates of $u$ by the weighted $L^\infty$-$L^\infty$ estimates
in subsection \ref{sect:weight:pw}.
For this purpose, we need to find the related wave equation hidden in the equations \eqref{reducedEuler}.

By the Helmholtz decomposition in subsection \ref{sect:decomp}, there exists a potential function $\phi(t,x)$ such that
\begin{equation}\label{potential:def}
u=P_1u+P_2u=\nabla\phi+P_2u.
\end{equation}
The inherent wave equation of $\phi$ can be directly deduced from \eqref{reducedEuler}, see appendix \ref{appendix:A} for details.
\begin{equation}\label{potential:wave}
\Box\phi=F:=\p_t\cA-(2-\sigma)Q_1-u\cdot Q_2,
\end{equation}
where the nonlocal term $\cA$ is defined by
\begin{equation}\label{cA:def}
\cA:=-(-\Delta)^{-1}\curl(u\curl u),\quad\lim_{|x|\rightarrow\infty}\cA(t,x)=0.
\end{equation}
In addition,
\begin{equation}\label{sigma:potential}
\sigma=-\p_t\phi+\cA-\frac12(|u|^2-\sigma^2),
\end{equation}
which is achieved in appendix \ref{appendix:A}.

Acting $(\cS+2)^{a_s}Z^{a_z}$ on \eqref{potential:wave}, we can get the equation of $\Gamma^a\phi$:
\begin{equation}\label{potential:wave:high}
\Box\Gamma^a\phi=F^a:=\sum_{b\le a}C^a_b\Gamma^b\p_t\cA
+\sum_{b+c\le a}C^a_{bc}Q_1^{bc}+\sum_{b+c+d\le a}C^a_{bcd}
\Big\{Q_1^{bc}\Gamma^d\sigma-Q_{2i}^{bc}\tilde\Gamma^du_i\Big\},
\end{equation}
where $C^a_{***}$ are some suitable constants.

At first, we deal with the pointwise estimates of the nonlocal term $\cA$.
\begin{lemma}[Estimates of $\cA$]\label{lem:cA:pw}
Under bootstrap assumptions \eqref{bootstrap}, for $|a'|\le N_1-2$, it holds that
\begin{equation}\label{cA:pw}
|\Gamma^{a'}\cA(t,x)|\ls\w{|x|}^{-\frac83}M\delta\w{t}^{2M'\ve}
\big\{M\delta+M\ve\w{t}^{-1}\ln(2+t)\big\}.
\end{equation}
\end{lemma}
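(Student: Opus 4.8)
The plan is to estimate the nonlocal quantity $\Gamma^{a'}\cA$ by first commuting the vector fields through the elliptic operator $(-\Delta)^{-1}\curl$, then exploiting the fact that $\cA$ is a quadratic expression in $u\,\curl u$, one factor of which carries the smallness $\delta$ coming from the vorticity. Concretely, since $\cS$ and $\tilde\Omega$ (and $\p$) commute with the Helmholtz projections by Lemma~\ref{lem:commutation}, one gets a Leibniz-type expansion
\begin{equation*}
\Gamma^{a'}\cA=-(-\Delta)^{-1}\sum_{b+c\le a'}C^{a'}_{bc}\,\curl\bigl(\tilde\Gamma^b u\,\Gamma^c\curl u\bigr)
\end{equation*}
(up to renaming the constants and absorbing the $\curl$/$\Omega$ bookkeeping), so the task reduces to a pointwise bound on $(-\Delta)^{-1}\curl$ applied to a product whose first factor is controlled by the wave-type pointwise estimates of Section~\ref{sect4} and whose second factor is controlled by the vorticity pointwise estimates \eqref{pw:curl}, \eqref{pw:curl:low} and the bootstrap assumptions \eqref{bootstrap}.

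Next I would represent $(-\Delta)^{-1}\curl$ of a compactly-behaved vector $G$ by the Newtonian-potential kernel, $|(-\Delta)^{-1}\curl\,G(x)|\lesssim\int \frac{|G(y)|}{|x-y|}\,dy$, split the $y$-integral into the near region $|y|\le\w{|x|}/2$, the far region $|y|\ge2\w{|x|}$, and the collar $|y|\sim\w{|x|}$, and in each region use the weighted decay of the integrand. For the product $G=\tilde\Gamma^b u\,\Gamma^c\curl u$, in the exterior cone $|y|\ge\w{s}/8$ one has from \eqref{pw:wave3}--\eqref{pw:wave4} and Lemma~\ref{lem:pw:improve} that $|\tilde\Gamma^b u|\lesssim M\delta\w{|y|}^{M'\ve-8/3}+M\ve\w{|y|}^{M'\ve-1/2}\w{|y|-s}^{-1/2}$ while $|\Gamma^c\curl u|\lesssim\w{|y|}^{-8}\bW_{|c|+1}(s)\lesssim M\delta\w{|y|}^{-8}$ by \eqref{pw:curl:low}; in the interior $|y|\le3\w{s}/4$ one uses \eqref{pw:wave4} for $\tilde\Gamma^b u$, which produces the factor $\w{s}^{-1}\ln^{1/2}(2+s)$ and hence, multiplied against the second factor and the bootstrap bound $\bW\lesssim M\delta\w{t}^{M'\ve}$, the claimed shape $M\delta\w{t}^{2M'\ve}\{M\delta+M\ve\w{t}^{-1}\ln(2+t)\}$. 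The strong $\w{|y|}^{-8}$ decay of $\curl u$ makes the spatial integral converge rapidly and pins the $x$-decay to $\w{|x|}^{-8/3}$, matching \eqref{cA:pw}; the orders $N_1-2$ in the statement versus $N_1-1$, $N_1$ available for the inputs give just enough derivative room for the commutator expansion and for the pointwise lemmas to apply.

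The main obstacle is handling the nonlocal operator without losing decay: a naive $L^p$--$L^q$ mapping bound for $(-\Delta)^{-1}\curl$ gives at best polynomial decay dictated by integrability, not the $\w{|x|}^{-8/3}$ rate, so one must genuinely exploit the weight $\w{|y|}^{8}$ on $\curl u$ and do the kernel integral by hand, being careful that the $\w{|y|-s}^{-1/2}$ singularity of the wave factor along the light cone is only borderline integrable in the collar region. A secondary technical point is that the factor $\p_t$ in the original definition does not appear in $\cA$ itself (it appears in $F$ through $\p_t\cA$), so here one only needs $\cA$; but when later $\Gamma^b\p_t\cA$ is needed one must either commute $\p_t$ inside or differentiate the kernel representation, and the same decay accounting must survive one extra derivative — which is why the hypothesis is stated for $|a'|\le N_1-2$ rather than the full order. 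I expect the kernel estimate, organized as a three-region decomposition with the weights $\w{|y|}^{-8}$, $\w{|y|}^{M'\ve-1/2}\w{|y|-s}^{-1/2}$, and $\w{s}^{-1}\ln^{1/2}$ carried through, to be routine once set up, with the only real care needed in the collar region near $|y|=\w{s}$.
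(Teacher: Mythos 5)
The Leibniz expansion you write (your identity matches~\eqref{Gamma:cA} in the paper) is correct, and the final accounting of factors — vorticity decay against wave decay — is in the right spirit. But the central device you propose, estimating $(-\Delta)^{-1}\curl$ directly by the Newtonian-potential kernel
\begin{equation*}
|(-\Delta)^{-1}\curl\,G(x)|\ \lesssim\ \int \frac{|G(y)|}{|x-y|}\,dy,
\end{equation*}
cannot produce the spatial decay $\w{|x|}^{-8/3}$ the lemma asserts. The obstruction is not in the collar or far region but in the near region $|y|\le|x|/2$: there $|x-y|\sim|x|$, so that contribution is bounded by $\frac{1}{|x|}\int_{|y|\le|x|/2}|G(y)|\,dy\sim\frac{C}{|x|}$ whenever $G\in L^1$. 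No amount of weighted decay of $G$ improves this — a convolution with a kernel of homogeneity $-1$ in two dimensions saturates at $\w{|x|}^{-1}$ far-field decay unless one exploits cancellation (vanishing moments of $G$), which your plan does not do and which is not available in general for $G=\tilde\Gamma^b u\,\Gamma^c\curl u$. So the claim that the strong $\w{|y|}^{-8}$ decay of $\curl u$ ``pins the $x$-decay to $\w{|x|}^{-8/3}$'' is incorrect.

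The paper avoids the issue entirely by a reordering of operations that you do not carry out: it first applies the Sobolev-type pointwise inequality~\eqref{pw:prepare5}, which trades $\w{|x|}^{8/3}|\Gamma^{a'}\cA|$ for weighted $L^p$ norms of $\nabla\Omega^{\le1}\Gamma^{a'}\cA$. Crucially this inserts a gradient, so the operator to be bounded becomes $\nabla(-\Delta)^{-1}\curl$, a degree-zero Riesz/Calder\'on--Zygmund operator, which is bounded on $L^p$ and on weighted $L^p$ with $A_p$ weights — in particular on $L^5$ with weight $\w{|x|}^7\in A_5$. After that, only a H\"older estimate of the product $\tilde\Gamma^bu\,\Gamma^c\curl u$ is needed, with the wave factor measured in $L^\infty$ against the weighted $L^p$ norms of $\curl u$ carried by $\bW$, $\cW$ in~\eqref{bootstrap}. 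The ``naive $L^p$--$L^q$ bound'' you dismiss is in fact, once the gradient is inserted, exactly the right tool, and the weight bookkeeping is handled cleanly by the $A_p$ theory rather than a three-region kernel computation. If you want to salvage your approach you would have to differentiate $\cA$ first (obtaining a genuinely singular CZ kernel) or exhibit enough vanishing moments of $u\curl u$; as written, the near-region contribution gives only $\w{|x|}^{-1}$ and the proof does not close.
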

\begin{proof}
In view of \eqref{pw:prepare5}, \eqref{cA:pw} can be directly achieved by the following $L^p$ estimates
\begin{equation}\label{cA:Lp}
\begin{split}
&\quad\sum_{|a|\le|a'|+1}\Big\{\|\w{|x|}^7\nabla\Gamma^a\cA(t,x)\|_{L^5}
+\sum_{p=\frac{10}{9},\frac{10}{7}}\|\nabla\Gamma^a\cA(t,x)\|_{L^p}\Big\}\\
&\ls M\delta\w{t}^{2M'\ve}\big\{M\delta+M\ve\w{t}^{-1}\ln(2+t)\big\}.
\end{split}
\end{equation}
Next we focus on the proof of \eqref{cA:Lp}.
Note that
\begin{equation}\label{Gamma:cA}
\Gamma^a\cA=-\sum_{b+c\le a}C^a_{bc}(-\Delta)^{-1}
\curl(\tilde\Gamma^bu~\Gamma^c\curl u),
\end{equation}
which can be achieved by applying the following equality repeatedly
\begin{equation*}
\Delta\Gamma\cA=\curl(u~\Gamma\curl u+\tilde\Gamma u\curl u).
\end{equation*}
Indeed, for $\Gamma=\Omega$ (the other case of $\Gamma$ is analogous),
direct computation yields
\begin{equation*}
\begin{split}
\Delta\Omega\cA&=\Omega\Delta\cA=\Omega\curl(u\curl u)=\curl\tilde\Omega(u\curl u)\\
&=\curl\{\Omega(u\curl u)-u^\perp\curl u\}\\
&=\curl\{(\Omega u)\curl u+u\Omega\curl u-u^\perp\curl u\}\\
&=\curl\{(\tilde\Omega u)\curl u+u\Omega\curl u\}.
\end{split}
\end{equation*}
It concludes from the $L^p$ boundedness of the Riesz operators and \eqref{Gamma:cA} that
\begin{equation*}
\begin{split}
\sum_{p=\frac{10}{9},\frac{10}{7}}\|\nabla\Gamma^a\cA(t,x)\|_{L_x^p}
&\ls\sum_{p=\frac{10}{9},\frac{10}{7}}
~\sum_{b+c\le a}\|\tilde\Gamma^bu~\Gamma^c\curl u\|_{L_x^p}\\
&\ls\sum_{p=\frac{10}{9},\frac{10}{7}}
~\sum_{b+c\le a}\|\w{|x|}^{-1}\Gamma^bu\|_{L_x^\infty}
\|\w{|x|}\Gamma^c\curl u\|_{L_x^p}\\
&\ls\bW_{|a|}(t)\big\{\cW_{|a|+1}(t)
+\w{t}^{-1}\ln(2+t)[E_{|a|+2}(t)+\cX_{|a|+2}(t)]\big\}\\
&\ls M\delta\w{t}^{2M'\ve}\big\{M\delta+M\ve\w{t}^{-1}\ln(2+t)\big\},
\end{split}
\end{equation*}
where we have used \eqref{pw:wave3'} and \eqref{pw:wave4}.

Moreover, according to \cite{Stein}, one knows that $\w{|x|}^7$ belongs to $A_5$ class.
Thereafter, we have
\begin{equation*}
\begin{split}
\|\w{|x|}^7\nabla\Gamma^a\cA(t,x)\|_{L_x^5}
&\ls\sum_{b+c\le a}\|\w{|x|}^7\tilde\Gamma^bu~\Gamma^c\curl u\|_{L_x^5}\\
&\ls\sum_{b+c\le a}\|\w{|x|}^{-1}\tilde\Gamma^bu\|_{L_x^\infty}
\|\w{|x|}^8\Gamma^c\curl u\|_{L_x^5}\\
&\ls M\delta\w{t}^{2M'\ve}\big\{M\delta+M\ve\w{t}^{-1}\ln(2+t)\big\}.
\end{split}
\end{equation*}
Thus, we have proved \eqref{cA:Lp}.
\end{proof}

Secondly, we focus on the pointwise estimates of the potential function $\Gamma^a\phi$, where  $\phi$ is defined in \eqref{potential:def}.
\begin{lemma}[Estimates of $\Gamma^a\phi$]\label{lem:potential:pw}
Under bootstrap assumptions \eqref{bootstrap}, for $|a|\le N_1-3$ and $|x|\le2\w{t}$, it holds that
\begin{equation}\label{potential:1}
\w{|x|+t}^\frac14\w{|x|-t}^\frac{1}{16}|\Gamma^a\phi(t,x)|
\ls M\delta\w{t}^\frac12+M\ve.
\end{equation}
\end{lemma}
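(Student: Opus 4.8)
The plan is to view $\Gamma^a\phi$, where $\phi$ is the velocity potential of $P_1u$ in \eqref{potential:def}, as a solution of the linear wave equation $\Box\Gamma^a\phi=F^a$ from \eqref{potential:wave:high}, and to split $\Gamma^a\phi=(\Gamma^a\phi)_{hom}+(\Gamma^a\phi)_{inh}$, where $(\Gamma^a\phi)_{hom}$ solves the homogeneous equation with the Cauchy data of $\Gamma^a\phi$ at $t=0$ and $(\Gamma^a\phi)_{inh}$ solves $\Box(\Gamma^a\phi)_{inh}=F^a$ with zero data. I would bound the first piece by Lemma~\ref{lem:weight:hom} and the second by Lemma~\ref{lem:weight:inh}.

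For the homogeneous part, I first rewrite the Cauchy data in terms of $(\rho^0,u^0)$: at $t=0$ one has $\cS=r\p_r$, $\phi|_{t=0}=-(-\Delta)^{-1}\dive u^0$ by \eqref{potential:def}, $\p_t\phi|_{t=0}$ is given by \eqref{sigma:potential}--\eqref{cA:def}, and powers $\p_t^{\ge2}$ are reduced to spatial derivatives through \eqref{potential:wave}. Since $|a|\le N_1-3$, the weighted $W^{2,1}\times W^{1,1}$ norms of $(\Gamma^a\phi,\p_t\Gamma^a\phi)|_{t=0}$ appearing in \eqref{hom:ineq} are then dominated by the quantity $\ve$ from \eqref{initial:data}; here one uses that $u^0,\rho^0$ are compactly supported, together with the Hardy and Sobolev inequalities, to pass between the weighted $L^1$ and $L^2$ norms. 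Lemma~\ref{lem:weight:hom} then gives $\w{|x|+t}^{1/2}\w{|x|-t}^{1/2}|(\Gamma^a\phi)_{hom}(t,x)|\ls\ve$, which is stronger than the $M\ve$ part of \eqref{potential:1}.

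For the inhomogeneous part, the main task is a pointwise bound on $F^a$ of the shape demanded by \eqref{cM:def}. The terms $\Gamma^b\p_t\cA$ with $|b|\le|a|$ are handled by Lemma~\ref{lem:cA:pw}, after absorbing the $\p_t$ into one of the $Z$'s so that $|b|+1\le N_1-2$; they decay like $\w{|x|}^{-8/3}$ with coefficient $M\delta\w t^{2M'\ve}\{M\delta+M\ve\w t^{-1}\ln(2+t)\}$. The quadratic and cubic terms $Q_1^{bc},Q_2^{bc},Q_1^{bc}\Gamma^d\sigma,Q_{2i}^{bc}\tilde\Gamma^du_i$ are estimated in $|x|\ge1$ through their null decompositions \eqref{null:Q1}--\eqref{null:Q2} together with Lemma~\ref{lem:pw:improve} and Corollary~\ref{coro:pw}, and in $|x|\le1$ directly from \eqref{Qbc:def} and the interior estimates of Lemma~\ref{lem:pw:improve}, Corollary~\ref{coro:pw}; near the light cone they inherit the factor $\w{|y|-s}^{-1}$ from the null structure, and everywhere their coefficients are at most $O(M^2\ve^2)$ up to a faster-decaying $M\delta$-piece. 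Consequently $F^a$ decays, for $|y|\le3\w t$ and away from the light cone, like $\w{|y|}^{-3/2}$ times a positive power of $\w{|y|+s}^{-1}$, and near the light cone like $\w s^{-3/2}\w{|y|-s}^{-1}$ times a positive power of $\w s^{-1}$, with coefficients $\ls M\delta\w s^{C\ve}+O(M\ve)$, while near $x=0$ only the non-decaying residue $\ls M^2\delta^2\w s^{2M'\ve}$ survives. Applying \eqref{sharp:pw1'} with $\mu_1=\tfrac14$, $\nu=\tfrac1{16}$ and a suitable $\mu>0$ with $\mu+\nu-\mu_1>0$ (legitimate since $|x|\le2\w t$) then yields $\w{|x|+t}^{1/4}\w{|x|-t}^{1/16}|(\Gamma^a\phi)_{inh}|\ls\cM_{\mu+\nu-\mu_1}(F^a)(t)\ls M\ve+M^2\delta^2\w t^{1+c}$ for some fixed $c\in(0,\tfrac12)$, the $\w t^{1+c}$ growth coming entirely from the origin residue of $\Gamma^b\p_t\cA$. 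Finally, since $t\delta\le\kappa$, $\kappa\le M^{-1}$ and $\delta\le\ve^{8/7}$ from \eqref{bootstrap}, one gets $M\delta\w t^{1/2+c}\ls M\kappa^{1/2+c}\delta^{1/2-c}\ls(M\delta)^{1/2-c}\ls\ve^{(1/2-c)/7}\ls1$ after shrinking $\ve_0$, hence $M^2\delta^2\w t^{1+c}=(M\delta\w t^{1/2})\bigl(M\delta\w t^{1/2+c}\bigr)\ls M\delta\w t^{1/2}$; combined with the homogeneous bound this proves \eqref{potential:1}.

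The main obstacle is the nonlocal term $\p_t\cA$: because $(-\Delta)^{-1}\curl$ in \eqref{cA:def} delocalizes the small, vorticity-driven source $u\,\curl u$, the quantity $\Gamma^b\p_t\cA$ carries no genuine time decay near $x=0$ — only the spatial decay $\w{|x|}^{-8/3}$ of Lemma~\ref{lem:cA:pw}. As a result the bound $\cM_{\mu+\nu-\mu_1}(F^a)(t)$ produced by Lemma~\ref{lem:weight:inh} is itself allowed to grow like $\w t^{1+}$, and the whole argument hinges on the observation that on the admissible interval $t\le\kappa/\delta$, and under the smallness relations among $M,\ve,\kappa,\delta$ in \eqref{bootstrap}, this growth is still consistent with the claimed order $M\delta\w t^{1/2}$; arranging the exponent bookkeeping so that this closes — choosing $\mu,\nu$ and then $\ve_0,\kappa$ appropriately — is the delicate point.
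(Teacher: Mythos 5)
Your overall strategy is exactly the paper's: split $\Gamma^a\phi=(\Gamma^a\phi)_{hom}+(\Gamma^a\phi)_{inh}$, control the homogeneous piece by Lemma~\ref{lem:weight:hom} and the data quantity $\ve$ from \eqref{initial:data}, and control the inhomogeneous piece by Lemma~\ref{lem:weight:inh}, estimating $\cM_{\mu+\nu-\mu_1}(F^a)(t)$ with Lemma~\ref{lem:cA:pw} for the nonlocal $\Gamma^b\p_t\cA$ terms and the null decompositions \eqref{null:Q1}--\eqref{null:Q2} plus the pointwise estimates of Section~\ref{sect2:3} for the quadratic/cubic terms, using $t\delta\le\kappa$ to close.

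There is, however, a substantive error in your parameter choice: you require $\mu+\nu-\mu_1>0$. Lemma~\ref{lem:weight:inh} imposes no such constraint (the parenthetical ``legitimate since $|x|\le2\w t$'' is unrelated; that restriction is the domain of the conclusion, not a sign condition on the exponent), and the paper deliberately takes $\mu=\nu=\tfrac1{16}$, $\mu_1=\tfrac14$, so that $\mu+\nu-\mu_1=-\tfrac18<0$. The sign matters. In \eqref{cM:def} the relevant powers are $\w{|y|+s}^{1+\mu+\nu-\mu_1}$ on $\Lambda_0$ and $\w{s}^{\frac32+\mu+\nu-\mu_1}$ on $\Lambda_1$. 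The pure $M^2\ve^2$ contributions to $Q_1^{bc}$ coming from the null decomposition carry no factor of $\delta$; with the paper's exponent $-\tfrac18$ they land on $M^2\ve^2\w{s}^{2M'\ve-1/8}\ls M\ve$, as in \eqref{potential:5}--\eqref{potential:8}. If instead $\mu+\nu-\mu_1=\alpha>0$, those same contributions become $M^2\ve^2\w{s}^{\alpha+2M'\ve}$ which is unbounded on $0\le s\le t\le\kappa/\delta$: the constraint $t\delta\le\kappa$ supplies no help here because there is no spare $\delta$ to trade against $\w s^{\alpha}$, and $\w t^{\alpha}$ can be astronomically large when $\delta$ is, say, doubly-exponentially small in $\ve$. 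So your proof would not close with $\alpha>0$; you need $\alpha\le 0$ (or at least $\alpha\le 2M'\ve$, which is not a fixed constant). A secondary inaccuracy: the $M\delta\w t^{1/2}$ growth in \eqref{potential:1} is not created by the origin residue of $\Gamma^b\p_t\cA$ — with the paper's parameters that contribution is bounded outright by $\ls M\delta$ in \eqref{potential:4}, the $t\delta\le\kappa$ constraint already absorbing the $\w s^{7/8}$ factor. The $\w t^{1/2}$ comes from the $M^2\delta^2$ pieces of $Q_1^{bc}$ in $\cD_1,\cD_2$, as seen in \eqref{potential:5}--\eqref{potential:6}.
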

\begin{proof}
By applying the weighted $L^\infty$-$L^\infty$ estimates \eqref{sharp:pw1'} and \eqref{hom:ineq}
with $\mu_1=\frac14$ and $\nu=\mu=\frac{1}{16}$ to $\Gamma^a\phi$ in \eqref{potential:wave:high}, we obtain
\begin{equation}\label{potential:2}
\begin{split}
&\quad\w{|x|+t}^\frac14\w{|x|-t}^\frac{1}{16}|\Gamma^a\phi(t,x)|\\
&\ls\|\w{|y|}\Gamma^a\phi(0,y)\|_{W_y^{2,1}}+\|\w{|y|}\p_t\Gamma^a\phi(0,y)\|_{W_y^{1,1}}
+\cM_{-\frac18}(F^a)(t).
\end{split}
\end{equation}

According to the definition of $F^a$ in \eqref{potential:wave:high}, we arrive at
\begin{equation}\label{potential:3}
\begin{split}
\cM_{-\frac18}(F^a)(t)\ls\sum_{b\le a}\cM_{-\frac18}(\Gamma^b\p_t\cA)(t)
+\sum_{b+c\le a}\cM_{-\frac18}(Q_1^{bc})(t)\\
+\sum_{b+c+d\le a}\cM_{-\frac18}
(|Q_1^{bc}\Gamma^d\sigma|+|Q_{2i}^{bc}\tilde\Gamma^du_i|)(t).
\end{split}
\end{equation}
Note that it only suffices to deal with the two terms on the right hand side of the first line in \eqref{potential:3},
since the treatment on the cubic nonlinearities in the second line are much easier.

It follows from the definition \eqref{cM:def} of $\cM_{-\frac18}(\cF)(t)$ and \eqref{cA:pw} that
\begin{equation}\label{potential:4}
\begin{split}
\sum_{b\le a}\cM_{-\frac18}(\Gamma^b\p_t\cA)(t)
&\ls\sum_{b\le a}~\sup_{s\le t,y}
\w{|y|}^\frac32\w{|y|+s}^\frac78|\Gamma^b\p_t\cA(s,y)|\\
&\ls M\delta\sup_{s\le t,y}\w{|y|+s}^\frac78\w{|y|}^{-\frac76}\w{s}^{2M'\ve}
\big\{M\delta+M\ve\w{s}^{-1}\ln(2+s)\big\}\\
&\ls M\delta.
\end{split}
\end{equation}
The control of $\cM_{-\frac18}(Q_1^{bc})(t)$ will be divided into three parts corresponding to the domains
$\Lambda_1(t)$, $\cD_1$ and $\cD_2$, where
\begin{equation*}
\begin{split}
&\cD_1:=\Lambda_0(t)\cap\{(s,y): |y|\le2\w{s}/3\},\\
&\cD_2:=\Lambda_0(t)\cap\{(s,y): 4\w{s}/3\le|y|\le3\w{t}\},
\end{split}
\end{equation*}
and the definitions of $\Lambda_0(t)$ and $\Lambda_1(t)$ see \eqref{Lambda:def}.

In $\cD_1$, applying \eqref{pw:wave3} to \eqref{Qbc:def} yields that
\begin{equation}\label{potential:5}
\begin{split}
&\quad\sum_{b+c\le a}\sup_{(s,y)\in\cD_1}
\w{|y|}^\frac32\w{|y|+s}^\frac78|Q_1^{bc}(s,y)|\\
&\ls\sup_{s\le t}\w{s}^{\frac{11}{8}+2M'\ve}
\Big(M\delta+M\ve\w{s}^{-\frac12}\Big)\Big(M\delta+M\ve\w{s}^{-1}\Big)
\ls M\delta\w{t}^\frac12+M\ve.
\end{split}
\end{equation}
In $\cD_2$, by using \eqref{pw:wave3} again to \eqref{Qbc:def}, we deduce that
\begin{equation}\label{potential:6}
\begin{split}
&\quad\sum_{b+c\le a}\sup_{(s,y)\in\cD_2}
\w{|y|}^\frac32\w{|y|+s}^\frac78|Q_1^{bc}(s,y)|\\
&\ls\sup_{\substack{|y|\le3\w{t},\\s\le t}}\w{|y|}^{\frac{11}{8}+2M'\ve}
\Big(M\delta+M\ve\w{|y|}^{-\frac12}\Big)\Big(M\delta+M\ve\w{|y|}^{-1}\Big)
\ls M\delta\w{t}^\frac12+M\ve.
\end{split}
\end{equation}
In $\Lambda_1(t)$, the null condition structure in \eqref{null:Q1} will play a crucial rule (see Section 5 below).
Applying \eqref{pw:wave3} to the terms that containing the factor $\frac{1}{|x|}$ in \eqref{null:Q1}, we then see that
\begin{equation}\label{potential:7}
\begin{split}
&\quad\sum_{b+c\le a}\sup_{(s,y)\in\Lambda_1(t)}
\w{s}^\frac{11}{8}\w{|y|-s}|Q_1^{bc}(s,y)|\\
&\ls\sup_{|y|\le4s/3\le4t/3}\w{s}^\frac38
\Big(M\delta\w{s}^{M'\ve-\frac83}\w{|y|-s}^\frac12+M\ve\w{s}^{M'\ve-\frac12}\Big)^2\\
&\quad+\sum_{b+c\le a}\sup_{(s,y)\in\Lambda_1(t)}\w{s}^\frac{11}{8}\w{|y|-s}
(|\nabla\Gamma^b\sigma\tilde\Gamma^cg|+|\Gamma^b\sigma\nabla\tilde\Gamma^cg|).
\end{split}
\end{equation}
By employing \eqref{pw:wave3} and \eqref{good:pw1} to $\nabla\Gamma^b\sigma\tilde\Gamma^cg$ and $\Gamma^b\sigma\nabla\tilde\Gamma^cg$,
one has
\begin{equation}\label{potential:8}
\begin{split}
\sum_{b+c\le a}\sup_{(s,y)\in\Lambda_1(t)}\w{s}^\frac{11}{8}\w{|y|-s}
(|\nabla\Gamma^b\sigma\tilde\Gamma^cg|+|\Gamma^b\sigma\nabla\tilde\Gamma^cg|)
\ls M\delta+M\ve.
\end{split}
\end{equation}
Collecting \eqref{potential:5}--\eqref{potential:8} together, we eventually arrive at
\begin{equation}\label{potential:9}
\sum_{b+c\le a}\cM_{-\frac18}(Q_1^{bc})(t)\ls M\delta\w{t}^\frac12+M\ve.
\end{equation}
At last, we turn to estimate  the initial data on the right hand side of \eqref{potential:2}.
It is deduced from \eqref{initial:data} and \eqref{sigma:potential} that
\begin{equation}\label{potential:10}
\|\w{|y|}\Gamma^a\phi(0,y)\|_{W_y^{2,1}}
+\|\w{|y|}\p_t\Gamma^a\phi(0,y)\|_{W_y^{1,1}}\ls\ve\ls M\ve.
\end{equation}
Substituting \eqref{potential:3}, \eqref{potential:4}, \eqref{potential:9} and \eqref{potential:10} into \eqref{potential:2}
yields \eqref{potential:1}.
\end{proof}

Next, the pointwise estimates of the good unknown $g$ can be improved.
\begin{lemma}[Improved estimates of $\tilde\Gamma^ag$]\label{lem:good:improve}
Under bootstrap assumptions \eqref{bootstrap}, for $|a|\le N_1-4$ and $\w{t}/8\le|x|\le2\w{t}$, it holds that
\begin{equation}\label{good:improve1}
|\tilde\Gamma^ag(t,x)|\ls M\delta\w{t}^{-\frac34}+M\ve\w{t}^{-\frac54}
+M\ve\w{t}^{M'\ve-\frac32}\w{|x|-t}^\frac12.
\end{equation}
\end{lemma}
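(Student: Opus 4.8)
The plan is to read off $g$ from the wave potential $\phi$ and exploit that $g$ is, to leading order, a \emph{good derivative} of $\phi$. From the Helmholtz decomposition $u=\nabla\phi+P_2u$ and from \eqref{sigma:potential} one has
\begin{equation*}
g_i=u_i-\omega_i\sigma=L_i\phi+(P_2u)_i-\omega_i\cA+\tfrac12\omega_i(|u|^2-\sigma^2),
\qquad L_i:=\p_i+\omega_i\p_t .
\end{equation*}
First I would commute $\tilde\Gamma^a$ through this identity. Using Lemma~\ref{lem:commutation} ($\tilde\Gamma$ commutes with $\nabla,P_1,P_2$), the facts $\tilde\Omega\omega=\cS\omega=0$, and the commutators $\tilde\Omega(L\psi)=L(\Omega\psi)$, $\cS(L\psi)=L((\cS-1)\psi)$—which say that applying $\tilde\Gamma$ to a good derivative of a scalar again yields a good derivative of a lower-order scalar—one obtains, on $\w t/8\le|x|\le2\w t$ and up to harmless $O(|x|^{-1})$ coefficients coming from commuting $\nabla$ past $\omega$,
\begin{equation*}
\tilde\Gamma^ag_i=\sum_{|b|\le|a|}C_bL_i(\Gamma^b\phi)+(P_2\tilde\Gamma^au)_i
+\sum_{|b|\le|a|}\big(O(1)+O(|x|^{-1})\big)\big(\Gamma^b\cA+\Gamma^b(|u|^2-\sigma^2)\big)
+O\big(|x|^{-1}\big)\,\Gamma^{\le|a|}\phi .
\end{equation*}

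Next I would split each good derivative. For $r=|x|$ and $\psi=\Gamma^b\phi$, using $\p_i=\omega_i\p_r+r^{-1}(\text{angular})$ and $r\p_r=\cS-t\p_t$ one gets $L_i\psi=\tfrac{\omega_i}{r}\cS\psi+\tfrac{\omega_i(r-t)}{r}\p_t\psi+\tfrac1r(\text{angular }\Omega\psi\text{ terms})$. The $\cS\psi$, $\Omega\psi$ and the $O(r^{-1})\Gamma^{\le|a|}\phi$ pieces are handled directly by Lemma~\ref{lem:potential:pw} (note $|b|+1\le N_1-3$): since $\w{|x|+t}\sim\w t$ on this region they are $\ls\w t^{-1}\w t^{-1/4}\big(M\delta\w t^{1/2}+M\ve\big)=M\delta\w t^{-3/4}+M\ve\w t^{-5/4}$. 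The remaining piece $\tfrac{r-t}{r}\p_t\psi$ is the delicate one: one cannot apply Lemma~\ref{lem:potential:pw} to $\p_t\Gamma^b\phi$, because its weak weight $\w{|x|-t}^{-1/16}$ is overwhelmed by the factor $|r-t|\le\w{|x|-t}$. Instead I would substitute $\p_t\phi=-\sigma+\cA-\tfrac12(|u|^2-\sigma^2)$ from \eqref{sigma:potential}; the main term $-\Gamma^b\sigma$ is then estimated by the \emph{sharp} pointwise bound \eqref{pw:wave3'}, giving $\tfrac{\w{|x|-t}}{\w t}|\Gamma^b\sigma|\ls M\ve\w t^{M'\ve-3/2}\w{|x|-t}^{1/2}+M\delta\w t^{M'\ve-1}$—the last term being $\ls M\delta\w t^{-3/4}$ since $|x|\le2\w t$ and $\ve$ is small. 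This produces the last two terms of \eqref{good:improve1}.

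It remains to absorb the error terms. The $\cA$-contributions (here and inside $\p_t\psi$) are tiny by Lemma~\ref{lem:cA:pw}, and $|P_2\tilde\Gamma^au|\ls\w{|x|}^{-8/3}\bW_{|a|+1}$ by \eqref{pw:P2u}; both are comfortably $\ls M\delta\w t^{-3/4}$. For the quadratic term I would use the null cancellation $|u|^2-\sigma^2=|g|^2+2\sigma(g\cdot\omega)$ coming from $u=g+\omega\sigma$, so that $\Gamma^{\le|a|}(|u|^2-\sigma^2)$ is a sum of products each carrying a factor $\tilde\Gamma^{\le|a|}g$; bounding that factor by \eqref{good:pw1} ($\ls M\ve\w t^{M'\ve-1}$ here) and the companion factor by \eqref{pw:wave3'} or \eqref{good:pw1}, these terms are $O((M\ve)^2)$ up to innocuous powers of $\w t$ and $\w{|x|-t}$, hence absorbed using $M\ve\le1$ and $M'\ve$ small. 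Collecting all pieces, together with $\delta\le\ve^{8/7}$, $M(\ve+\kappa)\le1$ and $\w{|x|-t}\ls\w t$ on the region, yields \eqref{good:improve1}. I expect the crux to be structural rather than computational: one must verify that $\tilde\Gamma$ preserves the good-derivative form, so that the leading part of $\tilde\Gamma^ag$ is $L_i(\Gamma^b\phi)$, which loses a full power of $r\sim\w t$ against $\Gamma^b\phi$ and thereby converts the $\w t^{1/2}$ growth in Lemma~\ref{lem:potential:pw} into the desired $\w t^{-3/4}$ decay; and one must route $\p_t\phi$ through $\sigma$ rather than through Lemma~\ref{lem:potential:pw}, whose $\w{|x|-t}^{1/16}$ weight is too weak near the light cone, with the null identity for $|u|^2-\sigma^2$ the other essential input.
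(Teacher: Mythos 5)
Your proposal is correct and follows essentially the same route as the paper: both read $g$ off as (to leading order) the good derivative $L_i\phi$ of the potential, estimate the $\cS\Gamma^b\phi$ and $\Omega\Gamma^b\phi$ pieces by Lemma~\ref{lem:potential:pw} (gaining the crucial $r^{-1}\sim\w{t}^{-1}$), handle the remaining near-cone piece via the decay of $\sigma$ or $u$, and control $\cA$, $P_2u$, and the quadratic null form by Lemmas~\ref{lem:cA:pw}, \ref{lem:pw:P2u}, and the null structure from \eqref{null:structure}. The paper organizes the algebra a bit differently (it multiplies by $t$, splits $t=(t-|x|)+|x|$, keeps $(t-|x|)\tilde\Gamma^au_i$ and estimates it directly by \eqref{pw:wave3}, and absorbs the $\tilde\Gamma^bg$-factors arising from $\Gamma^a(|u|^2-\sigma^2)$ rather than pre-bounding them by \eqref{good:pw1}), but these choices are cosmetic and give the same bound.
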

\begin{proof}
The key idea to achieve \eqref{good:improve1} is to make full use of the potential function $\phi$ with the relations $u=\nabla\phi+P_2u$ and \eqref{sigma:potential}.
It follows from the definition \eqref{goodunknown:def}, \eqref{null:structure} and tedious computation that
\begin{equation}\label{good:improve2}
\begin{split}
t\tilde\Gamma^ag_i&=(t-|x|)\tilde\Gamma^au_i+|x|\tilde\Gamma^au_i
-t\omega_i\tilde\Gamma^a\sigma
-\frac{t}{|x|}\sum_{b+c\le a}f_i^{a,b}(x)\Gamma^c\sigma\\
&=(t-|x|)\tilde\Gamma^au_i+|x|P_2\tilde\Gamma^au_i+|x|\tilde\Gamma^a\p_i\phi
+t\omega_i\tilde\Gamma^a\p_t\phi-t\omega_i\Gamma^a\cA\\
&\quad-\frac{t}{|x|}\sum_{b+c\le a}f_i^{a,b}(x)\Gamma^c\sigma
+\frac12t\omega_i\Gamma^a(|u|^2-\sigma^2)\\
&=(t-|x|)\tilde\Gamma^au_i+|x|P_2\tilde\Gamma^au_i
+\sum_{b\le a}C^a_b(|x|\p_i+t\omega_i\p_t)\Gamma^b\phi-t\omega_i\Gamma^a\cA\\
&\quad-\frac{t}{|x|}\sum_{b+c\le a}f_i^{a,b}(x)\Gamma^c\sigma
+t\omega_i\sum_{b+c\le a}C^a_{bc}(\tilde\Gamma^bu_j\tilde\Gamma^cu_j
-\Gamma^b\sigma\Gamma^c\sigma).
\end{split}
\end{equation}
By utilizing \eqref{radial:angular} and the first line of \eqref{null:structure} to the last summation in \eqref{good:improve2}, we arrive at
\begin{equation}\label{good:improve3}
\begin{split}
t\tilde\Gamma^ag_i&=(t-|x|)\tilde\Gamma^au_i+|x|P_2\tilde\Gamma^au_i
+\sum_{b\le a}C^a_b(\omega_i\cS\Gamma^b\phi+\Omega\Gamma^b\phi)-t\omega_i\Gamma^a\cA\\
&\quad-\frac{t}{|x|}\sum_{b+c\le a}f_i^{a,b}(x)\Gamma^c\sigma
+t\omega_i\sum_{b+c\le a}C^a_{bc}(\tilde\Gamma^bg_j\tilde\Gamma^cu_j
+\omega_j\Gamma^b\sigma\tilde\Gamma^cg_j)\\
&\quad+\frac{t\omega_i}{|x|}\sum_{b_1+b_2+c\le a}f_j^{b,b_1}(x)
\Gamma^{b_2}\sigma\tilde\Gamma^cu_j
+\frac{t\omega_i}{|x|}\sum_{b+c_1+c_2\le b}f_j^{c,c_1}(x)
\omega_j\Gamma^b\sigma\Gamma^{c_2}\sigma.
\end{split}
\end{equation}
Applying \eqref{pw:P2u}, \eqref{pw:wave3}, \eqref{cA:pw} and \eqref{potential:1} to \eqref{good:improve3} leads to
\begin{equation}\label{good:improve4}
\begin{split}
\w{t}|\tilde\Gamma^ag_i|&\ls M\delta\w{t}^{M'\ve-\frac53}
+M\ve\w{t}^{M'\ve-\frac12}\w{|x|-t}^\frac12
+\w{t}^{-\frac14}\big\{M\delta\w{t}^\frac12+M\ve\big\}\\
&\quad+M\delta\w{t}^{2M'\ve-\frac53}\big\{M\delta+M\ve\w{t}^{-1}\ln(2+t)\big\}
+M\ve\w{t}\sum_{b\le a}|\tilde\Gamma^bg|.
\end{split}
\end{equation}
Therefore, combining \eqref{good:improve4} with the smallness of $M\ve$ derives \eqref{good:improve1}.
\end{proof}

Finally, we turn to the estimates of the velocity $u=\nabla\phi+P_2u$.
\begin{lemma}[Estimates of $\nabla\Gamma^a\phi$]\label{lem:grad:phi}
Under bootstrap assumptions \eqref{bootstrap}, for $|a|\le N_1-4$ and $|x|\le2\w{t}$, it holds that
\begin{equation}\label{grad:phi:pw1}
\w{|x|-t}^\frac98|\nabla\Gamma^a\phi(t,x)|\ls M\delta\w{t}^\frac34+M\ve.
\end{equation}
\end{lemma}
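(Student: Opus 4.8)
The plan is to run the weighted $L^\infty$--$L^\infty$ scheme of Lemma~\ref{lem:potential:pw} one more time, now for the gradient of $\Gamma^a\phi$. Split $\Gamma^a\phi=(\Gamma^a\phi)_{hom}+(\Gamma^a\phi)_{inh}$ according to $\Box\Gamma^a\phi=F^a$ in \eqref{potential:wave:high}. For the homogeneous part apply \eqref{hom:grad}; writing $u=P_1u+P_2u=\nabla\phi+P_2u$ and using \eqref{sigma:potential}, the data norms $\|\w{|y|}^2\Gamma^a\phi(0,\cdot)\|_{W_y^{3,1}}$ and $\|\w{|y|}^2\p_t\Gamma^a\phi(0,\cdot)\|_{W_y^{2,1}}$ are, for $|a|\le N_1-4$ (so that $|a|+3\le N_1-1$), dominated by the $L^1$-type pieces of $\ve$ in \eqref{initial:data}, since $\phi(0)=-\Delta^{-1}\dive u^0$, $\p_t\phi(0)=-\sigma^0+\cA^0-\tfrac12(|u^0|^2-(\sigma^0)^2)$, and all higher $t$-derivatives of the data reduce via the equation. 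Hence $\w{|x|-t}^{9/8}|\nabla(\Gamma^a\phi)_{hom}|\ls\w{|x|-t}^{-3/8}\w{|x|+t}^{-1/2}\ve\ls M\ve$. For the inhomogeneous part apply \eqref{sharp:pw2'} with $\nu=\tfrac18$ and $\mu>0$ small (recall $|x|\le2\w{t}$); since $\w{|x|}^{1/2}\ge1$, the claim is reduced to the bound $\sum_{|a'|+j\le1}\cM_{\mu+1/8}(\nabla^{a'}\Omega^jF^a)(t)\ls M\delta\w{t}^{3/4}+M\ve$.

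I would then reorganize $\nabla^{a'}\Omega^jF^a$ with $|a'|+j\le1$: since $[\nabla,\Gamma]$ and $[\Omega,\Gamma]$ lower the order and preserve the null structure (a derivative of $\dive\tilde\Gamma^cu$ is again of the form $\dive\tilde\Gamma^{c'}u$, and likewise for $\nabla\Gamma^c\sigma$ and for the $\tilde\Gamma^cg$-terms in \eqref{null:Q1}--\eqref{null:Q2}), it becomes a finite sum of nonlocal terms $\Gamma^b\p_t\cA$ with $|b|\le N_1-2$, quadratic terms $Q_1^{bc},Q_2^{bc}$ and their first derivatives with $|b|+|c|\le N_1-3$, and cubic terms carrying an extra solution factor. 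The nonlocal terms are handled by \eqref{cA:pw}, the cubic terms by the Section~\ref{sect4} pointwise bounds; in both, the only growing contributions have the shape $(M\delta)^2\w{t}^{9/8+2M'\ve}$, which is absorbed into $M\delta\w{t}^{3/4}$ because on $t\delta\le\kappa$ one has $(M\delta)^2\w{t}^{9/8+2M'\ve}=M\delta\cdot\big(M\delta\,\w{t}^{3/8+2M'\ve}\big)\cdot\w{t}^{3/4}\le M\delta\,\w{t}^{3/4}$ after shrinking $\delta_0$. This is precisely why $\w{t}^{3/4}$ (rather than $1$) is the right power on the right-hand side.

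For the quadratic part I would split $[0,t]\times\R^2$ into $\Lambda_1(t)$, $\cD_1:=\Lambda_0(t)\cap\{|y|\le2\w{s}/3\}$ and $\cD_2:=\Lambda_0(t)\cap\{4\w{s}/3\le|y|\le3\w{t}\}$, exactly as in the proof of Lemma~\ref{lem:potential:pw}. On $\cD_1$ the interior estimates \eqref{pw:wave4}, \eqref{pw:wave6} apply; on $\cD_2$ one uses \eqref{pw:wave3}, \eqref{pw:wave5} together with $\w{|y|-s}\gtrsim\w{|y|}$, so that every factor gains decay in $|y|$. The sensitive region is $\Lambda_1(t)$: there one rewrites $Q_1^{bc},Q_2^{bc}$ and their $\nabla$-derivatives in the null form \eqref{null:Q1}--\eqref{null:Q2}, so that each product contains at least one good-unknown factor $\tilde\Gamma^{(\cdot)}g$, $\nabla\tilde\Gamma^{(\cdot)}g$ (a second derivative of $g$ being treated as $\nabla\tilde\Gamma^{(\cdot)+1}g$), and inserts the improved bounds \eqref{pw:wave5}, \eqref{good:pw1}, \eqref{good:pw2}, \eqref{good:improve1}; these carry the extra factor $\w{|y|-s}^{-1/2}$ (or better), which is exactly what is needed to beat the weight $\w{s}^{13/8+\mu}\w{|y|-s}$ in $\cM_{\mu+1/8}$. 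The remainder terms with coefficient $1/|x|$ in \eqref{null:Q1}--\eqref{null:Q2} are estimated directly by \eqref{pw:wave3} since $|x|\sim s$ on $\Lambda_1(t)$. Collecting the three regions gives $\sum\cM_{\mu+1/8}(\nabla^{a'}\Omega^jF^a)(t)\ls M\delta\w{t}^{3/4}+M\ve$, which together with the homogeneous estimate yields \eqref{grad:phi:pw1}.

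The main obstacle I expect is the near-cone region $\Lambda_1(t)$ for the \emph{differentiated} quadratic terms $\nabla Q_1^{bc}$: one must differentiate the null decomposition \eqref{null:Q1}, keep careful track of the ensuing second derivatives of $g$ and of the $1/|x|$-coefficients (which spawn extra $\Gamma^{(\cdot)}\sigma$-factors), and verify that the null cancellation still supplies enough $\w{|x|-t}$-decay to overcome the strong weight $\w{|x|-t}^{1+\nu}=\w{|x|-t}^{9/8}$ in \eqref{sharp:pw2'}. The margin is tight and exists only because the Section~\ref{sect4} bounds for $g$, $\nabla g$ carry $\w{|x|-t}^{-1/2}$ (resp.\ the improved exterior decay of $\tilde\Gamma^ag$) rather than no such weight; the secondary bookkeeping point is tracking the $\delta$-powers so that all the $(M\delta)^2\w{t}^{9/8+2M'\ve}$ contributions fit under $M\delta\w{t}^{3/4}$ on the time interval $t\delta\le\kappa$.
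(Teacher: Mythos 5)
Your proposal matches the paper's proof essentially line for line: split $\Gamma^a\phi$ into homogeneous and inhomogeneous parts, apply \eqref{hom:grad} and \eqref{sharp:pw2'} with $\nu=\tfrac18$ (the paper picks $\mu=\tfrac{1}{24}$, so $\mu+\nu=\tfrac16$, matching your ``$\mu$ small''), reorganize $\nabla^{a'}\Omega^jF^a$ into $\cM_{\frac16}(F^b)$ with $|b|\le|a|+1$, control $\Gamma^b\p_t\cA$ via \eqref{cA:pw}, and split the quadratic $Q_1^{bc}$ estimate over $\cD_1$, $\cD_2$, and $\Lambda_1(t)$ using \eqref{pw:wave4}/\eqref{pw:wave6}, \eqref{pw:wave3}/\eqref{pw:wave5}, and the null decomposition \eqref{null:Q1} with the good-unknown bounds \eqref{good:improve1}, \eqref{good:pw2}, respectively. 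Your identification of the near-cone region and the $(M\delta)^2$-to-$M\delta\w{t}^{3/4}$ bookkeeping as the two delicate points is exactly where the paper's computations \eqref{grad:phi:pw7}--\eqref{grad:phi:pw9} do the work.
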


\begin{proof}
Applying \eqref{sharp:pw2'} and \eqref{hom:grad} to \eqref{potential:wave:high} with $\nu=\frac18$ and $\mu=\frac{1}{24}$ yields
\begin{equation}\label{grad:phi:pw2}
\begin{split}
&\quad\w{|x|}^\frac12\w{|x|-t}^\frac98|\Gamma^a\phi(t,x)|\\
&\ls\|\w{|y|}^2\Gamma^a\phi(0,y)\|_{W_y^{3,1}}
+\|\w{|y|}^2\p_t\Gamma^a\phi(0,y)\|_{W_y^{2,1}}+\sum_{|b|\le|a|+1}\cM_{\frac16}(F^b)(t).
\end{split}
\end{equation}

Similarly to Lemma~\ref{lem:potential:pw}, we can obtain
\begin{equation}\label{grad:phi:pw3}
\begin{split}
\sum_{|b|\le|a|+1}\cM_{\frac16}(F^b)(t)
\ls\sum_{|b|\le|a|+1}\cM_{\frac16}(\Gamma^b\p_t\cA)(t)
+\sum_{|b|+|c|\le|a|+1}\cM_{\frac16}(Q_1^{bc})(t)\\
+\sum_{|b|+|c|+|d|\le|a|+1}\cM_{\frac16}
(|Q_1^{bc}\Gamma^d\sigma|+|Q_{2i}^{bc}\tilde\Gamma^du_i|)(t).
\end{split}
\end{equation}
It follows from the definition \eqref{cM:def} of $\cM_{\frac16}(\cF)(t)$ and \eqref{cA:pw} that
\begin{equation}\label{grad:phi:pw4}
\begin{split}
\sum_{|b|\le|a|+1}\cM_{\frac16}(\Gamma^b\p_t\cA)(t)
&\ls\sum_{|b|\le|a|+1}~\sup_{s\le t,y}
\w{|y|}^\frac32\w{|y|+s}^\frac76|\Gamma^b\p_t\cA(s,y)|\\
&\ls M\delta\sup_{s\le t,y}\w{|y|+s}^\frac76\w{|y|}^{-\frac76}\w{s}^{2M'\ve}
\big\{M\delta+M\ve\w{s}^{-1}\ln(2+s)\big\}\\
&\ls M\delta\w{t}^{\frac16+2M'\ve}\ln(2+t)\ls M\delta\w{t}^\frac15.
\end{split}
\end{equation}
In $\cD_1$, applying \eqref{pw:wave4} and \eqref{pw:wave6} to \eqref{Qbc:def} implies
\begin{equation}\label{grad:phi:pw5}
\begin{split}
&\quad\sum_{b+c\le a}\sup_{(s,y)\in\cD_1}\w{|y|}^\frac32\w{|y|+s}^\frac76|Q_1^{bc}(s,y)|\\
&\ls\sup_{s\le t}~\w{s}^\frac53
\Big(M\delta\w{s}^{M'\ve}+M\ve\w{s}^{2M'\ve-1}\ln(2+s)\Big)^2\\
&\ls M\delta\w{t}^\frac34+M\ve.
\end{split}
\end{equation}
In $\cD_2$, by using \eqref{pw:wave3}, \eqref{pw:wave5} to \eqref{Qbc:def}, we get
\begin{equation}\label{grad:phi:pw6}
\begin{split}
&\quad\sum_{b+c\le a}\sup_{(s,y)\in\cD_2}
\w{|y|}^\frac32\w{|y|+s}^\frac76|Q_1^{bc}(s,y)|\\
&\ls\sup_{\substack{|y|\le3\w{t},\\s\le t}}\w{|y|}^\frac53
\Big(M\delta\w{s}^{M'\ve}+M\ve\w{|y|}^{2M'\ve-1}\Big)^2\\
&\ls M\delta\w{t}^\frac34+M\ve.
\end{split}
\end{equation}

Similarly to Lemma \ref{lem:potential:pw}, the treatment in $\Lambda_1(t)$ will also need to
make full use of the null condition structure in \eqref{null:Q1}. Indeed, applying \eqref{pw:wave3} to
the terms that containing the factor $\frac{1}{|x|}$ in \eqref{null:Q1}, we then have
\begin{equation}\label{grad:phi:pw7}
\begin{split}
&\quad\sum_{b+c\le a}\sup_{(s,y)\in\Lambda_1(t)}\w{s}^\frac53\w{|y|-s}|Q_1^{bc}(s,y)|\\
&\ls\sup_{|y|\le4s/3\le4t/3}\w{s}^\frac23\Big(M\delta\w{s}^{M'\ve}\w{|y|-s}^\frac12
+M\ve\w{s}^{M'\ve-\frac12}\Big)^2\\
&\quad+\sum_{b+c\le a}\sup_{(s,y)\in\Lambda_1(t)}\w{s}^\frac53\w{|y|-s}
\Big\{|\nabla\Gamma^b\sigma\tilde\Gamma^cg|+|\Gamma^b\sigma\nabla\tilde\Gamma^cg|\Big\}.
\end{split}
\end{equation}
On the other hand, by using \eqref{pw:wave5} and \eqref{good:improve1} to $\nabla\Gamma^b\sigma\tilde\Gamma^cg$, one has
\begin{equation}\label{grad:phi:pw8}
\begin{split}
&\quad\sum_{b+c\le a}\sup_{(s,y)\in\Lambda_1(t)}\w{s}^\frac53\w{|y|-s}
|\nabla\Gamma^b\sigma\tilde\Gamma^cg|\\
&\ls\sup_{s\le t}\Big\{M\delta\w{s}^{2M'\ve+\frac53-\frac12-\frac34}
+M\ve\w{s}^{2M'\ve+\frac53-\frac12-\frac54}\Big\}\\
&\ls M\delta\w{t}^\frac34+M\ve.
\end{split}
\end{equation}
In addition, employing \eqref{pw:wave1}, \eqref{good:pw2} to $\Gamma^b\sigma\nabla\tilde\Gamma^cg$ yields that
\begin{equation}\label{grad:phi:pw9}
\sum_{b+c\le a}\sup_{(s,y)\in\Lambda_1(t)}\w{s}^\frac53\w{|y|-s}
|\Gamma^b\sigma\nabla\tilde\Gamma^cg|
\ls M\ve.
\end{equation}
Collecting \eqref{grad:phi:pw5}--\eqref{grad:phi:pw9} together, we eventually achieve
\begin{equation}\label{grad:phi:pw10}
\sum_{b+c\le a}\cM_{\frac16}(Q_1^{bc})(t)\ls M\delta\w{t}^\frac34+M\ve.
\end{equation}
Note that similarly to \eqref{potential:10}, we have
\begin{equation}\label{grad:phi:pw11}
\|\w{|y|}^2\Gamma^a\phi(0,y)\|_{W_y^{3,1}}
+\|\w{|y|}^2\p_t\Gamma^a\phi(0,y)\|_{W_y^{2,1}}\ls M\ve.
\end{equation}
Therefore, substituting  \eqref{grad:phi:pw3}, \eqref{grad:phi:pw4}, \eqref{grad:phi:pw10} and \eqref{grad:phi:pw11} into \eqref{grad:phi:pw2}
derives \eqref{grad:phi:pw1}.
\end{proof}

Combining \eqref{grad:phi:pw1} and \eqref{pw:P2u} with the decomposition $\tilde\Gamma^au=\nabla\Gamma^a\phi+P_2\tilde\Gamma^au$,
 we arrive at
\begin{corollary}\label{coro:improve}
Under bootstrap assumptions \eqref{bootstrap}, for $|a|\le N_1-4$ and $|x|\le2\w{t}$, it holds that
\begin{equation}\label{velocity:pw}
|\tilde\Gamma^au(t,x)|\ls M\delta\w{|x|}^{-\frac83}
+\w{|x|-t}^{-\frac98}\big\{M\delta\w{t}^\frac34+M\ve\big\}.
\end{equation}
\end{corollary}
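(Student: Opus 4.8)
The plan is to read off \eqref{velocity:pw} by splitting $\tilde\Gamma^a u$ into its irrotational and solenoidal parts and invoking the pointwise bounds already in hand for each. Since $\tilde\Gamma$ commutes with the Helmholtz projections (Lemma~\ref{lem:commutation}) and satisfies $\tilde\Omega\nabla f=\nabla\Omega f$ by \eqref{rotation:commutation} (together with the corresponding shifted identity for $\cS$), applying $\tilde\Gamma^a$ to the decomposition \eqref{potential:def} yields $\tilde\Gamma^a u=\nabla\Gamma^a\phi+P_2\tilde\Gamma^a u$. Thus it suffices to estimate the two summands separately and add.

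For the irrotational part, Lemma~\ref{lem:grad:phi} applies in exactly the stated range $|a|\le N_1-4$, $|x|\le 2\w{t}$, and rearranging \eqref{grad:phi:pw1} gives $|\nabla\Gamma^a\phi(t,x)|\ls\w{|x|-t}^{-9/8}\{M\delta\w{t}^{3/4}+M\ve\}$, which is precisely the second term on the right of \eqref{velocity:pw}. For the solenoidal part, \eqref{pw:P2u} provides $\w{|x|}^{8/3}|P_2\tilde\Gamma^a u(t,x)|\ls\bW_{|a|+1}(t)$, and the low-order bootstrap bound on $\bW$ in \eqref{bootstrap} turns this into $|P_2\tilde\Gamma^a u(t,x)|\ls M\delta\w{|x|}^{-8/3}$, the first term on the right of \eqref{velocity:pw}. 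Adding the two contributions finishes the proof.

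There is essentially no analytic obstacle remaining: the genuine difficulty — constructing the wave equation \eqref{potential:wave} for the potential, proving the modified weighted $L^\infty$-$L^\infty$ estimates of Section~\ref{sect:weight:pw}, and the region-by-region analysis over $\Lambda_1(t)$, $\cD_1$, $\cD_2$ in Lemmas~\ref{lem:cA:pw}, \ref{lem:potential:pw}, \ref{lem:good:improve} and \ref{lem:grad:phi} that exploits the null structure of $Q_1^{bc}$ — has all been carried out upstream. The one point that deserves a little care is the count of vector fields: one must check that with $|a|\le N_1-4$ the auxiliary quantities $E_{|a|+3}$, $\cX_{|a|+3}$, $\bW_{|a|+1}$, $\cW_{|a|+1}$ inherited from those lemmas still lie within the ranges controlled by the bootstrap hierarchy $N_1\ge13$, $N_1+2\le N\le 2N_1-11$; once this is confirmed, \eqref{velocity:pw} follows by mere addition.
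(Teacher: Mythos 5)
Your argument is exactly the paper's one-line deduction: the corollary appears right after the sentence ``Combining \eqref{grad:phi:pw1} and \eqref{pw:P2u} with the decomposition $\tilde\Gamma^au=\nabla\Gamma^a\phi+P_2\tilde\Gamma^au$, we arrive at,'' and you reproduce precisely that decomposition and those two estimates. The only caveat --- shared with the paper --- is that for $|a|\le N_1-4$ the index $|a|+1$ can reach $N_1-3$, so bounding $\bW_{|a|+1}$ requires the higher-order bootstrap line $\bW_{N_1}(t)\le M\delta(1+t)^{M'\ve}$ rather than the low-order one you cite, leaving a harmless $\w{t}^{M'\ve}$ factor.
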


\section{Energy estimates}\label{sect5}

\subsection{Elementary energy estimates}\label{sect5:1}
This subsection is aimed to establish the elementary energy estimates for $E_m(t)$, which
is defined by \eqref{energy:def}.
\begin{lemma}\label{lem:energy:wave}
Under bootstrap assumptions \eqref{bootstrap}, we have that
\begin{align}
E^2_N(t')&\ls E^2_N(0)+\int_0^{t'}E^2_N(t)\big\{M\delta+M\ve\w{t}^{-1}\big\}dt
\nonumber\\
&\quad+\int_0^{t'}E_N(t)M\delta\w{t}^{2M'\ve}
\big\{M\delta+M\ve\w{t}^{-\frac78}\big\}dt,\label{energy:wave:high}\\
E^2_{N_1-4}(t')&\ls E^2_{N_1-4}(0)+\int_0^{t'}E^2_{N_1-4}(t)
\big\{M\delta+M\ve\w{t}^{-\frac98}\big\}dt \nonumber\\
&\quad+\int_0^{t'}E_{N_1-4}(t)M\delta\w{t}^{2M'\ve}
\big\{M\delta+M\ve\w{t}^{-\frac78}\big\}dt.\label{energy:wave:low}
\end{align}
\end{lemma}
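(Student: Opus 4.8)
The plan is to run the standard $L^2$ energy estimate for the symmetric hyperbolic system \eqref{high:eqn}, dressed with Alinhac's ghost weight in order to exploit the null structure of the quadratic nonlinearities. Fix $m\in\{N,N_1-4\}$ and $|a|\le m$, and let $Q=Q(|x|-t)$ be a bounded nondecreasing profile with $Q'(s)\sim\w{s}^{-1-\mu_0}$, $0<\mu_0<1/2$, so that $e^{2Q}\simeq1$. Pairing the two equations in \eqref{high:eqn} with $e^{2Q}\Gamma^a\sigma$ and $e^{2Q}\tilde\Gamma^au$, summing and integrating by parts (using $\nabla(|x|-t)=\omega$ and $\p_t(|x|-t)=-1$), the first-order antisymmetric terms cancel and one obtains
\[
\frac12\frac{d}{dt}\int e^{2Q}\big(|\Gamma^a\sigma|^2+|\tilde\Gamma^au|^2\big)dx
+\int Q'e^{2Q}\big(|\Gamma^a\sigma-\omega\cdot\tilde\Gamma^au|^2+|\omega^\perp\cdot\tilde\Gamma^au|^2\big)dx
=\int e^{2Q}\big(\Gamma^a\sigma\,\cQ_1^a+\tilde\Gamma^au\cdot\cQ_2^a\big)dx .
\]
The ghost term on the left is nonnegative (it dominates the $\w{|x|-t}^{-1-\mu_0}$–weighted $L^2$ norm of the radial part of the good unknown $\tilde\Gamma^ag$ and the angular part of $\tilde\Gamma^au$); summing over $|a|\le m$, integrating in $t$ and using $e^{2Q}\simeq1$, the whole statement reduces to the bound
\[
\cR_m(t):=\sum_{|a|\le m}\Big|\int e^{2Q}\big(\Gamma^a\sigma\,\cQ_1^a+\tilde\Gamma^au\cdot\cQ_2^a\big)dx\Big|
\ls E_m^2(t)\big\{M\delta+M\ve\w{t}^{-\theta_m}\big\}+E_m(t)M\delta\w{t}^{2M'\ve}\big\{M\delta+M\ve\w{t}^{-7/8}\big\},
\]
where $\theta_N=1$ and $\theta_{N_1-4}=9/8$; integrating this inequality in time is precisely \eqref{energy:wave:high}–\eqref{energy:wave:low}.

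To estimate $\cR_m$ I expand $\cQ_j^a=\sum_{b+c=a}C^a_{bc}Q_j^{bc}$; since $|b|+|c|\le m$, in each product at least one factor has order $\le N_1-5$, and after the Helmholtz splitting $u=P_1u+P_2u=\nabla\phi+P_2u$ that factor is controlled in $L^\infty$ by Section~\ref{sect4}. For the irrotational part I split the interior $|x|\le3\w{t}/4$, where \eqref{pw:wave4}, \eqref{pw:wave6} and Corollary~\ref{coro:improve} give the $\w{t}^{-1}$ decay (resp. the sharper $\w{|x|-t}^{-9/8}$ behaviour, available at the lower order $m=N_1-4$), from the exterior $|x|\ge\w{t}/8$, where \eqref{pw:wave5}, \eqref{good:pw1}, \eqref{good:pw2} and \eqref{good:improve1} furnish the $\w{|x|-t}$–weighted decay; putting the low-order factor in $L^\infty$ and the top-order one in $L^2$ yields the $E_m^2\{M\delta+M\ve\w{t}^{-\theta_m}\}$ part. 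The delicate case is when the surviving derivative sits on $\nabla\Gamma^a\sigma$ or $\dive\tilde\Gamma^au$ with $|a|=m$ (i.e. $|b|=0$), which cannot be placed in $E_m$: there I integrate by parts to move it off the top-order factor, turning $\int\Gamma^a\sigma\,u\cdot\nabla\Gamma^a\sigma+\int\tilde\Gamma^au\cdot(u\cdot\nabla)\tilde\Gamma^au$ into $-\tfrac12\int(\dive u)(|\Gamma^a\sigma|^2+|\tilde\Gamma^au|^2)$ and $\int\sigma\big(\Gamma^a\sigma\,\dive\tilde\Gamma^au+\tilde\Gamma^au\cdot\nabla\Gamma^a\sigma\big)$ into $-\int(\nabla\sigma\cdot\tilde\Gamma^au)\Gamma^a\sigma$ (the ghost weight only contributes further harmless $Q'$–terms). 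Interior to the cone, $\|(\nabla\sigma,\dive u)\|_{L^\infty}\ls M\delta\w{t}^{M'\ve-1}+M\ve\w{t}^{2M'\ve-2}\ln(2+t)$ by \eqref{pw:wave6}, costing at most $E_m^2\{M\delta+M\ve\w{t}^{-1}\}$; near the cone, where the 2D decay is only $\w{t}^{-1/2}$ and the naive bound fails, I rewrite $\cQ_1^a$ through the null form \eqref{null:Q1}–\eqref{null:Q2}, replacing $\nabla\Gamma^c\sigma$ either by the good derivative $\nabla\tilde\Gamma^cg$ or by pairing it with the small factor $\tilde\Gamma^bg$ (Lemma~\ref{lem:good:improve}, \eqref{good:pw2}, Corollary~\ref{coro:good}), whose extra $\w{|x|-t}$–gain, together with the ghost good term and the weighted $\dot H^1$ bound \eqref{H1norm}, absorbs the loss; the $\tfrac1{|x|}$ remainders in \eqref{null:Q1}–\eqref{null:Q2} are dispatched by \eqref{pw:wave3} and the Hardy inequality. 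This is exactly the point where the potential–flow mechanism of Section~\ref{sect4} is indispensable: the improved pointwise bounds for $\nabla\Gamma^a\phi$ behind Corollary~\ref{coro:improve} are what render the exterior contributions of $u$ time–integrable.

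Finally, the products carrying a solenoidal factor $P_2u$ account for the second group $E_mM\delta\w{t}^{2M'\ve}\{M\delta+M\ve\w{t}^{-7/8}\}$: by Cauchy--Schwarz $|\int\Gamma^a\sigma\,Q_j^{bc}|\ls E_m\|Q_j^{bc}\|_{L^2}$, and $\|Q_j^{bc}\|_{L^2}$ is estimated by placing the $P_2u$–factor in $L^\infty$ via \eqref{pw:P2u} (contributing $M\delta\w{t}^{M'\ve}\w{|x|}^{-8/3}$) or in a weighted $L^p$ controlled by $\bW$, with the complementary factor in $L^2$ or a dual $L^{p'}$; the bootstrap assumptions \eqref{bootstrap} then supply the $M\delta\w{t}^{2M'\ve}$ coefficient. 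The low-order estimate \eqref{energy:wave:low} follows by the same scheme, now with every factor of order $\le N_1-4$ plus a few, so that Corollary~\ref{coro:improve} applies to the $L^\infty$–factor and upgrades $\w{t}^{-1}$ to $\w{t}^{-9/8}$. I expect the main obstacle to be precisely the top-order, near-cone interaction just described: matching the powers of $\w{|x|-t}$ so that the pieces one cannot absorb are genuine ghost good terms or carry a time-integrable coefficient, and checking that the vorticity never enters with a coefficient worse than $M\delta\w{t}^{2M'\ve}$.
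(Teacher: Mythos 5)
Your proposal follows essentially the same strategy as the paper's proof: a ghost-weight energy identity for the symmetrized system \eqref{high:eqn}, with the good term $\sum_i|\tilde\Gamma^au_i-\omega_i\Gamma^a\sigma|^2\w{|x|-t}^{-1-\mu_0}$; integration by parts to remove the top-order derivative when $|b|=0$ (the paper's $I^a$ in \eqref{Ia:def}); and a split into an interior zone treated by the pointwise bounds \eqref{pw:wave4}, \eqref{pw:wave6}, \eqref{velocity:pw} and an exterior zone treated by the null-form rewriting \eqref{null:Q1}--\eqref{null:Q2}, the good-unknown estimates of Corollary~\ref{coro:good} and Lemma~\ref{lem:good:improve}, and absorption into the ghost term. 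The integration-by-parts identities you display and the use of \eqref{H1norm} to trade $\cX_m$ for $E_m+\cW_{m-1}$ are exactly what the paper does.

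One point you should tighten: you assert that Corollary~\ref{coro:improve} renders the exterior contributions time-integrable, but that corollary (like Lemmas~\ref{lem:good:improve} and \ref{lem:grad:phi}) is stated only for $|x|\le2\w{t}$. The paper therefore splits the exterior into the conic shell $\w{t}/8\le|x|\le2\w{t}$, where the Section~\ref{sect4} improved bounds are used, and the far region $|x|\ge2\w{t}$, where they are not available and a different, more elementary mechanism takes over: there $\w{|x|-t}\gtrsim\w{|x|}\gtrsim\w{t}$, so the basic estimate \eqref{pw:wave3} already yields $\w{t}^{-1}$ decay for the low-order factor, and for $\|\nabla\tilde\Gamma^cu\|_{L^2(|x|\ge2\w{t})}$ one inserts a cutoff supported in $|x|\gtrsim t$ and appeals to the divergence-curl inequality plus $\cX_m$, as in \eqref{Qbc:ineq7}--\eqref{Qbc:ineq9} and \eqref{Ia:ineq5}. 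Without this extra split, the argument as you wrote it would invoke estimates outside their stated range of validity; with it, your plan is sound and matches the paper's.
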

\begin{proof}
For the multi-index $a$ with $|a|=m\le N$, multiplying \eqref{high:eqn} by $2e^q\Gamma^a\sigma$
and $2e^q\tilde\Gamma^au$, respectively, yields the following equality
\begin{equation*}
\begin{split}
&\p_t\{e^q(|\Gamma^a\sigma|^2+|\tilde\Gamma^au|^2)\}
+2\dive\{e^q(1-\sigma)\Gamma^a\sigma\tilde\Gamma^au\}
+\dive\{e^qu(|\Gamma^a\sigma|^2+|\tilde\Gamma^au|^2)\}\\
&+\frac{e^q}{\w{|x|-t}^\frac32}\sum_{i=1}^2\Big\{|\tilde\Gamma^au_i-\omega_i\Gamma^a\sigma|^2
-u_i\omega_i(|\Gamma^a\sigma|^2+|\tilde\Gamma^au|^2)
+2\sigma\omega_i\Gamma^a\sigma\tilde\Gamma^au_i\Big\}\\
&=e^q(|\Gamma^a\sigma|^2+|\tilde\Gamma^au|^2)\dive u
-2e^q\Gamma^a\sigma\tilde\Gamma^au\cdot\nabla\sigma
+\sum_{\substack{b+c=a,\\c<a}}2e^qC^a_{bc}(Q_1^{bc}\Gamma^a\sigma
+Q_2^{bc}\cdot\tilde\Gamma^au),
\end{split}
\end{equation*}
where the ghost weight $e^q=e^{q(|x|-t)}$ with $\ds q(|x|-t)=\int_{-\infty}^{|x|-t}\w{s}^{-\frac32}ds$ was introduced in \cite{Alinhac01}
for treating the global small solution problem of the second order 2D quasilinear wave equation.
Integrating the above equality over $[0,t']\times\R^2$ derives that
\begin{equation}\label{energy:wave:ineq}
\begin{split}
&\quad E^2_{|a|}(t')+\sum_{i=1}^2\int_0^{t'}\int\frac{1}{\w{|x|-t}^\frac32}
|\tilde\Gamma^au_i-\omega_i\Gamma^a\sigma|^2dxdt\\
&\ls E^2_{|a|}(0)+\int_0^{t'}\int\Big\{|I^a|+\sum_{\substack{b+c=a,\\c<a}}
(|Q_1^{bc}\Gamma^a\sigma|+|Q_2^{bc}\cdot\tilde\Gamma^au|)\Big\}dxdt,
\end{split}
\end{equation}
where
\begin{equation}\label{Ia:def}
\begin{split}
I^a:=&~(|\Gamma^a\sigma|^2+|\tilde\Gamma^au|^2)\dive u
-2\Gamma^a\sigma\tilde\Gamma^au\cdot\nabla\sigma\\
&+\frac{1}{\w{|x|-t}^\frac32}\sum_{i=1}^2
\Big\{u_i\omega_i(|\Gamma^a\sigma|^2+|\tilde\Gamma^au|^2)
-2\sigma\omega_i\Gamma^a\sigma\tilde\Gamma^au_i\Big\}.
\end{split}
\end{equation}
As in Lemma~\ref{lem:H1norm}, the space domain is still divided into two parts of $|x|\le\w{t}/8$ and $|x|\ge\w{t}/8$.

Firstly, we deal with $I^a$, $Q_1^{bc}$ and $Q_2^{bc}$ in the region $|x|\le\w{t}/8$.
For $I^a$, it concludes from \eqref{pw:wave3} and \eqref{pw:wave6} that
\begin{equation}\label{Ia:ineq1}
\int_{|x|\le\w{t}/8}|I^a|dx
\ls E^2_m(t)\big\{M\ve\w{t}^{-\frac32}+M\delta\big\}.
\end{equation}

In view of $|b|+|c|\le N\le2N_1-11$, then $|b|\le N_1-5$ or $|c|\le N_1-7$ holds.

If $|b|\le N_1-1$, applying \eqref{div:curl:ineq} to $\nabla\tilde\Gamma^cu$ and
applying \eqref{pw:wave4} to $\Gamma^b\sigma$, $\tilde\Gamma^bu$, respectively, infer that
\begin{equation}\label{Qbc:ineq1}
\begin{split}
&\quad\int_{|x|\le\w{t}/8}(|Q_1^{bc}\Gamma^a\sigma|+|Q_2^{bc}\cdot\tilde\Gamma^au|)dx\\
&\ls E_m(t)\big\{\cX_m(t)+E_{m-1}(t)+\w{t}\cW_{m-1}(t)\big\}
\big\{M\ve\w{t}^{M'\ve-2}\ln(2+t)+M\delta\w{t}^{M'\ve-1}\big\}\\
&\ls E_m(t)\big\{E_m(t)+\w{t}\cW_{m-1}(t)\big\}
\big\{M\ve\w{t}^{M'\ve-2}\ln(2+t)+M\delta\w{t}^{M'\ve-1}\big\}\\
&\ls E^2_m(t)\big\{M\ve\w{t}^{-\frac32}+M\delta\w{t}^{M'\ve-1}\big\}
+E_m(t)M\delta\w{t}^{2M'\ve}\big\{M\delta+M\ve\w{t}^{-\frac78}\big\},
\end{split}
\end{equation}
where we have also used \eqref{bootstrap} and \eqref{H1norm}.

If $|c|\le N_1-5$, by using \eqref{pw:wave6} and \eqref{velocity:pw} to $\nabla\Gamma^c\sigma$ and $\nabla\tilde\Gamma^cu$, respectively, we can obtain
\begin{equation}\label{Qbc:ineq2}
\int_{|x|\le\w{t}/8}(|Q_1^{bc}\Gamma^a\sigma|+|Q_2^{bc}\cdot\tilde\Gamma^au|)dx
\ls E^2_m(t)\big\{M\ve\w{t}^{-\frac98}+M\delta\big\}.
\end{equation}

Now, we turn to the treatments of $I^a$, $Q_1^{bc}$ and $Q_2^{bc}$ in the region $|x|\ge\w{t}/8$,
For $I^a$ defined by \eqref{Ia:def}, it follows from the definition of the good unknown \eqref{goodunknown:def} that
\begin{equation}\label{Ia:ineq2}
\begin{split}
I^a=&\sum_{i=1}^2\Big\{\dive u|\tilde\Gamma^au_i-\omega_i\Gamma^a\sigma|^2
+2\Gamma^a\sigma\tilde\Gamma^au_i(\omega_i\dive u-\p_i\sigma)\Big\}\\
&+\frac{1}{\w{|x|-t}^\frac32}\sum_{i,j=1}^2
\Big\{u_i\omega_i|\tilde\Gamma^au_j-\omega_j\Gamma^a\sigma|^2
+2g_i\omega_i\omega_j\Gamma^a\sigma\tilde\Gamma^au_j\Big\}.
\end{split}
\end{equation}
It follows from \eqref{radial:angular} and the second equality of \eqref{null:structure} that
\begin{equation}\label{Ia:ineq3}
\omega_i\dive u-\p_i\sigma=\omega_i\Big\{\p_jg_j
+\frac{1}{|x|}[f_j^0(x)\p_j\sigma+f_{jj}^0(x)\sigma]\Big\}+\frac{1}{|x|}\Omega\sigma.
\end{equation}
By applying \eqref{pw:wave3} and \eqref{good:pw1} to \eqref{Ia:ineq2} and \eqref{Ia:ineq3}, we see that
\begin{equation}\label{Ia:ineq4}
\int_{|x|\ge\w{t}/8}|I^a|dx\ls\sum_{i=1}^2\int\frac{M\ve}{\w{|x|-t}^\frac32}
|\tilde\Gamma^au_i-\omega_i\Gamma^a\sigma|^2dx
+E^2_m(t)\big\{M\delta+M\ve\w{t}^{-1}\big\},
\end{equation}
where we have used the Cauchy-Schwartz inequality.

Next, we focus on the treatments of $Q_1^{bc}$ and $Q_2^{bc}$ in the region $|x|\ge\w{t}/8$.

\noindent\underline{The case of $|c|\le N_1-7$:}
By the second equality of \eqref{null:structure}, we get
\begin{equation}\label{Qbc:ineq3}
\begin{split}
Q_{2i}^{bc}&=-\tilde\Gamma^bu_j\p_j\tilde\Gamma^cu_i
+\Gamma^b\sigma\p_i\Gamma^c\sigma\\
&=-\p_j\tilde\Gamma^cu_i(\tilde\Gamma^bu_j-\omega_j\Gamma^b\sigma)
+\Gamma^b\sigma(\p_i\Gamma^c\sigma-\omega_j\p_j\tilde\Gamma^cu_i)\\
&=-\p_j\tilde\Gamma^cu_i(\tilde\Gamma^bu_j-\omega_j\Gamma^b\sigma)
+\frac{1}{|x|}\Gamma^b\sigma\Omega\Gamma^c\sigma\\
&\quad-\omega_j\Gamma^b\sigma\Big\{\p_j\tilde\Gamma^cg_i
+\frac{1}{|x|}\sum_{c_1+c_2\le c}[f_i^{c_1}(x)\p_j\Gamma^{c_2}\sigma
+f_{ij}^{c_1}(x)\Gamma^{c_2}\sigma]\Big\},
\end{split}
\end{equation}
and
\begin{equation}\label{Qbc:ineq4}
\begin{split}
Q_1^{bc}&=\Gamma^b\sigma\Big\{\p_i\tilde\Gamma^cg_i
+\frac{1}{|x|}\sum_{c_1+c_2\le c}[f_i^{c_1}(x)\p_i\Gamma^{c_2}\sigma
+f_{ii}^{c_1}(x)\Gamma^{c_2}\sigma]\Big\}\\
&\quad-\p_i\Gamma^c\sigma(\tilde\Gamma^bu_i-\omega_i\Gamma^b\sigma).
\end{split}
\end{equation}
According to \eqref{pw:wave3} with $|c|\le N_1-7$, we arrive at
\begin{equation*}
\begin{split}
|\nabla\Gamma^c\sigma(t,x)|+|\nabla\tilde\Gamma^cu(t,x)|
&\ls\w{t}^{-7}\bW_{|a|+1}(t)
+\w{t}^{-\frac12}\w{|x|-t}^{-1}\big\{E_{|a|+3}(t)+\cX_{|a|+3}(t)\big\}\\
&\ls M\delta+M\ve\w{t}^{-\frac12}\w{|x|-t}^{-1}.
\end{split}
\end{equation*}
Substituting this inequality and \eqref{good:pw1} into \eqref{Qbc:ineq3} and \eqref{Qbc:ineq4} leads to
\begin{equation}\label{Qbc:ineq5}
\begin{split}
&\quad\sum_{\substack{b+c=a,\\|c|\le N_1-7}}\int_{|x|\ge\w{t}/8}
(|Q_1^{bc}\Gamma^a\sigma|+|Q_2^{bc}\cdot\tilde\Gamma^au|)dx\\
&\ls\sum_{|b|\le|a|}\sum_{i=1}^2\int\frac{M\ve}{\w{|x|-t}^\frac32}
|\tilde\Gamma^bu_i-\omega_i\Gamma^b\sigma|^2dx
+E^2_m(t)\big\{M\delta+M\ve\w{t}^{-1}\big\}.
\end{split}
\end{equation}

\noindent\underline{The case of $|b|\le N_1-4$ and the lower order energy:}
By applying \eqref{goodL2norm2}, \eqref{H1norm}, \eqref{good:improve1} to \eqref{null:Q1} and \eqref{null:Q2}, we obtain
\begin{equation}\label{Qbc:ineq6}
\begin{split}
&\sum_{\substack{b+c=a,\\c<a,|b|\le N_1-4}}\int_{\frac{\w{t}}{8}\le|x|\le2\w{t}}
(|Q_1^{bc}\Gamma^a\sigma|+|Q_2^{bc}\cdot\tilde\Gamma^au|)dx\\
&\ls E_m(t)\big\{E_m(t)+\cW_{m-1}(t)\big\}\big\{M\delta+M\ve\w{t}^{-\frac54}\big\}\\
&\ls E_m(t)\big\{E_m(t)+M\delta\w{t}^{M'\ve}\big\}
\big\{M\delta+M\ve\w{t}^{-\frac54}\big\},
\end{split}
\end{equation}
Next, we pay our attention to the related treatments in the region $|x|\ge2\w{t}$.
For this purpose, we need only to deal with $\|\nabla\tilde\Gamma^c u\|_{L^2(|x|\ge2\w{t})}$.
Choose the cutoff function $\tilde\chi(s)\in C^\infty$ which taking values in $[0,1]$ and satisfying
\begin{equation*}
\tilde\chi(s)=\left\{
\begin{aligned}
&1,\qquad\quad s\ge2,\\
&0,\qquad\quad s\le3/2.
\end{aligned}
\right.
\end{equation*}
Thereafter, it is easy to check that
\begin{equation}\label{Qbc:ineq7}
\w{t}\|\nabla\tilde\Gamma^c u\|_{L^2(|x|\ge2\w{t})}\ls E_{|c|}(t)+
\Big\|\w{|x|-t}\nabla\Big(\tilde\chi\Big(\frac{|x|}{t}\Big)
\tilde\Gamma^c u\Big)\Big\|_{L^2}.
\end{equation}
By using \eqref{div:curl:ineq} to the last term on the right hand side of \eqref{Qbc:ineq7}, we then get
\begin{equation}\label{Qbc:ineq8}
\begin{split}
\Big\|\w{|x|-t}\nabla\Big(\tilde\chi\Big(\frac{|x|}{t}\Big)
\tilde\Gamma^c u\Big)\Big\|_{L^2}\ls E_{|c|}(t)+\cX_{|c|+1}(t)+\cW_{|c|}(t),
\end{split}
\end{equation}
Consequently, it concludes from \eqref{pw:wave3}, \eqref{H1norm}, \eqref{Qbc:ineq7} and \eqref{Qbc:ineq8} that
\begin{equation}\label{Qbc:ineq9}
\begin{split}
&\quad\sum_{\substack{b+c=a,\\c<a,|b|\le N_1-4}}\int_{|x|\ge2\w{t}}
(|Q_1^{bc}\Gamma^a\sigma|+|Q_2^{bc}\cdot\tilde\Gamma^au|)dx\\
&\ls E_m(t)\w{t}^{M'\ve-1}\big\{M\delta\w{t}^{-\frac83}+M\ve\w{t}^{-1}\big\}
\big\{E_m(t)+\cX_m(t)+M\delta\w{t}^{M'\ve}\big\}\\
&\ls E^2_m(t)\big\{M\delta+M\ve\w{t}^{-\frac32}\big\}
+E_m(t)M\delta\w{t}^{2M'\ve-1}\big\{M\delta\w{t}^{-\frac83}+M\ve\w{t}^{-1}\big\}\\
&\ls E^2_m(t)\big\{M\delta+M\ve\w{t}^{-\frac32}\big\}
+E_m(t)M^2\ve\delta\w{t}^{-1}.
\end{split}
\end{equation}
For all $m\le N$, substituting \eqref{Ia:ineq1}-\eqref{Qbc:ineq2}, \eqref{Ia:ineq4}, \eqref{Qbc:ineq5},
\eqref{Qbc:ineq6} and \eqref{Qbc:ineq9} into \eqref{energy:wave:ineq} yields \eqref{energy:wave:high}.

Finally, we are dedicated to the lower energy estimates of $I^a$ in the region $|x|\ge\w{t}/8$ with $|a|=m\le N_1-4$.
In the region $|x|\ge2\w{t}$, applying \eqref{pw:wave3} and \eqref{good:pw1} to \eqref{Ia:ineq2} shows that
\begin{equation}\label{Ia:ineq5}
\int_{|x|\ge2\w{t}}|I^a|dx\ls E^2_{N_1-4}(t)
\big\{M\delta+M\ve\w{t}^{-\frac32}\big\}.
\end{equation}
Substituting the first equality in \eqref{null:structure} into \eqref{Ia:ineq2} infers
\begin{equation}\label{Ia:ineq6}
\begin{split}
I^a=&\sum_{i=1}^2\Big\{\dive u\Big|\tilde\Gamma^ag_i
+\frac{1}{|x|}\sum_{a_1+a_2\le a}f_i^{a,a_1}(x)\Gamma^{a_2}\sigma\Big|^2\Big\}\\
&+2\sum_{i,j=1}^2\omega_i\Gamma^a\sigma\tilde\Gamma^au_i
\Big\{\p_jg_j+\frac{1}{|x|}[f_j^0(x)\p_j\sigma+f_{jj}^0(x)\sigma]
+\frac{1}{|x|}\Omega\sigma\Big\}\\
&+\frac{1}{\w{|x|-t}^\frac32}\sum_{i,j=1}^2
\Big\{u_i\omega_i\Big|\tilde\Gamma^ag_j
+\frac{1}{|x|}\sum_{a_1+a_2\le a}f_j^{a,a_1}(x)\Gamma^{a_2}\sigma\Big|^2
+2g_i\omega_i\omega_j\Gamma^a\sigma\tilde\Gamma^au_j\Big\}.
\end{split}
\end{equation}
By plugging \eqref{pw:wave3'}, \eqref{good:pw1} and the improved poinwise estimates
\eqref{good:improve1} of $\tilde\Gamma^ag$ with $|a|\le N_1-4$ into \eqref{Ia:ineq6}, we arrive at
\begin{equation}\label{Ia:ineq7}
\begin{split}
\int_{\frac{\w{t}}{8}\le|x|\le2\w{t}}|I^a|dx&\ls E_{N_1-4}(t)
\big\{E_{N_1-4}(t)+M\delta\big\}\big\{M\delta+M\ve\w{t}^{-\frac54}\big\}\\
&\quad+E^2_{N_1-4}(t)\big\{M\delta+M\ve\w{t}^{-\frac32}\big\}.
\end{split}
\end{equation}
For all $m\le N_1-4$, substituting \eqref{Ia:ineq1}, \eqref{Qbc:ineq1}, \eqref{Qbc:ineq6},
\eqref{Qbc:ineq9}, \eqref{Ia:ineq5} and \eqref{Ia:ineq7} into \eqref{energy:wave:ineq} yields \eqref{energy:wave:low}.
\end{proof}

\subsection{Energy estimates of the vorticity}\label{sect5:2}
Before taking the estimates of the vorticity, we will establish some useful lemmas.
Recalling the definition of the specific vorticity \eqref{curl:def}, then it is easy
to check that
\begin{equation}\label{curl:eqn}
(\p_t+u\cdot\nabla)w=0.
\end{equation}
\begin{lemma}\label{lem:grad:curl}
Under bootstrap assumptions \eqref{bootstrap}, for $m\le N-2$ and $k\le N_1-4$, it holds that
\begin{equation}\label{grad:curl}
\sum_{|a|\le m}\w{|x|}|\nabla\Gamma^aw(t,x)|\ls\sum_{|a'|\le m+1}|\Gamma^{a'}w(t,x)|
+t\w{|x|}^{-\frac32}W_{N_1-4}(t)\sum_{|b|\le m}|\tilde\Gamma^bu(t,x)|,
\end{equation}
and
\begin{equation}\label{grad:curl'}
\sum_{|a|\le k}\w{|x|}|\nabla\Gamma^aw(t,x)|\ls\sum_{|a'|\le k+1}|\Gamma^{a'}w(t,x)|.
\end{equation}
Furthermore, for $|x|\ge\w{t}/8$, it holds that
\begin{equation}\label{grad:curl:cone}
\sum_{|a|\le m}\w{|x|}|\nabla\Gamma^aw(t,x)|\ls\sum_{|a'|\le m+1}|\Gamma^{a'}w(t,x)|.
\end{equation}
\end{lemma}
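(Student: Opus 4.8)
The plan is to establish all three estimates by a joint induction on $|a|$, reducing $\na\Gamma^aw$ to the ``primitive'' quantities $\Gamma^{a'}w$ ($|a'|\le|a|+1$) via the radial--angular splitting $\na=\omega\p_r+\frac{\omega^\perp}{|x|}\Omega$ together with the transport equation \eqref{curl:eqn}. Commuting $\p$ past $\cS$ and $\Omega$ (using $[\p_i,\cS]=\p_i$ and the fact that $[\p_i,\Omega]$ is again a first-order spatial derivative) shows unconditionally that $|\na\Gamma^aw|\ls\sum_{|a'|\le|a|+1}|\Gamma^{a'}w|$; hence all three inequalities are trivial on a fixed ball $|x|\le C_0$ (there $\w{|x|}\ls1$), and the entire content is to recover the weight $\w{|x|}$ when $|x|$ is large. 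The angular part satisfies $|\Omega\Gamma^aw|\ls\sum_{|a'|\le|a|+1}|\Gamma^{a'}w|$ directly, so only $r\p_r\Gamma^aw$ needs work. Writing $r\p_r=\cS-t\p_t$ and commuting $\Gamma^a$ through $\p_t+u\cdot\na$ (using \eqref{rotation:commutation} and $[\cS,\p_t]=-\p_t$, as in \eqref{high:eqn}) gives
\begin{equation*}
(\p_t+u\cdot\na)\Gamma^aw=R^a:=\sum_{\substack{b+c\le a\\ c<a}}C^a_{bc}\,\tilde\Gamma^bu\cdot\na\Gamma^cw,
\end{equation*}
whence the key identity
\begin{equation*}
\bigl(|x|-t\,\omega\cdot u\bigr)\,\p_r\Gamma^aw=\cS\Gamma^aw-tR^a+\frac{t\,\omega^\perp\cdot u}{|x|}\,\Omega\Gamma^aw .
\end{equation*}

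Next I would use that $|u|$ is small, so the coefficient $|x|-t\,\omega\cdot u$ is comparable to $|x|$. On $|x|\ge\w{t}/8$, the crude estimate \eqref{pw:wave3'} gives $|u|\ls M\ve$, hence $\tfrac{t}{|x|}|u|\ls M\ve\ll1$ and $|x|-t\,\omega\cdot u\gt|x|$; also $\tfrac{t}{|x|}\ls1$ there. On the interior $|x|\le\w{t}/8$ the crude interior bound \eqref{pw:wave4} only yields a $t|u|$ that grows (at least logarithmically) as $t\sim\kappa/\dl\to\infty$, so here I would instead invoke the improved interior decay \eqref{velocity:pw} of $u$ (legitimate since $|x|\le2\w{t}$); together with \eqref{bootstrap} and $t\dl\le\kappa$ it forces $t|u|\ls1$ uniformly, so $|x|-t\,\omega\cdot u\gt|x|$ for $|x|$ larger than a fixed constant $C_0$, while on $|x|>2\w{t}$ one reverts to \eqref{pw:wave3'}. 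Dividing in the key identity turns the radial part into
\begin{equation*}
\w{|x|}\,|\na\Gamma^aw|\ls|\cS\Gamma^aw|+|\Omega\Gamma^aw|+\tfrac{t}{|x|}|u|\,|\Omega\Gamma^aw|+t|R^a|,
\end{equation*}
the first three terms being already admissible.

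It remains to absorb $t|R^a|=t\sum_{c<a}|\tilde\Gamma^bu|\,|\na\Gamma^cw|$, which is where the induction enters. I would treat \eqref{grad:curl'} first: for $|a|\le k\le N_1-4$ one has $|b|,|c|\le N_1-4$, so \eqref{velocity:pw} gives $t|\tilde\Gamma^bu|\ls\w{|x|}$ and the inductive hypothesis gives $|\na\Gamma^cw|\ls\w{|x|}^{-1}\sum_{|c'|\le|c|+1}|\Gamma^{c'}w|$, whence $t|\tilde\Gamma^bu|\,|\na\Gamma^cw|\ls\sum_{|a'|\le|a|+1}|\Gamma^{a'}w|$ and no velocity term appears. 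For \eqref{grad:curl} and \eqref{grad:curl:cone} (joint induction, $|a|\le m\le N-2$): on $|x|\ge\w{t}/8$ the bound $\tfrac{t}{|x|}\ls1$ absorbs the power of $t$, and combining \eqref{pw:wave3'}/\eqref{velocity:pw} with the inductive \eqref{grad:curl:cone} and \eqref{grad:curl'} yields \eqref{grad:curl:cone} without a velocity term. On the interior, in each commutator term either $|b|\le N_1-4$ --- then $t|\tilde\Gamma^bu|\ls\w{|x|}$ by \eqref{velocity:pw} and the inductive form of \eqref{grad:curl} for $\Gamma^cw$ reproduces exactly the asserted term $t\w{|x|}^{-3/2}W_{N_1-4}(t)\sum_{|b|\le m}|\tilde\Gamma^bu|$ --- or $|b|>N_1-4$, which forces $|c|\le N_1-10$, and then \eqref{pw:curl} together with \eqref{grad:curl'} gives $|\na\Gamma^cw|\ls\w{|x|}^{-5/2}W_{N_1-4}(t)$, producing that same term with a spare factor $\w{|x|}^{-1}$.

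The main obstacle is precisely the term $t\p_t\Gamma^aw$: used naively it has size $t|\na\Gamma^aw|$, which in the interior overwhelms the weight $\w{|x|}$ by $t/\w{|x|}\gg1$. Only the transport equation \eqref{curl:eqn} rescues it, rewriting it as $-t\,u\cdot\na\Gamma^aw+tR^a$; the first piece can be absorbed into the coefficient $|x|-t\,\omega\cdot u$ \emph{only because} $t|u|$ stays bounded, which hinges on the sharp interior decay \eqref{velocity:pw} of $u$ --- the crude $O(\ve)$ information being far too weak once $t$ is as large as $\kappa/\dl$. Beyond this, the difficulty is purely bookkeeping: one must track carefully how the powers of $t$, the weights $\w{|x|}$, and the low-/high-order split of $(b,c)$ interact so that the inductive velocity term reproduces itself rather than accumulating.
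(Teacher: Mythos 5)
Your proof is correct and follows essentially the same route as the paper: both trade $r\p_r=\cS-t\p_t$ for the transport operator, rewrite $t\p_t\Gamma^aw$ via $(\p_t+u\cdot\na)\Gamma^aw=R^a$, and then exploit the sharp interior decay \eqref{velocity:pw} (resp.\ \eqref{pw:wave3'} on the light cone) to make $t|\tilde\Gamma^bu|/\w{|x|}\ls M\ve$ so the dangerous term can be removed. The only cosmetic difference is that you separate the $c=a$ contribution $-t(\omega\cdot u)\p_r\Gamma^aw$ algebraically into the coefficient $|x|-t\,\omega\cdot u$ and then proceed by induction over $|a|$, whereas the paper keeps it as one summand of \eqref{grad:curl:1}, sums over $|a|\le m$, and absorbs the whole $M\ve\sum_{|c|\le m}|\w{|x|}\na\Gamma^cw|$ contribution at once by smallness; these two bookkeeping devices are equivalent.
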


\begin{proof}
Similarly to the derivation of \eqref{high:eqn}, acting $(\cS+1)^{a_s}Z^{a_z}$ on \eqref{curl:eqn} derives
\begin{equation}\label{curl:eqn:high}
(\p_t+u\cdot\nabla)\Gamma^aw=\sum_{\substack{b+c=a,\\c<a}}C^a_{bc}J_{bc}
:=-\sum_{\substack{b+c=a,\\c<a}}C^a_{bc}\tilde\Gamma^bu\cdot\nabla\Gamma^cw,
\end{equation}
where $\Gamma^a=\cS^{a_s}Z^{a_z}$.
Symbolically, we can see that
\begin{equation}\label{grad:curl:1}
\begin{split}
\w{|x|}\nabla\Gamma^aw&=\sum_{\hat\Gamma\in\{\nabla,\Omega\}}\hat\Gamma\Gamma^aw
+r\p_r\Gamma^aw
=\sum_{\hat\Gamma\in\{\nabla,\Omega,\cS\}}\hat\Gamma\Gamma^aw-t\p_t\Gamma^aw\\
&=\Gamma^{\le1}\Gamma^aw-t\sum_{b+c=a}C^a_{bc}\tilde\Gamma^bu\cdot\nabla\Gamma^cw,
\end{split}
\end{equation}
By using \eqref{pw:curl} to $\nabla\Gamma^cw$ with $|c|\le N_1-7$, we arrive at
\begin{equation}\label{grad:curl:2}
\sum_{\substack{b+c=a,\\|c|\le N_1-7}}|\tilde\Gamma^bu\cdot\nabla\Gamma^cw|
\ls\w{|x|}^{-\frac32}W_{N_1-4}(t)\sum_{|b|\le m}|\tilde\Gamma^bu|.
\end{equation}
If $|c|\ge N_1-6$, then $|b|\le N_1-4$.
It follows from \eqref{bootstrap}, \eqref{pw:wave3} and \eqref{velocity:pw} that
\begin{equation}\label{grad:curl:3}
\sum_{|b|\le N_1-4}t|\tilde\Gamma^bu\cdot\nabla\Gamma^cw|
\ls\sum_{|b|\le N_1-4}\frac{t|\tilde\Gamma^bu|}{\w{|x|}}|\w{|x|}\nabla\Gamma^cw|
\ls M\ve|\w{|x|}\nabla\Gamma^cw|.
\end{equation}
Thereafter, substituting \eqref{grad:curl:2} and \eqref{grad:curl:3} into \eqref{grad:curl:1}
with the smallness of $M\ve$ implies \eqref{grad:curl}.

By plugging \eqref{grad:curl:3} into \eqref{grad:curl:1}, we can achieve \eqref{grad:curl'}.

For the proof of \eqref{grad:curl:cone}, by using \eqref{pw:wave3'} to $\tilde\Gamma^bu$ directly, yields
\begin{equation*}
\begin{split}
t|\tilde\Gamma^bu\cdot\nabla\Gamma^cw|
&\ls|\tilde\Gamma^bu||\w{|x|}\nabla\Gamma^cw|\\
&\ls\w{t}^{-\frac12}|\w{|x|}\nabla\Gamma^cw|
\big\{E_{|b|+2}(t)+\cX_{|b|+2}(t)+\cW_{|b|+1}(t)\big\}\\
&\ls M\ve|\w{|x|}\nabla\Gamma^cw|.
\end{split}
\end{equation*}
This, together with \eqref{grad:curl:1}, implies \eqref{grad:curl:cone}.
\end{proof}

\begin{lemma}\label{lem:curl:equiv}
Under bootstrap assumptions \eqref{bootstrap}, we have
\begin{equation}\label{curl:equiv}
\cW_{N_1-4}(t)\ls W_{N_1-4}(t),\qquad\cW_{N-1}(t)\ls W_{N-1}(t).
\end{equation}
\end{lemma}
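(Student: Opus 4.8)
The proof rests on the algebraic identity behind \eqref{curl:def}: since $w=(1-\sigma)\curl u$, one has $\curl u=w+\sigma\,\curl u$, equivalently $\curl u=(1-\sigma)^{-1}w$. Applying $\Gamma^a$ and expanding by Leibniz, $\Gamma^a\curl u=\Gamma^a w+\Gamma^a(\sigma\,\curl u)$, where $\Gamma^a(\sigma\,\curl u)=\sigma\,\Gamma^a\curl u$ plus a sum of products $\Gamma^b\sigma\,\Gamma^{a-b}\curl u$ with $1\le|b|\le|a|$, hence $|a-b|\le|a|-1$. The plan is to prove both inequalities in \eqref{curl:equiv} by induction on $|a|$, doing $|a|\le N_1-4$ first (write $m=N_1-4$) and $|a|\le N-1$ afterwards (write $m=N-1$), and feeding the completed low-order bound $\cW_{N_1-4}\ls W_{N_1-4}$ into the second round whenever a low-order $\cW$ reappears.

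Taking the weighted $L^2$ norm, the term $\w{|x|}\Gamma^a w$ contributes $W_{|a|}(t)\le W_m(t)$, and the top-order term obeys $\|\w{|x|}\sigma\,\Gamma^a\curl u\|_{L^2}\le\|\sigma\|_{L^\infty}\cW_{|a|}(t)\ls M\ve\,\cW_m(t)$, using $\|\sigma\|_{L^\infty}\ls M\ve$ from \eqref{pw:wave3} (or \eqref{pw:wave2}) together with \eqref{bootstrap}; this is absorbed into the left-hand side once $M\ve$ is small. For the remaining products $\Gamma^b\sigma\,\Gamma^{a-b}\curl u$, the key structural fact is that $|a|\le N-1\le2N_1-12$ forces the smaller of $|b|$ and $|a-b|$ to be $\le N_1-6$. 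If $|b|$ is the small index, place $\Gamma^b\sigma$ in $L^\infty$ --- the no-growth line $E_{N_1-4}+\cX_{N_1-4}\ls M\ve$ of \eqref{bootstrap} and \eqref{pw:wave3} yield $\|\Gamma^b\sigma\|_{L^\infty}\ls M\ve$ --- and $\w{|x|}\Gamma^{a-b}\curl u$ in $L^2$, which is $\le\cW_{|a-b|}(t)\le\cW_{|a|-1}(t)\ls W_{|a|-1}(t)\le W_m(t)$ by the inductive hypothesis; this family is $\ls M\ve\,W_m(t)$. If $|a-b|$ is the small index, then $|a-b|+2\le N_1-4$, so \eqref{pw:curl} gives $\|\w{|x|}\Gamma^{a-b}\curl u\|_{L^\infty}\ls\cW_{|a-b|+2}(t)\ls W_{|a-b|+2}(t)\le W_m(t)$ (by the inductive hypothesis, respectively the first-round bound when $m=N-1$), while $\Gamma^b\sigma$ is placed in $L^2$ and controlled by the energies in \eqref{bootstrap}; a short check of the index ranges shows this family too is $\ls M\ve\,W_m(t)$. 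The composite terms arising if one differentiates $(1-\sigma)^{-1}$ directly, namely $h^{(k)}(\sigma)\,\Gamma^{b_1}\sigma\cdots\Gamma^{b_k}\sigma\,\Gamma^c w$ with $\sum|b_i|+|c|\le|a|$ and $|b_i|\ge1$, are handled in the same fashion: at most one factor carries more than $N_1-6$ vector fields, and each remaining $\Gamma^{b_i}\sigma$ goes into $L^\infty$ with a gain of $M\ve$. Altogether $\cW_m(t)\ls W_m(t)+M\ve\,\cW_m(t)$, and smallness of $M\ve$ closes the induction.

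The only real difficulty is the derivative bookkeeping in this last step, because neither naive choice works: keeping the vorticity in $L^2$ throughout would require an $L^\infty$ bound on $\Gamma^b\sigma$ for $|b|$ as large as $|a|$, which is unavailable, while always keeping $\sigma$ in $L^2$ forces the vorticity into a weighted $L^\infty$ norm that \eqref{pw:curl} controls only up to order $N-3$. The constraint $N-1\le2N_1-12$ is precisely what makes the dichotomy consistent, guaranteeing that in every product the complementary index is small enough for the pointwise estimates and the no-growth lines of \eqref{bootstrap} to apply, so that the resulting constant is a universal multiple of $M\ve$ rather than of the top-order energies.
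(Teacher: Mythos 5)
Your decomposition $\Gamma^a\curl u=\Gamma^a w+\sum_{b+c=a}C^a_{bc}\Gamma^b\sigma\,\Gamma^c\curl u$ is exactly the paper's starting point, the top-order term $\sigma\,\Gamma^a\curl u$ is absorbed the same way, and the case $|b|\le N-2$ (your ``$|b|$ small'') is handled correctly via $\|\Gamma^b\sigma\|_{L^\infty}\ls M\ve$. The first inequality $\cW_{N_1-4}\ls W_{N_1-4}$ never triggers the other case and goes through. The gap is in the second case, where $|b|$ is large (in the high-order estimate $m=N-1$, this is forced to be $|b|\ge N-1$, hence $|c|=0$) and you pair $\|\Gamma^b\sigma\|_{L^2}$ with $\|\w{|x|}\Gamma^c\curl u\|_{L^\infty}$. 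Two problems: first, you discard the extra $\w{|x|}^{-1/2}$ that \eqref{pw:curl} actually provides (it controls $\w{|x|}^{3/2}|\Gamma^c\curl u|$, so pairing with the weight $\w{|x|}$ leaves a free $\w{|x|}^{-1/2}$ which you throw away); second, and crucially, since $|b|$ can be as large as $N-1>N_1-4$, the only available bound is $\|\Gamma^b\sigma\|_{L^2}\ls E_N(t)\ls M\ve\w{t}^{M'\ve}$, not $M\ve$. Your claim that this family is ``$\ls M\ve\,W_m(t)$'' is therefore off by the factor $\w{t}^{M'\ve}$, and this factor is \emph{not} uniformly bounded on $[0,T_\delta]$: with $\delta=e^{-1/\ve^2}$ one has $\w{T_\delta}^{M'\ve}\sim e^{M'/\ve}\to\infty$. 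So the absorption fails for exactly the regime the theorem is designed to reach.

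The paper's fix is precisely to keep the $\w{|x|}^{-1/2}$ on $\Gamma^b\sigma$, split into $|x|\ge\w{t}/2$ and $|x|\le\w{t}/2$, and use the weight to extract $\w{t}^{-1/2}$ in the outer region and a Hardy-type inequality (\eqref{curl:equiv:4}, controlling $\|\w{|x|-t}\Gamma^b\sigma/\w{|x|}^{5/4}\|_{L^2}$ by $E_{|b|}+\cX_{|b|+1}$) together with $\w{t}^{-1/4}$ in the inner region. The leftover power $\w{t}^{-1/4+M'\ve}$ is then bounded by $1$ since $M'\ve$ is small, so the entire contribution is $\ls M\ve\,\cW_{N-1}(t)$ and absorbs. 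Without recovering a genuine negative power of $\w{t}$ to kill the $\w{t}^{M'\ve}$ growth, the argument does not close. A minor additional remark: the paragraph about expanding $(1-\sigma)^{-1}$ and composite terms $h^{(k)}(\sigma)\Gamma^{b_1}\sigma\cdots$ is superfluous --- the Leibniz identity you already wrote down has no such terms.
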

\begin{proof}
Note that
\begin{equation*}
\Gamma^a\curl u=\Gamma^aw+\sum_{b+c=a}C^a_{bc}\Gamma^b\sigma\Gamma^c\curl u.
\end{equation*}
Multiplying the above equality by $\w{|x|}$ and taking $L^2$ norm lead to
\begin{equation}\label{curl:equiv:1}
\begin{split}
\|\w{|x|}\Gamma^a\curl u\|_{L^2}&\ls\|\w{|x|}\Gamma^aw\|_{L^2}
+\sum_{\substack{b+c=a,\\|b|\le N-2}}
\|\Gamma^b\sigma\|_{L^\infty}\|\w{|x|}\Gamma^c\curl u\|_{L^2}\\
&\qquad+\sum_{\substack{b+c=a,\\|b|\ge N-1}}
\|\w{|x|}\Gamma^b\sigma\Gamma^c\curl u\|_{L^2}.
\end{split}
\end{equation}
In addition, it follows from \eqref{pw:wave1} that
\begin{equation}\label{curl:equiv:2}
\begin{split}
\sum_{\substack{b+c=a,\\|b|\le N-2}}
\|\Gamma^b\sigma\|_{L^\infty}\|\w{|x|}\Gamma^c\curl u\|_{L^2}
\ls\sum_{c\le a}\|\w{|x|}\Gamma^c\curl u\|_{L^2}.
\end{split}
\end{equation}
For all $|a|\le N_1-4$, substituting \eqref{curl:equiv:2} into \eqref{curl:equiv:1} yields the first inequality of \eqref{curl:equiv}.

Next, we deal the second line of \eqref{curl:equiv:1}.
It is easy to check that
\begin{equation}\label{curl:equiv:3}
\begin{split}
\|\w{|x|}\Gamma^b\sigma\Gamma^c\curl u\|_{L^2}
&\ls\w{t}^{-\frac12}\|\Gamma^b\sigma\|_{L^2(|x|\ge\frac{\w{t}}{2})}
\|\w{|x|}^\frac32\Gamma^c\curl u\|_{L^\infty(|x|\ge\frac{\w{t}}{2})}\\
&\quad+\w{t}^{-\frac14}\Big\|\frac{\w{|x|-t}\Gamma^b\sigma}{\w{|x|}^\frac54}\Big\|_{L^2(|x|\le\frac{\w{t}}{2})}
\|\w{|x|}^\frac32\Gamma^c\curl u\|_{L^\infty(|x|\le\frac{\w{t}}{2})}.
\end{split}
\end{equation}
Applying the Hardy inequality infers
\begin{equation}\label{curl:equiv:4}
\begin{split}
\Big\|\frac{\w{|x|-t}\Gamma^b\sigma}{\w{|x|}^\frac54}\Big\|_{L^2}
\ls\Big\|\frac{\w{|x|-t}\Gamma^b\sigma}{|x|\ln|x|}\Big\|_{L^2}
\ls E_{|b|}(t)+\cX_{|b|+1}(t)\ls M\ve\w{t}^{M'\ve}.
\end{split}
\end{equation}
From $|b|+|c|\le N-1$ and $|b|\ge N-1$, then $|c|\le N-3$ holds.
By plugging \eqref{pw:curl} and \eqref{curl:equiv:4} into \eqref{curl:equiv:3}, we derive
\begin{equation}\label{curl:equiv:5}
\begin{split}
\sum_{\substack{b+c=a,\\|c|\le N-3}}
\|\w{|x|}\Gamma^b\sigma\Gamma^c\curl u\|_{L^2}
\ls M\ve\sum_{|c|\le N-3}\cW_{|c|+2}(t)\ls M\ve\cW_{N-1}(t).
\end{split}
\end{equation}
For all $|a|\le N-1$, combining \eqref{curl:equiv:1}, \eqref{curl:equiv:2}, \eqref{curl:equiv:5} with the smallness of $M\ve$
yields the second inequality of \eqref{curl:equiv}.
\end{proof}

With these lemmas, we begin to take the estimates of the vorticity.
\begin{lemma}[$L^2$ estimates]\label{lem:curl:L2}
Under bootstrap assumptions \eqref{bootstrap}, for $W_m(t)$ defined by \eqref{energy:def}, it holds that
\begin{align}
W_{N_1-4}^2(t')&\ls W_{N_1-4}^2(0)
+\int_0^{t'}W^2_{N_1-4}(t)\big\{M\delta+M\ve\w{t}^{-\frac98}\big\}dt,\label{curl:L2:low}\\
W_{N-1}^2(t')&\ls W_{N-1}^2(0)
+\int_0^{t'}W^2_{N-1}(t)\big\{M\delta+M\ve\w{t}^{-1}\big\}dt \nonumber\\
&\quad+\int_0^{t'}M^2\ve\delta\w{t}^{M'\ve-1}W_{N-1}(t)dt.\label{curl:L2:high}
\end{align}
\end{lemma}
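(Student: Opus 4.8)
The plan is to run a weighted $L^2$ energy estimate directly on the transported quantities $\Gamma^a w$. Commuting the vector fields through \eqref{curl:eqn} gives \eqref{curl:eqn:high}, i.e. $(\p_t+u\cdot\nabla)\Gamma^a w=-\sum_{b+c=a,\,c<a}C^a_{bc}\,\tilde\Gamma^b u\cdot\nabla\Gamma^c w$. Multiplying by $2\w{|x|}^2\Gamma^a w$, integrating over $\R^2$ and integrating by parts in the transport term (using $\nabla\w{|x|}^2=2x$) yields
\begin{equation*}
\frac{d}{dt}\|\w{|x|}\Gamma^a w\|_{L^2}^2=\int\w{|x|}^2(\dive u)|\Gamma^a w|^2\,dx+2\int(x\cdot u)|\Gamma^a w|^2\,dx-2\sum_{b+c=a,\,c<a}C^a_{bc}\int\w{|x|}^2\Gamma^a w\,(\tilde\Gamma^b u\cdot\nabla\Gamma^c w)\,dx .
\end{equation*}
Summing over $|a|\le m$ (with $m=N_1-4$ or $m=N-1$) and integrating in $t$, the proof reduces to estimating the three integrals on the right.

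The crux is the localization of the support of $w$: since $w(0,\cdot)=(1-\sigma^0)\curl u^0\in C_0^\infty$ lies in a fixed ball $B(0,R)$ and is transported by the flow of $u$, and since the bootstrap assumptions together with the pointwise bounds of Sections~\ref{sect2}--\ref{sect4} make $\|u\|_{L^\infty}$ small — in fact sufficiently decaying along the trajectories, which stay deep inside the outgoing light cone — the support of $w(t,\cdot)$ remains in a region $\{|x|\le R+o(t)\}$ on which $\w{|x|-t}\gt\w{t}$ and $\w{|x|}$ grows at most like a tiny power of $\w{t}$. On such a region Corollary~\ref{coro:improve} and \eqref{pw:wave6} give $|u|+|\dive u|\ls M\delta+M\ve\w{t}^{-9/8}$, so the first two integrals above are $\ls W_m^2(t)\{M\delta+M\ve\w{t}^{-9/8}\}$, which matches \eqref{curl:L2:low} and is admissible for \eqref{curl:L2:high}. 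Without this localization $\int(x\cdot u)|\Gamma^a w|^2$ could only be bounded by terms like $M\ve\w{t}^{-1/2}\w{|x|}\,W_m^2$, whose time integral diverges like $\sqrt t$ — the obstruction noted in the Introduction.

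For the commutator integrals one splits by order, using $|a|\le N-1\le 2N_1-12$ so that always $|b|\le N_1-7$ or $|c|\le N_1-7$: when $|c|$ is small, $\nabla\Gamma^c w$ goes into $L^\infty$ by \eqref{pw:curl} (size $M\delta\w{|x|}^{-3/2}$) and $\Gamma^a w,\tilde\Gamma^b u$ stay in $L^2$; when $|b|$ is small, $\tilde\Gamma^b u$ goes into $L^\infty$ by \eqref{velocity:pw} and $\w{|x|}\nabla\Gamma^c w$ is handled by Lemma~\ref{lem:grad:curl}. At the lower order $m=N_1-4$ all indices satisfy $|c|\le N_1-4$, so the clean bound \eqref{grad:curl'} controls $\w{|x|}\nabla\Gamma^c w$ by $W_{N_1-4}$ with no velocity feedback and the commutator is directly $\ls\{M\delta+M\ve\w{t}^{-9/8}\}W_{N_1-4}^2(t)$, whence \eqref{curl:L2:low}. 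At the top order, with $|c|$ close to $N-1$, the bound \eqref{grad:curl} of Lemma~\ref{lem:grad:curl} reintroduces a velocity through its ``bad'' term $t\w{|x|}^{-3/2}W_{N_1-4}(t)\,\tilde\Gamma^{b'}u$; estimating this in $L^2$ on the localized support of $w$ and using $W_{N_1-4}(t)\ls M\delta$ with the decay of $u$ deep inside the cone produces precisely the linear source term $\int_0^{t'}M^2\ve\delta\w{t}^{M'\ve-1}W_{N-1}(t)\,dt$ of \eqref{curl:L2:high}; finally $\delta\le\ve$ absorbs the residual mixed cubic terms.

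The step I expect to be hardest is this top-order commutator $\tilde\Gamma^b u\cdot\nabla\Gamma^c w$ with $|c|$ near $N-1$, where $\nabla\Gamma^c w$ costs one derivative more than $W_{N-1}$ controls and one must use the transport equation a second time, trading the extra spatial derivative for $\cS$ at the cost of a factor $t$ and an extra velocity. Keeping the resulting term integrable in time — so that the final Gronwall argument delivers $T_\delta=\kappa/\delta$ rather than a spuriously short lifespan — is exactly what forces the use of the almost-$\w{t}^{-2}$ decay of $u$ deep inside the light cone from the $L^\infty$-$L^\infty$ theory of Section~\ref{sect4}, and is why the support localization of $w$ cannot be dispensed with.
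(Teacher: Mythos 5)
Your proposal follows the paper's starting point (commute vector fields through the transport equation, multiply by $2\w{|x|}^2\Gamma^a w$, and estimate the resulting integrals) but then diverges in the decisive step. The paper multiplies by $2\w{|x|}^2 e^{q(|x|-t)}\Gamma^a w$, where $e^q$ is Alinhac's ghost weight; integrating then produces the sign-definite dividend $\int_0^{t'}\!\!\int \w{|x|-t}^{-3/2}\big|\w{|x|}\Gamma^a w\big|^2\,dx\,dt$ on the left-hand side. It is this dividend that absorbs, via Young's inequality with a small factor $M\ve$, the near-light-cone contributions of $\cJ(u)$ and of the commutator $J_{bc}$ on $\{|x|\ge\w{t}/8\}$ (see \eqref{curl:L2:3}): near the cone $|\dive u|\ls M\ve\w{t}^{-1/2}\w{|x|-t}^{-3/2}$, so $\int|\dive u|\big|\w{|x|}\Gamma^a w\big|^2$ can only be controlled by trading the factor $\w{|x|-t}^{-3/2}$ against the dividend, and a bare time integration of $M\ve\w{t}^{-1/2}W^2$ would diverge like $(\kappa/\delta)^{1/2}$. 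You replace this mechanism with a support-localization argument, claiming $\supp w(t,\cdot)$ stays in a slowly-growing ball where $\w{|x|-t}\gt\w{t}$, which would indeed make the near-cone region vacuous and render the ghost weight unnecessary. This is a genuinely different idea, not the paper's.

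Two concrete problems. First, as written the localization is circular: you argue that $\|u\|_{L^\infty}$ is ``sufficiently decaying along the trajectories, which stay deep inside the outgoing light cone'', but that the characteristics remain deep inside the cone (and hence see the $O(M\delta)$ rather than $O(M\ve\w{t}^{-1/2})$ velocity bound) is precisely what you are trying to establish. The crude bound $|u|\ls M\ve$ only gives $\supp w(t,\cdot)\subset\{|x|\le R+CM\ve t\}$, and since $\ve T_\delta=\ve\kappa/\delta$ is unbounded when $\delta\ll\ve$, this does not close by itself; one needs a continuity argument (bootstrap on $\sup_y |X(t;y)|$, using Corollary~\ref{coro:improve} once $X$ is verified to be in $\{|x|\le 3\w{t}/4\}$, and a separate trivial estimate for the initial transient $t\ls R$). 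This can very likely be carried out, but it is a nontrivial step that your proposal elides, and it reintroduces a dependence on the support radius $R$ that the paper's weighted-norm formulation deliberately avoids. Second, in the top-order commutator $\tilde\Gamma^b u\cdot\nabla\Gamma^c w$ with $|b|\ge N_1-3$, $|c|\le N_1-7$, you assert that the localized-support $L^2$ estimate ``produces precisely the linear source term $\int_0^{t'}M^2\ve\delta\w{t}^{M'\ve-1}W_{N-1}(t)\,dt$'', but you do not explain where the factor $\w{t}^{-1}$ comes from: on a bounded set $\tilde\Gamma^b u$ has no pointwise time decay available at this order (only $\|\tilde\Gamma^b u\|_{L^2}\ls M\ve\w{t}^{M'\ve}$, with no $\w{t}^{-1}$), and in the paper the required $\w{t}^{-1}$ is extracted by a Hardy-type inequality $\big\|\chi\tilde\Gamma^b u/(|x|\log|x|)\big\|_{L^2}\ls\w{t}^{-1}E_N(t)+\cW_{N-1}(t)$ combined with the weighted $\dot H^1$ control of Lemma~\ref{lem:H1norm}. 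Without spelling this out, the linear source term in \eqref{curl:L2:high} is not actually derived. Finally, a small misattribution: the integral you single out, $\int(x\cdot u)|\Gamma^a w|^2$, is in fact benign near the cone once \eqref{pw:wave3'} is invoked (it gives $|u|/\w{|x|}\ls M\ve\w{t}^{-3/2}$ there); the term that genuinely forces either the ghost weight or your localization is the $|\dive u|$ contribution to $\cJ(u)$.
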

\begin{proof}
Multiplying \eqref{curl:eqn:high} by $2\w{|x|}^2e^{q(|x|-t)}\Gamma^aw$ infers
\begin{equation*}
\begin{split}
&\quad\p_t\Big(e^q\big|\w{|x|}\Gamma^aw\big|^2\Big)
+\frac{e^q\big|\w{|x|}\Gamma^aw\big|^2}{\w{|x|-t}^\frac32}
+\dive\Big(e^qu\big|\w{|x|}\Gamma^aw\big|^2\Big)\\
&=e^q\big|\w{|x|}\Gamma^aw\big|^2\Big(\dive u+u\cdot\nabla q\Big)
+u\cdot\nabla\Big(\w{|x|}^2\Big)e^q|\Gamma^aw|^2
+\sum_{\substack{b+c=a,\\c<a}}2e^q\w{|x|}^2C^a_{bc}\Gamma^awJ_{bc}.
\end{split}
\end{equation*}
Integrating the above equality over $[0,t']\times\R^2$ leads to
\begin{equation}\label{curl:L2:1}
\begin{split}
&\quad W^2_{|a|}(t')+\int_0^{t'}\int
\frac{\big|\w{|x|}\Gamma^aw\big|^2}{\w{|x|-t}^\frac32}dxdt\\
&\ls W^2_{|a|}(0)+\int_0^{t'}\int\big|\w{|x|}\Gamma^aw\big|^2\cJ(u)dxdt
+\sum_{\substack{b+c=a,\\c<a}}\int_0^{t'}\int\w{|x|}^2|\Gamma^aw||J_{bc}|dxdt,
\end{split}
\end{equation}
where
\begin{equation}\label{cJ:def}
\cJ(u):=|\dive u|+\frac{|u|}{\w{|x|-t}^\frac32}+\frac{|u|}{\w{|x|}}.
\end{equation}
The integral domain is still decomposed into two parts of $|x|\ge\w{t}/8$ and $|x|\le\w{t}/8$ as before.

\noindent\underline{The case $|b|\le N_1-4$ in the region $|x|\le\w{t}/8$:}
By plugging the improved pointwise estimates \eqref{velocity:pw} into $\cJ(u)$ and $\tilde\Gamma^bu$, we derive
\begin{equation}\label{curl:L2:2}
\begin{split}
&\int_{|x|\le\w{t}/8}\big|\w{|x|}\Gamma^aw\big|^2\cJ(u)dx
+\sum_{\substack{b+c=a,\\c<a,|b|\le N_1-4}}\int_{|x|\le\w{t}/8}
\w{|x|}^2|\Gamma^aw||J_{bc}|dx\\
&\ls W^2_{|a|}(t)\big\{M\delta+M\ve\w{t}^{-\frac98}\big\}.
\end{split}
\end{equation}
\noindent\underline{The case $|b|\le N_1-4$ in the region $|x|\ge\w{t}/8$:}
Note that $|c|\le|a|-1\le N-2$.
Then, from \eqref{pw:wave3} and \eqref{grad:curl:cone}, we arrive at
\begin{equation}\label{curl:L2:3}
\begin{split}
&\int_0^{t'}\int_{|x|\ge\w{t}/8}\big|\w{|x|}\Gamma^aw\big|^2\cJ(u)dxdt
+\sum_{\substack{b+c=a,\\c<a,|b|\le N_1-4}}\int_0^{t'}\int_{|x|\ge\w{t}/8}
\w{|x|}^2|\Gamma^aw||J_{bc}|dxdt\\
&\ls M\ve\sum_{|a'|\le|a|}\int_0^{t'}\int
\frac{\big|\w{|x|}\Gamma^{a'}w\big|^2}{\w{|x|-t}^\frac32}dxdt
+\int_0^{t'}W^2_{|a|}(t)\big\{M\delta+M\ve\w{t}^{M'\ve-\frac32}\big\}dt,
\end{split}
\end{equation}
where we have used Young's inequality.

For all $|a|\le N_1-4$, substituting \eqref{curl:L2:2} and \eqref{curl:L2:3} into \eqref{curl:L2:1} yields \eqref{curl:L2:low}.

In the rest part, we will focus on the case $|b|\ge N_1-3$.
From $|b|+|c|\le N-1\le2N_1-10$, one has $|c|\le N_1-7\le N-4$.

\noindent\underline{In the region $|x|\ge\w{t}/8$:}
Applying \eqref{pw:curl} and \eqref{grad:curl:cone} to $\nabla\Gamma^cw$ infers
\begin{equation}\label{curl:L2:4}
\begin{split}
\int_{|x|\ge\w{t}/8}\w{|x|}^2|\Gamma^aw||J_{bc}|dx
&\ls\w{t}^{-\frac32}W_{|a|}(t)E_{|b|}(t)W_{|c|+3}(t)\\
&\ls M\ve\w{t}^{M'\ve-\frac32}W_{|a|}(t)W_{N-1}(t).
\end{split}
\end{equation}
\underline{In the region $|x|\le\w{t}/8$:}
We deduce from \eqref{pw:curl}, \eqref{H1norm}, \eqref{grad:curl'}, \eqref{curl:equiv}
and the Hardy inequality that
\begin{equation}\label{curl:L2:5}
\begin{split}
\|\w{|x|}J_{bc}\|_{L^2(|x|\le\frac{\w{t}}{8})}
&\ls\|\w{|x|}^\frac52\nabla\Gamma^cw\|_{L^\infty(|x|\le\frac{\w{t}}{8})}
\Big\|\frac{\chi\tilde\Gamma^bu}{\w{|x|}^\frac32}\Big\|_{L_x^2}\\
&\ls\|\w{|x|}^\frac32\Gamma^{\le1}\Gamma^cw\|_{L^\infty}
\Big\|\frac{\chi\tilde\Gamma^bu}{|x|\log|x|}\Big\|_{L_x^2}\\
&\ls\w{t}^{-1}W_{|c|+3}(t)\big\{E_{|b|+1}(t)+\cW_{|b|}(t)\big\}\\
&\ls\w{t}^{-1}W_{N_1-4}(t)\big\{E_N(t)+W_{N-1}(t)\big\}\\
&\ls M\delta W_{N-1}(t)+M^2\ve\delta\w{t}^{M'\ve-1}.
\end{split}
\end{equation}
For all $|a|\le N-1$, by plugging  \eqref{curl:L2:2}--\eqref{curl:L2:5} into \eqref{curl:L2:1}, we obtain \eqref{curl:L2:high}.
\end{proof}

\begin{lemma}[$L^p$ estimates]\label{lem:curl:Lp}
Under bootstrap assumptions \eqref{bootstrap}, set $\ds\sW_{p,m}(t):=\sum_{|a|\le m}\|\w{|x|}^{\theta(p)}\Gamma^aw(t,x)\|_{L_x^p}$, where
\begin{equation*}
\begin{split}
\theta(p):=\left\{
\begin{aligned}
&1,&&p=\frac{10}{9},\frac{10}{7},\\
&8,&&p=5.
\end{aligned}
\right.
\end{split}
\end{equation*}
Then, for $p=\frac{10}{9},\frac{10}{7},5$, it holds that
\begin{align}
\sW^p_{p,N_1-4}(t')&\ls\sW^p_{p,N_1-4}(0)+\int_0^{t'}\sW^p_{p,N_1-4}(t)
\big\{M\delta+M\ve\w{t}^{-\frac98}\big\}dt,\label{curl:Lp:low}\\
\sW^p_{p,N_1}(t')&\ls\sW^p_{p,N_1}(0)+\int_0^{t'}\sW^p_{p,N_1}(t)
\big\{M\delta+M\ve\w{t}^{-1}\big\}dt\nonumber\\
&\quad+\int_0^{t'}M\delta\w{t}^{M'\ve}\big\{M\delta+M\ve\w{t}^{-1}\big\}
\sW^{p-1}_{p,N_1}(t)dt.\label{curl:Lp:high}
\end{align}
\end{lemma}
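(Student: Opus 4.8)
The plan is to mimic the structure of the $L^2$ estimate in Lemma~\ref{lem:curl:L2}, replacing the $L^2$ energy identity by an $L^p$ one. First I would act $(\cS+1)^{a_s}Z^{a_z}$ on the transport equation \eqref{curl:eqn} to get \eqref{curl:eqn:high}, multiply it by $p\,\w{|x|}^{p\theta(p)}e^{q(|x|-t)}|\Gamma^aw|^{p-2}\Gamma^aw$, and integrate over $[0,t']\times\R^2$. This produces
\begin{equation*}
\sW^p_{p,|a|}(t')+\int_0^{t'}\!\!\int\frac{\w{|x|}^{p\theta(p)}|\Gamma^aw|^p}{\w{|x|-t}^{3/2}}dxdt
\ls\sW^p_{p,|a|}(0)+\int_0^{t'}\!\!\int\w{|x|}^{p\theta(p)}|\Gamma^aw|^{p-1}\Big(|\Gamma^aw|\,\cJ(u)+\!\!\sum_{\substack{b+c=a,\,c<a}}\!\!|J_{bc}|\Big)dxdt,
\end{equation*}
where $\cJ(u)$ is as in \eqref{cJ:def} and the ghost-weight term comes from $-\p_t q>0$; the divergence term $\dive(e^qu\,\w{|x|}^{p\theta(p)}|\Gamma^aw|^p)$ integrates to zero, and $u\cdot\nabla(\w{|x|}^{p\theta(p)})$ contributes a term of size $\ls|u|/\w{|x|}$ already counted in $\cJ(u)$.

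Next I would bound the right-hand side just as in Lemma~\ref{lem:curl:L2}, splitting into $|x|\ge\w{t}/8$ and $|x|\le\w{t}/8$ and, in the nonlinear commutator sum $J_{bc}=-\tilde\Gamma^bu\cdot\nabla\Gamma^cw$, into the cases $|b|$ small versus $|b|$ large. For $|b|\le N_1-4$ one uses the improved velocity pointwise bound \eqref{velocity:pw} (inside the cone) and \eqref{pw:wave3'} (near/outside the cone), together with \eqref{grad:curl'}, \eqref{grad:curl:cone} for $\nabla\Gamma^cw$; in the interior region this yields a coefficient $M\delta+M\ve\w{t}^{-9/8}$ as in \eqref{curl:L2:2}, and in the exterior region the factor $M\ve\w{t}^{-3/2}$ together with an absorbable copy of the ghost-weight term (after Young's inequality with exponents $p$ and $p/(p-1)$ applied to $\w{|x|}^{\theta(p)}|\Gamma^aw|^{p-1}\cdot\w{|x|}^{\theta(p)}|\nabla\Gamma^cw|$), exactly paralleling \eqref{curl:L2:3}. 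Summing over $|a|\le N_1-4$ gives \eqref{curl:Lp:low}.

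For the high-order estimate \eqref{curl:Lp:high} one treats the remaining range $|b|\ge N_1-3$, so that $|c|\le N_1-7$ and $\nabla\Gamma^cw$ is pointwise controlled by $\w{|x|}^{-3/2}W_{N_1-4}(t)$ via \eqref{pw:curl} and \eqref{grad:curl:cone}. In the region $|x|\ge\w{t}/8$ one directly extracts $\w{t}^{-3/2}$ and a factor $E_{|b|}(t)\ls M\ve\w{t}^{M'\ve}$ for the $L^p$ norm of $\tilde\Gamma^bu$ (using \eqref{pw:prepare1} and the Helmholtz/divergence–curl inequalities to put $\tilde\Gamma^bu$ in the needed $L^p$), which after Hölder against $\w{|x|}^{\theta(p)}|\Gamma^aw|^{p-1}$ produces the term $M\delta\w{t}^{M'\ve}\{M\delta+M\ve\w{t}^{-1}\}\sW^{p-1}_{p,N_1}(t)$. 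In the interior region $|x|\le\w{t}/8$, the Hardy inequality applied to $\tilde\Gamma^bu/(|x|\log|x|)$ (as in \eqref{curl:L2:5}) combined with \eqref{H1norm} and \eqref{curl:equiv} again yields $\w{t}^{-1}W_{N_1-4}(t)\{E_N(t)+W_{N-1}(t)\}\ls M\delta\sW_{p,N_1}(t)+M\ve\delta\w{t}^{M'\ve-1}$-type contributions absorbed into the same two integrals. The delicate point — and the main obstacle — is ensuring the weight $\w{|x|}^{p\theta(p)}$ (with the large exponent $8p$ for $p=5$) remains an $A_p$ weight so that the divergence–curl inequality \eqref{div:curl:ineq} and the $L^p$-boundedness of the Riesz transforms still apply: here $p\theta(p)=8\cdot5=40<2(5-1)\cdot$... which fails the naive bound $\beta<2(p-1)$, so one must instead never differentiate through the weight and only ever apply \eqref{div:curl:ineq} to $\nabla\Gamma^cw$ in the low-order range where the weight is bounded on the relevant region, exactly as the statement's weighted norms $\bW_m,\sW_m$ were designed (weight $\w{|x|}^8$ only paired with $L^5$, weight $\w{|x|}^1$ with $L^{10/9},L^{10/7}$, all of which do satisfy $\beta<2(p-1)$). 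Once that bookkeeping is respected, collecting the two regions and summing over $|a|\le N_1$ yields \eqref{curl:Lp:high}.
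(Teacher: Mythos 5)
Your setup is correct: the $L^p$ energy identity with ghost weight $e^{q}$, the splitting into $|x|\ge\w{t}/8$ and $|x|\le\w{t}/8$, and the treatment of the low-index range $|b|\le N_1-4$ all match the paper. But the handling of the high range $N_1-3\le|b|\le N_1$ is misdirected, and this is exactly where the new term $\sW^{p-1}_{p,N_1}(t)$ comes from.

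First, the misattribution: you claim the $M\delta\w{t}^{M'\ve}\{M\delta+M\ve\w{t}^{-1}\}\sW^{p-1}_{p,N_1}(t)$ contribution arises in the exterior region $|x|\ge\w{t}/8$. In fact the paper's exterior bound \eqref{curl:Lp:2} holds \emph{uniformly for all} $b$ with $b+c=a$, because \eqref{pw:wave3'} bounds $\tilde\Gamma^bu$ pointwise for every $|b|\le N-2\supset\{|b|\le N_1\}$ and \eqref{grad:curl:cone} handles $\nabla\Gamma^cw$ pointwise near the cone, so no Hölder on $\tilde\Gamma^bu$ in $L^p$ is ever needed there. The extra term genuinely comes from the interior region $|x|\le\w{t}/8$ with $|b|\ge N_1-3$. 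Second, the step you do need there is not what you propose. You suggest putting $\tilde\Gamma^bu$ into $L^p$ via \eqref{pw:prepare1} and Helmholtz/div-curl, but \eqref{pw:prepare1} goes the wrong way (it converts $L^p$ control into a pointwise bound, not vice versa), and the bootstrap assumptions give no $L^p$ control of $\tilde\Gamma^bu$ for $p\ne2$; for $p<2$ there is also no Sobolev embedding that would produce it. The paper instead distributes the weight so that $\nabla\Gamma^cw$ (with $|c|\le N_1-7$, hence in the low-index range where $\sW$-control is available) carries the $L^p$ or $L^5$ norm, while $\tilde\Gamma^bu$ is controlled in $L^\infty$ after dividing by $\w{|x|}^{5/4}$ via Hardy, Sobolev and \eqref{div:curl:ineq}, as in \eqref{curl:Lp:5}. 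This also forces a dichotomy you omit: for $p=\tfrac{10}{9},\tfrac{10}{7}$ one Hölders $\|\w{|x|}^3\nabla\Gamma^cw\|_{L^p}$ against $\|\w{|x|}^{-5}\|_{L^{\frac{5p}{5-p}}}$ to reach the $L^5$-weighted norm in $\sW$ (eq.~\eqref{curl:Lp:4}), whereas for $p=5$ one bounds $\|\w{|x|}^8J_{bc}\|_{L^5}^5$ using \eqref{pw:prepare1}, \eqref{grad:curl'} and \eqref{curl:Lp:5} as in \eqref{curl:Lp:7}. Finally, the $A_p$-weight discussion you flag as ``the main obstacle'' is not the issue: the div-curl inequality \eqref{div:curl:ineq} is never applied with the weight $\w{|x|}^{p\theta(p)}$ in this lemma; the paper always bounds $\nabla\Gamma^cw$ pointwise (via \eqref{pw:curl}, \eqref{grad:curl'}, \eqref{grad:curl:cone}) before integrating, so the large weight never passes through a singular integral.
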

\begin{proof}
Multiplying \eqref{curl:eqn:high} by $p\w{|x|}^{p\theta(p)}e^{q(|x|-t)}|\Gamma^aw|^{p-2}\Gamma^aw$ shows that
\begin{equation*}
\begin{split}
&\p_t\Big(e^q\big|\w{|x|}^{\theta(p)}\Gamma^aw\big|^p\Big)
+\frac{e^q\big|\w{|x|}^{\theta(p)}\Gamma^aw\big|^p}{\w{|x|-t}^\frac32}
+\dive\Big(e^qu\big|\w{|x|}^{\theta(p)}\Gamma^aw\big|^p\Big)\\
&=e^q\big|\w{|x|}^{\theta(p)}\Gamma^aw\big|^p\Big(\dive u+u\cdot\nabla q\Big)
+u\cdot\nabla\Big(\w{|x|}^{p\theta(p)}\Big)e^q|\Gamma^aw|^p\\
&\quad+\sum_{\substack{b+c=a,\\c<a}}
pe^q\w{|x|}^{p\theta(p)}|\Gamma^aw|^{p-2}C^a_{bc}\Gamma^awJ_{bc}.
\end{split}
\end{equation*}
Integrating the above equality over $[0,t']\times\R^2$ leads to
\begin{equation}\label{curl:Lp:1}
\begin{split}
&\quad\|\w{|x|}^{\theta(p)}\Gamma^aw(t',x)\|^p_{L^p}+\int_0^{t'}\int
\frac{\big|\w{|x|}^{\theta(p)}\Gamma^aw\big|^p}{\w{|x|-t}^\frac32}dxdt\\
&\ls\|\w{|x|}^{\theta(p)}\Gamma^aw(0,x)\|^p_{L^p}
+\int_0^{t'}\int\big|\w{|x|}^{\theta(p)}\Gamma^aw\big|^p\cJ(u)dxdt\\
&\quad+\sum_{\substack{b+c=a,\\c<a}}\int_0^{t'}
\int\w{|x|}^{p{\theta(p)}}|\Gamma^aw|^{p-1}|J_{bc}|dxdt,
\end{split}
\end{equation}
see \eqref{cJ:def} for the definition of $\cJ(u)$.

Next, we deal with the two integrals on the right hand side of \eqref{curl:Lp:1} in the two different parts $|x|\ge\w{t}/8$ and $|x|\le\w{t}/8$, separately.

In the region $|x|\ge\w{t}/8$, with the help of \eqref{grad:curl:cone}, applying \eqref{pw:wave3'} to $\tilde\Gamma^bu$ yields that
\begin{equation}\label{curl:Lp:2}
\begin{split}
&\int_{|x|\ge\w{t}/8}\big|\w{|x|}^{\theta(p)}\Gamma^aw\big|^p\cJ(u)dx
+\sum_{\substack{b+c=a,\\c<a}}\int_{|x|\ge\w{t}/8}
\w{|x|}^{p{\theta(p)}}|\Gamma^aw|^{p-1}|J_{bc}|dx\\
&\ls M\ve\sum_{|a'|\le|a|}\int_0^{t'}\int
\frac{\big|\w{|x|}^{\theta(p)}\Gamma^{a'}w\big|^p}{\w{|x|-t}^\frac32}dxdt
+\int_0^{t'}\sW^p_{p,|a|}(t)\big\{M\delta+M\ve\w{t}^{M'\ve-\frac32}\big\}dt.
\end{split}
\end{equation}
For $|b|\le N_1-4$, in the region $|x|\le\w{t}/8$, similarly to \eqref{curl:L2:2}, we derive
\begin{equation}\label{curl:Lp:3}
\begin{split}
&\int_{|x|\le\w{t}/8}\big|\w{|x|}^{\theta(p)}\Gamma^aw\big|^p\cJ(u)dx
+\sum_{\substack{b+c=a,\\c<a,|b|\le N_1-4}}\int_{|x|\le\w{t}/8}
\w{|x|}^{p{\theta(p)}}|\Gamma^aw|^{p-1}|J_{bc}|dx\\
&\ls\sW^p_{p,|a|}(t)\big\{M\delta+M\ve\w{t}^{-\frac98}\big\}.
\end{split}
\end{equation}
For all $|a|\le N_1-4$, plugging \eqref{curl:Lp:2} and \eqref{curl:Lp:3} into \eqref{curl:Lp:1} implies \eqref{curl:Lp:low}.

Finally, we focus on the proof of \eqref{curl:Lp:high}. In fact, it only suffices to treat the case of $N_1-3\le|b|\le N_1$ in the region $|x|\le\w{t}/8$. At this time, $|c|\le N_1-|b|\le3\le N_1-7$ holds.

For $p=\frac{10}{9},\frac{10}{7}$, by the H\"{o}lder inequality, we have
\begin{equation}\label{curl:Lp:4}
\begin{split}
\|\w{|x|}^3\nabla\Gamma^cw\|_{L^p}
&\ls\|\w{|x|}^8\nabla\Gamma^cw\|_{L^5}\|\w{|x|}^{-5}\|_{L^\frac{5p}{5-p}}\\
&\ls\sW_{|c|+1}(t)\ls\sW_{N_1-4}(t)\ls M\delta.
\end{split}
\end{equation}
On the other hand, we can deduce from $N_1+2\le N$, the standard Sobolev embedding and Hardy inequality that
\begin{equation}\label{curl:Lp:5}
\begin{split}
\|\w{|x|}^{-\frac54}\tilde\Gamma^bu\|_{L_x^\infty}
&\ls\|\w{|x|}^{-\frac54}\tilde\Gamma^bu\|_{L_x^2}
+\|\w{|x|}^{-\frac54}\nabla\nabla^{\le1}\tilde\Gamma^bu\|_{L_x^2}\\
&\ls\Big\|\frac{\chi\tilde\Gamma^bu}{|x|\log|x|}\Big\|_{L_x^2}
+\w{t}^{-1}E_{|b|+2}(t)+\cW_{|b|+1}(t)\\
&\ls\w{t}^{-1}E_{|b|+2}(t)+\cW_{|b|+1}(t)\\
&\ls\w{t}^{M'\ve}\big\{M\delta+M\ve\w{t}^{-1}\big\},
\end{split}
\end{equation}
where we have used \eqref{div:curl:ineq} and \eqref{H1norm}.

Collecting \eqref{curl:Lp:4} and \eqref{curl:Lp:5} infers
\begin{equation}\label{curl:Lp:6}
\int_{|x|\le\w{t}/8}\w{|x|}^p|\Gamma^aw|^{p-1}|J_{bc}|dx
\ls\sW^{p-1}_{p,|a|}(t)M\delta\w{t}^{M'\ve}\big\{M\delta+M\ve\w{t}^{-1}\big\}.
\end{equation}
Now, we deal with the case $p=5$.
It is concluded from \eqref{pw:prepare1}, \eqref{grad:curl'} and \eqref{curl:Lp:5} that
\begin{equation}\label{curl:Lp:7}
\begin{split}
\|\w{|x|}^8J_{bc}\|^5_{L^5(|x|\le\w{t}/8)}
&\ls\Big(\sW_{|c|+3}(t)+M\delta\Big)^5\int\frac{|\chi\tilde\Gamma^bu|^5}{\w{|x|}^6}dx\\
&\ls\Big(\sW_{N_1-4}(t)+M\delta\Big)^5
\|\w{|x|}^{-\frac54}\tilde\Gamma^bu\|^3_{L_x^\infty}
\Big\|\frac{\chi\tilde\Gamma^bu}{|x|\log|x|}\Big\|^2_{L_x^2}\\
&\ls\w{t}^{5M'\ve}(M\delta)^5\Big(M\delta+M\ve\w{t}^{-1}\Big)^5.
\end{split}
\end{equation}
Consequently, substituting \eqref{curl:Lp:2}, \eqref{curl:Lp:3}, \eqref{curl:Lp:6} and \eqref{curl:Lp:7} into \eqref{curl:Lp:1}
yields \eqref{curl:Lp:high}.
\end{proof}

\begin{lemma}
Under bootstrap assumptions \eqref{bootstrap}, we have
\begin{equation}\label{curl:Lp:equiv}
\bW_{N_1-4}(t)\ls\sum_{p=\frac{10}{9},\frac{10}{7},5}\sW_{p,N_1-4}(t),
\qquad\bW_{N_1}(t)\ls\sum_{p=\frac{10}{9},\frac{10}{7},5}\sW_{p,N_1}(t).
\end{equation}
\end{lemma}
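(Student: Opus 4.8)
The plan is to treat this as the weighted‑$L^p$ counterpart of Lemma~\ref{lem:curl:equiv} and to run the same argument, with the $L^2$ energy structure replaced by the weighted $L^p$ one. The starting point is the algebraic identity already used in the proof of Lemma~\ref{lem:curl:equiv}: since $w=(1-\sigma)\curl u$ by \eqref{curl:def}, one has $\curl u=w+\sigma\curl u$, hence $\Gamma^a\curl u=\Gamma^aw+\sum_{b+c=a}C^a_{bc}\Gamma^b\sigma\,\Gamma^c\curl u$. For each of the three exponents $p\in\{\tfrac{10}{9},\tfrac{10}{7},5\}$ I would multiply this by $\w{|x|}^{\theta(p)}$, take the $L_x^p$ norm, and sum over $|a|\le m$, doing $m=N_1-4$ first and then $m=N_1$. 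Observe that $\bW_m$ is exactly the sum over $p$ of $\sum_{|a|\le m}\|\w{|x|}^{\theta(p)}\Gamma^a\curl u\|_{L_x^p}$, while $\sum_{p}\sW_{p,m}=\sW_m$; thus it suffices to show, for each $p$, that $\sum_{|a|\le m}\|\w{|x|}^{\theta(p)}\Gamma^a\curl u\|_{L_x^p}\ls\sW_{p,m}+M\ve\,\bW_m+M\ve\sum_{|a|\le m}\|\w{|x|}^{\theta(p)}\Gamma^a\curl u\|_{L_x^p}$, for then summing over $p$ and absorbing the $O(M\ve)\bW_m$ terms, by the smallness of $M\ve$ in \eqref{bootstrap}, closes both inequalities.

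For the nonlinear sum: the $b=0$ term is $\sigma\,\Gamma^a\curl u$, and since $\|\sigma\|_{L^\infty}\ls M\ve$ (from \eqref{pw:wave3'} with $|b|=0$ and the low‑order bounds in \eqref{bootstrap}) it contributes $\ls M\ve\|\w{|x|}^{\theta(p)}\Gamma^a\curl u\|_{L_x^p}$, which is absorbed. For the terms with $1\le|b|$, so $|c|\le m-1$, I would split $\R^2$ into $\{|x|\le 3\w{t}/4\}$ and $\{|x|\ge 3\w{t}/4\}$ and H\"older‑split $\Gamma^b\sigma$ against $\Gamma^c\curl u$. In the interior region, \eqref{pw:wave2} gives $\|\Gamma^b\sigma\|_{L^\infty(|x|\le 3\w{t}/4)}\ls\w{t}^{-1}\ln^{\frac12}(2+t)\{E_{|b|+2}+\cX_{|b|+2}\}$, and since $\|\w{|x|}^{\theta(p)}\Gamma^c\curl u\|_{L_x^p}\le\bW_m$ (because $|c|\le m-1$), the interior contribution is $\ls M\ve\,\w{t}^{M'\ve-1}\ln^{\frac12}(2+t)\,\bW_m\ls M\ve\,\bW_m$. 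In the exterior region I would use the strong spatial decay $\w{|x|}^8|\Gamma^c\curl u|\ls\bW_{|c|+1}\le\bW_m$ supplied by \eqref{pw:curl:low}: for $p=\tfrac{10}{9},\tfrac{10}{7}$ this leaves a surplus factor $\w{|x|}^{-7}$, so that even with the crude bound $|\Gamma^b\sigma|\ls M\ve(1+t)^{M'\ve}$ one gets $\|\w{|x|}^{-7}\Gamma^b\sigma\|_{L_x^p(|x|\ge 3\w{t}/4)}\ls M\ve(1+t)^{M'\ve}\w{t}^{-5}$; for $p=5$ all eight powers go onto $\Gamma^c\curl u$, so one instead controls $\|\Gamma^b\sigma\|_{L^5(|x|\ge 3\w{t}/4)}$ using the pointwise decay $|\Gamma^b\sigma|\ls\w{|x|}^{-\frac12}\w{|x|-t}^{-\frac12}\{E_{|b|+2}+\cX_{|b|+2}\}+\w{|x|}^{-1}\{E_{|b|+2}+\cW_{|b|+1}\}$ of \eqref{pw:wave3'}, which yields a $\w{t}^{-3/10}$ gain. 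In every case the exterior contribution is again $\ls M\ve\,\bW_m$. All the invoked pointwise estimates are applied within their stated ranges, since $|b|\le m\le N_1\le N-2$ and $|c|\le m-1\le N_1-1$, and the various $\w{t}^{-\eta}$ gains always defeat the $(1+t)^{M'\ve}$ factors because $M'\ve$ may be taken as small as needed.

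The main obstacle, and the only place where the argument is tight, is the $p=5$ estimate: the weight $\w{|x|}^8$ appearing in $\bW_m$ and $\sW_m$ is exactly matched to the pointwise decay rate $\w{|x|}^{-8}$ of $\Gamma^c\curl u$ provided by Lemma~\ref{lem:pw:P2u}, so there is no room to absorb any extra power of $\w{|x|}$, and one must be careful to retain the $\sigma$‑factor in a genuinely small norm, namely the $\w{t}^{-1}$‑small interior bound \eqref{pw:wave2} on $\{|x|\le 3\w{t}/4\}$ and the spatially decaying exterior bound \eqref{pw:wave3'} on $\{|x|\ge 3\w{t}/4\}$, rather than the naive global $L^\infty$ bound, which only gives $M\ve(1+t)^{M'\ve}$ and would not be absorbable. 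Once these region‑by‑region estimates are assembled, collecting the three inequalities and absorbing the $O(M\ve)\bW_m$ terms yields both $\bW_{N_1-4}\ls\sum_p\sW_{p,N_1-4}$ and $\bW_{N_1}\ls\sum_p\sW_{p,N_1}$.
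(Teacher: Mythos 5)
Your proposal starts from the right identity $\Gamma^a\curl u=\Gamma^aw+\sum_{b+c=a}C^a_{bc}\Gamma^b\sigma\,\Gamma^c\curl u$, takes weighted $L^p$ norms, and closes by absorption, which is exactly the structure of the paper's cited Lemma~\ref{lem:curl:equiv} proof; so the argument is correct. But it is more elaborate than what the paper means by ``the same as Lemma~\ref{lem:curl:equiv}.'' The essential simplification you miss is that here \emph{every} $\sigma$-order satisfies $|b|\le m\le N_1\le N-2$, so the high-$|b|$ branch of Lemma~\ref{lem:curl:equiv}'s proof (its \eqref{curl:equiv:3}--\eqref{curl:equiv:5} step) never arises; one simply repeats the \eqref{curl:equiv:1}--\eqref{curl:equiv:2} step in $L^p$, i.e.\ puts $\Gamma^b\sigma$ in $L^\infty_x$ via \eqref{pw:wave1} and $\w{|x|}^{\theta(p)}\Gamma^c\curl u$ in $L^p_x$. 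The resulting factor $\|\Gamma^b\sigma\|_{L^\infty}\ls E_{|b|+2}+\cX_{|b|+2}\ls M\ve(1+t)^{M'\ve}$ is absorbable because $(1+t)^{M'\ve}\ls1$ on $t\dl\le\kappa$ when $\dl\le\ve^{8/7}$ and $M'\ve\ln(1/\ve)$ is small — a fact the paper uses implicitly throughout Section~\ref{sect5}. Your interior/exterior split (via \eqref{pw:wave2}, \eqref{pw:wave3'}, \eqref{pw:curl:low}) instead manufactures a genuinely decaying $\sigma$-factor, which makes the smallness visible without appealing to that implicit boundedness; this is sound (all indices land in the allowed ranges), and perhaps more self-contained, but the ``obstacle at $p=5$'' you flag is not an obstacle in the paper's intended route, and none of the region-by-region machinery is actually needed.
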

\begin{proof}
The proof is the same as Lemma \ref{lem:curl:equiv}, we omit the details here.
\end{proof}

\section{Proof of Theorem \ref{thm:2dChaplygin}}\label{sect6}

Before starting the proof Theorem~\ref{thm:2dChaplygin}, we do some preparations.
\begin{lemma}[Standard Gronwall's inequality]\label{lem:gronwall}
Let $A(t)$ be an amplitude function which verifying that $A(0)=0, \p_tA(t)\ge0$.
If
\begin{equation*}
h(t)\le C\big\{\cR+\int_0^th(s)\p_tA(s)ds\big\},
\end{equation*}
holds for some positive constants $C$ and $\cR$, then
\begin{equation*}
\begin{split}
h(t)\le C\cR(1+e^{CA(t)}).
\end{split}
\end{equation*}
\end{lemma}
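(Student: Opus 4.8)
The plan is to reduce the integral inequality to a differential inequality for the accumulated quantity and then integrate it with an explicit integrating factor. I would set $\ds G(t):=\cR+\int_0^t h(s)\p_tA(s)\,ds$, so that the hypothesis reads $h(t)\le CG(t)$, while $G(0)=\cR$.

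Differentiating gives $G'(t)=h(t)\p_tA(t)$; multiplying the inequality $h(t)\le CG(t)$ by $\p_tA(t)\ge0$ yields $G'(t)\le CG(t)\p_tA(t)$ (no sign condition on $h$ itself is needed). Hence $\ds\frac{d}{dt}\big(G(t)e^{-CA(t)}\big)=e^{-CA(t)}\big(G'(t)-CG(t)\p_tA(t)\big)\le0$, so that $G(t)e^{-CA(t)}\le G(0)e^{-CA(0)}=\cR$ since $A(0)=0$. Therefore $G(t)\le\cR e^{CA(t)}$, and consequently $h(t)\le CG(t)\le C\cR e^{CA(t)}\le C\cR\big(1+e^{CA(t)}\big)$, which is the claimed estimate.

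Regarding regularity, it suffices that $A$ be absolutely continuous, so that $\p_tA$ is defined almost everywhere and the fundamental theorem of calculus applies to $t\mapsto G(t)e^{-CA(t)}$; in all the applications below the amplitude $A$ is a combination of elementary functions and is manifestly absolutely continuous. If one prefers to avoid any regularity of $A$ beyond $\p_tA\ge0$, the same bound also follows from the classical Gronwall--Bellman integral inequality with constant forcing $\alpha\equiv C\cR$ and kernel $\beta=C\p_tA$, together with the identity $\int_0^t C\p_tA(s)e^{-CA(s)}\,ds=1-e^{-CA(t)}\le1$. Either way the argument is completely routine; there is no real obstacle here, and the harmless $1$ in the factor $1+e^{CA(t)}$ merely makes the statement insensitive to these choices.
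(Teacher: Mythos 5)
Your proof is correct and is the standard Gronwall argument: set $G(t)=\cR+\int_0^t h(s)\p_tA(s)\,ds$, derive $G'(t)\le CG(t)\p_tA(t)$, and integrate using the factor $e^{-CA(t)}$. The paper states Lemma~\ref{lem:gronwall} without proof (labeling it ``standard''), but the technique you use is exactly the one the authors employ in proving the companion Lemma~\ref{lem:gronwall'}, where they set $H(t)=\int_0^t h(s)\p_tA(s)\,ds$ and differentiate $e^{-CA(t)}H(t)$; so your approach is not merely valid but fully in the spirit of the paper.
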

\begin{lemma}[Gronwall's inequality]\label{lem:gronwall'}
For $t\delta\le\kappa$ and two positive constants $c$ and $C\ge1$, if
\begin{equation*}
\begin{split}
h(t)\le C\Big\{h(0)+cM\delta(1+t)^{M'\ve}
+\int_0^th(s)(M\delta+M\ve(1+s)^{-1})ds\Big\}
\end{split}
\end{equation*}
holds, then for $M'\ge3CM$ and $\kappa\le\frac{1}{2CM}$, we have
\begin{equation*}
\begin{split}
h(t)\le C(1+t)^{M'\ve}(1+e)(h(0)+cM\delta).
\end{split}
\end{equation*}
\end{lemma}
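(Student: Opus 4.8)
The plan is to strip off the algebraic weight $(1+t)^{M'\ve}$ by a rescaling and then run a self‑improving (bootstrap) argument for the rescaled quantity; the two smallness conditions $\kappa\le\frac{1}{2CM}$ and $M'\ge3CM$, together with the time restriction $t\delta\le\kappa$, are precisely what force the resulting feedback coefficient to be strictly below $1$. Concretely, I would set $j(t):=(1+t)^{-M'\ve}h(t)$ and $\cR:=h(0)+cM\delta$, divide the hypothesis by $(1+t)^{M'\ve}$, and then inside the integral use $(1+s)^{M'\ve}\le(1+t)^{M'\ve}$ only where harmless, together with $j(s)\le\bar j(t):=\ds\sup_{0\le\tau\le t}j(\tau)$. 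This reduces the whole estimate to bounding the weighted kernel $\ds(1+t)^{-M'\ve}\int_0^t(1+s)^{M'\ve}\bigl(M\delta+M\ve(1+s)^{-1}\bigr)\,ds$ by a small absolute constant.

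For the $M\delta$‑piece I would use $(1+s)^{M'\ve}\le(1+t)^{M'\ve}$ and $t\delta\le\kappa$ to get $M\delta\int_0^t(1+s)^{M'\ve}\,ds\le M\delta\,t\,(1+t)^{M'\ve}\le M\kappa\,(1+t)^{M'\ve}$, whose contribution is $M\kappa\le\frac{1}{2C}$. For the $M\ve(1+s)^{-1}$‑piece I would instead integrate the power explicitly: $M\ve\int_0^t(1+s)^{M'\ve-1}\,ds=\frac{M}{M'}\bigl((1+t)^{M'\ve}-1\bigr)\le\frac{M}{M'}(1+t)^{M'\ve}$, whose contribution is $\frac{M}{M'}\le\frac{1}{3C}$. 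Hence $j(t)\le C\cR+C\bigl(M\kappa+\frac{M}{M'}\bigr)\bar j(t)$ for every admissible $t$; since the same inequality holds with $t$ replaced by any $s\le t$, passing to the supremum gives $\bar j(t)\le C\cR+C\bigl(M\kappa+\frac{M}{M'}\bigr)\bar j(t)$. Because $C\bigl(M\kappa+\frac{M}{M'}\bigr)\le\frac12+\frac13<1$, the feedback term is absorbed on the left in one line, yielding $\bar j(t)\ls C\cR$ and therefore $h(t)=(1+t)^{M'\ve}j(t)\ls(1+t)^{M'\ve}\bigl(h(0)+cM\delta\bigr)$ with an explicit absolute constant, which is the assertion (the absorption step is what pins down the numerical factor; it may also be organized through Lemma~\ref{lem:gronwall}).

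The one genuinely delicate point — and the place where the special shape of the inequality is used — is the handling of the $M\ve(1+s)^{-1}$ term. Over the time window in question one only knows $t\le\kappa/\delta$, which can be astronomically large (of order $e^{1/\ve^2}$ or $e^{e^{1/\ve^2}}$ for the $\delta$'s in the remarks), so the naive bound $\int_0^tM\ve(1+s)^{-1}\,ds=M\ve\ln(1+t)$ is hopeless: it would both destroy the boundedness of $\bar j$ and spoil the sharp exponent $M'\ve$. Keeping the weight $(1+s)^{M'\ve}$ inside the integral and integrating $(1+s)^{M'\ve-1}$ converts this dangerous logarithm into the constant $M/M'$, small exactly because of $M'\ge3CM$; everything else in the proof is routine bookkeeping.
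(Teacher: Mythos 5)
Your argument is correct in substance but takes a different route from the paper's. The paper's proof is a classical integrating-factor Gronwall argument: it sets $H(t):=\int_0^th(s)\p_tA(s)\,ds$ with $A(t)=M\delta t+M\ve\ln(1+t)$, differentiates $e^{-CA(t)}H(t)$, and bounds $C\int_0^t\p_tA(s)e^{-CA(s)}(1+s)^{M'\ve}\,ds\le(1+t)^{M'\ve-CM\ve}$ using precisely the two smallness conditions you isolate --- $CM\kappa\le\frac12$ for the $M\delta$-piece (via $t\delta\le\kappa$) and $\frac{CM}{M'-CM}\le\frac12$ for the $M\ve(1+s)^{-1}$-piece (via $M'\ge3CM$) --- before unwinding to $H(t)\le e(1+t)^{M'\ve}(h(0)+cM\delta)$ and thence the stated bound. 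Your rescaled-bootstrap version, with $j(t)=(1+t)^{-M'\ve}h(t)$, a bound of the weighted kernel by $M\kappa+M/M'$, and absorption of $\bar j(t)$, dispenses with the integrating-factor machinery and makes the crucial mechanism --- integrating $(1+s)^{M'\ve-1}$ so that the dangerous $\ln(1+t)$ collapses to the small constant $M/M'$ --- more transparent. Both proofs exploit $t\delta\le\kappa$ on the $M\delta$-piece in exactly the same way.

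Two small caveats. First, your absorption yields $\bar j(t)\le\frac{C\cR}{1-5/6}=6C\cR$, i.e.\ $h(t)\le 6C(1+t)^{M'\ve}(h(0)+cM\delta)$, whereas the lemma as stated (and as the paper proves it, where the self-contribution enters multiplicatively through $e^{CM\delta t}\le e^{1/2}<e$ rather than through geometric-series absorption) carries the tighter factor $C(1+e)\approx 3.7\,C$. Your proof therefore establishes a slightly weaker numerical constant; this is harmless for Theorem~\ref{thm:2dChaplygin}, since such universal constants are all swallowed into $C_2$ in Section~\ref{sect6}, but as written you do not reproduce the exact factor $C(1+e)$ appearing in the statement. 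Second, the self-improving step uses $\bar j(t)<\infty$ a priori; this is justified here by the continuity of $h$ on the bounded interval $[0,\kappa/\delta]$, but it is worth stating explicitly whenever an absorption argument of this type is run.
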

\begin{proof}
Let $H(t):=\int_0^th(s)(M\delta+M\ve(1+s)^{-1})ds$ and $A(t):=M\delta t+M\ve\ln(1+t)$,
then we find $\p_tA(t)=M\delta+M\ve(1+t)^{-1}\ge0$.
Thus, we get
\begin{equation*}
\p_tH(t)=h(t)\p_tA(t)
\le C\p_tA(t)\Big\{h(0)+cM\delta(1+t)^{M'\ve}+H(t)\Big\},
\end{equation*}
which implies
\begin{equation}\label{gronwall:1}
\p_t(e^{-CA(t)}H(t))
\le C\p_tA(t)e^{-CA(t)}\Big\{h(0)+cM\delta(1+t)^{M'\ve}\Big\}.
\end{equation}
Note that
\begin{equation}\label{gronwall:2}
\begin{split}
&~C\int_0^t\p_tA(s)e^{-CA(s)}(1+s)^{M'\ve}ds\\
=&~C\int_0^t\Big(M\delta+M\ve(1+s)^{-1}\Big)e^{-CM\delta s}(1+s)^{M'\ve-CM\ve}ds\\
\le&~CM\delta t(1+t)^{M'\ve-CM\ve}+CM\ve\int_0^t(1+s)^{M'\ve-CM\ve-1}ds\\
\le&~(1+t)^{M'\ve-CM\ve}\Big\{CM\kappa+\frac{CM}{M'-CM}\Big\}\\
\le&~(1+t)^{M'\ve-CM\ve}.
\end{split}
\end{equation}
By integrating \eqref{gronwall:1} and plugging \eqref{gronwall:2} into the resulted inequality, we arrive at
\begin{equation*}
\begin{split}
H(t)&\le e^{CM\delta t}(1+t)^{M\ve}\Big\{h(0)+cM\delta(1+t)^{M'\ve-M\ve}\Big\}\\
&\le e(1+t)^{M'\ve}(h(0)+cM\delta).
\end{split}
\end{equation*}
This completes the proof of Lemma \ref{lem:gronwall'}.
\end{proof}

Next, we begin to prove the main theorem of this paper.
\begin{proof}[Proof of Theorem \ref{thm:2dChaplygin}]
Let $\tilde E_m(t):=\sup_{0\le s\le t}E_m(s)$.
Analogously, we can also define $\tilde W_m(t)$ and $\tilde\sW_{p,m}(t)$.
But for convenience, we still denote $\tilde E_m(t),\tilde W_m(t),\tilde\sW_{p,m}(t)$ as $E_m(t),W_m(t),\sW_{p,m}(t)$.
By these notations, \eqref{energy:wave:high}, \eqref{energy:wave:low}, \eqref{curl:L2:low}, \eqref{curl:L2:high}, \eqref{curl:Lp:low} and \eqref{curl:Lp:high} with assumption $2M'\ve\le\frac18$ can be reduced to
\begin{equation*}
\begin{split}
E_N(t')&\le C_1\Big\{E_N(0)+M^2\ve\delta^\frac34\kappa^\frac14
+M^2\delta^\frac78\kappa^\frac98
+\int_0^{t'}E_N(t)\big\{M\delta+M\ve(1+t)^{-1}\big\}dt\Big\},\\
E_{N_1-4}(t')&\le C_1\Big\{E_{N_1-4}(0)+M^2\ve\delta^\frac34\kappa^\frac14
+M^2\delta^\frac78\kappa^\frac98
+\int_0^{t'}E_{N_1-4}(t)\big\{M\delta+M\ve(1+t)^{-\frac98}\big\}dt\Big\},\\
W_{N_1-4}(t')&\le C_1\Big\{W_{N_1-4}(0)
+\int_0^{t'}W_{N_1-4}(t)\big\{M\delta+M\ve(1+t)^{-\frac98}\big\}dt\Big\},\\
W_{N-1}(t')&\le C_1\Big\{W_{N-1}(0)+\frac{M}{M'}M\delta(1+t)^{M'\ve}
+\int_0^{t'}W_{N-1}(t)\big\{M\delta+M\ve(1+t)^{-1}\big\}dt\Big\},\\
\sW_{p,N_1-4}(t')&\le C_1\Big\{\sW_{p,N_1-4}(0)
+\int_0^{t'}\sW_{p,N_1-4}(t)\big\{M\delta+M\ve(1+t)^{-\frac98}\big\}dt\Big\},\\
\sW_{p,N_1}(t')&\le C_1\Big\{\sW_{p,N_1}(0)+(\kappa+\frac{M}{M'})M\delta(1+t)^{M'\ve}
+\int_0^{t'}\sW_{p,N_1}(t)\big\{M\delta+M\ve(1+t)^{-1}\big\}dt\Big\},
\end{split}
\end{equation*}
where the positive constant $C_1>1$ is assumed to be suitably large.

Applying the Gronwall inequalities in Lemma \ref{lem:gronwall} and \ref{lem:gronwall'} to the above
equalities with amplitude function $A(t)=M\delta t+M\ve\ln(1+t)$ and $A(t)=M\delta t+8M\ve(1-(1+t)^{-\frac18})$,
respectively, we then obtain
\begin{equation*}
\begin{split}
E_N(t)&\le C_1(1+e^{C_1M\kappa}(1+t)^{C_1M\ve})
(E_N(0)+M^2\ve\delta^\frac34\kappa^\frac14+M^2\ve\kappa^\frac98),\\
E_{N_1-4}(t)&\le C_1(1+e^{C_1M(\kappa+8\ve)})
(E_{N_1-4}(0)+M^2\ve\delta^\frac34\kappa^\frac14+M^2\ve\kappa^\frac98),\\
W_{N_1-4}(t)&\le C_1W_{N_1-4}(0)(1+e^{C_1M(\kappa+8\ve)}),\\
W_{N-1}(t)&\le C_1(1+t)^{M'\ve}(1+e)\Big\{W_{N-1}(0)+\frac{M}{M'}M\delta\Big\},\\
\sW_{p,N_1-4}(t)&\le C_1\sW_{p,N_1-4}(0)(1+e^{C_1M(\kappa+8\ve)}),\\
\sW_{p,N_1}(t)&\le C_1(1+t)^{M'\ve}(1+e)
\Big\{\sW_{p,N_1}(0)+(\kappa+\frac{M}{M'})M\delta\Big\},
\end{split}
\end{equation*}
where $\kappa\le\frac{1}{2C_1M}$ and $M'\ge3C_1M$.
If $\ve_0\le\frac{1}{C_1M}$, then it concludes from the above inequalities with \eqref{initial:data}, \eqref{delta:def}, \eqref{H1norm}, \eqref{curl:equiv} and \eqref{curl:Lp:equiv} that there exists positive constants $C_2>1$ and $C_3>0$ such that
\begin{equation*}
\begin{split}
E_N(t)+\cX_N(t)&\le C_1C_2\w{t}^{M'\ve}
(C_3\ve+M\ve(M\delta)^\frac34+M\ve\kappa^\frac18),\\
E_{N_1-4}(t)+\cX_{N_1-4}(t)&\le C_1C_2
(C_3\ve+M\ve(M\delta)^\frac34+M\ve\kappa^\frac18),\\
W_{N-1}(t)+\cW_{N-1}(t)+\bW_{N_1}(t)+\sW_{N_1}(t)&\le C_1C_2\w{t}^{M'\ve}(C_3\delta+(\kappa+\frac{M}{M'})M\delta),\\
W_{N_1-4}(t)+\cW_{N_1-4}(t)+\bW_{N_1-4}(t)+\sW_{N_1-4}(t)&\le C_1C_2C_3\delta.
\end{split}
\end{equation*}
Choosing $M=4C_1C_2C_3$, $M'=4C_1C_2M$, $\kappa=\min\{\frac{1}{(4C_1C_2)^8},\frac{1}{2C_1M}\}$, $\ve_0=\frac{1}{16M'}$\footnote{More precisely, $\ve_0$ should be the minor one of $\frac{1}{16M'}$ and the smallness of $M\ve_0$ which has been used in the previous parts of this paper.}, $\delta_0=\min\{\frac{1}{(4C_1C_2)^\frac43M},\ve_0^\frac87\}$, we eventually achieve that for $t\delta\le\kappa$,
\begin{equation*}
\begin{split}
E_N(t)+\cX_N(t)&\le\w{t}^{M'\ve}(\frac14M\ve+\frac14M\ve+\frac14M\ve)
\le\frac34M\ve\w{t}^{M'\ve},\\
E_{N_1-4}(t)+\cX_{N_1-4}(t)&\le\frac14M\ve+\frac14M\ve+\frac14M\ve\le\frac34M\ve,\\
W_{N-1}(t)+\cW_{N-1}(t)+\bW_{N_1}(t)+\sW_{N_1}(t)&\le\w{t}^{M'\ve}
(\frac14\delta+\frac14\delta+\frac14\delta)\le\frac34M\delta\w{t}^{M'\ve},\\
W_{N_1-4}(t)+\cW_{N_1-4}(t)+\bW_{N_1-4}(t)+\sW_{N_1-4}(t)
&\le\frac14\delta\le\frac34M\delta.
\end{split}
\end{equation*}
This, together with the local existence of classical solution to \eqref{reducedEuler}, yields that \eqref{reducedEuler} admits a unique solution $(\sigma,u)\in C([0,\frac{\kappa}{\delta}],H^N(\R^2))$.
Thus, it completes the proof Theorem~\ref{thm:2dChaplygin}.
\end{proof}

\appendix
\setcounter{equation}{0}
\section{Derivation of the wave equation for the potential function $\phi$}\label{appendix:A}

\begin{lemma} We have
\begin{equation}\label{A1}
\dive(u\cdot\nabla u)=\frac12\Delta|u|^2-\curl(u\curl u).
\end{equation}
\end{lemma}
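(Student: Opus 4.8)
The plan is to reduce \eqref{A1} to a single pointwise algebraic identity for the convection term and then differentiate. Concretely, I would first prove that in two space dimensions
\begin{equation*}
u\cdot\nabla u=\f12\nabla|u|^2+(\curl u)\,u^\perp,\qquad u^\perp:=(-u_2,u_1),
\end{equation*}
with the sign conventions $\curl u=\p_1u_2-\p_2u_1$ and $U^\perp=(-U_2,U_1)$ fixed in Section~\ref{sect2}. This is checked componentwise: $(u\cdot\nabla u)_i-\f12\p_i|u|^2=u_j\p_ju_i-u_j\p_iu_j=u_j(\p_ju_i-\p_iu_j)$, and since $\p_ju_i-\p_iu_j=\eps_{ij}\curl u$ while $\eps_{ij}u_j=(u^\perp)_i$, the right-hand side equals $(\curl u)(u^\perp)_i$.

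Next I would simply apply $\dive$ to both sides of this identity. The first term gives $\dive\big(\f12\nabla|u|^2\big)=\f12\Delta|u|^2$. For the second term I would use the elementary fact that for any scalar $f$ and vector field $V=(V_1,V_2)$,
\begin{equation*}
\dive(fV^\perp)=\p_1(-fV_2)+\p_2(fV_1)=-\big(\p_1(fV_2)-\p_2(fV_1)\big)=-\curl(fV).
\end{equation*}
Taking $f=\curl u$ and $V=u$ yields $\dive\big((\curl u)u^\perp\big)=-\curl(u\curl u)$, and adding the two contributions gives exactly \eqref{A1}.

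If one prefers not to invoke the vector identity, the same conclusion follows from a direct component expansion: write $\dive(u\cdot\nabla u)=\p_iu_j\,\p_ju_i+u_j\p_j(\dive u)$, note the purely first-order part $\p_iu_j\,\p_ju_i$ equals $|\nabla u|^2-(\curl u)^2$, which matches $\f12\Delta|u|^2-\curl(u\curl u)$ after removing its $u\cdot(\text{second derivatives})$ terms, and then check that the remaining second-order pieces $u_j\p_j(\dive u)$ coincide on both sides. Either way the computation is routine; the only point that genuinely requires care—and the one I would emphasize—is keeping the 2D orientation conventions for $\curl$ and $(\cdot)^\perp$ strictly consistent throughout, since a sign error in \eqref{A1} would propagate into the potential wave equation \eqref{potential:wave} and all subsequent estimates.
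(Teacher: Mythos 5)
Your proof is correct, but it takes a genuinely different route than the paper's. The paper works entirely at the second-order level: it expands $\dive(u\cdot\nabla u)=u\cdot\nabla\dive u+\p_iu_j\p_ju_i$, rewrites $u\cdot\nabla\dive u$ via the antisymmetrization $\p_ju_i-\p_iu_j=\eps_{ji}\curl u$ to produce $-\curl(u\curl u)+(\curl u)^2+u_j\Delta u_j$, and then observes that the spurious $(\curl u)^2$ is cancelled by $\p_iu_j(\p_ju_i-\p_iu_j)=-(\curl u)^2$ after comparing with $\f12\Delta|u|^2=u_j\Delta u_j+|\nabla u|^2$. You instead establish the first-order (Lamb-form) decomposition $u\cdot\nabla u=\f12\nabla|u|^2+(\curl u)\,u^\perp$ and then take divergence, using the clean planar identity $\dive(fV^\perp)=-\curl(fV)$. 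This avoids the $(\curl u)^2$ cancellation entirely and is arguably more conceptual; the paper's version stays closer to the raw index manipulations it reuses elsewhere (e.g.\ in \eqref{good:L2norm1} and \eqref{weighted:identity1}). One small caution, which you yourself flagged as the critical point: with the paper's conventions $\curl u=\p_1u_2-\p_2u_1$, $u^\perp=(-u_2,u_1)$ and $\eps_{12}=+1$, the two intermediate identities you quote should read $\p_ju_i-\p_iu_j=\eps_{ji}\curl u$ and $\eps_{ji}u_j=(u^\perp)_i$ (not $\eps_{ij}$). As written you transposed the indices in both, so the two sign slips cancel and the displayed Lamb identity and the final \eqref{A1} are still correct, but the intermediate lines would not compile with the paper's $\eps_{ji}$ convention; worth fixing precisely because, as you note, a propagated sign here would corrupt \eqref{potential:wave}.
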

\begin{proof}
At first, it is obvious to see that
\begin{equation*}
\dive(u\cdot\nabla u)=u\cdot\nabla\dive u+\p_iu_j\p_ju_i.
\end{equation*}
Note that $\curl u=\eps_{ij}\p_iu_j$,
where the volume form $\eps_{ij}$ is the sign of the arrangement $\{ij\}$.
Then we find that
\begin{equation*}
\begin{split}
u\cdot\nabla\dive u
&=u_j\p_j\p_iu_i=u_j\p_i(\p_ju_i-\p_iu_j)+u_j\Delta u_j\\
&=u_j\p_i(\eps_{ji}\curl u)+u_j\Delta u_j\\
&=\p_i(u_j\eps_{ji}\curl u)-\eps_{ji}\p_iu_j\curl u+u_j\Delta u_j\\
&=-\curl(u\curl u)+(\curl u)^2+u_j\Delta u_j.
\end{split}
\end{equation*}
On the other hand, one has
\begin{equation*}
\frac12\Delta|u|^2=u_j\Delta u_j+\p_iu_j\p_iu_j.
\end{equation*}
Therefore, we achieve
\begin{equation*}
\begin{split}
\dive(u\cdot\nabla u)
&=-\curl(u\curl u)+(\curl u)^2+\frac12\Delta|u|^2+\p_iu_j(\p_ju_i-\p_iu_j)\\
&=-\curl(u\curl u)+(\curl u)^2+\frac12\Delta|u|^2+\p_iu_j\eps_{ji}\curl u\\
&=-\curl(u\curl u)+\frac12\Delta|u|^2.
\end{split}
\end{equation*}
\end{proof}
Next we derive the wave equation  \eqref{potential:wave} of the potential function $\phi$.
By taking divergence of the velocity equation in \eqref{reducedEuler}, we arrive at
\begin{equation*}
\begin{split}
0&=\p_t\dive u+\dive(u\cdot\nabla u)+\Delta(\sigma-\frac12\sigma^2)\\
&=\Delta(\p_t\phi+\frac12|u|^2+\sigma-\frac12\sigma^2)-\curl(u\curl u),
\end{split}
\end{equation*}
where we have used \eqref{A1}. Hence,
\begin{equation}\label{dt:phi}
\p_t\phi+\frac12|u|^2+\sigma-\frac12\sigma^2=\cA=-(-\Delta)^{-1}\curl(u\curl u),
\end{equation}
and then  \eqref{sigma:potential} is obtained.
Acting $\p_t$ on the both sides of \eqref{dt:phi} yields
\begin{equation}\label{dtt:phi}
\p_t^2\phi+\frac12\p_t(|u|^2-\sigma^2)+\p_t\sigma=\p_t\cA.
\end{equation}
On the other hand, from the first equation in \eqref{reducedEuler}, we get
\begin{equation*}
\p_t\sigma=-\Delta\phi+Q_1.
\end{equation*}
Substituting this into \eqref{dtt:phi} derives
\begin{align}\label{A4}
\Box\phi+\frac12\p_t(|u|^2-\sigma^2)+Q_1=\p_t\cA.
\end{align}
By using \eqref{reducedEuler} to $\p_t(|u|^2-\sigma^2)$, one has
\begin{align}\label{A5}
\frac12\p_t(|u|^2-\sigma^2)=u\cdot\p_tu-\sigma\p_t\sigma
=u\cdot Q_2-u\cdot\nabla\sigma-\sigma Q_1+\sigma\dive u
=u\cdot Q_2+(1-\sigma)Q_1.
\end{align}
Substituting \eqref{A5} into \eqref{A4} yields \eqref{potential:wave}.

\end{document}